\newtheorem{theorem}{Theorem}[section]
\newtheorem{lemma}[theorem]{Lemma}
\newtheorem{remark}[theorem]{Remark}
\numberwithin{equation}{section}
\newcommand*{\rom}[1]{\expandafter\@slowromancap\romannumeral #1@}
\newcommand{\bd}{\mathrm{d}}
\newcommand{\grad}{\nabla_{x}}
\newcommand{\vdot}{v\!\cdot\!\nabla_x}
\newcommand{\sdot}{\!\cdot\!}
\newcommand{\weakc}{\rightharpoonup}
\newcommand{\m}{\dm}
\newcommand{\bdv}{\dm \bd v}
\newcommand{\dm}{\mathrm{M}}
\newcommand{\bdp}{\mathcal{P}}
\newcommand{\bdpv}{\mathcal{P}_{\!\!\perp}}
\newcommand{\bds}{\mathrm{S}}
\newcommand{\divg}{\mathrm{div}}
\newcommand{\Q}{\mathcal{Q}}
\newcommand{\vpbg}{\mathcal{G}_{\epsilon}}
\newcommand{\intps}{\int_{\scriptstyle \mathbb{T}^3}\!\int_{\scriptstyle \mathbb{R}^3} }
\newcommand{\intpsz}{\int_{\scriptstyle \mathbb{I}_z}\int_{\scriptstyle \mathbb{T}^3}\!\int_{\scriptstyle \mathbb{R}^3} }
\newcommand{\intv}{\int_{\mathbb{R}^3}}
\newcommand{\intt}{\int_{\mathbb{T}^3}}
\newcommand{\intz}{\int_{\mathbb{I}_z}}
\newcommand{\reps}{\tfrac{1}{\epsilon}}
\newcommand{\repst}{\tfrac{1}{\epsilon^2}}
\newcommand{\repsta}{\tfrac{a_2}{\epsilon^2}}
\newcommand{\llv}{L^2_{\Lambda_{v}}}
\newcommand{\dt}{\frac{\mathrm{d}}{2 \mathrm{d}t}}
\newcommand{\tdt}{\frac{\mathrm{d}}{ \mathrm{d}t}}
\newcommand{\hszt}{L^2L^2_z}
\newcommand{\lzi}{L^\infty_z}
\newcommand{\hszti}{L^{2\cap\infty}_z}
\newcommand{\eps}{\epsilon}
\renewcommand{\ge}{\geqslant}
\renewcommand{\le}{\leqslant}
\title[NS-VPB-UQ]{ Sensitivity analysis and incompressible Navier-Stokes-Poisson limit of Vlasov-Poisson-Boltzmann equations with uncertainty }
\author[Ning Jiang]{Ning Jiang}
\address[Ning Jiang]{\newline School of Mathematics and Statistics, Wuhan University, Wuhan, 430072, P. R. China}
\email{njiang@whu.edu.cn}
\author[X. Zhang]{Xu Zhang}
\address[Xu Zhang]{\newline School of Mathematics and Statistics, zhengzhou University, Zhengzhou, 450001, P. R. China}
\email{xuzhang889@zzu.edu.cn}
\begin{document}
	\begin{abstract}
For the Vlasov-Poisson-Boltzmann equations with random uncertainties from the initial data or collision kernels, we proved the sensitivity analysis and energy estimates uniformly with respect to the Knudsen number in the diffusive scaling using hypocoercivity method. As a consequence, we also justified the incompressible Navier-Stokes-Poisson limit with random inputs. In particular, for the first time, we obtain the precise convergence rate {\em without} employing any results based on Hilbert expansion. We not only generalized the previous deterministic Navier-Stokes-Poisson limits to random initial data case, also improve the previous uncertainty quantification results to the case where the initial data include both kinetic and fluid parts.
	\end{abstract}
\maketitle	
\section{Introduction}
\subsection{Vlasov-Poisson-Boltzmann system}
Dilute electrons in the absence of a magnetic field can be described by the Vlasov-Poisson-Boltzmann system, which is fundamental in plasma physics. In this paper, we consider the one-species VPB, which is a coupling a Boltzmann equation with a poisson equation arising from electric field:
\begin{equation}\label{vpb}
    \left\{
    \begin{array}{c}
\partial_t f + \vdot f + \grad\phi\sdot\nabla_v f = \Q(f,f), \\[2mm]
\Delta_x \phi=\int_{\mathbb{R}^3} f \bd v - \rho_b\,.
        \end{array}
  \right.
\end{equation}
It models the dynamics of dilute electrons under the self-consistent electric field while the   ion is seen as the background charge. Here the non-negative $f(t,x,v)$ represents the density function of electrons in position $x\in \mathbb{T}^3$ with velocity $v\in\mathbb{R}^3$ at time $t>0$. In \eqref{vpb}, the first equation is the classical Boltzmann equation, and coupled with the $\rho_b$ which is the density of  the background charge, the self-consistent electrostatic field $\nabla_{\!x}\phi(t,x)$ is generated and described by the Poisson equation. The collision between particles is given by the standard Boltzmann collision operator $\Q(f,f)$ with hard-sphere interaction:
\begin{equation}\nonumber
  \Q(f,f)(v)= \int_{\mathbb{R}^3 \times \mathbb{S}^2} ( f^\prime f_1^\prime - f f_1 ) | ( v - v_1 ) \cdot \omega | \bd \omega \bd v_1 \,,
\end{equation}
where
\begin{equation*}
  f = f(t,x,v) \,, \quad f^\prime = f ( t , x , v^\prime ) \,, \quad f_1 = f ( t , x , v_1 ) , \quad f^\prime_1 = f ( t , x , v_1^\prime ) \,,
\end{equation*}
\begin{equation*}
  v^\prime = v - [ ( v - v_1 ) \cdot \omega ] \omega \,, \quad v_1^\prime = v_1 + [ ( v - v_1 ) \cdot \omega ] \omega \,, \quad \omega \in \mathbb{S}^2 \,.
\end{equation*}
It is well-known that the constant state (denoted by $\mathrm{M}(v)$, and called global Maxwellian) which makes the collision operator $\Q(\dm, \dm)=0$ has the following form (after normalization):
\begin{equation}
  \dm(v)= \tfrac{1}{(2\pi)^{3/2}}e^{-|v|^2/2}\,.
\end{equation}

There are many contributions to the mathematical analysis of VPB, or more generally Boltzmann type equations. Basically, there are two types of results on the well-posedness of the kinetic equations: the first is ``large" initial data renormalized solutions introduced by DiPerna-Lions, see \cite{DL-Annals1989} and \cite{Lions-Tokyo1994-1, Lions-Tokyo1994-2}. The second is the classical solutions near global Mexwellian, initialed from Ukai \cite{Ukai}, and later developed by Guo using the so-called nonlinear energy method, \cite{Guo-vpb, guo-2003-vmb-invention}. Based on these seminal contributions, there are lots of improvements in the past two decades from the perspectives of boundary conditions {\cite{Mischler}}, collision kernels (from cut-off to non-cutoff kernels) and regularity, which we will not list here.

The main goals of this paper is two-folds: 1. We connect the one-species VPB \eqref{vpb} in the diffusive scalings (see the scaled form \eqref{vpb-ns-scaling}) to the incompressible Navier-Stokes-Fourier-Poisson (NSFP) equations. We will justify this limit in the context of classical solutions. 2. We consider the uncertainty quantification (UQ) problem for VPB under this diffusive scaling. In this paper the random effects come from the initial data and collision kernels, and we will analyze the sensitivity of the uncertainty. In the following subsections, we describe the above problems in details.

\subsection{Incompressible NSFP limit}
One of the most important features of the Boltzmann-type equations (or more generally, kinetic equations) is their connection to the fluid equations. The so-called fluid regimes of the kinetic equation are those of asymptotic dynamics of the scaled kinetic equations when the Knudsen number $\eps$ (which is the ratio of the mean free path to the macroscopic length scale) is very small. Justifying these limiting processes rigorously has been an active research field from late 70's. Among many results obtained, the most complete contributions (at least for global in time and high spatial dimension cases) are the incompressible Navier-Stokes limits. There are two types of results in this field:
\begin{enumerate}
  \item Start the solutions of the scaled kinetic equations with estimates {\em uniformly} in the Knudsen number $\eps$, then extract a convergent (sub)sequence converging to the solutions of the fluid equations as $\eps\rightarrow 0\,;$.
  \item Start from the solutions for the limiting fluid equations, then construct a sequence of {\em special} solutions of the scaled kinetic equations for small Knudsen number $\eps$.
\end{enumerate}
The key difference between the results of type (1) and (2) are: in type (1), the solutions of the fluid equations are {\em not} known a priori, and are completely obtained from taking limits from the kinetic equations. In short, it is ``from kinetic to fluid", or ``bottum-up" (the term used in Mischler-Mouhot in \cite{Mischler-Mouhot-Invent2013}). This approach in fact gives a mesoscopic proof of the existence for the limiting macroscopic equations. In type (2), the solutions of the fluid equations are known {\em first}. In short, it is ``from fluid to kinetic", or ``top-down".

Most of the type (2) results are based on the Hilbert expansion  and obtained in the context of classical solutions. It was started from Nishida and Caflisch's work on the compressible Euler limit \cite{Nishida, Caflisch, KMN}. Their approach was revisitied by Guo, Jang and Jiang, combining with nonlinear energy method to apply to the acoustic limit \cite{GJJ-KRM2009, GJJ-CPAM2010, JJ-DCDS2009}. After then this process was used for the incompressible limits, for examples, \cite{DEL-89} and \cite{guo2006NSlimit}. In \cite{DEL-89}, De Masi-Esposito-Lebowitz considered  Navier-Stokes limit in dimension 2. More recently, using the nonlinear energy method, in \cite{guo2006NSlimit} Guo justified the Navier-Stokes limit (and beyond, i.e. higher order terms in Hilbert expansion). This result was extended in \cite{JX-SIMA2015} to more general initial data which allow the fast acoustic waves.

The most successful example of type (1) result is the so-called BGL program (named after Bardos-Golse-Levermore's work \cite{bgl1991formal, bgl1993convergence}), which justifies weak limit starting from DiPer-Lions solutions of Boltzmann equations to weak solutions of fluid equations (mostly, incompressible Navier-Stokes and Euler equations). This program started from \cite{bgl1991formal, bgl1993convergence} and completed  in \cite{Go-Sai04, Go-Sai09} by Golse and Saint-Raymond. For the limits in BGL program for bounded domain, see \cite{MSRM-CPAM2003} and \cite{JM-CPAM2017}.

In another line of research, there are also a few type (1) results in the context of classical solutions. The first result in this direction might be Bardos-Ukai \cite{b-u} in which the semigroup method was employed to justify the incompressible Navier-Stokes limit from the hard sphere Boltzmann equation. Recently, on the torus, hypocoercivity method was used by Briant \cite{briant-2015-be-to-ns} and Briant, Merino-Aceituno and Mouhot \cite{BMMouhot} to provide a constructive proof of the incompressible Navier-Stokes limit. Their methods can be considered as an improvement of semigroup method, based on the functional analysis breakthrough of Gualdani-Mischler-Mouhot \cite{GMM}. We emphasize that in \cite{briant-2015-be-to-ns}, it was for the first time a convergence rate was explicitly provided, for hard sphere kernel, and for special initial data which coincide with that needed in the Hilbert expansion method \cite{guo2006NSlimit}. To obtain this convergence rate, it essentially needed Guo's result \cite{guo2006NSlimit} which was based on Hilbert expansion.

In the deterministic part of the current paper, our strategy was inspired by the above works, in particular \cite{briant-2015-be-to-ns}, to give a constructive proof of incompressible NSFP limit of the one-species VPB. Note that for the proof based energy method, this limit has been justified by Guo-Jiang-Luo in \cite{Guo-Jiang-Luo}, using the similar method for Boltzmann equation \cite{ns-limit-2018}. (After we finish the draft of this paper, we heard that Li-Zhong-Yang also prove this limit using the method in the spirit of Ukai's semigroup method). Comparing to the corresponding Navier-Stokes limit of the Boltzmann equation in \cite{briant-2015-be-to-ns, BMMouhot}, there are some key difference and difficulties. The first is for the linearized VPB operator, the fluid and kinetic parts are not orthogonal. The second, which we consider as more important, is to obtain an explicit convergence rate, we delicately decompose the fluid and kinetic parts in the semigroup, and use the structure of the NSFP equations, then avoid any Hilbert expansion type results (which was used in \cite{briant-2015-be-to-ns}, as mentioned above). This is one of the novelty of this paper.

\subsection{UQ of VPB in diffusive limit}
Another main goal of this paper is to study the above kinetic equation and its diffusive approximation under the influence of random uncertainty. Since kinetic equations are not first-principle physical equations, there are inevitably modeling errors, incomplete knowledge of the interaction mechanism, and imprecise measurement of the initial and boundary data, which contribute uncertainties to the equations. Understanding the impact of these uncertainties is crucial to the simulation and validation of the models, in order to provide more reliable predictions, calibrations, and improvements of the models. In this paper we consider the uncertainty coming from initial data and collision kernels. The uncertainty is described by the random variable $z$, which lies in the random space $I_z$ with a probability measure $\pi(z)\mathrm{d}z$, then the solution $f = f(t,x,v,z)$ depends on $z$. The {\em sensitivity analysis} aims to study how randomness of the initial data and collision kernel (the  ``input") propagates in time and how it affects the solution in the long time (the ``output"). It is an essential part of the so-called uncertainty quantification for kinetic equations. Among many previous contributions in this direction, we only list \cite{hujin-2017-uq-review, liujin-2018-kinetic, Jin-Liu-Ma-2017, Jin-Zhu, Hu-Jin-Shu}.

In \cite{liujin-2018-kinetic}, Liu and Jin prove that the solutions of nonlinear kinetic equations with random inputs tend to the deterministic global equilibrium under the acoustic and Navier-Stokes (NS) scalings, in the functional analysis setting of \cite{briant-2015-be-to-ns}. In \cite{liujin-2018-kinetic}, they proved the sensitivity analysis of the Boltzmann equation in the acoustic and NS scalings, but their initial data did not contain the corresponding acoustic or Navier-Stokes parts. Thus, at later time, the fluid parts are totally ignored. The current paper not only generalize the result of \cite{liujin-2018-kinetic} from Boltzmann to VPB, more importanlyly, but also includes the fluid parts and the interactions between the fluid and kinetic parts under the random inputs. This is the first UQ result for spatially high dimension VPB equations in diffusive limits containing Navier-Stokes-Fourier-Poisson dynamics.

\section{Settings on the kernels}
In this section, we will state the assumpution on kernel for both deterministic case and random case respectively.

\subsection{Linearization around Maxwellian}
First, in this subsectin, we derive the linearized equation of VPB system around the global Maxwellian. According to \cite[Sec. 2.3.5]{diogosrm-2019-vmb-fluid}, the scaled VPB equation is
\begin{align}
\label{vpb-ns-scaling}
\begin{cases}
\epsilon \partial_t f_\epsilon + \vdot f_\epsilon + \epsilon \grad\phi_\epsilon\sdot\nabla_v f_\epsilon = \tfrac{1}{\epsilon}\Q(f_\epsilon,f_\epsilon), \\
\epsilon \Delta_x \phi_\epsilon(t,x)=\intv f_\epsilon(t,x,v) \bd v - 1,
\end{cases}
\end{align}
with
\begin{align}
\label{vpb-linear-m}
f_\epsilon= \m(1 + \epsilon g_\epsilon),   \mathrm{M} =  \tfrac{1}{(\sqrt{2\Pi})^3}\exp(-\tfrac{|v|^2}{2}).
\end{align}
Pluging $f_\epsilon= \m(1 + \epsilon g_\epsilon)$ into \eqref{nsfp}, the equations of $g_\epsilon$ is

\begin{align}
\label{vpb-ns-scaling-ns-corrector}
\begin{cases}
  \partial_t g_\epsilon + \reps \vdot g_\epsilon +  \repst \mathcal{L}(g_\epsilon) +  (\nabla_v g_\epsilon   -  {v}  g_\epsilon) \sdot \nabla\phi_\epsilon - \tfrac{v}{\epsilon} \sdot \nabla \phi_\epsilon = \reps {\Gamma}(g_\epsilon,g_\epsilon), \\
 \Delta_x \phi_\epsilon =\intv g_\epsilon \m \bd v,
\end{cases}
\end{align}
where
\begin{align}
\begin{cases}
\mathcal{L}(g )& = \m^{-1}\big[ \Q(\m,  \m g) + \Q(\m g , \m) \big]\\
               & =\int_{\mathbb{T}^3\times\mathbb{S}^{2}}(g - g' + g_* - g_*') B(v-v_*,\omega) \m(v_*)\bd v_*\bd\omega\,\\
\Gamma(g ,h )  & = \tfrac{1}{2\m}\big[ \Q(\m h, \m g) + \Q(\m g , \m h) \big]\\
 & =\tfrac{1}{2}\int_{\mathbb{T}^3\times\mathbb{S}^{2}}(g_*' h' + h_*' g'- g_* h - h_* g) B(v-v_*,\omega) \m(v_*)\bd v_*\bd\omega.
\end{cases}
\end{align}
As long as the uniform estimates with respect to $\epsilon$ is established. By weak convergence method and based on the formal analysis in \cite[Sec. 2.3.5]{diogosrm-2019-vmb-fluid}, the corresponding   Navier-Stokes-Fourier-Poisson equations
\begin{align}
\label{nsfp}
\begin{cases}
\partial_t u + u\sdot \nabla u - \nu \Delta u + \nabla P = \rho \nabla \theta,\\
\partial_t(\tfrac{3}{2}\theta - \rho) + u \sdot\nabla (\tfrac{3}{2}\theta - \rho) - \tfrac{5\kappa}{2}\Delta \theta =0,\\
\divg u = 0,~~ \Delta(\rho + \theta ) = \rho,~~E = \nabla(\rho + \theta).
\end{cases}
\end{align}
can be derived from \eqref{vpb-ns-scaling-ns-corrector}.

\begin{remark}
\label{remark-sqrt-m}
The linearization is different with \cite{briant-2015-be-to-ns} and \cite{mouhotneumann-2006-decay}. Indeed, in \cite{briant-2015-be-to-ns, mouhotneumann-2006-decay},
\begin{align}
\label{vpb-linear-sqrtm}
 f_\epsilon = \sqrt{\m}(\sqrt\m + \epsilon g_\epsilon),
\end{align}
by simple calculation, in this settings, the $\tfrac{v}{\epsilon} \sdot \nabla \phi_\epsilon$ in \eqref{vpb-ns-scaling-ns-corrector} is replaced by  $\tfrac{v}{\epsilon} \sdot \nabla \phi_\epsilon \sqrt\m $. Even for linear equation, it is compulsory to  add couple of new terms up  to  the norm used in \cite[Theorem 2.1]{briant-2015-be-to-ns} to obtain a closed inequality. The added terms hint that it is more convenient to use \eqref{vpb-linear-m} other than \eqref{vpb-linear-sqrtm}.  More explainations can be found in Remark \ref{remark-m-sqrtm}.
\end{remark}

\begin{remark}
The comparsion between the linear Boltzmann operator and linear VPB operator will be stated in Sec. \ref{sec-linear-vpb}.
\end{remark}

\subsection{Settings of kernels on the deterministic case}
\label{sec-assump-determin} For the deterministic case, the assumptions are the same as those in  \cite[Sec.2.1]{briant-2015-be-to-ns}. We copy the below for the reader's convience.  Before them, we introduce some notation to be extensively used later. $\nabla^i_x  f $($\nabla^j_vf$) denotes all the $i$-$th$($j$-$th$) derivative  of $f$ with respect to $x$($v$). Specially, $\nabla_x f$ is the gradient of scalar function $f$. $\nabla_x^1 f$ denotes all the first order derivative of $f$.
\begin{align*}
&\|f\|_{L^2_v}^2 =  \intv f^2 \bdv ,   ~~\|f\|_{L^2}^2 =  \intps f^2 \bdv \bd x,~~\|f\|_{H^s_x}^2 =  \sum\limits_{k=0}^s\|\nabla^k_x f\|_{L^2}^2, \\
& \|f\|_{L^2_\Lambda}^2 =  \intps f^2 \hat{v}\bdv \bd x,~~~~\hat{v}= 1 + |v|, |f\|_{H^s_{\Lambda_x}}^2 =  \sum\limits_{k=0}^s\|\nabla^k_x f\sqrt{\hat{v}}\|_{L^2}^2,~~\|f\|_{\llv}^2 =  \intv f^2 \hat{v}\bdv\\
& \|f\|_{H^s_\Lambda}^2 =  \sum\limits_{k=0}^s \sum\limits_{i + j =k}\|\nabla^i_x \nabla^j_v f\|_{L^2_\Lambda}^2,~~\|f\|_{H^s}^2 =  \sum\limits_{k=0}^s \sum\limits_{i + j =k}\|\nabla^i_x \nabla^j_v f\|_{L^2}^2.
\end{align*}
The assumputions on the kernels are as follows.

{\bf 1, Coercivity assumpution.} The linear Boltzmann operator $\mathcal{L}$ is a closed and self-adjoint operator from $L^2_v$ to $L^2_v$ with the following decomposition:
\begin{align}
\label{assump-h1-h}
\mathcal{L}(h) = -\mathcal{K}(h) + \Lambda(v)\sdot h.
\end{align}
Furthermore, $\Lambda$ is coercive:
\begin{itemize}
\item for any $h\in L^2_v$ and some positive constant $\tilde{a}$ and $C_u$, \[ \tilde{a} \|h\|_{\llv} \le \intv \Lambda(h)\sdot h \bdv   \lesssim  \|h\|_{\llv}, \]
and
\begin{align}
\vert\intv  \Lambda(h)\sdot g\bdv  \vert \le C_u \|h\|_{\llv}\|g\|_{\llv}.
\end{align}
\item For any $f, g \in L^2_v$,
\begin{align}
\intv f \sdot \mathcal{L}(g) \bdv  \lesssim \|f\|_{\llv} \|g\|_{\llv}.
\end{align}

\item With respect to the derivative to $v$, the operator $\Lambda$ admits ``a defect of coercivity'', i.e.,
there exist some strictly positive  constants $a_3$ and $a_4$   such that
\begin{align}
\label{constant-a3-a4}
\intv\nabla_{v} \Lambda(h)\sdot\nabla_{v} h\bdv   \geqslant a_3 \left\|\nabla_{v} h\right\|_{\llv}^{2}-a_4\|h\|_{L^{2}_v}^{2}.
\end{align}
and for the high order derivative,
\begin{align}
\label{constant-a3-a4-hi}
\intps \nabla_{v}^i \nabla_x^j \Lambda(h)\sdot\nabla_{v}^i \nabla_x^j h\bdv \bd x \geqslant a_3 \left\|\nabla_{v}^i \nabla_x^j h \right\|_{L^2_\Lambda}^{2}-a_4\|h\|_{H^{i+j-1}}^{2}.
\end{align}
\end{itemize}

{\bf 2, Mixing property in velocity}

Furthermore, for any $\delta>0$, there exists some positive constant $C_\delta$,
\begin{align}
\label{constant-delta}
\vert \intv \nabla_{v} \mathcal{K}(h) \sdot \nabla_{v} h\bdv  \vert  \leqslant C_\delta\|h\|_{L^{2}_v}^{2}+\delta\left\|\nabla_{v} h\right\|_{\llv}^{2},
\end{align}
and for high order derivative,
\begin{align}
\vert \intps \nabla_{v}^i \nabla^j_x  \mathcal{K}(h) \sdot \nabla_{v}^i \nabla^j_x h\bdv  \vert  \leqslant C_\delta\|h\|_{H^{i+j-1}}^{2}+\delta\left\|\nabla_{v}^i \nabla^j_x\right\|_{L^2_\Lambda}^{2},
\end{align}

{\bf 3, Relaxation to equilibrium}

The kernl space of $\mathcal{L}$ is spanned by
\[ 1, v_1, v_2, v_3, |v|^2. \]
Furthermore, there exists some constant $a_2$
\begin{align}
\label{constant-a2}
  \intps \mathcal{L}(h)\sdot h \bdv \bd x   \geqslant a_2 \|h^\perp\|_{L^2_\Lambda}^2,
\end{align}
where  $h^\perp$ is defined as follows:
\[ h^\perp = h -\bdp h, ~~\bdp h = \int_{\mathbb{R}^3} h \bdv + v \sdot \int_{\mathbb{R}^3} h v \bdv +  \tfrac{|v|^2 -3}{2}  \sdot \int_{\mathbb{R}^3} \tfrac{|v|^2 -3}{2}  h \bdv. \]

 {\bf 4, Assumption  on  the bilinear term}
\begin{itemize}
\item For any $g, h\in L^2$, $\Gamma(g,h) \in \mathrm{Ker}(\mathcal{L})^\perp$.
\item For the non-linear operator, $s \ge 3$,
\begin{align}
\label{constant-cn}
\begin{split}
&\vert \intps \nabla^s_x \Gamma(g,h) \sdot f \bdv \bd x \vert \lesssim \|(g,h)\|_{H^s_{x}} \|(g,h)\|_{H^s_{\Lambda_x}}\|f^\perp\|_{L^2_\Lambda},\\
&\vert \intps \nabla^j_x \nabla_v^i \Gamma(g,h) \sdot f \bdv \bd x \vert \lesssim \|(g,h)\|_{H^s} \|(g,h)\|_{H^s_{\Lambda }}\|f\|_{L^2_\Lambda},~~i\ge 1,~~ s = i+j.
\end{split}
\end{align}
\end{itemize}

\subsection{Settings of kernels on the random case}
\label{sec-assump-ran}
For the random case, the solution is dependent of $z$, i.e.,
\[ g_\epsilon = g_\epsilon(t,x,v,z),~~ z \in \mathbb{I}_z. \]
Besides, the random variable $z$ is assumed to be one dimension with bounded support.

The uncertainties  come from not only the initial data but also the collision kernels. For the collision kernels, the cross section depends on $z$, i.e.,
\begin{align*}
B(|v-v_*|, \cos\theta, z) = C_b |v-v_*|b(\cos \theta, z).
\end{align*}
Furthermore, for any $\eta \in [-1,1]$, $z \in \mathbb{I}_z$, there exists some constant $\tilde{C}_b$ such that
\[ |b(\eta, z)| \le \tilde{C}_b, |\partial_\eta b(\eta, z)| \le \tilde{C}_b,~~ |\partial_z b(\eta, z)| \le \tilde{C}_b.  \]

Based on the above assumption, for the derivative of $\mathcal{L}$ and $\Gamma$ with respect to $z$,
\begin{align*}
\mathcal{L}_z(g )=\int_{\mathbb{T}^3\times\mathbb{S}^{2}}(g - g' + g_* - g_*') \partial_z B(v-v_*,\omega,z) \m(v_*)\bd v_*\bd\omega\,
\end{align*}
we also assume that for any $h \in L^2$ and for each $z \in \mathbb{I}_z$,
\begin{align*}
\mathcal{L}_z(h) \in \mathrm{Ker}(\mathcal{L})^\perp,~~ \intps \mathcal{L}_z(h(z)) \sdot g(z) \bdv \bd x \lesssim \|h(z)\|_{L^2_\Lambda}\|g^\perp(z)\|_{L^2_\Lambda}.
\end{align*}
Furthermore, for the bilinear operator $\Gamma$,
\begin{align}
\label{constant-gamma-z}
\begin{split}
&\vert \intps \nabla^s_x \Gamma_z(g,h)(z) \sdot f(z) \bdv \bd x \bd z \vert \lesssim \|(g(z),h(z))\|_{H^s_{x}  } \|(h(z), g(z))\|_{H^s_{\Lambda_x} }\|f^\perp(z)\|_{L^2_\Lambda},\\
&\vert \intps \nabla^j_x \nabla_v^i \Gamma_z(g,h)(z) \sdot f(z) \bdv \bd x \bd z \vert \\
& \lesssim \|(h(z),g(z))\|_{H^s} \|(g(z),h(z))\|_{H^s_{\Lambda}}\|f(z)\|_{L^2_\Lambda},~~i\ge 1,~~ s = i+j.
\end{split}
\end{align}

For the random case, the linear Boltzmann operator $\mathcal{L}$ is dependent of $z$, except the above assumption, for any $z \in \mathbb{I}_z$, we assume that the $\mathcal{L}$ satisfies all the assumption in Sec. \ref{sec-assump-determin}.

\section{Main results}
This section is devoted to stating the main results of this work.

\subsection{Main results of deterministic parts}
Now we state the result for the determinsitic case, the initial data are assumed to satisfy
\begin{align}
\label{initial-mean-nonlinear-c}
\begin{split}
  { \intps g_\epsilon(0) \m \bd v \bd x = 0,     \intps v g_\epsilon(0) \m \bd v \bd x =0,}\\
  \big( 3\intps (\tfrac{|v|^2}{3} - 1) g_\epsilon(0)\m \bd v \bd x +  \epsilon \|\nabla_x \phi_\epsilon(0)\|_{L^2}^2 \big)= 0.
  \end{split}
\end{align}
\begin{theorem}[Existence and Decay rates]
\label{theoremNonlinear-decay}
Under the assumptions on kernels in Sec. \ref{sec-assump-determin} and assumptions  \eqref{initial-mean-nonlinear-c} on the initial data, there exists some small enough  constant $c_{00}$ such that for each $0 < \epsilon \le 1$,  as long as
\[  \|g_\epsilon(0)\|_{H^s_x}^2 + \epsilon^2 \|\nabla_v   g_\epsilon(0)\|_{H^{s-1}}^2   \le c_{00}, \] equations \eqref{vpb-ns-scaling-ns-corrector} admit a unique solution  $(g_\epsilon, \nabla \phi_\epsilon)$ satisfying that there exist $\bar{c}_{00}>0$ and $\bar{c_0}>0$ such that
\begin{align}
\label{estTheorem-nonlinear-exp-decay}
\|g_\epsilon(t)\|_{H^s_x}^2 + \epsilon^2 \|\nabla_v g_\epsilon(t)\|_{H^{s-1}}^2 \le \bar{c}_{00} \exp(- \bar{c_0} t).
\end{align}
\end{theorem}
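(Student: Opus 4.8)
The plan is to prove Theorem \ref{theoremNonlinear-decay} by constructing a carefully weighted energy functional $\mathcal{E}_\epsilon(t)$, equivalent to $\|g_\epsilon(t)\|_{H^s_x}^2 + \epsilon^2\|\nabla_v g_\epsilon(t)\|_{H^{s-1}}^2$ uniformly in $\epsilon$, and showing it satisfies a differential inequality of the form $\tdt \mathcal{E}_\epsilon + \lambda \mathcal{D}_\epsilon \le (\text{nonlinear terms})$, where $\mathcal{D}_\epsilon$ is a dissipation functional controlling the full norm (not just the microscopic part). The starting point is the standard hypocoercivity observation: energy estimates on \eqref{vpb-ns-scaling-ns-corrector} using only $\mathcal{L}$ give coercivity on $g_\epsilon^\perp$ via \eqref{constant-a2}, i.e. $\tfrac{a_2}{\epsilon^2}\|g_\epsilon^\perp\|_{L^2_\Lambda}^2$, but no control on the fluid part $\bdp g_\epsilon$. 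To recover dissipation of the macroscopic components one adds mixed-derivative correction terms — quantities like $\epsilon\langle \grad g_\epsilon, \nabla_v g_\epsilon\rangle$ and their higher-order analogues — whose time derivative produces $\|\grad \bdp g_\epsilon\|^2$-type terms at the cost of controllable remainders; this is the Mouhot–Neumann / Briant scheme adapted to the VPB structure. Because the Poisson coupling $\Delta_x\phi_\epsilon = \intv g_\epsilon \m\,\bd v$ relates $\nabla\phi_\epsilon$ to the density part of $\bdp g_\epsilon$, the electric-field contributions $(\nabla_v g_\epsilon - vg_\epsilon)\sdot\nabla\phi_\epsilon$ and $\tfrac{v}{\epsilon}\sdot\nabla\phi_\epsilon$ must be tracked through every integration by parts; in particular the singular $\tfrac{1}{\epsilon}$ term is the reason the linearization \eqref{vpb-linear-m} (rather than \eqref{vpb-linear-sqrtm}) is used, per Remark \ref{remark-sqrt-m}, and one uses the mean-zero and energy conditions \eqref{initial-mean-nonlinear-c} — propagated in time as conservation laws — to cancel the worst-behaved resonant pieces.

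The steps, in order: (i) derive the $L^2_x$ (zeroth-order) energy identity for $g_\epsilon$, using \eqref{assump-h1-h}, \eqref{constant-a2}, the coercivity of $\Lambda$, and the antisymmetry of the transport term $\tfrac{1}{\epsilon}\vdot$, together with an estimate of the Poisson terms via $\|\nabla\phi_\epsilon\|\lesssim\|\bdp g_\epsilon\|$ and elliptic regularity on $\mathbb{T}^3$; (ii) do the same for pure $x$-derivatives up to order $s$, where \eqref{constant-a3-a4-hi} and \eqref{constant-delta} (high-order versions) are used and the nonlinear term is bounded by \eqref{constant-cn}; (iii) do the mixed $x$–$v$ derivative estimates up to total order $s$, now carrying the $\epsilon^2$ weight in front of $v$-derivatives, where the defect-of-coercivity inequality \eqref{constant-a3-a4-hi} again converts $\nabla_v\Lambda$ into $\|\nabla_v^i\nabla_x^j g_\epsilon\|_{L^2_\Lambda}^2$ minus lower-order terms absorbed by (i)–(ii); (iv) form the hypocoercive combination $\mathcal{E}_\epsilon := \|g_\epsilon\|_{H^s_x}^2 + \epsilon^2\|\nabla_v g_\epsilon\|_{H^{s-1}}^2 + \sum(\text{small}\cdot\text{cross terms})$ with carefully chosen $O(1)$ and $O(\epsilon)$ coefficients so that all remainder terms are absorbed and one obtains $\tdt\mathcal{E}_\epsilon + \lambda\,\mathcal{D}_\epsilon \lesssim \sqrt{\mathcal{E}_\epsilon}\,\mathcal{D}_\epsilon$; (v) run the standard continuity/bootstrap argument: if $\mathcal{E}_\epsilon(0)\le c_{00}$ is small enough, then $\sqrt{\mathcal{E}_\epsilon}\le \tfrac{\lambda}{2C}$ on a maximal interval, giving $\tdt\mathcal{E}_\epsilon + \tfrac{\lambda}{2}\mathcal{D}_\epsilon\le 0$, hence (since $\mathcal{D}_\epsilon\gtrsim\mathcal{E}_\epsilon$ by the spectral gap plus recovered macroscopic dissipation) exponential decay $\mathcal{E}_\epsilon(t)\le\bar c_{00}e^{-\bar c_0 t}$, which closes the bootstrap and yields global existence and \eqref{estTheorem-nonlinear-exp-decay}.

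The main obstacle, as the introduction flags, is that for the linearized VPB operator the fluid and kinetic parts are \emph{not} orthogonal, so the clean splitting of $\bdp g_\epsilon$ versus $g_\epsilon^\perp$ that works for Boltzmann is polluted by the Poisson term: the density $\intv g_\epsilon\m\,\bd v$ appears both in $\bdp g_\epsilon$ and, through $\phi_\epsilon$, in the forcing. Handling this requires exploiting the precise structure — treating $\epsilon\|\nabla_x\phi_\epsilon\|^2$ as part of the energy (this is why the third condition in \eqref{initial-mean-nonlinear-c} mixes the temperature moment with $\epsilon\|\nabla_x\phi_\epsilon(0)\|^2$), so that the ``bad'' cross terms between $\tfrac{v}{\epsilon}\sdot\nabla\phi_\epsilon$ and the transport operator telescope into a total time derivative. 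A secondary difficulty is the $\tfrac{1}{\epsilon}$ singularity in front of $\vdot g_\epsilon$ in the mixed-derivative estimates: each $v$-derivative hitting this term loses a power of $\epsilon$, which is exactly why the $v$-derivative part of the energy carries the compensating weight $\epsilon^2$, and one must check that every such term is either antisymmetric or absorbed by the $\tfrac{a_2}{\epsilon^2}\|g_\epsilon^\perp\|_{L^2_\Lambda}^2$ dissipation with room to spare.
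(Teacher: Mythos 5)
Your overall plan — hypocoercive weighted energy with mixed $x$--$v$ cross terms, $\epsilon^2$ weight on $v$-derivatives, the Poisson term $-\tfrac{1}{\epsilon}\int\nabla\phi_\epsilon\cdot u_\epsilon$ telescoping into $\tfrac{\mathrm d}{2\mathrm dt}\|\nabla\phi_\epsilon\|_{L^2}^2$, and a smallness bootstrap — is indeed the paper's approach (the proof of Lemma~\ref{lemma-nonlinear-decay} plus the iteration of Sec.~\ref{sec-app}). However, you do not directly address the one obstruction the paper flags as the genuinely new difficulty relative to Boltzmann, and your phrase ``propagated in time as conservation laws'' papers over it.

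The gap is this. To close the differential inequality you need the macroscopic dissipation $\|\nabla_x\bdp g_\epsilon\|^2 + \|\rho_\epsilon\|^2$ (recovered from cross terms) to control $\|\bdp g_\epsilon\|^2$ itself, which requires Poincar\'e on the torus, which requires $\int_{\mathbb T^3}\bdp g_\epsilon\,\mathrm dx = 0$. In the \emph{nonlinear} VPB system this fails: the conservation law \eqref{conservation-law-nonlinear} preserves not $\int\theta_\epsilon$ but the combination $\int\theta_\epsilon + \tfrac{\epsilon}{3}\|\nabla_x\phi_\epsilon\|^2$. With the initial data \eqref{initial-mean-nonlinear-c} one gets $\int_{\mathbb T^3}\theta_\epsilon(t)\,\mathrm dx = -\tfrac{\epsilon}{3}\|\nabla_x\phi_\epsilon(t)\|_{L^2}^2\neq 0$ for $t>0$, so $\bdp g_\epsilon$ has a time-dependent nonzero mean. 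The paper's fix (the computation around \eqref{est-mean-difference}--\eqref{est-differ}) is to decompose $\bdp g_\epsilon$ into its mean and mean-free parts, apply Poincar\'e to the latter, and explicitly bound the former by $\epsilon^2\|\nabla_x\phi_\epsilon\|_{L^2}^4 + C_{in}$, using \eqref{initial-mean-nonlinear-c} to force $C_{in}=0$. That quartic term must then be absorbed into the dissipation. Without this step, your step (v) — ``$\mathcal D_\epsilon\gtrsim\mathcal E_\epsilon$ by the spectral gap plus recovered macroscopic dissipation'' — does not hold as stated, because the recovered macroscopic dissipation only controls $\bdp g_\epsilon$ up to its nonzero mean.

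A secondary structural point you omit but should at least acknowledge: the paper splits the weighted energy by $v$-derivative order ($\mathfrak E_{\epsilon,1}^s$, $\mathfrak E_{\epsilon,2,1}^s$, $\mathfrak E_{\epsilon,2,2}^s$ in \eqref{est-norm-es-g-2}), exploiting that $\nabla_v^i\big(\tfrac{v}{\epsilon}\cdot\nabla\phi_\epsilon\big)=0$ for $i\ge 2$ so the electric field drops out of those blocks, and feeding the lower blocks as source terms into the higher ones. This is what makes the linearization around $\m$ (rather than $\sqrt\m$, per Remark~\ref{remark-m-sqrtm}) pay off; your uniform treatment of ``mixed $x$--$v$ derivative estimates'' does not by itself expose this structure, though it is compatible with it.
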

\begin{remark}
Compared to the Boltzmann case, the results are similar, but there exists new difficulty from the eletric field during the proof. Indeed, by the global conservation laws, for the Boltzmann equation, if the initial data satisfy
\[  { \intps g_\epsilon(0) \m \bd v \bd x = 0,\quad \intps v g_\epsilon(0) \m \bd v \bd x =0,} \quad
     \intps (\tfrac{|v|^2}{3} - 1) g_\epsilon(0)\m \bd v \bd x  \big)= 0. \]
Then for any $t>0$, one can obtain that
\[  { \intps g_\epsilon(t) \m \bd v \bd x = 0,\quad \intps v g_\epsilon(t) \m \bd v \bd x =0,} \quad
     \intps (\tfrac{|v|^2}{3} - 1) g_\epsilon(t)\m \bd v \bd x  \big)= 0. \]
So $\intt \bdp g_\epsilon \bd x = 0$ and the Poincare's inequality can be used. The linear VPB system enjoys the same properites (see Remark \ref{remark-linear}).  But for the nonlinear VPB system, even if the initial data satisfy \eqref{initial-mean-nonlinear-c}, by the global conservation laws of VPB system, one can obtain that
\begin{align*}
{ \intps g_\epsilon(t) \m \bd v \bd x = 0,     \intps v g_\epsilon(t) \m \bd v \bd x =0,}\\
     \intps (\tfrac{|v|^2}{3} - 1) g_\epsilon(t)\m \bd v \bd x =   -\tfrac{\epsilon}{3} \|\nabla_x \phi_\epsilon(t)\|_{L^2}^2.
\end{align*}
The above relation indicates that $\intt \bdp g_\epsilon \bd x \neq 0$ and we can not directly employ the Poincare's inequality to recover the dissipative estimates of the fluids part.  To overcome this, noticing that
\begin{align*}
\intt \bdp g_\epsilon \bd x = -\tfrac{\epsilon}{3} \|\nabla_x \phi_\epsilon(t)\|_{L^2}^2 \sdot (\tfrac{|v|^2}{3} - 1),
\end{align*}
we carefully split and estimate the mean part of $\bdp g_\epsilon$.
\end{remark}

\begin{remark}
Noticing that  for the linear VPB system, i.e.,
\[ \partial_t g_\epsilon + \reps \vdot g_\epsilon +  \repst \mathcal{L}(g_\epsilon)   - \tfrac{v}{\epsilon} \sdot \nabla \phi_\epsilon =0, \]
if we apply $\nabla_x^j \nabla_v^i$ to the above equation and $i \ge 2$, the last term $\tfrac{v}{\epsilon} \sdot \nabla \phi_\epsilon$ will vanish. This is why we split the whole estimate of $g_\epsilon$, i.e.,
$\|g_\epsilon(t)\|_{H^s_x}^2 + \epsilon^2 \|\nabla_v g_\epsilon(t)\|_{H^{s-1}}^2$ into three parts during the proof (see \eqref{est-norm-es-g-2}). This is the advantage of linearizing \eqref{vpb-ns-scaling} around $\dm$.  But since there exists a weight $\dm$ in the definition of $H^s$,  the price is that we can not obtain the $L^\infty$ estimate of $g_\epsilon$ from its $H^s$ norm.
\end{remark}

\begin{remark}
\label{remark-v-derivative}
Compared to the Boltzmann case, the derivative of $g_\epsilon$ with respect to $v$ brings more  difficulty for VPB system. On one hand, for the nonlinear system \eqref{vpb-ns-scaling-ns-corrector}, for the term $(\nabla_v g_\epsilon   -  {v}  g_\epsilon) \sdot \nabla\phi_\epsilon$, there already exists derivative with respect to $v$. While taking $s$-th derivative to \eqref{vpb-ns-scaling-ns-corrector}, we should employ integration by parts to avoid the presence of $\nabla_v^{s+1}g_\epsilon$. On the other hand, from the unifrom estimates \eqref{estTheorem-nonlinear-exp-decay}, as long as there exists derivative with respect to $v$, there is   a coefficient $\epsilon^2$. This means that there is no uniform estimates of $\nabla_v g_\epsilon$ and  makes the computation more difficult and complicated.

\end{remark}

For the convergence rate, the initial data in \cite{briant-2015-be-to-ns} are independent of $\epsilon$.     The initial data $g_\epsilon(0)$  are assumed to be well prepared, i.e.,  satisfy
\begin{align}
\label{assumpIni-Con}
g_\epsilon(0) = g_0(0) + \epsilon g_{\epsilon,1}(0),~~~~ g_0(0), g_{\epsilon,1} \in H^s,
\end{align}
where   $g_0$ belongs the kernel of $\mathcal{L}$ and  satisfies the Boussinesq relation.
\begin{theorem}
\label{main-results-convergence-rate}
Under the assumption of Theorem \ref{theoremNonlinear-decay} and \eqref{assumpIni-Con},  the solution $g_\epsilon$ constructed in Theorem \ref{theoremNonlinear-decay}   converges to $g$ in distributional sense with
\[ g = \rho + u\sdot v + \tfrac{\theta}{2}(|v|^2-3), \]
where $\rho, u, \theta$  are  sulution ( in $H^s_x$ space) to  the   initial boundary problem of the following system on torus
\begin{align*}
\begin{cases}
\partial_t u + u\sdot \nabla u - \nu \Delta u + \nabla P = \rho \nabla \theta,\\
\partial_t(\tfrac{3}{2}\theta - \rho) + u \sdot\nabla (\tfrac{3}{2}\theta - \rho) - \tfrac{5\kappa}{2}\Delta \theta =0,\\
\divg u = 0,~~ \Delta(\rho + \theta ) = \rho,~~E = \nabla(\rho + \theta),\\
u(0,x) = \mathbf{P} u_0(x),~~~~(\tfrac{3}{2}\theta - \rho)(0,x) = (\tfrac{3}{2}\theta_0(x) - \rho_0(x))
\end{cases}
\end{align*}
Furthermore, the solution sequence $ g_\epsilon  $ converge to $  g  $ with the following rate
\begin{align}
\| \int_0^\infty   (g_\epsilon - g)(s) \bd s \|_{H^\ell_x}^2 + \int_0^\infty \|   (g_\epsilon - g)(s) \|_{H^\ell_x}^2 \bd s  \lesssim  {\epsilon |\ln \epsilon|},~~~~ \ell \le s -2.
\end{align}
\end{theorem}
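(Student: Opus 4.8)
The plan is to derive a closed equation for the difference $r_\epsilon = g_\epsilon - g$ and to exploit the fact that the limiting fields $(\rho,u,\theta)$ solve the NSFP system \eqref{nsfp}, so that the troublesome $O(1)$ terms in the kinetic equation are exactly cancelled by the fluid dynamics. First I would decompose $g_\epsilon = \bdp g_\epsilon + g_\epsilon^\perp$ and recall from the uniform estimates of Theorem \ref{theoremNonlinear-decay} that $g_\epsilon^\perp = O(\epsilon)$ in the relevant norms (since $\mathcal{L}$ is coercive on $\mathrm{Ker}(\mathcal{L})^\perp$ by \eqref{constant-a2} and the factor $\repst$ forces $\|g_\epsilon^\perp\|_{L^2_\Lambda}$ to be small). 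Writing $\bdp g_\epsilon = \rho_\epsilon + u_\epsilon\sdot v + \tfrac{\theta_\epsilon}{2}(|v|^2-3)$, the moment equations obtained by testing \eqref{vpb-ns-scaling-ns-corrector} against $1, v, |v|^2$ give, after using the local conservation laws and the bilinear estimates \eqref{constant-cn}, an approximate incompressible NSFP system for $(\rho_\epsilon,u_\epsilon,\theta_\epsilon)$ with source terms that are $O(\epsilon)$ in $H^{\ell}_x$ plus time-oscillatory (acoustic) contributions of size $O(1)$ but with $\tfrac{1}{\epsilon}$-fast oscillation. The key point, as emphasized in the introduction, is that one does \emph{not} pass through a Hilbert expansion: instead one decomposes the semigroup $e^{t\mathcal{G}_\epsilon}$ into its fluid (hydrodynamic) and kinetic parts and tracks them separately.

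The central mechanism for the rate is the time-averaging of the acoustic modes. Because $\divg u = 0$ in the limit, the incompressible component of $u_\epsilon$ converges strongly, while the compressible/acoustic part $\nabla\Delta^{-1}\divg u_\epsilon$ and the acoustic density-temperature combination oscillate on the time scale $\epsilon$. Integrating the difference equation in time, $\int_0^\infty r_\epsilon(s)\,ds$, these oscillatory terms contribute a boundary-type term of size $O(\epsilon)$; the logarithmic loss $|\ln\epsilon|$ appears because one must split the time integral at a threshold $t\sim\epsilon|\ln\epsilon|$ (or use a frequency cutoff at $|k|\sim 1/\epsilon$) to control the low-frequency resonances in the acoustic operator on the torus — the standard $\int_\epsilon^1 \tfrac{dt}{t}$ estimate. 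I would carry this out by (i) writing $r_\epsilon$ via Duhamel against the linearized VPB semigroup $\mathcal{G}_\epsilon$, splitting into $\bdp r_\epsilon$ and $r_\epsilon^\perp$; (ii) on the kinetic part, using hypocoercivity (the coercivity \eqref{constant-a2}, the mixing \eqref{constant-delta}, and the defect-of-coercivity \eqref{constant-a3-a4}) to get exponential decay, so that $r_\epsilon^\perp$ is directly $O(\epsilon)$ in $L^2([0,\infty);H^\ell_x)$; (iii) on the fluid part, subtracting the NSFP equations and performing an energy estimate in $H^\ell_x$ for $\ell \le s-2$ (two derivatives are lost: one to the convection/commutator terms in the nonlinearity as in Remark \ref{remark-v-derivative}, one to the pressure/Poisson coupling $\Delta(\rho+\theta)=\rho$), using the already-established uniform bounds to absorb the quadratic terms via $\|g_\epsilon\|_{H^s_x}$ small.

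The two quadratic obstructions specific to VPB (versus Boltzmann) must be handled with care: the self-consistent force term $(\nabla_v g_\epsilon - v g_\epsilon)\sdot\nabla\phi_\epsilon - \tfrac{v}{\epsilon}\sdot\nabla\phi_\epsilon$ produces, at leading order, the term $\rho\nabla\theta = E\cdot(\cdots)$ in the momentum equation and the Poisson constraint $\Delta(\rho+\theta)=\rho$; since $\nabla\phi_\epsilon = \nabla\Delta^{-1}\!\int g_\epsilon \dm\,dv$ is controlled by $\bdp g_\epsilon$ via elliptic regularity, its difference with $E=\nabla(\rho+\theta)$ is again $O(\epsilon)$ plus acoustic oscillation, and feeds into the same time-averaging argument. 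The main obstacle I anticipate is precisely the acoustic/Poisson resonance analysis on $\mathbb{T}^3$: one must diagonalize the fast operator $\tfrac{1}{\epsilon}(\vdot)$ projected onto the fluid modes together with the singular $\tfrac{1}{\epsilon}\nabla\phi_\epsilon$ coupling, check that no resonant interaction among the wave modes destroys the $O(\epsilon|\ln\epsilon|)$ bound (the genuinely new feature compared to \cite{briant-2015-be-to-ns}, where the fluid and kinetic parts are orthogonal and there is no electric field), and verify that the Boussinesq-type constraint built into \eqref{assumpIni-Con} on the initial data is consistent with the dispersion relation so the initial layer does not contribute more than $O(\epsilon)$. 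Once these pieces are assembled, Grönwall in $H^\ell_x$ closes the estimate and yields the stated $\epsilon|\ln\epsilon|$ rate for both $\|\int_0^\infty (g_\epsilon-g)\,ds\|_{H^\ell_x}^2$ and $\int_0^\infty \|(g_\epsilon-g)(s)\|_{H^\ell_x}^2\,ds$.
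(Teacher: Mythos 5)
Your high-level plan (Duhamel against the linearized semigroup, split into macroscopic and microscopic parts, control the acoustic oscillations) is in the right ballpark, but it misidentifies the two mechanisms that actually drive the paper's proof, and without them the argument does not close. First, the logarithmic factor does \emph{not} come from a frequency cutoff at $|k|\sim 1/\epsilon$ or a ``$\int_\epsilon^1 dt/t$'' dyadic estimate over acoustic modes. The paper first proves the clean bound
\[
\int_0^T \|(g_\epsilon - g)(s)\|_{H^\ell_x}^2\,\mathrm{d}s \lesssim \epsilon T,
\]
(valid for all $T$, no log), and separately uses the exponential decay from Theorem \ref{theoremNonlinear-decay} to get $\int_T^\infty \|(g_\epsilon - g)(s)\|_{H^\ell_x}^2\,\mathrm{d}s \lesssim e^{-\bar{c}T}$. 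Choosing $T\sim|\ln\epsilon|$ (not $T\sim\epsilon|\ln\epsilon|$ as you write) and adding the two pieces produces $\epsilon|\ln\epsilon|$. This is a simple time-splitting optimization, not a resonance analysis; your proposal would not discover the $|\ln\epsilon|$ by the route you sketch.

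Second, and more importantly, you miss the central trick that lets the authors dispense with Hilbert expansion. In the semigroup picture the low-frequency projections $\bdp_j(\epsilon n)$ split as $\bdp_{0,j}(n) + \epsilon|n|\bdp_{r,j}(w)$ (using the explicit spectral expansion of $\mathcal{B}_\epsilon$ in the appendix, which your proposal never invokes), and the acoustic projections $\Psi_{1,1,\pm 1}^\epsilon$ carry an $\epsilon$-fast phase $e^{a^{\pm 1} t/\epsilon}$ whose time integral gives a factor $\epsilon$ only after integration by parts. The catch is that this is \emph{not} enough to bound $\int_0^T\|\Psi_1^\epsilon g_\epsilon - \Psi_1 g_\epsilon\|^2$ directly — that estimate, in Briant's Boltzmann argument, required Hilbert expansion. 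The paper's observation (Lemma \ref{lemmaConvergenceElec}, Remark \ref{remark-rate-1}, Lemma \ref{lemma-dd}) is to instead estimate $\Psi_1^\epsilon g_\epsilon - \Psi_1 g_\epsilon - \sum_{\pm 1}\Psi_{1,1,\pm 1}^\epsilon(g)$, which can be closed against $\sup\|g_\epsilon\|_{H^s_x}^2\int_0^T\|\bdp g_\epsilon - g\|^2$ and absorbed by smallness, and then to show \emph{separately} that $\Psi_{1,1,\pm 1}^\epsilon(g)=O(\epsilon)$ by integrating by parts in $s$ and exploiting that the limit $g$ solves a parabolic system, so $\partial_t g\sim\Delta g$ — this is also where the extra derivative is lost ($\ell\le s-2$ rather than $\ell\le s-1$, see Remark \ref{remark-rate-2}), not from a ``pressure/Poisson coupling'' as you suggest. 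Your step (iii), a raw $H^\ell_x$ energy estimate on the difference of the fluid parts, does not produce this cancellation: the fast acoustic contributions enter with $O(1)$ amplitude and your Gr\"onwall argument would blow up in $\epsilon$ unless you implement the subtraction and the time-integration-by-parts explicitly.
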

\begin{remark}
The proof of this theorem is based on the semi-group method. First, we need to caculate the spectrum of linear VPB operator very clearly. While $\epsilon =1$, its spectrum was clearly investigated in \cite{spectrum-arxiv-vpb}. But for the dimensionless linear VPB operator, the spectrum is dependent of $\epsilon$ and will be calculated again in Sec.\ref{sec-spec}.  As mentioned in Remark \ref{remark-v-derivative},  owing to the derivative with respect to $v$ ( term $\nabla_v g_\epsilon \sdot \nabla\phi_\epsilon$) and no uniform estimates of $\nabla_v g$, there exists new difficutly. The idea of dealing with such difficulty will be explained during the proof.
\end{remark}
\begin{remark}
\label{remark-loss}
Compared to the main result for the Boltzmann case, the solution $g_\epsilon$ belong to $H^s$ space, but we only obtain the convergence rate in $H^{s-2}_x$ space. There are two reasons. One is that the term induced by eletric field ($\nabla_v g_\epsilon \sdot \nabla\phi_\epsilon$) where there already exists order one derivative with respect to $v$. The other is explained in Remark \ref{remark-rate-1} and \ref{remark-rate-2}.
\end{remark}

\begin{remark}
In \cite{briant-2015-be-to-ns}, for the Boltzmann case, the convergence rate is obtained by the spectrum analysis based on Hilbert expansion.  We shall explain  where and why the Hilbert expansion is  Remark \ref{remark-hilbert} and our method to avoid Hilbert expansion in  Remark \ref{remark-rate-1} and \ref{remark-rate-2}.
\end{remark}

\begin{remark}
If the initial data do not statisfy \eqref{assumpIni-Con} (i.e. the general initial data), we can not obtain the convergence rates without using Hilbert expansion.  But by employing Hilbert expansion and Remark \ref{remark-hilbert}, we can obtain the same results.
\end{remark}

\subsection{Main results of random part}
While considering the random influence of the VPB equations, we consider its wellposedness and stability.

Before stating the main results, we introduce the notation to be used soon. Let
\[\|f\|_{H^s_{z}}^2 =  \sum\limits_{k=0}^s \sum\limits_{i + j =k}\|\nabla^i_x \nabla^j_v f\|_{L^2 L^{2\cap\infty}_z}^2  + \sum\limits_{k=0}^{s-1} \sum\limits_{i + j =k}\|\nabla^i_x \nabla^j_v  \partial_z f\|_{L^2 L^2_z}^2\]
and
\[\|f(t)\|_{L^2 L^{2\cap\infty}_z}^2 =   \intpsz f^2(t,x,v,z) \bdv \bd x \bd z +  \sup\limits_{z \in \mathbb{I}_z}\intps f^2(t,x,v,z) \bdv \bd x.  \]
The $H^s_{x,z}$ norm only contains the derivative with respect to $x$ and $z$. The detailed definiton of these notations can be found in Sec. \ref{sec-z}.

For the random part, we need the similar version assumption like \eqref{initial-mean-nonlinear-c}, i.e., for each $z \in \mathbb{I}_z$
\begin{align}
\label{initial-mean-nonlinear-recall-1}
\begin{split}
  { \intps g_\epsilon(0,x,v,z) \m \bd v \bd x = 0,     \intps v g_\epsilon(0,x,v,z) \m \bd v \bd x =0,}\\
  \big( 3\intps (\tfrac{|v|^2}{3} - 1) g_\epsilon(0,x,v,z)\m \bd v \bd x +  \epsilon \|\nabla_x \phi_\epsilon(0,z)\|_{L^2}^2 \big)= 0.
  \end{split}
\end{align}

\begin{theorem}[Existence and Stability]
\label{main-result-stable}
(I)(Existence).   Under the assumption on the kernel in Sec.\ref{sec-assump-ran} on  system \eqref{vpb-ns-scaling} and assumption \eqref{initial-mean-nonlinear-recall-1} on intial data ,   there exists constant $d_0>0$ such that for each $0<\epsilon\le 1$, as long as the its initial data $g_\epsilon(0)$ satisfy
\begin{align*}
\|(g_\epsilon,\nabla_x \phi_\epsilon)(0)\|_{H^s_{x,z}}^2  + \epsilon^2 \|\nabla_v g_\epsilon(0)\|_{H^{s-1}_z}^2 \le d_0,
\end{align*}
then system \eqref{vpb-ns-scaling-ns-corrector} adimit a global uniquess classical solution in $H^s_z$ space. furthermore the solutions enjoy exponential decay: for some $\bar{d}_0>0$ and $\bar{d}>0$ (all independent of $\epsilon$ while $\epsilon <1$) such that
\begin{align}
\label{estMainRexistence}
\|(g_\epsilon,\nabla_x \phi_\epsilon)(t)\|_{H^s_{x,z}}^2 + \epsilon^2 \|\nabla_v g_\epsilon(t)\|_{H^{s-1}_z}^2 \le \bar{d}_0 \exp(- \bar{d} t).
\end{align}
(II)(Stability). The solutions are stable in the following sense: Suppose that $g_{\epsilon,1}$ and $g_{\epsilon,2}$ are two solutions to VPB system with initial data $g_{\epsilon,1}(0)$ and $g_{\epsilon,2}(0)$ respectively in $H^s_z$ space for each $\epsilon>0$. For some small enough consant $s_0$,  as long as the initial data satisfy
\begin{align*}
 \|(g_{\epsilon,1}, \nabla_x \phi_{\epsilon,1})\|_{H^s_{x,z}}^2 + \epsilon^2 \|\nabla_v g_{\epsilon,1}\|_{H^{s-1}_z}^2 \le  s_0,~~~~\|(g_{\epsilon,2}, \nabla_x \phi_{\epsilon,2})\|_{H^s_{x,z}}^2 + \epsilon^2 \|\nabla_v g_{\epsilon,2}\|_{H^{s-1}_z}^2 \le  s_0,
\end{align*}
there exist some constant $\bar{s}_0$ and $\bar{s}$  such that
\begin{aligno}
&\|(g_{\epsilon,1} - g_{\epsilon,2},\nabla_x (\phi_{\epsilon,1} -\phi_{\epsilon,1}) )(t)\|_{H^\ell_{x}L^{2\cap \infty}_z }^2 + \epsilon^2 \|\nabla_v g_\epsilon(t)\|_{H^{\ell-1}L^{2\cap \infty}_z}^2 \\
& \le \bar{s}_0 \exp(-\bar{s} t) \left( \|(g_{\epsilon,1} - g_{\epsilon,2},\nabla_x (\phi_{\epsilon,1} -\phi_{\epsilon,1}) )(0)\|_{H^\ell_{x}L^{2\cap \infty}_z }^2 + \epsilon^2 \|\nabla_v g_\epsilon(0)\|_{H^{\ell-1}L^{2\cap \infty}_z}^2 \right), ~~ \ell \le s -1.
\end{aligno}
\end{theorem}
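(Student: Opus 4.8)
The plan is to establish Theorem \ref{main-result-stable} by adapting the deterministic hypocoercivity argument of Theorem \ref{theoremNonlinear-decay} to the $z$-dependent setting, treating the random variable $z$ essentially as a parameter but carefully tracking how a single $z$-derivative propagates. First I would set up the full energy functional $\mathcal{E}_s(t)$ comparable to $\|(g_\epsilon,\nabla_x\phi_\epsilon)(t)\|_{H^s_{x,z}}^2 + \epsilon^2\|\nabla_v g_\epsilon(t)\|_{H^{s-1}_z}^2$, built as a suitable linear combination (with small carefully-chosen weights, as in \cite{briant-2015-be-to-ns}) of: the pure $x$-derivatives $\|\nabla_x^j g_\epsilon\|_{L^2L^{2\cap\infty}_z}^2$ for $j\le s$, the mixed $v$-derivatives $\epsilon^2\|\nabla_x^j\nabla_v^i g_\epsilon\|^2$ for $i\ge 1$, $i+j\le s$, the $z$-differentiated quantities $\|\nabla_x^j\nabla_v^i\partial_z g_\epsilon\|_{L^2L^2_z}^2$, together with the cross terms $\langle \nabla_x^j g_\epsilon,\nabla_x^{j-1}\nabla_v g_\epsilon\rangle$ needed to extract spatial dissipation via the mixing property \eqref{constant-delta} and the defect-of-coercivity bounds \eqref{constant-a3-a4}, \eqref{constant-a3-a4-hi}. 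The same functional (with $s$ replaced by $\ell$, and the norms restricted to $L^{2\cap\infty}_z$ with no extra $\partial_z$ term) will serve for the stability part (II).

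For part (I), the key steps are: (a) differentiate \eqref{vpb-ns-scaling-ns-corrector} in $x$, $v$, and $z$; for the $z$-derivative one gets the same linear structure plus a commutator $\repst\mathcal{L}_z(g_\epsilon)$ and $\reps\Gamma_z(g_\epsilon,g_\epsilon)$, which are controlled by the assumptions in Sec.\ref{sec-assump-ran} exactly as $\mathcal{L}$ and $\Gamma$ are in the deterministic case — crucially $\mathcal{L}_z(h)\in\mathrm{Ker}(\mathcal{L})^\perp$ so it contributes only to the kinetic part; (b) use \eqref{constant-a2} on the microscopic part $g_\epsilon^\perp$ to get $\repst a_2\|g_\epsilon^\perp\|^2_{L^2_\Lambda}$ dissipation, handle the singular term $-\tfrac{v}{\epsilon}\sdot\nabla\phi_\epsilon$ and the nonlinear electric terms using the Poisson coupling $\Delta_x\phi_\epsilon=\intv g_\epsilon\m\bd v$ and the modified-conservation-law correction to the mean of $\bdp g_\epsilon$ (as in the remark following Theorem \ref{theoremNonlinear-decay}), now uniformly in $z$; (c) absorb all nonlinear and electric-field contributions into the dissipation using the smallness of $\mathcal{E}_s$, which closes a differential inequality $\tdt\mathcal{E}_s + \bar d\,\mathcal{E}_s \le 0$, giving both global existence by a standard continuation argument and the exponential decay \eqref{estMainRexistence}; the $L^\infty_z$ part of the norm is handled by applying the same estimates pointwise in $z$ and using that the bounds in Sec.\ref{sec-assump-ran} are uniform in $z\in\mathbb{I}_z$.

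For part (II), set $h_\epsilon = g_{\epsilon,1}-g_{\epsilon,2}$ and $\psi_\epsilon=\phi_{\epsilon,1}-\phi_{\epsilon,2}$; then $h_\epsilon$ satisfies the linearized VPB equation with source terms of the schematic form $\reps\Gamma(h_\epsilon,g_{\epsilon,1})+\reps\Gamma(g_{\epsilon,2},h_\epsilon)$ plus electric-field differences $(\nabla_v h_\epsilon - v h_\epsilon)\sdot\nabla\phi_{\epsilon,1} + (\nabla_v g_{\epsilon,2}-v g_{\epsilon,2})\sdot\nabla\psi_\epsilon - \tfrac{v}{\epsilon}\sdot\nabla\psi_\epsilon$. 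Running the same weighted-energy estimate at level $\ell\le s-1$, every source term is bilinear with at least one factor being $h_\epsilon$ (or $\nabla\psi_\epsilon$, controlled by $h_\epsilon$ through Poisson) and the other factor bounded by $\sqrt{s_0}$ from \eqref{estMainRexistence}; choosing $s_0$ small enough absorbs these into the dissipation and yields $\tdt\mathcal{E}_\ell + \bar s\,\mathcal{E}_\ell\le 0$, hence the stated exponential stability. I do not carry a $\partial_z$ on $h_\epsilon$ here, which is why part (II) is stated in $H^\ell_x L^{2\cap\infty}_z$ rather than $H^s_z$ and loses one derivative.

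The main obstacle I expect is (c) — closing the estimate in the presence of the electric field without any uniform-in-$\epsilon$ control of $\nabla_v g_\epsilon$ (cf. Remark \ref{remark-v-derivative}): the term $(\nabla_v g_\epsilon - v g_\epsilon)\sdot\nabla\phi_\epsilon$ carries a $v$-derivative, so taking $s$ derivatives forces integration by parts to avoid $\nabla_v^{s+1}g_\epsilon$, and the factor $\epsilon^2$ attached to every $v$-derivative norm must be bookkept so that the singular prefactors $\reps$, $\repst$ in the equation combine favorably rather than blowing up. The $z$-derivative layer adds a further bookkeeping burden because $\partial_z$ does not commute with the splitting into fluid/kinetic parts and because the commutators $\mathcal{L}_z$, $\Gamma_z$ must be shown to land entirely in the dissipative (microscopic) channel — but this is exactly guaranteed by the structural assumptions $\mathcal{L}_z(h),\Gamma_z(g,h)\in\mathrm{Ker}(\mathcal{L})^\perp$ imposed in Sec.\ref{sec-assump-ran}, so the randomness introduces no genuinely new analytic difficulty beyond what the deterministic VPB already requires.
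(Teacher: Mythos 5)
Your overall roadmap matches the paper's: for part~(I) the paper first runs the deterministic energy estimate pointwise in $z$ (this is Lemma~\ref{lemma-nonlinear-decay-ran-infty}, giving the $L^{2\cap\infty}_z$ part), then separately differentiates the equation in $z$ to estimate $\partial_z g_\epsilon$ at one derivative lower (Lemma~\ref{lemma-nonlinear-decay-ran}), using exactly the commutator structure $\repst\mathcal{L}_z$, $\reps\Gamma_z$ and the assumption $\mathcal{L}_z(h)\in\mathrm{Ker}(\mathcal{L})^\perp$ to push the bad coefficients into the microscopic channel; for part~(II) the paper also works with $h_\epsilon=g_{\epsilon,1}-g_{\epsilon,2}$, uses the modified conservation law \eqref{est-mean-difference-stable} to restore Poincar\'e, and loses one derivative ($\ell\le s-1$) precisely because of the term $(vg_{\epsilon,2}-\nabla_v g_{\epsilon,2})\sdot\nabla\phi_\epsilon^h$, as you note.

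There is, however, a genuine gap in how you close part~(II). You claim the bilinear source terms can be absorbed because ``the other factor [is] bounded by $\sqrt{s_0}$ from \eqref{estMainRexistence}; choosing $s_0$ small enough absorbs these into the dissipation and yields $\tdt\mathcal{E}_\ell+\bar s\mathcal{E}_\ell\le 0$.'' This fails for the contribution $\|g_{\epsilon,2}(t,z)\|_{H^s_\Lambda}^2\|h_\epsilon(t,z)\|_{H^\ell_x}^2$ appearing on the right side of the stability energy inequality \eqref{estshs}: the weighted norm $\|g_{\epsilon,2}\|_{H^s_\Lambda}$ carries the factor $\hat v=1+|v|$, and the a~priori bound \eqref{estMainRexistence} controls only the unweighted $H^s$ norm. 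Hence $\|g_{\epsilon,2}(t,z)\|_{H^s_\Lambda}$ is \emph{not} uniformly small in $t$, and pointwise absorption into the dissipation does not go through. The paper's fix is to instead use the time-integrated dissipation inherited from \eqref{est-z-to-e-or}, namely $\int_0^\infty\|g_{\epsilon,2}(t,z)\|_{H^s_\Lambda}^2\,\bd t\le C$, and then close the argument by Gr\"onwall with an integrable, rather than small, coefficient (this is \eqref{eststable0}). Without invoking that $L^1_t$ bound on the weighted norm, your differential inequality $\tdt\mathcal{E}_\ell+\bar s\mathcal{E}_\ell\le 0$ cannot be reached, and the stated exponential stability would not follow. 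A smaller but related omission in part~(I): $\partial_z g_\epsilon$ requires its own conservation-law identity \eqref{conservation-law-nonlinear-z}--\eqref{conservation-law-nonlinear-zz}, derived from \eqref{initial-mean-nonlinear-ran}, to handle the nonzero spatial mean of $\bdp\partial_z g_\epsilon$; ``now uniformly in $z$'' under-describes this extra bookkeeping.
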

\begin{remark}
From \eqref{estshs}, the constant $s_0$ is smaller than $d_0$. Compared to the existence and similar to Remark \ref{remark-loss}, due to the infulence of the eletric field,  there exists derivative loss ($\ell \le s-1$).   The explanation can be found during \eqref{estStableN1} and \eqref{estDN11}.
\end{remark}

\begin{remark}
Compared to \cite{liujin-2018-kinetic} where the initial data of the Boltzmann equation belong to the orthogonal space of $\mathrm{Ker}\mathcal{L}$,  the initial  data  contain fluid parts for VPB system.
\end{remark}
For fluid limit, the initial data $g_\epsilon(0)$  are assumed to satisfy
\begin{align}
\label{assumpIni-Con-z}
g_\epsilon(0,x,z) \to  g_0(0,x,z)~~( \text{strongly in}~ H^s).
\end{align}

\begin{theorem}[Fluid limits]
\label{theorem-limit}
Under the assumption of Theorem \ref{main-result-stable} and \eqref{assumpIni-Con-z},  for $0< \epsilon \le 1$ and $s\ge 4$, let $(g_\epsilon,\nabla_\epsilon)$ be  $H^s_z$ solutions to system \eqref{vpb-ns-scaling-ns-corrector} constructed in Theorem \ref{main-result-stable}. Then, as $\epsilon \to 0$, in distributional sense,
\[ g_\epsilon \to \rho(t,x,z) + u(t,x,z)\cdot v+ \tfrac{\theta}{2}(|v|^2 -3),   \]
where $\rho,~u,~\theta $ belong to   $L^\infty((0,+\infty);H^s_{x,z})\cap C((0,+\infty) \times \mathbb{T}^3\times \mathbb{I}_z)\cap C((0,+\infty);H^{s-1}_{x,z}) $  and  are solutions to
\begin{align}
\label{nspf-z}
\begin{cases}
\partial_t u + u\sdot \nabla u - \nu \Delta u + \nabla P = \rho \nabla \theta,\\
\partial_t(\tfrac{3}{2}\theta - \rho) + u \sdot\nabla (\tfrac{3}{2}\theta - \rho) - \tfrac{5\kappa}{2}\Delta \theta =0,\\
\divg u = 0,~~ \Delta(\rho + \theta ) = \rho,~~E = \nabla(\rho + \theta).
\end{cases}
\end{align}
with initial data
\[ u(0,x,z) = \mathbf{P} u_0(x,z),~~~~(\tfrac{3}{2}\theta - \rho)(0,x,z) = (\tfrac{3}{2}\theta_0 - \rho_0).\]
Furthermore, $\mathbf{P}u_\epsilon(t),~(\tfrac{3}{2}\theta_\epsilon(t) - \rho_\epsilon(t)) \in C((0,+\infty);H^{s-1}_{x,z}),~~\forall \delta>0 $
\[ \mathbf{P} u_\epsilon \rightrightarrows u,~~\left(\tfrac{3}{5}\theta_\epsilon - \tfrac{2}{5}\rho_\epsilon \right) \rightrightarrows \left(\tfrac{3}{5}\theta - \tfrac{2}{5}\rho  \right),~~\text{in},~~C([\delta,+\infty) \times \mathbb{T}^3\times \mathbb{I}_z),~~\forall \delta>0.  \]
In addition, if the initial data are well prepared, i.e,
\[ g_0 \in \mathrm{Ker}\mathcal{L},~~ \divg u_0 =0,~~\Delta(\rho_0 + \theta_0) = \rho_0,  \]
we have
\[ \mathbf{P} u_\epsilon \rightrightarrows u,~~\left(\tfrac{3}{5}\theta_\epsilon - \tfrac{2}{5}\rho_\epsilon \right) \rightrightarrows \left(\tfrac{3}{5}\theta - \tfrac{2}{5}\rho  \right),~~\text{in},~~C([0,+\infty) \times \mathbb{T}^3\times \mathbb{I}_z).  \]
\end{theorem}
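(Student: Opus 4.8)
The plan is to run the classical moment / weak-compactness method for the diffusive limit, now carried out uniformly in the random parameter $z$, with the $\eps$-uniform a priori estimates of Theorem \ref{main-result-stable} as the sole input. By \eqref{estMainRexistence}, $(g_\eps,\grad\phi_\eps)$ is bounded uniformly in $0<\eps\le1$ in $L^\infty((0,\infty);H^s_{x,z})$ with exponential-in-time decay, and $\eps\nabla_v g_\eps$ in $L^\infty((0,\infty);H^{s-1}_z)$; hence, along a subsequence, $g_\eps\weakc g$ weak-$\ast$ in $L^\infty((0,\infty);H^s_{x,z})$, and the moments $\rho_\eps=\intv g_\eps\bdv$, $u_\eps=\intv g_\eps v\,\bdv$, $\theta_\eps=\tfrac13\intv g_\eps(|v|^2-3)\,\bdv$ converge weak-$\ast$ to limits $\rho,u,\theta$. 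Multiplying the first line of \eqref{vpb-ns-scaling-ns-corrector} by $\eps^2$, every term but $\mathcal L(g_\eps)$ is $O(\eps)$ in a lower-order norm (the Gaussian weight in $H^s$ tames the $v$-growth in $\vdot$, and $\grad\phi_\eps$, $\Gamma(g_\eps,g_\eps)$ are uniformly bounded), so $\mathcal L(g)=0$; by the kernel structure of Sec.\ \ref{sec-assump-ran}, $g=\bdp g=\rho+u\sdot v+\tfrac\theta2(|v|^2-3)$, while the same $\eps^2$-equation together with the coercivity of $\mathcal L$ gives $g_\eps^\perp=O(\eps)$ and identifies its leading profile $g_\eps^\perp=\eps\,\mathcal L^{-1}\!\big[-(\vdot\bdp g)^\perp+\Gamma(\bdp g,\bdp g)\big]+o(\eps)$.

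Next, testing \eqref{vpb-ns-scaling-ns-corrector} against the collision invariants $1,v,|v|^2$ and using $\mathcal L(g_\eps),\Gamma(g_\eps,g_\eps)\perp\mathrm{Ker}\,\mathcal L$ together with the Poisson relation $\Delta_x\phi_\eps=\rho_\eps$ yields local conservation laws whose $O(1/\eps)$ contributions are exact gradients; passing to the (time-averaged) limit forces $\divg u=0$, $E=\grad(\rho+\theta)$ and $\Delta(\rho+\theta)=\rho$, matching the formal derivation of \cite[Sec.~2.3.5]{diogosrm-2019-vmb-fluid}. The heart of the proof is to upgrade this to strong compactness of the genuinely fluid modes. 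Applying the Leray projection $\mathbf P$ to the momentum balance and forming the combination $\tfrac32\theta_\eps-\rho_\eps$ in the mass/energy balances removes every $O(1/\eps)$ term ($\mathbf P$ annihilates gradients, in particular $\mathbf P(\reps\grad\phi_\eps)=0$), so that $\partial_t\mathbf P u_\eps$ and $\partial_t(\tfrac32\theta_\eps-\rho_\eps)$ are bounded, uniformly in $\eps$, in $L^\infty((\delta,\infty);H^{s-2}_{x,z})$ for each $\delta>0$ — away from $t=0$ their right-hand sides only involve $\reps g_\eps^\perp=O(1)$ and the (integrated-by-parts, hence $O(1)$) electric terms, while near $t=0$ a kinetic initial layer may occur for general data. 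An Aubin--Lions--Simon argument on the $4$-dimensional domain $\mathbb T^3\times\mathbb I_z$ then promotes the weak-$\ast$ convergence of these two quantities to strong convergence in $C([\delta,\infty);H^{s-1}_{x,z})$, and since $s\ge4$ gives $H^{s-1}(\mathbb T^3\times\mathbb I_z)\hookrightarrow C(\mathbb T^3\times\mathbb I_z)$, this is exactly the stated $\mathbf P u_\eps\rightrightarrows u$ and $(\tfrac35\theta_\eps-\tfrac25\rho_\eps)\rightrightarrows(\tfrac35\theta-\tfrac25\rho)$ on $[\delta,\infty)\times\mathbb T^3\times\mathbb I_z$.

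The complementary (gradient) components — the acoustic mode and the plasma oscillation driven by $\tfrac{v}{\eps}\sdot\grad\phi_\eps$ — carry $O(1/\eps)$ time oscillations and converge only weakly; one checks, as for the Boltzmann case, that these oscillations average to zero in the quadratic terms and hence do not enter the limiting nonlinearities, and that under the well-prepared hypothesis ($g_0\in\mathrm{Ker}\,\mathcal L$, $\divg u_0=0$, $\Delta(\rho_0+\theta_0)=\rho_0$) they vanish identically, which is what lets the convergence and continuity extend to $[0,\infty)\times\mathbb T^3\times\mathbb I_z$. With the strong convergence of $\mathbf P u_\eps$ and $\tfrac32\theta_\eps-\rho_\eps$ in hand, the quadratic fluxes converge and — inserting the leading profile of $g_\eps^\perp$ identified above and invoking the standard Burnett identities for $\mathcal L^{-1}$ — the trace-free momentum flux $\reps\intv g_\eps^\perp\big(v\otimes v-\tfrac{|v|^2}3\mathrm{Id}\big)\bdv$ and the heat flux $\reps\intv g_\eps^\perp\,\tfrac12 v(|v|^2-5)\,\bdv$ produce simultaneously the Newtonian stress $-\nu\big(\nabla u+(\nabla u)^{\top}\big)$, the Fourier flux $-\tfrac{5\kappa}2\nabla\theta$, and the convective terms $u\sdot\nabla u$, $u\sdot\nabla(\tfrac32\theta-\rho)$, while residual gradients together with $\rho E=\tfrac12\nabla\rho^2+\rho\nabla\theta$ are absorbed into $P$ and the forcing $\rho\nabla\theta$; this identifies \eqref{nspf-z}. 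The initial data pass to the limit by \eqref{assumpIni-Con-z}: $u(0)=\mathbf P u_0$, $(\tfrac32\theta-\rho)(0)=\tfrac32\theta_0-\rho_0$, with $u_0,\rho_0,\theta_0$ the moments of $g_0(0)$; the inclusion $\rho,u,\theta\in L^\infty((0,\infty);H^s_{x,z})$ is inherited from the uniform bound, the continuity in $C((0,\infty);H^{s-1}_{x,z})$ — hence on $\mathbb T^3\times\mathbb I_z$ — follows from the parabolic smoothing of \eqref{nspf-z} (and, for $\mathbf P u_\eps$, $\tfrac32\theta_\eps-\rho_\eps$, from their $\eps$-uniform $\partial_t$-bound), and uniqueness of the $H^s$-solution of \eqref{nspf-z} forces the whole family to converge.

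The main obstacle is the compactness step: establishing $\eps$-uniform time-equicontinuity of the incompressible and Boussinesq modes, and showing that the fast acoustic--plasma oscillations created by the $\reps\vdot$ and $\tfrac{v}{\eps}\sdot\grad\phi_\eps$ terms neither obstruct that compactness nor pollute the quadratic limit — all of it carried through with the extra variable $z$, where every constant must remain independent of $z$ (equivalently, one works throughout in the $L^\infty_z$-based norms of Sec.\ \ref{sec-z}), so that the convergence holds pointwise in $z$ and the limit is continuous on $\mathbb T^3\times\mathbb I_z$.
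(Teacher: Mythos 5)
Your proposal is correct and follows essentially the same route as the paper's proof in Section~\ref{sec-limits}: the $\epsilon$-uniform $H^s_z$ bounds and dissipation estimate give $g_\epsilon^\perp = O(\epsilon)$ and $g\in\mathrm{Ker}\,\mathcal{L}$; the moment equations with the Leray projection and the Boussinesq combination remove the $O(1/\epsilon)$ gradients; Aubin--Lions--Simon plus Sobolev embedding on $\mathbb{T}^3\times\mathbb{I}_z$ yield the uniform convergence away from $t=0$, extended to $t=0$ for well-prepared data; and the Burnett-type flux computation ($\mathcal{L}^{-1}$ applied to the leading profile of $g_\epsilon^\perp$) identifies the NSFP system. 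You are marginally more careful than the paper on the derivative count for $\partial_t\mathbf{P}u_\epsilon$ (placing it in $H^{s-2}_{x,z}$ rather than $H^{s-1}_{x,z}$, which still suffices for the Aubin--Lions argument), and you make explicit the time-averaging of the fast acoustic/plasma oscillations that the paper treats more implicitly via the projection and the identification of $R_2(\epsilon)\to 0$.
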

\begin{remark}
The resulting system \eqref{nspf-z} is dependent of $z$. Thus, we have prove the wellposedness of incompressible Navier-Stokes-Poisson system with random input.
\end{remark}
\begin{remark}
THe random variable $z$ are assumed to be one dimension. This results can be generalized to high dimension($N$ dimension) random variable if we assume the initial data belong to $H^sH^N_z$.

Furthermore, during the existence of solution, the $L^2$ bound of $\partial_z g_\epsilon$ is not necessary ( initial data belong to $H^sL^{2\cap\infty}_z$ is enough.). But while verifying the fluid limits, since the VPB system is nonlinear,  we need post more regularity of initial data on $z$.
\end{remark}

The whole Sec.\ref{sec-z} serves to proving this theorem.

\section{Analyse the linear equation}
\label{sec-linear-vpb}
\subsection{Difference of the linear equation compared to the linear Boltzmann equation}
\label{sec-difference}
The linear  VPB equation is
\begin{align}
\label{vpb-ns-scaling-ns-linear}
\begin{cases}
  \partial_t g_\epsilon + \reps \vdot g_\epsilon -  \repst \mathcal{L}(g_\epsilon)  - \tfrac{v}{\epsilon} \sdot \nabla \phi_\epsilon = 0, \\
 \Delta_x \phi_\epsilon =\int g_\epsilon \m \bd v.
\end{cases}
\end{align}

Furthermore, denoting
\[ \vpbg(g_\epsilon):=-\reps \vdot g_\epsilon  +  \repst \mathcal{L}(g_\epsilon)  + \tfrac{v}{\epsilon} \sdot \nabla \phi_\epsilon , \]
different with the $$\mathcal{G}_{\mathrm{BE},\epsilon+}(g_\epsilon):=-\reps \vdot g_\epsilon  +  \repst \mathcal{L}(g_\epsilon)$$ defined in \cite{briant-2015-be-to-ns},
$\vpbg$ is not orthogonal to the fluid parts. Indeed, the new term $\tfrac{v}{\epsilon} \sdot \nabla \phi_\epsilon   $ generated by the electric field lies in the kernel space of $\mathcal{G}$.

The Theorem 2.4 of \cite{briant-2015-be-to-ns} is based on the fact that   $\mathcal{G}$ is orthogonal to the fluids parts. Furthermore, the main difference of the norm $\mathcal{H}_\epsilon^s$ and $\mathcal{H}_{\epsilon\perp}^s$ is on the derivative with respect to $v$.  There exists a coefficient $\epsilon^2$ before the derivative of $g_\epsilon$ with respect to $v$  in $\mathcal{H}_\epsilon^s$ but not in  $\mathcal{H}_{\epsilon\perp}^s$. As $\epsilon$ tends to zero, no useful estimates  containing derivatives of $g_\epsilon$ with respect to $v$ can be obtained. More details can be found in Remark \ref{remark-linear}.

\subsection{Estimates}
Assume that the initial data satisfy for each $\epsilon>0$
\begin{align}
\label{mean-zero-initial}
\begin{split}
\intps g_\epsilon(0) \m \bd v \bd x = \intps (\tfrac{|v|^2}{3} - 1) g_\epsilon(0)\m \bd v \bd x = 0, ~~\intps v g_\epsilon(0) \m \bd v \bd x =0,
\end{split}
\end{align}
then by simple calculation, we can deduce that for any time $t>0$
\begin{align}
\label{mean-zero-all-time}
\begin{split}
\intps g_\epsilon(t) \m \bd v \bd x = \intps (\tfrac{|v|^2}{3} - 1) g_\epsilon(t)\m \bd v \bd x = 0, ~~\intps v g_\epsilon(t) \m \bd v \bd x =0.
\end{split}
\end{align}
This means that the solution of linear equations conserves its mean values on the phase space. This will be very useful during employing the Poincare inequality.

The macroscopic density, velocity and temperature are defined as usually,
\begin{align*}
\rho_\epsilon = \intv g_\epsilon \bdv,~~u_\epsilon = \intv vg_\epsilon \bdv, ~~ \theta_\epsilon = \intv(\tfrac{|v|^2}{3} - 1) g_\epsilon \bdv.
\end{align*}
Define
\begin{align*}
\mathfrak{E}_\epsilon^s(t)&:= \sum\limits_{k=0}^{s-1}\bigg( \lambda_1 \|(\nabla^k g_\epsilon,\nabla^k \nabla_x \phi_\epsilon)\|_{L^2}^2 + \lambda_2 \|(\nabla_x \nabla^k g_\epsilon, \Delta \nabla_x^k \phi_\epsilon)\|_{L^2}^2 \bigg)\\
& + \sum\limits_{k=0}^{s-1}\bigg( \lambda_3 \epsilon^2 ( \|\nabla^k \nabla_v g_\epsilon\|_{L^2}^2 - \|\nabla^k \nabla_x \phi_\epsilon\|_{L^2}^2) + 2 \lambda_4 \epsilon \intps \nabla_x \nabla^k_x g_\epsilon\sdot\nabla_v \nabla_x^k g_\epsilon \bdv \bd x \bigg),
\end{align*}
where $\lambda_i (i=1\cdots4)$ will be given later such that $\mathfrak{E}_\epsilon^1(t)$  is  equivalent to
\[ \mathcal{E}_\epsilon^s(t):= \|(g_\epsilon(t),\nabla_x \phi_\epsilon)\|_{H^s_x}^2 + \epsilon^2 \|\nabla_v g_\epsilon(t)\|_{H^{s-1}}^2. \]

\begin{lemma}
\label{lemma-linear-decay}
Under the assumptions on kernels in Sec. \ref{sec-assump-determin} and assumptions  \eqref{mean-zero-initial} on the initial data, there exists some small enough  constant $c_0$ such that as long as   \[  \|g_\epsilon(0)\|_{H^s_x}^2 + \epsilon^2 \|\nabla_v   g_\epsilon(0)\|_{H^{s-1}}^2   \le  c_0, \] equations \eqref{vpb-ns-scaling-ns-linear} admits a solution  $(g_\epsilon, \nabla \phi_\epsilon)$ satisfying for any $t>0$
\[ \|(g_\epsilon(t),\nabla_x \phi_\epsilon)\|_{H^s_x}^2 + \epsilon^2 \|\nabla_v g_\epsilon(t)\|_{H^{s-1}}^2 < +\infty. \]
Furthermore, there exist $\tilde{c}_0>0$ and $\tilde{c}>0$ (all independent of $\epsilon$ while $\epsilon <1$) such that
\begin{align}
\label{est-lemma-linear-exp-decay}
\|(g_\epsilon(t),\nabla_x \phi_\epsilon)\|_{H^s_x}^2 + \epsilon^2 \|\nabla_v g_\epsilon(t)\|_{H^{s-1}}^2 \le \tilde{c}_0 \exp(- \tilde{c} t).
\end{align}
\end{lemma}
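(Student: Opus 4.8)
The plan is to run a hypocoercivity energy estimate on the linear system \eqref{vpb-ns-scaling-ns-linear} in the spirit of \cite{briant-2015-be-to-ns}, but adapted to the modified linearization \eqref{vpb-linear-m} and to the extra electric-field term $\tfrac{v}{\epsilon}\sdot\nabla\phi_\epsilon$. The functional to propagate is $\mathfrak{E}_\epsilon^s(t)$ defined above, with constants $\lambda_1,\dots,\lambda_4$ chosen hierarchically ($1 \gg \lambda_2 \gg \lambda_4 \gg \lambda_3 > 0$, each small relative to the previous) so that $\mathfrak{E}_\epsilon^s$ is equivalent (uniformly in $\epsilon\le 1$) to $\mathcal{E}_\epsilon^s(t)=\|(g_\epsilon,\nabla_x\phi_\epsilon)\|_{H^s_x}^2 + \epsilon^2\|\nabla_v g_\epsilon\|_{H^{s-1}}^2$; the sign issue in the term $\lambda_3\epsilon^2(\|\nabla^k\nabla_v g_\epsilon\|_{L^2}^2 - \|\nabla^k\nabla_x\phi_\epsilon\|_{L^2}^2)$ is harmless because $\|\nabla_x\phi_\epsilon\|$ is already controlled by $\|g_\epsilon\|_{L^2}$ via the Poisson equation. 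First I would establish local existence and the a priori bound by a standard iteration/continuation argument, so the real content is the differential inequality $\tdt\mathfrak{E}_\epsilon^s(t) \le -\tilde c\,\mathfrak{E}_\epsilon^s(t)$, from which \eqref{est-lemma-linear-exp-decay} follows by Gronwall.

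The estimate is assembled from four families of terms, obtained by applying $\nabla_x^k$ ($k\le s-1$), $\nabla_x\nabla_x^k$, $\nabla_v\nabla_x^k$, and the mixed multiplier $\epsilon\,\nabla_x\nabla_x^k g_\epsilon \cdot \nabla_v\nabla_x^k g_\epsilon$ to the equation and pairing appropriately. Pure $x$-derivatives: the transport term $\reps\vdot g_\epsilon$ is skew-adjoint and drops; $-\repst\mathcal{L}$ gives dissipation $\tfrac{a_2}{\epsilon^2}\|(\nabla_x^k g_\epsilon)^\perp\|_{L^2_\Lambda}^2$ by \eqref{constant-a2}; the electric term contributes $\intps \tfrac{v}{\epsilon}\sdot\nabla\phi_\epsilon\,\nabla_x^k g_\epsilon$, which after using $\Delta_x\phi_\epsilon = \intv g_\epsilon\m\,dv = \rho_\epsilon$ and integrating by parts couples to $\partial_t\|\nabla_x^k\nabla_x\phi_\epsilon\|^2$, producing the $-\reps$ (hence for fixed scaling, $O(1)$) growth that must be absorbed. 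Crucially the kernel/fluid part is handled via the macroscopic equations: testing the equation against $1, v, \tfrac{|v|^2-3}{2}$ yields the local conservation laws for $\rho_\epsilon,u_\epsilon,\theta_\epsilon$, and combining these with the cross term $2\lambda_4\epsilon\intps \nabla_x\nabla_x^k g_\epsilon\cdot\nabla_v\nabla_x^k g_\epsilon$ (the hypocoercive commutator term) recovers dissipation $\|\nabla_x(\rho_\epsilon,u_\epsilon,\theta_\epsilon)\|_{H^{s-1}}^2$; here \eqref{mean-zero-all-time} guarantees $\intt \bdp g_\epsilon\,dx = 0$ so Poincaré upgrades this to full control of $\|\bdp g_\epsilon\|_{H^s_x}^2$. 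The $v$-derivative terms bring $-\lambda_3\epsilon^2 a_3\|\nabla_x^k\nabla_v g_\epsilon\|_{L^2_\Lambda}^2 + \lambda_3 a_4\|g_\epsilon\|_{H^{s-1}}^2$ from the defect-of-coercivity \eqref{constant-a3-a4}, plus $\lambda_3 C_\delta\|g_\epsilon\|^2 + \lambda_3\delta\|\nabla_v g_\epsilon\|^2$ from the mixing term \eqref{constant-delta}; the transport term $\reps\vdot$ no longer commutes and leaves $\tfrac{\lambda_3\epsilon^2}{\epsilon}\intps \nabla_x g_\epsilon\cdot\nabla_v g_\epsilon = \lambda_3\epsilon\intps\nabla_x g_\epsilon\cdot\nabla_v g_\epsilon$, which is exactly why the mixed term with coefficient $\lambda_4\epsilon$ appears — it is designed to cancel/dominate this. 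The electric term $\tfrac{v}{\epsilon}\sdot\nabla\phi_\epsilon$ under one $v$-derivative produces $\tfrac{1}{\epsilon}\nabla\phi_\epsilon$, explaining the $-\|\nabla^k\nabla_x\phi_\epsilon\|^2$ correction inside $\mathfrak{E}_\epsilon^s$ (so that this term telescopes rather than being estimated), and under two or more $v$-derivatives it vanishes entirely.

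The main obstacle is the loss of orthogonality: because $\tfrac{v}{\epsilon}\sdot\nabla\phi_\epsilon \in \mathrm{Ker}(\mathcal{G}_\epsilon)$, the electric field feeds directly into the fluid modes and the clean decoupling of \cite[Thm 2.4]{briant-2015-be-to-ns} fails. The remedy is twofold: (i) keep $\|\nabla_x\phi_\epsilon\|_{H^s_x}^2$ as a genuine component of the energy, using $\Delta_x\phi_\epsilon = \rho_\epsilon$ and elliptic regularity on $\mathbb{T}^3$ (legitimate since $\rho_\epsilon$ has zero mean by \eqref{mean-zero-all-time}) to identify $\|\nabla_x\phi_\epsilon\|_{H^{s}}\sim\|\rho_\epsilon\|_{H^{s-1}}\lesssim\|g_\epsilon\|_{H^{s-1}}$; (ii) when pairing $\tfrac{v}{\epsilon}\sdot\nabla\phi_\epsilon$ against $g_\epsilon$, split $g_\epsilon = \bdp g_\epsilon + g_\epsilon^\perp$, send the $\perp$-piece into the large $\tfrac{a_2}{\epsilon^2}$ dissipation with an $\epsilon$-power to spare, and recognize the $\bdp g_\epsilon$-piece as a total time derivative of $\|\nabla_x\phi_\epsilon\|^2$ plus lower-order pieces absorbed by the macroscopic dissipation. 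After choosing $\delta$ small, then $\lambda_3\ll\lambda_4\ll\lambda_2\ll 1$ in that order, and finally $c_0$ small enough to close the (in fact linear, so automatically closed) estimate, all bad terms are absorbed and one obtains $\tdt\mathfrak{E}_\epsilon^s \le -\tilde c\,\mathfrak{E}_\epsilon^s$ with $\tilde c$ independent of $\epsilon$, yielding \eqref{est-lemma-linear-exp-decay} with $\tilde c_0$ proportional to the initial energy.
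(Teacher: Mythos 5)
Your proposal is correct and follows essentially the same hypocoercivity route as the paper: the modified energy with a $\|\nabla_x\phi_\epsilon\|^2$ component and the mixed $\lambda_4\epsilon\intps\nabla_x\nabla_x^k g_\epsilon\cdot\nabla_v\nabla_x^k g_\epsilon$ commutator term, the continuity-equation trick converting the electric-field pairings (against $g_\epsilon$ and against $\nabla_v g_\epsilon$) into a total time derivative of $\|\nabla_x\phi_\epsilon\|^2$, the Poincar\'e step enabled by \eqref{mean-zero-all-time}, and the crucial structural observation that the electric term disappears under two or more $v$-derivatives, which underlies the $\mathfrak{E}_{\epsilon,1}^s/\mathfrak{E}_{\epsilon,2}^s$ splitting the paper uses for $s\ge 2$. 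One small inaccuracy in your stated hierarchy of weights: in the paper's actual choice $\lambda_1$ and $\lambda_2$ must be taken \emph{large} relative to $\lambda_4$ (and $\lambda_4\gg\lambda_3$) so that the $\epsilon^{-2}$-scaled coercivity of $\mathcal{L}$ can absorb the cross couplings, rather than $1\gg\lambda_2\gg\lambda_4\gg\lambda_3$ as you wrote; this does not affect the validity of the strategy.
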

\begin{remark}
\label{remark-linear}
There exists a coefficient $\epsilon^2$ before $\|\nabla_v g_\epsilon(t)\|_{L^2}^2$. Furthermore, the similar version to \cite[Theorem 2.4]{briant-2015-be-to-ns} where there is no $\epsilon^2$ before the derivative of $g_\epsilon$ with respect to $v$ can not be established for \eqref{vpb-ns-scaling-ns-linear}. Indeed, for the linear Boltzmann equation, if the initial data do not contain fluid part, then solutions preserve this property. Thus the underlined term in \eqref{est-linear-derivative-v-1-all} can be absorbed by employing \eqref{est-linear-vpb-ns-basic-l2} even though there exists coefficient $\repst$ before the underlined term. Therefore, there is no $\repst$ before $\|\nabla_v g_\epsilon(t)\|_{L^2}^2$ in the norm used in \cite[Theorem 2.4]{briant-2015-be-to-ns}.   But for linear VPB equation, according Section \ref{sec-difference},  as though the initial data are orthogonal to the fluid part, the solutions always contains fluid parts.  The underlined term can not be controlled while $\epsilon$ is very small. This is why the $\epsilon^2$ is  necessary.
\end{remark}
\begin{remark}
\label{remark-linear-difference-with-nonlinear}
{ For the nonlinear system, there exist new difficulties. Specially, \eqref{mean-zero-all-time} does not hold while the initial data satisfies \eqref{mean-zero-initial}. The Poincare's inequality can not be used. Thus we can not directly recover the $L^2$ of $\bdp g_\epsilon$ from $\nabla_x g_\epsilon$.     }
\end{remark}

\begin{proof}
Here, we only take $s=1$ for example. For any integer $s\ge 2$, after applying $\nabla_x^{s-1}$ to \eqref{vpb-ns-scaling-ns-linear},  the method are  similar.  Multiplying the \eqref{vpb-ns-scaling-ns-linear} by $g_\epsilon \m $, integrating over the phase space, it follows that
\begin{align}
\label{est-linear-vpb-ns-basic-1}
\begin{split}
\dt \|g_\epsilon\|_{L^2}^2 + \repsta\|g_\epsilon - \bdp g_\epsilon\|_{L^2_\Lambda}^2 - \reps \int_{\mathbb{T}} (\nabla_x \phi_\epsilon)\sdot ( \int_{\mathbb{R}^3} v g \m \bd v )\bd x \le 0.
\end{split}
\end{align}
To close the above equation, the term with coeffient $\reps$ should be carefully taken care. In additions, noticing that the resulting equation after multiplying the first equation of \eqref{vpb-ns-scaling-ns-linear} by $\dm$ and then  integrating with respect to velocity is
\begin{align*}
\epsilon \partial_t \int g_\epsilon \m \bd v + \divg \int_{\mathbb{R}^3} v g_\epsilon \m \bd v =0.
\end{align*}
therefore,
\begin{align}
\label{est-linear-basic-l2-d-electric}
\begin{split}
- \reps \int_{\mathbb{T}} (\nabla_x \phi_\epsilon)\sdot ( \int_{\mathbb{R}^3} v g \m \bd v )\bd x & =  \reps \int_{\mathbb{T}} ( \phi_\epsilon)\sdot \divg ( \int_{\mathbb{R}^3} v g_\epsilon \m \bd v )\bd x \\
& = - \int_{\mathbb{T}} \phi_\epsilon \sdot ( \partial_t \int g_\epsilon \m \bd v  )\bd x\\
& = -\int_{\mathbb{T}} \phi_\epsilon \sdot \Delta \partial_t \phi_\epsilon \bd x\\
& = \dt \|\nabla \phi_\epsilon\|_{L^2}^2.
\end{split}
\end{align}
Thus, together with \eqref{est-linear-vpb-ns-basic-1}, we can conclude that
\begin{align}
\label{est-linear-vpb-ns-basic-l2}
\dt ( \|g_\epsilon\|_{L^2}^2 + \|\nabla \phi_\epsilon\|_{L^2}^2)(t) +  \repsta\|g_\epsilon - \bdp g_\epsilon\|_{L^2_\Lambda}^2  \le 0.
\end{align}

Similarly, the $L^2$ estimates of high order  derivatives with respect to $x$ can be obtained.
\begin{align}
\label{est-linear-vpb-ns-x-derivative-l2}
\dt ( \|\nabla^s_x g_\epsilon\|_{L^2}^2 + \|\nabla \nabla^s_x \phi_\epsilon\|_{L^2}^2)(t) +  \repsta\|\nabla^s_x g_\epsilon - \bdp (\nabla^s_x g_\epsilon)\|_{L^2_\Lambda}^2 \le 0.
\end{align}
Now we turn to the derivative with respect to $v$ of $g_\epsilon$.
 \begin{align}
\label{linear-derivative-v}
\partial_t \nabla_v g_\epsilon + \reps \vdot \nabla_v g_\epsilon + \reps \nabla_x g_\epsilon-  \repst \nabla_v \mathcal{L}(g_\epsilon)  - \tfrac{1}{\epsilon}   \nabla \phi_\epsilon    = 0.
\end{align}
Multiplying \eqref{linear-derivative-v} by $\nabla_v g_\epsilon \m $ and then integrating over the phase space, it follows that
\begin{align}
\label{est-linear-derivative-v-0}
\begin{split}
&\dt( \|\nabla_v g_\epsilon\|_{L^2}^2)  + \reps \intps \nabla_x g_\epsilon\sdot\nabla_v g_\epsilon \m \bd v \bd x \\
&- \repst \intps \nabla_v \mathcal{L}(g_\epsilon) \sdot \nabla_v g_\epsilon \m \bd v \bd x -\reps \intps \nabla_x \phi_\epsilon \sdot \nabla_v g_\epsilon \m \bd b \bd x  =0.
\end{split}
\end{align}
Recalling
\[ \mathcal{L}= \mathcal{K} - {\Lambda},\]
thus
\begin{align*}
- \repst \intps \nabla_v \mathcal{L}(g_\epsilon) \sdot \nabla_v g_\epsilon \m\bd v \bd x & = - \repst \intps \nabla_v \mathcal{K}(g_\epsilon) \sdot \nabla_v g_\epsilon \m \bd v \bd x\\
& \quad +  \repst \intps \nabla_v \Lambda(g_\epsilon) \sdot \nabla_v g_\epsilon \m\bd v \bd x.
\end{align*}
According to \eqref{constant-a3-a4}, we can obtain that
\begin{align*}
a_3 \|\nabla_v g_\epsilon\|_{L^2_\Lambda}^2 - a_4\|g_\epsilon\|_{L^2}^2 \le \intps \nabla_v \Lambda(g_\epsilon) \sdot \nabla_v g_\epsilon \m \bd v \bd x,
\end{align*}
and for any $\delta>0$ there exists $C_\delta$ such that
\begin{align*}
-\delta\|\nabla_v g_\epsilon\|_{L^2_\Lambda}^2 - C_\delta\|g_\epsilon\|_{L^2}^2 \le -\intps \nabla_v \mathcal{K}(g_\epsilon) \sdot \nabla_v g_\epsilon \m \bd v \bd x.
\end{align*}
Summing the above two inequalities up, we can infer that
\begin{align}
\label{est-linear-derivative-v-term-dissipation}
 \tfrac{(a_3 -\delta)}{\epsilon^2} \|\nabla_v g_\epsilon\|_{L^2_\Lambda}^2 - \underline{\tfrac{(a_4 + C_\delta)}{\epsilon^2}\|g_\epsilon\|_{L^2}^2}  \le   - \repst \intps \nabla_v \mathcal{L}(g_\epsilon) \sdot \nabla_v g_\epsilon \bdv \bd x.
\end{align}
While $\delta$ is small enough and $6\delta <<a_3$, $a_3 - \delta$ is strictly positive. As we will show, it can absorb other term. Indeed, for the $\delta>0$, by Young's inequality, there exists some constant $C_{\delta,1}$ such that
\begin{align*}
\reps \big\vert\intps \nabla_x g_\epsilon\sdot\nabla_v g_\epsilon \bdv \bd x\big\vert \le C_{\delta,1}\|\nabla_x g_\epsilon\|_{L^2}^2 + \tfrac{\delta}{\epsilon^2}\|\nabla_v g_\epsilon\|_{L^2_\Lambda}^2.
\end{align*}

For the   last term,
\begin{align*}
-\reps \intps \nabla_x \phi_\epsilon \sdot \nabla_v g_\epsilon \bdv \bd x & = \tfrac{1}{\epsilon}\intps \nabla_x \phi_\epsilon \sdot  v \sdot g_\epsilon   \bdv \bd x = - \tfrac{\mathrm{d}}{2\mathrm{d}t} \|\nabla \phi_\epsilon\|_{L^2_x}^2,
\end{align*}
where we have used \eqref{est-linear-basic-l2-d-electric}.

Summing the relative inequalities up, then \eqref{est-linear-derivative-v-0} turns to
\begin{align}
\label{est-linear-derivative-v-1-all}
\begin{split}
& \dt( \|\nabla_v g_\epsilon\|_{L^2}^2 + \|\nabla_x \phi_\epsilon\|_{L^2_x}^2) + \tfrac{(a_3 -3\delta)}{\epsilon^2} \|\nabla_v g_\epsilon\|_{L^2_\Lambda}^2 \\
& \le   \underline{\tfrac{(a_4 + C_\delta)}{\epsilon^2}\|g_\epsilon\|_{L^2}^2} +  C_{\delta,1}\|\nabla_x g_\epsilon\|_{L^2}^2.
\end{split}
\end{align}
Multiplying \eqref{linear-derivative-v} by $\nabla_x g_\epsilon\m$ and then integrating over $\mathbb{T}\times\mathbb{R}^3$, it follows that
\begin{align}
\label{est-linear-derivative-v-x-0}
\begin{split}
&\dt( \intps \nabla_x g_\epsilon \sdot \nabla_v g_\epsilon \bdv \bd x) + \reps \intps \nabla_v(\vdot g_\epsilon)\sdot\nabla_x g_\epsilon \bdv \bd x\\
&  - \repst \intps \nabla_v \mathcal{L}(g_\epsilon) \sdot \nabla_x g_\epsilon \bdv \bd x -\reps \intps \nabla_x \phi_\epsilon \sdot \nabla_x g_\epsilon  \bdv \bd x   =0.
\end{split}
\end{align}
By simple computation,
\begin{align*}
\reps \intps \nabla_v(\vdot g_\epsilon)\sdot\nabla_x g_\epsilon \bd v \bd x = \tfrac{1}{2\epsilon}\|\nabla_x g_\epsilon\|_{L^2}^2 + \tfrac{1}{2\epsilon}\|\vdot g_\epsilon\|_{L^2}^2.
\end{align*}
For the term with coefficient $\repst$,
\begin{align*}
- \repst \intps \nabla_v \mathcal{L}(g_\epsilon) \sdot \nabla_x g_\epsilon \bd v \bd x & = - \repst \intps  \mathcal{L}(\nabla_x g_\epsilon) \sdot \nabla_v g_\epsilon \bd v \bd x\\
& = - \repst \intps  \mathcal{L}((\nabla_x g_\epsilon)^\perp) \sdot \nabla_v g_\epsilon \bd v \bd x\\
& \le \tfrac{C_u}{\epsilon^2} \int_{\mathbb{T}^3}\|(\nabla_x g_\epsilon)^\perp\|_{L^2_{v,\Lambda}}\|\nabla_v g_\epsilon\|_{L^2_{v,\Lambda}}\bd x\\
& \le \tfrac{C_u^2}{2a_6\epsilon^2}\|(\nabla_x g_\epsilon)^\perp\|_{L^2_{\Lambda}}^2  + \tfrac{a_6}{2\epsilon^2}\|\nabla_v g_\epsilon \|_{L^2_{\Lambda}}^2,
\end{align*}
where $a_6$ will be given later. For the terms in the left hand of \eqref{est-linear-derivative-v-x-0},
\begin{align*}
-\reps \intps \nabla_x \phi_\epsilon \sdot \nabla_x g_\epsilon  \bdv \bd x   = \tfrac{1}{\epsilon} \|\rho_\epsilon\|_{L^2}^2.
\end{align*}
Thus, for \eqref{est-linear-derivative-v-x-0}, in the light of the above three inequalities (equalities), we obtain that
\begin{align}
\label{est-linear-nabla-x-dissipation}
\begin{split}
& \dt( \intps \nabla_x g_\epsilon \sdot \nabla_v g_\epsilon \bdv \bd x) + \tfrac{1}{2\epsilon} \big( \|\nabla_x g_\epsilon\|_{L^2}^2 + \|\vdot g_\epsilon\|_{L^2}^2 + 2 \|\rho_\epsilon\|_{L^2}^2\big) \\
& \le \tfrac{C_u^2}{2a_6\epsilon^2}\|(\nabla_x g_\epsilon)^\perp\|_{L^2_{\Lambda}}^2  + \tfrac{a_6}{2\epsilon^2}\|\nabla_v g_\epsilon \|_{L^2_{\Lambda}}^2.
\end{split}
\end{align}
 According to  \eqref{est-linear-vpb-ns-basic-l2}, \eqref{est-linear-vpb-ns-x-derivative-l2}, \eqref{est-linear-derivative-v-1-all} and \eqref{est-linear-nabla-x-dissipation},
\begin{align}
\label{est-linear-h1-all-0}
\begin{split}
 \dt \mathfrak{E}_\epsilon^1(t) & + \tfrac{\lambda_1a_2}{\epsilon^2}\|g_\epsilon^\perp\|_{L^2}^2 + \tfrac{\lambda_2 a_2}{\epsilon^2}\|(\nabla_x g_\epsilon)^\perp\|_{L^2}^2 +  \lambda_3 (a_3 -3\delta) \|\nabla_v g_\epsilon\|_{L^2_\Lambda}^2 \\
& \qquad +   {\lambda_4}  \big( \|\nabla_x g_\epsilon\|_{L^2}^2 + \|\vdot g_\epsilon\|_{L^2}^2 + 2 \|\rho_\epsilon\|_{L^2}^2\big)\\
& \le \lambda_3 \underline{{(a_4 + C_\delta)}\|\bdp g_\epsilon\|_{L^2}^2} + \lambda_3 C_{\delta,1}\epsilon^2\|\nabla_x g_\epsilon\|_{L^2}^2 +  \lambda_3 C_{\delta, 2} \epsilon^2 \|\nabla_x \phi_\epsilon\|_{L^2}^2\\
& \qquad + \lambda_4 \tfrac{C_u^2}{a_6} \|(\nabla_x g_\epsilon)^\perp\|_{L^2_{\Lambda}}^2  + \lambda_4 a_6\|\nabla_v g_\epsilon \|_{L^2_{\Lambda}}^2 + \lambda_3 {{(a_4 + C_\delta)}\|g_\epsilon^\perp\|_{L^2}^2}.
\end{split}
\end{align}
The way of choosing $\lambda_i$ ($i=1,\cdots,4$) is to let the right hand of \eqref{est-linear-h1-all-0} be absorbed by the``dissipation" term of the left hand.

Recalling that $\bdp g_\epsilon $ has zero mean value on torus (by \eqref{mean-zero-all-time}),  by Poincare's inequality, there exists some constant $a_5>0$ such that
\begin{align}
\label{constant-a5}
& \|\bdp g_\epsilon\|_{L^2}^2 \le a_5 \|\nabla_x \bdp g_\epsilon\|_{L^2}^2 \le a_5 \|\nabla_x g_\epsilon\|_{L^2}^2,~~\text{and},~~ \|\nabla_x \phi_\epsilon\|_{L^2}^2 \le a_5 \|\Delta \phi_\epsilon\|_{L^2}^2 = a_5 \|\rho_\epsilon\|_{L^2}^2.
\end{align}

\begin{align}
\label{est-linear-all-h1}
\begin{split}
& \dt \mathfrak{E}_\epsilon^1(t) + \tilde\lambda_1\|g_\epsilon^\perp\|_{L^2}^2 + \tilde\lambda_2\|(\nabla_x g_\epsilon)^\perp\|_{L^2}^2  + \lambda_4 \|\vdot g_\epsilon\|_{L^2}^2 \\
& \qquad +  \lambda_5 \|\nabla_v g_\epsilon\|_{L^2_\Lambda}^2  +  \lambda_6\|\nabla_x g_\epsilon\|_{L^2}^2 +  \lambda_7\|\rho_\epsilon\|_{L^2_x}^2  \le 0.
\end{split}
\end{align}
where
\begin{align*}
\tilde\lambda_1 = \big[ \tfrac{\lambda_1a_2}{\epsilon^2} - \lambda_3(a_4 + C_\delta)\big],\\
\tilde\lambda_2 = \big[\tfrac{\lambda_2a_2}{\epsilon^2} - \lambda_4 \tfrac{C_u^2}{a_6} - \lambda_3 a_5 (a_4 + C_\delta)\big],\\
\lambda_5 = \big[\lambda_3 (a_3 -3\delta) - \lambda_4 a_6\big],\\
\lambda_6 = \big[ \lambda_4 - \lambda_3 C_{\delta,1}\epsilon^2  -  a_5(a_4 + C_\delta)\lambda_3 \big],\\
\lambda_7 = \big[2\lambda_4 - \lambda_3 a_5 C_{\delta, 2} \epsilon^2 \big].
\end{align*}

For any fixed $\lambda_4 \ge 2\delta a_5 + 2 a_5 $,  we can choose small enough $\lambda_3$ first. Noticing that
\begin{align*}
& \qquad \tfrac{2\lambda_4}{a_5}\|\nabla_x \phi_\epsilon\|_{L^2}^2 \le 2 \lambda_4 \|\rho_\epsilon\|_{L^2}^2,\\
& \lambda_3  {{(a_4 + C_\delta)}\|\bdp g_\epsilon\|_{L^2}^2} + \lambda_3 C_{\delta,1}\epsilon^2\|\nabla_x g_\epsilon\|_{L^2}^2 \le \lambda_3 \big(a_5 (a_4 + C_\delta) + C_{\delta,1}\epsilon^2 \big) \|\nabla_x g_\epsilon\|_{L^2}^2.
\end{align*}
Choosing $\lambda_3$ such that,
\[ \tfrac{1}{2}\lambda_4 \ge  \lambda_3 \big(a_5 (a_4 + C_\delta) + C_{\delta,1}\epsilon^2 \big) + \delta, ~~\text{and}, \tfrac{\lambda_4}{a_5} \ge  \lambda_3 C_{\delta, 2} \epsilon^2 + \delta.~~\]
For $\lambda_1$, $\lambda_2$ and $a_6$, they can be chosen as follows.
\[ \lambda_4 a_6 = \tfrac{1}{2}\lambda_3 (a_3 -3\delta),~~\tfrac{\lambda_1- \lambda_3 \epsilon^2 - 1}{\epsilon^2} \ge \lambda_3 {(a_4 + C_\delta)}  + \delta,~~\tfrac{\lambda_2- \lambda_4 }{\epsilon^2} \ge  \lambda_4 \tfrac{C_u^2}{a_6} + \delta . \]
Finally, setting $ c_d = \min\{\delta, \tfrac{1}{2}\lambda_3 (a_3 -3\delta)\}$, we finally deduce that
\begin{align}
\label{est-linear-es-1-1}
\dt \mathfrak{E}_\epsilon^1(t) + c_d (\|g_\epsilon\|_{H^1}^2 + \tfrac{1}{\epsilon^2} \|(g_\epsilon)^\perp\|_{H^1}^2 + \|\nabla_v g_\epsilon\|_{L^2}^2+ \|\rho_\epsilon\|_{H^1}^2 ) \le 0.
\end{align}
Besides, while $\epsilon<1$, $\lambda_i, c_d$ can be chosen independently of $\epsilon$. Furthermore, there exist $c_l$ and $c_u$ such that
\[ c_l \mathcal{E}_\epsilon^1(t) \le \mathfrak{E}_\epsilon^1 \le  c_u \mathcal{E}_\epsilon^1(t). \]
By Gr\"onwall's inequality, we complete the proof  for $s=1$.

For $s \ge 2$, define $\mathfrak{E}_\epsilon^1(t)$
\begin{align}
\label{est-norm-es-g-2}
\mathfrak{E}_\epsilon^s(t) = \mathfrak{E}_{\epsilon,1}^s(t) + \mathfrak{E}_{\epsilon,2}^s(t),
\end{align}
where
\begin{align*}
\mathfrak{E}_{\epsilon,1}^s(t)\approx\|g_\epsilon\|_{H^s}^2+ \sum\limits_{i = 1, i + j =s}\epsilon^2\|\nabla^i_v \nabla_x^j g_\epsilon\|_{L^2}^2,\\
\mathfrak{E}_{\epsilon,2}^s(t)=\sum\limits_{i \ge 2, i + j =s}\epsilon^2\|\nabla^i_v \nabla_x^j g_\epsilon\|_{L^2}^2.
\end{align*}
By the similar tricks of obtaining \eqref{est-linear-es-1-1}, we can deduce that
\begin{align}
\label{est-linear-es-1-s}
\dt \mathfrak{E}_{\epsilon,1}^s(t) + c_d (\|g_\epsilon\|_{H^s}^2 + \tfrac{1}{\epsilon^2} \|(g_\epsilon)^\perp\|_{H^s}^2 + \|\nabla_v g_\epsilon\|_{H^{s-1}}^2+ \|\nabla_x \phi\|_{H^s}^2 ) \le 0.
\end{align}
While the order of derivative to $v$ is greater than $2$ (the second part in \eqref{est-norm-es-g-2}), according to \eqref{linear-derivative-v},  the effect of the electric field vanishes, i.e.,
 \begin{align}
\label{linear-derivative-v-s}
\partial_t \nabla_x^j \nabla_v^i g_\epsilon + \reps \nabla_x^j \nabla_v^i (\vdot  g_\epsilon)    -  \repst \nabla_x^j \nabla_v^i \mathcal{L}(g_\epsilon)     = 0,~~ i \ge 2.
\end{align}
Furthermore, we do not need to split fluid parts out like \eqref{est-linear-nabla-x-dissipation}. Indeed, by the assumption \eqref{assump-h1-h} on the collision kernel, with the similar method of deducing \eqref{est-linear-derivative-v-1-all},  it follows that,
\begin{align}
\label{est-linear-es-2-s-0}
\begin{split}
& \qquad \epsilon^2 \dt\|\nabla_x^j \nabla_v^i g_\epsilon\|_{L^2}^2 +  {(a_3 -3\delta)}  \|\nabla_x^j \nabla_v^i g_\epsilon\|_{L^2}^2  \\
& \le   {(a_4 + C_\delta)} \|\nabla_x^j \nabla_v^{i-1} g_\epsilon\|_{L^2}^2 +  C_{\delta,1} \epsilon^2 \| \nabla_x^{j +1} \nabla_v^{i-1} g_\epsilon\|_{L^2}^2.
\end{split}
\end{align}
If $s=2$, we can conclude that
\begin{align*}
\dt \mathfrak{E}_{\epsilon,2}^2(t) + \tfrac{{(a_3 -3\delta)}}{\epsilon^2} \mathfrak{E}_{\epsilon,2}^2(t) \le (a_4 + C_\delta +  C_{\delta,1} \epsilon^2) \|g_\epsilon\|_{H^1_{x,v}}^2.
\end{align*}
For $s \ge 3$, $\mathfrak{E}_{\epsilon,2}^s(t)$ can be split into two cases: $i=2$ can $ i \ge 3$, i.e.,
\begin{align*}
\mathfrak{E}_{\epsilon,2}^s(t) = \mathfrak{E}_{\epsilon,2,1}^s(t) + \mathfrak{E}_{\epsilon,2,2}^s(t),
\end{align*}
where
\begin{align*}
\mathfrak{E}_{\epsilon,2,1}^s(t)=\sum\limits_{i = 2, i + j =s}\epsilon^2\|\nabla^i_v \nabla_x^j g_\epsilon\|_{L^2}^2,~~\mathfrak{E}_{\epsilon,2,2}^s(t) = \sum\limits_{i \ge 3, i + j =s}\epsilon^2\|\nabla^i_v \nabla_x^j g_\epsilon\|_{L^2}^2.
\end{align*}
According to \eqref{est-linear-es-2-s-0},  we can infer that
\begin{align}
\label{est-e21}
\dt \mathfrak{E}_{\epsilon,2,1}^s(t) + \tfrac{{(a_3 -3\delta)}}{\epsilon^2} \mathfrak{E}_{\epsilon,2,1}^s(t) \le (a_4 + C_\delta +  C_{\delta,1} \epsilon^2) \mathfrak{E}_{\epsilon,1}^s(t).
\end{align}
According to \eqref{est-linear-es-2-s-0} and Poincare's inequality,  noticing that
\begin{align}
\label{est-linear-es-2-s-0-vs}
\begin{split}
& \qquad \epsilon^2 \dt\|\nabla_x^j \nabla_v^i g_\epsilon\|_{L^2}^2 +  {(a_3 -3\delta)}  \|\nabla_x^j \nabla_v^i g_\epsilon\|_{L^2}^2  \\
& \le   {(a_4 + C_\delta)} \|\nabla_x^j \nabla_v^{i-1} g_\epsilon\|_{L^2}^2 +  C_{\delta,1} \epsilon^2 \| \nabla_x^{j +1} \nabla_v^{i-1} g_\epsilon\|_{L^2}^2\\
& \le  c_9 \| \nabla_x^{j +1} \nabla_v^{i-1} g_\epsilon\|_{L^2}^2,
\end{split}
\end{align}
where $c_9 =  \bigg( a_5 {(a_4 + C_\delta)} +  C_{\delta,1} \epsilon^2\bigg) $. Thus,
\begin{align*}
& \epsilon^2 \dt\| \nabla_v^s g_\epsilon\|_{L^2}^2 +  {(a_3 -3\delta)}  \| \nabla_v^s g_\epsilon\|_{L^2}^2   \le  c_9 \| \nabla_x  \nabla_v^{s-1} g_\epsilon\|_{L^2}^2,\\
& \epsilon^2 \dt\| \nabla_x \nabla_v^{s-1} g_\epsilon\|_{L^2}^2 +  {(a_3 -3\delta)}  \| \nabla_x \nabla_v^{s-1} g_\epsilon\|_{L^2}^2   \le  c_9 \| \nabla_x^2 \nabla_v^{s-2} g_\epsilon\|_{L^2}^2,\\
& \qquad \qquad \qquad \qquad \hspace{4cm}\vdots \\
& \epsilon^2 \dt\| \nabla_x^{s-3}\nabla_v^{3} g_\epsilon\|_{L^2}^2 +  {(a_3 -3\delta)}  \| \nabla_x^{s-3} \nabla_v^{3} g_\epsilon\|_{L^2}^2   \le  c_9 \| \nabla_x^{s-2} \nabla_v^{2} g_\epsilon\|_{L^2}^2.
\end{align*}
From the above inequalities, we can obtain that
\begin{align}
\label{est-es22}
 c_f \cdot \dt \mathfrak{E}_{\epsilon,2,2}^s(t) + \tfrac{\delta}{\epsilon^2}  \mathfrak{E}_{\epsilon,2,2}^s(t) \le (a_3 - 4\delta) \|\nabla^{s-2}_x \nabla_v^2 g_\epsilon\|_{L^2}^2,
\end{align}
where $c_f = \tfrac{a_3 - 4\delta}{c_9}$.
Combing \eqref{est-e21} and \eqref{est-es22}, it follows that
\begin{align}
\label{est-linear-e2}
\dt \bigg(  c_f \cdot   \mathfrak{E}_{\epsilon,2,2}^s(t) +  \mathfrak{E}_{\epsilon,2,1}^s(t)\bigg) + \tfrac{\delta}{\epsilon^2}  \mathfrak{E}_{\epsilon,2}^s(t) \le (a_4 + C_\delta +  C_{\delta,1} \epsilon^2) \mathfrak{E}_{\epsilon,1}^s(t).
\end{align}
In the light of \eqref{est-linear-es-1-s}, $c_e = \tfrac{(a_4 + C_\delta +  C_{\delta,1} \epsilon^2) + \delta}{c_d}$, we can obtain that
\begin{align}
\dt \bigg( c_e \cdot   \mathfrak{E}_{\epsilon,1}^s(t) +  c_f \cdot   \mathfrak{E}_{\epsilon,2,2}^s(t) +  \mathfrak{E}_{\epsilon,2,1}^s(t)\bigg) +   \tfrac{c_e}{\epsilon^2} \|(g_\epsilon)^\perp\|_{H^s}^2  + c_e\|\rho_\epsilon\|_{H^s}^2  + \delta\|g_\epsilon\|_{H^s_{x,v}}^2  \le 0.
\end{align}
Obviously, $c_e \cdot   \mathfrak{E}_{\epsilon,1}^s(t) +  c_f \cdot   \mathfrak{E}_{\epsilon,2,2}^s(t) +  \mathfrak{E}_{\epsilon,2,1}^s(t)$ is equivalent to $\mathfrak{E}_\epsilon^s$, that is to say, one can reselect $c_l$ and $c_u$ such that,
\begin{align}
\label{constant-cl-cu}
 c_l \mathfrak{E}_\epsilon^s \le  c_e \cdot   \mathfrak{E}_{\epsilon,1}^s(t) +  c_f \cdot   \mathfrak{E}_{\epsilon,2,2}^s(t) +  \mathfrak{E}_{\epsilon,2,1}^s(t) \le c_u \mathfrak{E}_\epsilon^s.
\end{align}
We complete the proof for $s\ge 2$ for this lemma.
\end{proof}
Let's give a remark on the difference on \eqref{vpb-linear-m} and \eqref{vpb-linear-sqrtm}.
\begin{remark}
\label{remark-m-sqrtm}
While using \eqref{vpb-linear-sqrtm}, the last term in \eqref{est-linear-derivative-v-x-0} is replaced by
$-\reps\intps \nabla_v (\vdot \phi_\epsilon \sdot \sqrt\m) \sdot \nabla_x g_\epsilon\bd v \bd x$, by integration by parts,
\begin{align*}
-\reps\intps \nabla_v (\vdot \phi_\epsilon \sdot \sqrt\m) \sdot \nabla_x g_\epsilon\bd v \bd x = \reps \|\Delta \phi_\epsilon\|_{L^2}^2 - \tfrac{1}{2\epsilon} \sum\limits_{i=1}^3\intps \partial_{x_ix_j}^2 \phi_\epsilon \sdot  v_i v_j \sqrt\m g_\epsilon\bd v \bd x.
\end{align*}
The last term in the above equation can not be controlled directly. If we add some new terms up to the norm used in \cite[Theorem 2.1]{briant-2015-be-to-ns} to cancel it, the new norms is just the one we use.
\end{remark}
\section{Prior estimates of the nonlinear system }
This section is devoted to deducing the same version to Lemma \ref{lemma-linear-decay}.
The non-linear system
\begin{align}
\label{vpb-ns-nonlinear}
\partial_t g_\epsilon + \reps \vdot g_\epsilon +  \repst \mathcal{L}(g_\epsilon)   - \tfrac{v}{\epsilon} \sdot \nabla \phi_\epsilon = N(g_\epsilon),
\end{align}
where
\begin{align*}
N(g_\epsilon)&:= N_1(g_\epsilon) + N_2(g_\epsilon),\\
N_1(g_\epsilon)&:= ({v}  g_\epsilon - \nabla_v g_\epsilon     ) \sdot \nabla\phi_\epsilon,\\
N_2(g_\epsilon)&:= \reps {\Gamma}(g_\epsilon,g_\epsilon).
\end{align*}
For the nonlinear case, the global conservation law is
\begin{align}
\label{conservation-law-nonlinear}
\begin{split}
\tdt { \intps g_\epsilon(t) \m \bd v \bd x = 0,   \tdt \intps v g_\epsilon(t) \m \bd v \bd x =0,}\\
\tdt \big(  \intps (\tfrac{|v|^2}{2} - \tfrac{3}{2}) g_\epsilon(t)\m \bd v \bd x +  \epsilon \|\nabla_x \phi_\epsilon(t)\|_{L^2}^2 \big)= 0.
\end{split}
\end{align}

The initial data are assumed to satisfy
\begin{align}
\label{initial-mean-nonlinear}
\begin{split}
  { \intps g_\epsilon(0) \m \bd v \bd x = 0,     \intps v g_\epsilon(0) \m \bd v \bd x =0,}\\
  \big( 3\intps (\tfrac{|v|^2}{3} - 1) g_\epsilon(0)\m \bd v \bd x +  \epsilon \|\nabla_x \phi_\epsilon(0)\|_{L^2}^2 \big)= 0.
  \end{split}
\end{align}.

\begin{lemma}
\label{lemma-nonlinear-decay}
Under the assumptions on kernels in Sec. \ref{sec-assump-determin} and assumptions  \eqref{initial-mean-nonlinear} on the initial data, there exists some small enough  constant $c_{00}$ such that as long as
\[  \|g_\epsilon(0)\|_{H^s_x}^2 + \epsilon^2 \|\nabla_v   g_\epsilon(0)\|_{H^{s-1}}^2   \le c_{00}, \] equations \eqref{vpb-ns-scaling-ns-corrector} admit a solution  $(g_\epsilon, \nabla \phi_\epsilon)$ satisfying that there exist $\bar{c}_{00}>0$ and $\bar{c_0}>0$ (all independent of $\epsilon$ while $\epsilon <1$) such that
\begin{align}
\label{est-lemma-nonlinear-exp-decay}
\|(g_\epsilon(t)\|_{H^s_x}^2 + \epsilon^2 \|\nabla_v g_\epsilon(t)\|_{H^{s-1}}^2 \le \bar{c}_{00} \exp(- \bar{c_0} t).
\end{align}
\end{lemma}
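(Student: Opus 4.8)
The plan is to bootstrap from the linear estimate in Lemma \ref{lemma-linear-decay} via a standard continuity (bootstrap) argument, treating $N(g_\epsilon) = N_1(g_\epsilon) + N_2(g_\epsilon)$ as a perturbation that is quadratic in the energy. First I would fix the same energy functional $\mathfrak{E}_\epsilon^s(t)$ used in the linear proof, decomposed as in \eqref{est-norm-es-g-2} into $\mathfrak{E}_{\epsilon,1}^s + \mathfrak{E}_{\epsilon,2,1}^s + \mathfrak{E}_{\epsilon,2,2}^s$, and redo the weighted energy identities, this time carrying along the right-hand contributions $\langle \nabla_x^j\nabla_v^i N(g_\epsilon), \nabla_x^j\nabla_v^i g_\epsilon\rangle$ and the mixed term $\langle \nabla_x^{j+1}\nabla_v^{i-1}N(g_\epsilon), \nabla_x^j\nabla_v^i g_\epsilon\rangle$. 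The dissipation produced on the left is exactly what the linear proof produced, namely $c_d\big(\|g_\epsilon\|_{H^s}^2 + \tfrac1{\epsilon^2}\|g_\epsilon^\perp\|_{H^s}^2 + \|\nabla_v g_\epsilon\|_{H^{s-1}}^2 + \|\nabla_x\phi_\epsilon\|_{H^s}^2\big)$, so the whole game is to show the nonlinear contributions are bounded by $C\sqrt{\mathfrak{E}_\epsilon^s}$ times this dissipation, hence absorbable once $\mathfrak{E}_\epsilon^s$ is small.

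The heart of the argument is the estimate of the two nonlinear pieces. For $N_2(g_\epsilon) = \reps\Gamma(g_\epsilon,g_\epsilon)$, since $\Gamma(g,h)\in\mathrm{Ker}(\mathcal{L})^\perp$, the $\reps$ is compensated: pairing against $g_\epsilon$ the $\reps$ is killed because only $g_\epsilon^\perp$ survives, and pairing against $g_\epsilon^\perp$ gives $\reps\|\cdot\|\,\|g_\epsilon^\perp\|_{L^2_\Lambda}$, which is controlled by $\tfrac1{\epsilon^2}\|g_\epsilon^\perp\|_{H^s}^2$ (available in the dissipation) times $\|(g_\epsilon,g_\epsilon)\|_{H^s_x}\|(g_\epsilon,g_\epsilon)\|_{H^s_{\Lambda_x}}$ via \eqref{constant-cn}; for the mixed $v$-derivative pairings the second line of \eqref{constant-cn} is used and the $\reps$ there must be absorbed against $\|\nabla_v g_\epsilon\|_{H^{s-1}_\Lambda}$ — here the $\epsilon^2$ weight on the $v$-derivative part of $\mathfrak{E}_\epsilon^s$ and Young's inequality do the job, again at the cost of a factor $\sqrt{\mathfrak{E}_\epsilon^s}$. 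For $N_1(g_\epsilon) = (vg_\epsilon - \nabla_v g_\epsilon)\sdot\nabla\phi_\epsilon$ I would use that $\nabla\phi_\epsilon = \nabla\Delta^{-1}\rho_\epsilon$ is one derivative smoother than $\rho_\epsilon$, Sobolev embedding $H^2_x\hookrightarrow L^\infty_x$ (so $s\ge 3$ is used), and the polynomial-in-$v$ Gaussian weight $\dm$ to handle the factor $v$; crucially, when $s$ derivatives land so that $\nabla_v^{s+1}g_\epsilon$ would appear, I integrate by parts in $v$ to move a derivative onto $g_\epsilon\dm$, generating only $\nabla_v^s g_\epsilon$ and lower-order terms weighted by $v$, as flagged in Remark \ref{remark-v-derivative}.

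The subtle structural point — and the main obstacle — is the one highlighted in the remark after Theorem \ref{theoremNonlinear-decay}: for the nonlinear system \eqref{conservation-law-nonlinear} the analogue of \eqref{mean-zero-all-time} fails, since $\intps(\tfrac{|v|^2}{3}-1)g_\epsilon(t)\m\,\bd v\,\bd x = -\tfrac{\epsilon}{3}\|\nabla_x\phi_\epsilon(t)\|_{L^2}^2 \neq 0$, so $\intt\bdp g_\epsilon\,\bd x\neq 0$ and Poincaré cannot be applied directly to recover $\|\bdp g_\epsilon\|_{L^2}^2\lesssim\|\nabla_x g_\epsilon\|_{L^2}^2$. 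I would handle this by splitting $\bdp g_\epsilon = \overline{\bdp g_\epsilon} + (\bdp g_\epsilon - \overline{\bdp g_\epsilon})$ into its spatial mean and mean-zero part; Poincaré applies to the mean-zero part, while the mean part is explicitly $-\tfrac{\epsilon}{3}\|\nabla_x\phi_\epsilon(t)\|_{L^2}^2\cdot\tfrac{|v|^2-3}{2}$, whose size is $O(\epsilon)\cdot\|\nabla_x\phi_\epsilon\|_{L^2}^2 = O(\epsilon)\cdot\|\rho_\epsilon\|_{L^2}^2$ up to the Poincaré-type bound $\|\nabla_x\phi_\epsilon\|_{L^2}^2 \le a_5\|\rho_\epsilon\|_{L^2}^2$ from \eqref{constant-a5}, hence again absorbable into the $\lambda_7\|\rho_\epsilon\|_{H^s_x}^2$ dissipation with the smallness of $\mathfrak{E}_\epsilon^s$ (which bounds $\|\nabla_x\phi_\epsilon\|$) eating the extra factor. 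Once all nonlinear and mean-part contributions are shown to be $\le C\sqrt{\mathfrak{E}_\epsilon^s}\cdot(\text{dissipation})$, choosing $c_{00}$ small so that $C\sqrt{\mathfrak{E}_\epsilon^s}\le \tfrac12$ along the bootstrap interval yields $\tdt\mathfrak{E}_\epsilon^s + \tfrac{c_d}{2}\mathfrak{E}_\epsilon^s \le 0$, and Grönwall together with the equivalence $c_l\mathcal{E}_\epsilon^s \le \mathfrak{E}_\epsilon^s \le c_u\mathcal{E}_\epsilon^s$ closes the continuity argument and gives \eqref{est-lemma-nonlinear-exp-decay}; local existence plus these uniform a priori bounds yield the global solution.
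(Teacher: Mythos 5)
Your proposal follows essentially the same route as the paper's proof: the same energy functional $\mathfrak{E}_\epsilon^s$ with the decomposition \eqref{est-norm-es-g-2}, the same resolution of the broken Poincar\'e inequality by splitting $\bdp g_\epsilon$ into its spatial mean (explicitly $-\tfrac{\epsilon}{3}\|\nabla_x\phi_\epsilon\|_{L^2}^2(\tfrac{|v|^2}{3}-1)$ once the initial constraint \eqref{initial-mean-nonlinear} kills the constant $C_{in}$) and a mean-zero remainder, the same use of $\Gamma\in\mathrm{Ker}(\mathcal{L})^\perp$ to cancel the $\reps$ in $N_2$, the same integration by parts in $v$ to avoid $\nabla_v^{s+1}g_\epsilon$ in $N_1$, and the same $\sqrt{\mathfrak{E}_\epsilon^s}\times(\text{dissipation})$ bound closed by a continuity/bootstrap argument and Gr\"onwall. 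The only cosmetic difference is that you phrase the mean-part contribution directly as $O(\epsilon^2)\|\rho_\epsilon\|_{L^2}^4$ rather than passing through the intermediate $\|\nabla_x\phi_\epsilon\|_{L^2}^4$ term that the paper records in \eqref{est-differ}--\eqref{est-nonlinear-all-h1}, but the absorption mechanism is identical.
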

\begin{remark}
\begin{align}\label{c0}
C_v (0) = \tfrac{\epsilon}{3} \|\nabla_x \phi_\epsilon(0)\|_{L^2}^2   \sdot (\tfrac{|v|^2}{3} - 1)
  + \intt \theta_\epsilon(0) \bd x (\tfrac{|v|^2}{3} - 1) + \intt u_\epsilon(0) \bd x \sdot v + \intt \rho_\epsilon(0)\bd x.
\end{align}
Combing \eqref{initial-mean-nonlinear} and  \eqref{c0}, we can deduce $C_v(0)=0$.  This type assumptions were used in \cite[Page 600]{guo-2003-vmb-invention} which hints that   the   total mass, total momentum and total energy are the same as the steady state.
\end{remark}
\begin{proof}
The proof of the nonlinear case is parallel to the process of Lemma \ref{lemma-linear-decay}. First, about $\mathfrak{E}_{\epsilon,1}^s$, by the similar process as those in the proof of  Lemma \ref{lemma-linear-decay}, we can deduce similar estimates for equation \eqref{vpb-ns-nonlinear} as \eqref{est-linear-h1-all-0}. During the calculation, $N(g_\epsilon)$ can be formally seen as the source term.
\begin{align}
\label{est-nonlinear-h1-all-0}
\begin{split}
& \dt \mathfrak{E}_\epsilon^1(t)  + \tfrac{\lambda_1}{\epsilon^2}\|g_\epsilon^\perp\|_{L^2}^2 + \tfrac{\lambda_2}{\epsilon^2}\|(\nabla_x g_\epsilon)^\perp\|_{L^2}^2 +  \lambda_3 (a_3 -3\delta) \|\nabla_v g_\epsilon\|_{L^2_\Lambda}^2 \\
& \qquad +   {\lambda_4}  \big( \|\nabla_x g_\epsilon\|_{L^2}^2 + \|\vdot g_\epsilon\|_{L^2}^2 + 2 \|\rho_\epsilon\|_{L^2}^2\big)\\
& \le \lambda_3 \underline{{(a_4 + C_\delta)}\|\bdp g_\epsilon\|_{L^2}^2} + \lambda_3 C_{\delta,1}\epsilon^2\|\nabla_x g_\epsilon\|_{L^2}^2 +  \lambda_3 C_{\delta, 2} \epsilon^2 \|\nabla_x \phi_\epsilon\|_{L^2_x}^2\\
& \qquad + \lambda_4 \tfrac{C_u^2}{a_6} \|(\nabla_x g_\epsilon)^\perp\|_{L^2_{\Lambda}}^2  + \lambda_4 a_6\|\nabla_v g_\epsilon \|_{L^2_{\Lambda}}^2 + \lambda_3 {{(a_4 + C_\delta)}\|g_\epsilon^\perp\|_{L^2}^2}\\
& \qquad + \lambda_1\intps N(g_\epsilon) g_\epsilon\bdv \bd x + \lambda_2\intps \nabla_x N(g_\epsilon) \sdot \nabla_x g_\epsilon\bdv \bd x \\
& \qquad + \lambda_3\epsilon^2\intps \nabla_v N(g_\epsilon) \sdot \nabla_v g_\epsilon\bdv \bd x + \lambda_4 \epsilon \intps \nabla_x N(g_\epsilon) \sdot \nabla_x g_\epsilon\bdv \bd x.
\end{split}
\end{align}
The terms in last two lines are triple of $g_\epsilon$. They can be bounded  and absorbed while the initial data are small enough. The difficulty are caused by he underlined term in \eqref{est-nonlinear-h1-all-0}. As mentioned before, its the mean value   on the torus is not equal to zero. According to \eqref{conservation-law-nonlinear},
\begin{align}
\label{est-mean-difference}
\begin{split}
\intt \bdp g_\epsilon(t) \bd x  =C_\Omega^{-1}\sdot C_v(0)  - C_\Omega^{-1} \sdot \tfrac{\epsilon}{3} \|\nabla_x \phi_\epsilon(t)\|_{L^2}^2   \sdot (\tfrac{|v|^2}{3} - 1), ~~ C_\Omega = \intt \bd x.
\end{split}
\end{align}
Thus,
\begin{align}
\label{est-differ}
\begin{split}
\|\bdp g_\epsilon\|_{L^2}^2 & \le  \|\bdp g_\epsilon + C_\Omega^{-1}\tfrac{\epsilon}{3}\|\nabla_x \phi_\epsilon\|_{L^2}^2 \sdot (\tfrac{|v|^2}{3} - 1) - C_\Omega^{-1}\sdot C_v(0) \|_{L^2}^2  \\
& + \| C_\Omega^{-1}\tfrac{\epsilon}{3}\|\nabla_x \phi_\epsilon\|_{L^2}^2 \sdot (\tfrac{|v|^2}{3} - 1) - C_\Omega^{-1}\sdot C_v(0) \|_{L^2}^2\\
& \le a_5 \|\nabla_x \bdp g_\epsilon  \|_{L^2}^2 + \| C_\Omega^{-1}\tfrac{\epsilon}{3}\|\nabla_x \phi_\epsilon\|_{L^2}^2 \sdot (\tfrac{|v|^2}{3} - 1) - C_\Omega^{-1}\sdot C_v(0) \|_{L^2}^2  \\
& \le a_5 \|\nabla_x \bdp g_\epsilon  \|_{L^2}^2 + \| C_\Omega^{-1}\tfrac{\epsilon}{3}\|\nabla_x \phi_\epsilon\|_{L^2}^2 \sdot (\tfrac{|v|^2}{3} - 1)\|_{L^2}^2  +  \|C_\Omega^{-1}\sdot C_v(0) \|_{L^2}^2\\
& \le a_5 \|\nabla_x \bdp g_\epsilon  \|_{L^2}^2 +   \tfrac{\epsilon^2}{9C_\Omega} \|\nabla_x \phi_\epsilon\|_{L^2}^4 +  {C}_{in},
\end{split}
\end{align}
where $C_{in}$ only depends on the fluid part of initial data and
 \begin{align}
 \label{cin}
{C}_{in} = \|C_\Omega^{-1}\sdot C_v(0) \|_{L^2}^2.
\end{align}

Compared to the linear case, there exists two more terms $ \|\nabla_x \phi_\epsilon\|_{L^2}^4$ and ${C}_{in}$. Combing \eqref{constant-a5}, we can conclude that
\begin{align}
\label{est-nonlinear-all-h1}
\begin{split}
& \dt \mathfrak{E}_\epsilon^1(t) + \tilde\lambda_1\|g_\epsilon^\perp\|_{L^2}^2 + \tilde\lambda_2\|(\nabla_x g_\epsilon)^\perp\|_{L^2}^2  + \lambda_4 \|\vdot g_\epsilon\|_{L^2}^2 \\
& \qquad +  \lambda_5 \|\nabla_v g_\epsilon\|_{L^2_\Lambda}^2  +  \lambda_6\|\nabla_x g_\epsilon\|_{L^2}^2 +  \lambda_7\|\nabla_x \phi_\epsilon\|_{L^2_x}^2 \\
& \le    \tfrac{\epsilon^2 \lambda_3 {(a_4 + C_\delta)} }{9C_\Omega} \|\nabla_x \phi_\epsilon\|_{L^2}^4  + \lambda_1\intps N(g_\epsilon) g_\epsilon\bdv \bd x \\
& + \lambda_2\intps \nabla_x N(g_\epsilon) \sdot \nabla_x g_\epsilon\bdv \bd x  + \lambda_3\epsilon^2\intps \nabla_v N(g_\epsilon) \sdot \nabla_v g_\epsilon\bdv \bd x \\
& + \lambda_4\epsilon\intps \nabla_x N(g_\epsilon) \sdot \nabla_x g_\epsilon\bdv \bd x + \lambda_3 {(a_4 + C_\delta)} C_{in}.
\end{split}
\end{align}
By the similar way of choosing $\lambda_i$, it follows that
\begin{align}
\label{est-nonlinear-es1}
\begin{split}
& \dt \mathfrak{E}_{\epsilon,1}^s(t) + c_d (\|g_\epsilon\|_{H^s}^2 + \tfrac{1}{\epsilon^2} \|(g_\epsilon)^\perp\|_{H^s}^2 + \|\nabla_v g_\epsilon\|_{H^{s-1}}^2+ \|\nabla_x \phi\|_{H^s}^2 )\\
& \qquad \le    \tfrac{\epsilon^2 \lambda_3 {(a_4 + C_\delta)} }{9C_\Omega} \|\nabla_x \phi_\epsilon\|_{L^2}^4   + \lambda_3 {(a_4 + C_\delta)} C_{in} \\
&  \qquad +  \lambda_1 \sum\limits_{k=0}^{s-1} \intps \nabla^k_x N(g_\epsilon) \sdot \nabla_x^k g_\epsilon\bdv \bd x \\
& \qquad + \lambda_2 \sum\limits_{k=0}^{s-1} \intps \nabla_x \nabla^k_x N(g_\epsilon) \sdot \nabla_x \nabla^k_x g_\epsilon\bdv \bd x \\
&  \qquad + \lambda_3 \epsilon^2 \sum\limits_{k=0}^{s-1} \intps \nabla_v \nabla^k_x N(g_\epsilon) \sdot \nabla_v \nabla^k_x g_\epsilon\bdv \bd x \\
&  \qquad + \lambda_4 \epsilon \sum\limits_{k=0}^{s-1} \intps \nabla_x \nabla^k_x N(g_\epsilon) \sdot \nabla_v \nabla^k_x g_\epsilon\bdv \bd x.
\end{split}
\end{align}
Except for $\lambda_3 {(a_4 + C_\delta)} C_{in}$, the right hand of \eqref{est-nonlinear-all-hs} can be bounded and absorbed by the left hand while the initial data are small enough. According to the linear case, we still need to consider the estimates of $\nabla_x^j \nabla_v^i g_\epsilon$ for $i \ge 2$, recalling that
\begin{align*}
\mathfrak{E}_{\epsilon,2}^s(g(t)) &= \mathfrak{E}_{\epsilon,2,1}^s(g(t)) + \mathfrak{E}_{\epsilon,2,2}^s(t),\\
\mathfrak{E}_{\epsilon,2,1}^s(t)=\sum\limits_{i = 2, i + j =s}\epsilon^2\|\nabla^i_v \nabla_x^j g_\epsilon\|_{L^2}^2,&~~\mathfrak{E}_{\epsilon,2,2}^s(t) = \sum\limits_{i \ge 3, i + j =s}\epsilon^2\|\nabla^i_v \nabla_x^j g_\epsilon\|_{L^2}^2,
\end{align*}
from the linear case, as long as that the derivative of $g_\epsilon$ with respect to $v$ is greater than $1$, the fluid parts are not needed to split out like \eqref{est-nonlinear-h1-all-0}. Similar to \eqref{est-linear-e2}, its nonlinear version reads as
\begin{align}
\label{est-nonlinear-e2}
\begin{split}
& \dt \bigg(  c_f \cdot   \mathfrak{E}_{\epsilon,2,2}^s(t) +  \mathfrak{E}_{\epsilon,2,1}^s(t)\bigg) + \tfrac{\delta}{\epsilon^2}  \mathfrak{E}_{\epsilon,2}^s(t) \\
& \qquad \le (a_4 + C_\delta +  C_{\delta,1} \epsilon^2) \mathfrak{E}_{\epsilon,1}^s(t)\\
& \qquad + (c_f +1) \epsilon^2  \sum\limits_{i \ge 2\atop i + j =s} \vert\intps \big(\nabla^j_x \nabla^i_v N(g_\epsilon)\big) \sdot \big(\nabla^j_x \nabla^i_v g_\epsilon\big)\bdv \bd x \vert.
\end{split}
\end{align}
Since $c_d \le \delta$, thus combing the relevant estimates,

\begin{align}
\label{est-nonlinear-all-hs}
\begin{split}
& \dt \bigg( c_e \cdot   \mathfrak{E}_{\epsilon,1}^s(t) +  c_f \cdot   \mathfrak{E}_{\epsilon,2,2}^s(t) +  \mathfrak{E}_{\epsilon,2,1}^s(t)\bigg) \\
& \qquad +   \tfrac{c_e}{\epsilon^2} \|(g_\epsilon)^\perp\|_{H^s}^2  + c_e\|\rho_\epsilon\|_{H^s}^2  + c_d\|g_\epsilon\|_{H^s_{x,v}}^2  \\
&\le  \tfrac{\epsilon^2 \lambda_3 {(a_4 + C_\delta)} }{9C_\Omega} \|\nabla_x \phi_\epsilon\|_{L^2}^4   + \lambda_3 {(a_4 + C_\delta)} C_{in} \\
& + c_e(\lambda_1 + \lambda_2) \sum\limits_{k=0}^{s} \vert \intps \nabla^k_x N(g_\epsilon) \sdot \nabla_x^k g_\epsilon\bdv \bd x \vert \\
& + (c_f +1 + c_e \lambda_3) \epsilon^2  \sum\limits_{i \ge 1\atop i + j =s} \vert\intps \big(\nabla^j_x \nabla^i_v N(g_\epsilon)\big) \sdot \big(\nabla^j_x \nabla^i_v g_\epsilon\big)\bdv \bd x \vert\\
& + \lambda_4 \cdot  c_e \epsilon \sum\limits_{k=0}^{s-1} \vert \intps \nabla_x \nabla^k_x N(g_\epsilon) \sdot \nabla_v \nabla^k_x g_\epsilon\bdv \bd x\vert.
\end{split}
\end{align}
For the nonlinear term, recalling
\begin{align*}
N(g_\epsilon)&:= N_1(g_\epsilon) + N_2(g_\epsilon),\\
N_1(g_\epsilon)&:= ({v}  g_\epsilon - \nabla_v g_\epsilon     ) \sdot \nabla\phi_\epsilon,\\
N_2(g_\epsilon)&:= \reps {\Gamma}(g_\epsilon,g_\epsilon),
\end{align*}
According to the definition of $\mathfrak{E}_\epsilon^s(t)$ in \eqref{est-norm-es-g-2}, as long as the order of  derivative of $g_\epsilon$ with respect to $v$ is greater than one,  there exists a coefficient $\epsilon^2$ in the norm. We should carefully calculate the non-linear terms. $N_2(g_\epsilon, g_\epsilon)$ can be controlled by the same way as that in \cite{briant-2015-be-to-ns}. For the term generated by the electric field,
\begin{align*}
& \sum\limits_{ i+j =s}\vert \intps \nabla^j_x \nabla^i_v \big( ({v}  g_\epsilon - \nabla_v g_\epsilon     ) \sdot \nabla\phi_\epsilon \big) \sdot \nabla^j_x \nabla^i_v g_\epsilon\bdv \bd x \vert \\
& \le   \sum\limits_{ i+j =s} \vert \intps \nabla^j_x \nabla^i_v \big(  {v}  g_\epsilon    \sdot \nabla\phi_\epsilon \big) \sdot \nabla^j_x \nabla^i_v g_\epsilon\bdv \bd x \vert \\
& + \sum\limits_{ i+j =s} \vert \intps \nabla^j_x \nabla^i_v \big( \nabla_v g_\epsilon      \sdot \nabla\phi_\epsilon \big) \sdot \nabla^j_x \nabla^i_v g_\epsilon\bdv \bd x \vert.
\end{align*}
For the second term, we can find that
\begin{align*}
& \sum\limits_{ i+j =s} \vert \intps \nabla^j_x \nabla^i_v \big( \nabla_v g_\epsilon      \sdot \nabla\phi_\epsilon \big) \sdot \nabla^j_x \nabla^i_v g_\epsilon\bdv \bd x \vert\\
& \le \sum\limits_{ i+j =s} \vert \intps \nabla^j_x  \big( \nabla_v \nabla^i_v g_\epsilon      \sdot \nabla\phi_\epsilon \big) \sdot \nabla^j_x \nabla^i_v g_\epsilon\bdv \bd x \vert \\
& \le  \sum\limits_{ i+j =s} \vert \intps   \big( \nabla_v \nabla^j_x \nabla^i_v g_\epsilon      \sdot \nabla\phi_\epsilon \big) \sdot \nabla^j_x \nabla^i_v g_\epsilon\bdv \bd x \vert\\
& + \sum\limits_{  i+j =s, i \ge 1 \atop k + l = j, l \ge 1} \vert \intps   \big(   \nabla^k_x \nabla^{ i +1}_v g_\epsilon      \sdot \nabla_x^{l+1}\phi_\epsilon \big) \sdot \nabla^j_x \nabla^i_v g_\epsilon\bdv \bd x \vert\\
& +  \vert \intps   \big(    \nabla_v g_\epsilon      \sdot \nabla_x^{s+1}\phi_\epsilon \big) \sdot \nabla^j_x \nabla^i_v g_\epsilon\bdv \bd x \vert.
\end{align*}
Noticing that $\|\rho_\epsilon\|_{H^s} = \|\phi_\epsilon\|_{H^{s+2}}$, thus we can infer after integration by parts
\begin{align*}
& \sum\limits_{i+j =s} \vert \intps   \big( \nabla_v \nabla^j_x \nabla^i_v g_\epsilon      \sdot \nabla\phi_\epsilon \big) \sdot \nabla^j_x \nabla^i_v g_\epsilon\bdv \bd x \vert\\
& \quad = \tfrac{1}{2}\sum\limits_{i+j =s} \vert \intps     \nabla_v (\nabla^j_x \nabla^i_v g_\epsilon)^2 \sdot \nabla\phi_\epsilon  \bdv \bd x \vert\\
& \quad = \tfrac{1}{2}\sum\limits_{i+j =s} \vert \intps      (\nabla^j_x \nabla^i_v g_\epsilon)^2 \sdot v \sdot \nabla\phi_\epsilon  \bdv \bd x \vert\\
& \quad \lesssim \|\rho_\epsilon\|_{H^s}\|g_\epsilon\|_{H^s_\Lambda}^2.
\end{align*}
Similarly, since $l+ 1 \le s$, thus
\begin{align*}
& \sum\limits_{  i+j =s, i \ge 1 \atop k + l = j, l \ge 1} \vert \intps   \big(   \nabla^k_x \nabla^{ i +1}_v g_\epsilon      \sdot \nabla_x^{l+1}\phi_\epsilon \big) \sdot \nabla^j_x \nabla^i_v g_\epsilon\bdv \bd x \vert \\
& \lesssim \|\nabla_x^{l+1}\phi_\epsilon\|_{L^\infty}  \sum\limits_{  i+j =s, i \ge 1 \atop k + l = j, l \ge 1} \vert \intps   |\nabla^k_x \nabla^{ i +1}_v g_\epsilon| \sdot        |\nabla^j_x \nabla^i_v g_\epsilon| \bdv \bd x \vert \\
& \lesssim \|\rho_\epsilon\|_{H^s}\|g_\epsilon\|_{H^s}^2.
\end{align*}
For the last one,
\begin{align*}
& \vert \intps   \big(    \nabla_v g_\epsilon      \sdot \nabla_x^{s+1}\phi_\epsilon \big) \sdot \nabla^j_x \nabla^i_v g_\epsilon\bdv \bd x \vert \\
& \lesssim \vert \intt | \nabla_x^{s+1}\phi_\epsilon|  \intv      |\nabla_v g_\epsilon|       \sdot |\nabla^j_x \nabla^i_v g_\epsilon|\bdv \bd x \vert\\
& \lesssim \vert \intt | \nabla_x^{s+1}\phi_\epsilon|   \|\nabla_v g_\epsilon\|_{L^2_v}       \sdot \|\nabla^j_x \nabla^i_v g_\epsilon\|_{L^2_v}  \bd x \vert \\
& \lesssim \|\|\nabla_v g_\epsilon\|_{L^2_v}\|_{L^\infty}\|g_\epsilon\|_{H^s_x}\|g_\epsilon\|_{H^s}\\
& \lesssim  \|\rho_\epsilon\|_{H^s}\|g_\epsilon\|_{H^s}^2.
\end{align*}
Similarly, we can infer that
\begin{align*}
\sum\limits_{i+j =s} \vert \intps \nabla^j_x \nabla^i_v \big(  {v}  g_\epsilon    \sdot \nabla\phi_\epsilon \big) \sdot \nabla^j_x \nabla^i_v g_\epsilon\bdv \bd x \vert  \lesssim   \|\rho_\epsilon\|_{H^s}\|g_\epsilon\|_{H^s_\Lambda}^2.
\end{align*}

All together, we can conclude that
\begin{align}
\sum\limits_{\atop i+j =s}\vert \intps \nabla^j_x \nabla^i_v \big( ({v}  g_\epsilon - \nabla_v g_\epsilon     ) \sdot \nabla\phi_\epsilon \big) \sdot \nabla^j_x \nabla^i_v g_\epsilon\bdv \bd x \vert \lesssim \|g_\epsilon\|_{H^s_x}\|g_\epsilon\|_{H^s_\Lambda}^2.
\end{align}

For the second part of $N(g_\epsilon)$  containing nonlinear collision operator in the right hand of \eqref{est-nonlinear-all-hs}, i.e., $N_2(g_\epsilon)$, recalling $N_2(g_\epsilon)  = \reps {\Gamma}(g_\epsilon,g_\epsilon)$ and $\Gamma(g_\epsilon, g_\epsilon)$ belongs to the orthogonal space of $\mathcal{L}$,
\begin{align*}
& c_e(\lambda_1 + \lambda_2)\sum\limits_{k=0}^{s} \vert \intps \nabla^k_x N_2(g_\epsilon) \sdot \nabla_x^k g_\epsilon\bdv \bd x \vert \\
& = c_e(\lambda_1 + \lambda_2)\sum\limits_{k=0}^{s} \vert \intps \nabla^k_x \Gamma(g_\epsilon, g_\epsilon) \sdot \tfrac{1}{\epsilon}\big(\nabla_x^k g_\epsilon\big)^\perp\bdv \bd x \vert \\
& \le \tfrac{c_e^2(\lambda_1 + \lambda_2)^2}{c_e}\|\Gamma(g_\epsilon, g_\epsilon)\|_{H^s_x}^2 + \tfrac{c_e}{2\epsilon^2}\|g_\epsilon^\perp\|_{H^s_x}^2.
\end{align*}
For the last two terms in the right hand of \eqref{est-nonlinear-all-hs},
\begin{align*}
& \epsilon^2  \sum\limits_{i \ge 1\atop i + j =s} \vert\intps \big(\nabla^j_x \nabla^i_v N_2(g_\epsilon)\big) \sdot \big(\nabla^j_x \nabla^i_v g_\epsilon\big)\bdv \bd x \vert \\
& + \epsilon \sum\limits_{k=0}^{s-1} \vert \intps \nabla_x \nabla^k_x N_2(g_\epsilon) \sdot \nabla_v \nabla^k_x g_\epsilon\bdv \bd x\vert \\
& \le  \epsilon \sum\limits_{i \ge 1\atop i + j =s} \vert\intps \big(\nabla^j_x \nabla^i_v \Gamma(g_\epsilon, g_\epsilon)\big) \sdot \big(\nabla^j_x \nabla^i_v g_\epsilon\big)\bdv \bd x \vert\\
& \qquad +  \sum\limits_{k=0}^{s-1} \vert \intps \nabla_x \nabla^k_x \Gamma(g_\epsilon, g_\epsilon) \sdot \nabla_v \nabla^k_x g_\epsilon\bdv \bd x\vert\\
& \lesssim \epsilon \|\Gamma(g_\epsilon, g_\epsilon)\|_{H^s}\|g_\epsilon\|_{H^{s}} +  \|\Gamma(g_\epsilon, g_\epsilon)\|_{H^s_x}\|\nabla_v g_\epsilon\|_{H^{s-1}}.
\end{align*}
By the assumption \eqref{assump-h1-h},
\begin{align*}
\|\Gamma(g_\epsilon, g_\epsilon)\|_{H^s_x} \lesssim \|g_\epsilon\|_{H^s_x} \|g_\epsilon\|_{H^s_{\Lambda_x}},~~\|\Gamma(g_\epsilon, g_\epsilon)\|_{H^s} \lesssim \|g_\epsilon\|_{H^s}\|g_\epsilon\|_{H^s_\Lambda}
\end{align*}
Combing the relevant estimates, there exists some constant $C_n$ such that
\begin{align}
\begin{split}
& \dt \bigg( c_e \cdot   \mathfrak{E}_{\epsilon,1}^s(t) +  c_f \cdot   \mathfrak{E}_{\epsilon,2,2}^s(t) +  \mathfrak{E}_{\epsilon,2,1}^s(t)\bigg) \\
& \qquad +   \tfrac{c_e}{2\epsilon^2} \|(g_\epsilon)^\perp\|_{H^s_x}^2  + c_e\|\rho_\epsilon\|_{H^s}^2  + c_d\|g_\epsilon\|_{H^s_{x,v}}^2  \\
&\le C_n\big(\|g_\epsilon\|_{H^s_x}^2 + \|g_\epsilon\|_{H^s_x} + \epsilon\|g_\epsilon\|_{H^s}\big)\|g_\epsilon\|_{H^s}^2     + \lambda_3 {(a_4 + C_\delta)} C_{in}.
\end{split}
\end{align}
While the initial data satisfy   \eqref{initial-mean-nonlinear},
\[ C_{in} = 0, \]
we can conclude that
\begin{align}
\label{est-re-z}
\begin{split}
& \dt \bigg( c_e \cdot   \mathfrak{E}_{\epsilon,1}^s(t) +  c_f \cdot   \mathfrak{E}_{\epsilon,2,2}^s(t) +  \mathfrak{E}_{\epsilon,2,1}^s(t)\bigg) +  c_d \|g_\epsilon\|_{H^{s}_x}^2 +   \tfrac{c_d}{\epsilon^2} \cdot \epsilon^2 \|\nabla_v g_\epsilon\|_{H^{s-1}}^2 \\
&\lesssim \big(\|g_\epsilon\|_{H^s}^2 + \|g_\epsilon\|_{H^s} + \|\rho_\epsilon\|_{H^s}\big)\|g_\epsilon\|_{H^s}^2     + \lambda_3 {(a_4 + C_\delta)} C_{in}\\
&\lesssim (\|g_\epsilon\|_{H^s_x} +   \epsilon \|\nabla_v g_\epsilon\|_{H^{s-1}})( \|g_\epsilon\|_{H^{s}_x}^2 +   \tfrac{1}{\epsilon^2} \cdot \epsilon^2 \|\nabla_v g_\epsilon\|_{H^{s-1}}^2 )\\
& \lesssim   \sqrt{\mathfrak{E}_\epsilon^s(t)} \|g_\epsilon\|_{H^{s}_x}^2  + \tfrac{1}{\epsilon^2} \sqrt{\mathfrak{E}_\epsilon^s(t)}\cdot  \epsilon^2 \|\nabla_v g_\epsilon\|_{H^{s-1}}^2.
\end{split}
\end{align}
Thus, there exists some constant $C_s$
\begin{align}
\label{est-v-uni}
\begin{split}
& \dt \bigg( c_e \cdot   \mathfrak{E}_{\epsilon,1}^s(t) +  c_f \cdot   \mathfrak{E}_{\epsilon,2,2}^s(t) +  \mathfrak{E}_{\epsilon,2,1}^s(t)\bigg) +  c_d \|g_\epsilon\|_{H^{s}_{\Lambda_x}}^2 +   \tfrac{c_d}{\epsilon^2} \cdot \epsilon^2 \|\nabla_v g_\epsilon\|_{H^{s-1}_\Lambda}^2 + \tfrac{c_e}{2\epsilon^2} \|(g_\epsilon)^\perp\|_{H^s_x}^2 \\
& \le C_s \sqrt{\mathfrak{E}_\epsilon^s(t)}\|g_\epsilon\|_{H^{s}_x}^2 + \tfrac{C_s}{\epsilon^2} \sqrt{\mathfrak{E}_\epsilon^s(t)}\cdot  \epsilon^2 \|\nabla_v g_\epsilon\|_{H^{s-1}}^2.
\end{split}
\end{align}
From \eqref{constant-cl-cu},  since $c_e \cdot   \mathfrak{E}_{\epsilon,1}^s(t) +  c_f \cdot   \mathfrak{E}_{\epsilon,2,2}^s(t) +  \mathfrak{E}_{\epsilon,2,1}^s(t)$ is equivalent to $\mathfrak{E}_\epsilon^s(t)$, i.e.,
\begin{align*}
 c_l \mathfrak{E}_\epsilon^s \le  c_e \cdot   \mathfrak{E}_{\epsilon,1}^s(t) +  c_f \cdot   \mathfrak{E}_{\epsilon,2,2}^s(t) +  \mathfrak{E}_{\epsilon,2,1}^s(t) \le c_u \mathfrak{E}_\epsilon^s.
\end{align*}
thus by the continuous bootstrap method, as long as the initial data
\begin{align}
\label{est-v-uni-2}
\mathfrak{E}_\epsilon^s(0) \le \frac{c_l c_d^2}{4 c_u C_s^2 },
\end{align}
the global existence can be obtained and
\begin{align}
\label{est-v-uni-3}
\mathfrak{E}_\epsilon^s(t) \le \frac{c_d^2}{4  C_s^2 },~~\forall t >0.
\end{align}
Moreover,
\begin{align}
\label{est-to-e}
\begin{split}
& \dt \bigg( c_e \cdot   \mathfrak{E}_{\epsilon,1}^s(t) +  c_f \cdot   \mathfrak{E}_{\epsilon,2,2}^s(t) +  \mathfrak{E}_{\epsilon,2,1}^s(t)\bigg) +  \frac{c_d}{2 c_u}\bigg( c_e \cdot   \mathfrak{E}_{\epsilon,1}^s(t) +  c_f \cdot   \mathfrak{E}_{\epsilon,2,2}^s(t) +  \mathfrak{E}_{\epsilon,2,1}^s(t)\bigg) \le 0.
\end{split}
\end{align}
Thus, we can obtain that
\begin{align}
\label{est-nonlinear-decay-rate}
\mathfrak{E}_\epsilon^s(t) \le \tfrac{1}{c_l}\exp(- \tfrac{c_d}{2c_u} t).
\end{align}
\end{proof}

\section{Construction of approximate solutions}
\label{sec-app}
In this section, we are going to sketch the process of construction of approximate solutions. For any fixed $\epsilon>0$,
\begin{align}
\label{vpb-ns-nonlinear-approx}
\partial_t g_{n,\epsilon} + \reps \vdot g_{n,\epsilon} +  \repst \mathcal{L}(g_{n,\epsilon})   - \tfrac{v}{\epsilon} \sdot \nabla \phi_{n,\epsilon} = N(g_{n-1,\epsilon}),
\end{align}
where
\begin{align*}
\Delta_x \phi_{n-1, \epsilon} & =\int g_{n-1,\epsilon} \m \bd v,\\
N(g_{n-1,\epsilon})&:= N_1(g_{n-1,\epsilon}) + N_2(g_{n-1,\epsilon}),\\
N_1(g_{n-1,\epsilon})&:= ({v}  g_{n,\epsilon} - \nabla_v g_{n,\epsilon}     ) \sdot \nabla\phi_{n-1,\epsilon},\\
N_2(g_{n-1,\epsilon})&:= \reps {\Gamma}(g_{n-1,\epsilon},g_{n-1,\epsilon}).
\end{align*}
We also assume that initial states of $g_{\epsilon,n}$ satisfy that
\begin{align}
\label{initial-mean-nonlinear-approx}
\begin{split}
  { \intps g_{n,\epsilon}(0) \m \bd v \bd x = 0,     \intps v g_{n,\epsilon}(0) \m \bd v \bd x =0,}\\
  \big( 3\intps (\tfrac{|v|^2}{3} - 1) g_{n,\epsilon}(0)\m \bd v \bd x +  \epsilon \|\nabla_x \phi_{n-1,\epsilon}(0)\|_{L^2}^2 \big)= 0.
  \end{split}
\end{align}
Under the settings and \eqref{vpb-ns-nonlinear-approx} and \eqref{initial-mean-nonlinear-approx}, for each $n \ge 1,\, n \in \mathbb{N}^+$,  the first  equation of \eqref{vpb-ns-nonlinear-approx} is linear with source term $N(g_{n-1,\epsilon})$. Provided that there exist estimates (in $\mathfrak{E}_\epsilon^s(t)$ norm) of $g_{n-1, \epsilon}$, the existence of $g_{n, \epsilon}$ is guaranteed by Hahn-Banach theorem.  Thus,  we can prove the existence of $g_{n,\epsilon}$ for each $n\in\mathbb{N}^+$.  From equations \eqref{vpb-ns-nonlinear-approx}, one can establish similar version of \eqref{est-nonlinear-all-hs}. Following the similar process of dealing with  \eqref{est-nonlinear-all-hs} and by induction method, the uniform estimates of $g_{n, \epsilon}$ with respect to $n$ can be achieved. By weak convergence method, the wellposedness of $g_\epsilon$ can be proved. Besides, as there exists one more derivative with respect to $v$ in $N_1(g_\epsilon)$, i.e., $\nabla_v g_\epsilon      \sdot \nabla\phi_\epsilon$, the uniqueness of $g_\epsilon$ is verified only in $\mathfrak{E}_\epsilon^{s-1}(t)$ space.

\section{The convergence rate of $g_\epsilon$}
\label{sec-con}
This section consists of prove the main results Thoerem \ref{main-results-convergence-rate}.

Based on the uniform estimates \eqref{est-lemma-nonlinear-exp-decay} of $g_\epsilon$ with respect to $\epsilon$, i.e.,
\[ \|g_\epsilon(t)\|_{H^s_x}^2 \le  \bar{c}_0,~~~~\forall t \ge 0, \]
together with the formal analysis in \cite{diogosrm-2019-vmb-fluid}, there exists a unique $g$ with
\[ g = \rho(t,x) +  u(t,x)\sdot v + \theta(t,x)\left( \tfrac{|v|^2}{2} - \tfrac{3}{2}\right),~~~~\divg u =0,~~\Delta(\rho+ \theta) = \rho. \]
Specially, $\rho$,~$u$ and $\theta$ is a strong solution to \eqref{nsfp}. We will give a sketch of proof on verifying the fluids limit in Sec. \ref{sec-limits}.

The goal of this section is to obtain the convergence rate of $g_\epsilon$ to $g$ with the average of time. Based on the spectrum analysis and Duhamel's principle, for the Boltzmann equation, from  \cite[Theorem 2.5]{briant-2015-be-to-ns},  the convergence rate is $\sqrt{\epsilon|\ln \epsilon|}$. In the rest part of this section, with the similar  spectrum representation of semigroup as \cite{briant-2015-be-to-ns,convergence-rate-linear-1975}, we will show  the convergence is the same for VPB system, that is to say
\[ \|\int_0^t g_\epsilon(s) \bd s - \int_0^t g(s)\bd s \|_{H^s_x} \approx O(\sqrt{\epsilon|\ln \epsilon|}),~~~~\forall~ t>0. \]
For the linear VPB system \eqref{vpb-ns-scaling-ns-linear}, recalling that
\[ \vpbg(g_\epsilon):= -\reps \vdot g_\epsilon  +  \repst \mathcal{L}(g_\epsilon)  + \tfrac{v}{\epsilon} \sdot \nabla \phi_\epsilon. \]
By Lemma \ref{lemma-linear-decay}, $\vpbg$ generates a semi-group $e^{t\vpbg}$ on $H^s$. By Duhamel's principle, for the nonlinear system \eqref{vpb-ns-nonlinear},
\begin{align}
\label{duhamel-g}
\begin{aligned} g_{\epsilon}(t,x,v) &=e^{t G_{\varepsilon}} g_\epsilon(0,x,v)+\int_{0}^{t}  e^{(t-s) G_{\epsilon}} N(g_\epsilon) \bd s \\ &:=U^{\epsilon}(t) g_\epsilon(0)+\Psi^{\epsilon}\left(g_{\epsilon}\right). \end{aligned}
\end{align}
Denoting the Fourier transform with $x$ on torus  by $\mathcal{F}$, the associate discrete variable by $n \in \mathbb{Z}^3$, i.e,
\[ \mathcal{F}(g_\epsilon) = \hat{g}_\epsilon(t,n,v),  \]
then $\hat{g}_\epsilon$ satisfies the following equations:
\begin{align}
\label{vpb-linear-fourier}
\epsilon^2 \partial_t \hat{g}_\epsilon - \mathcal{L}\hat{g}_\epsilon + \epsilon \mathrm{i}(v \cdot n) \hat{g}_\epsilon + \epsilon \frac{\mathrm{i}(v \cdot n)}{|n|^{2}}  \int_{\mathbb{R}^{3}} \hat{g}_\epsilon \bdv =0.
\end{align}
Denoting the linear operator $\mathcal{B}_\epsilon(n)$ as following:
\[ \mathcal{A}_\epsilon(n) \hat{g}_\epsilon :=   \tfrac{1}{\epsilon^2} \mathcal{L}\hat{g}_\epsilon - \tfrac{1}{\epsilon} \mathrm{i}(v \cdot n) \hat{g}_\epsilon - \tfrac{1}{\epsilon} \frac{\mathrm{i}(v \cdot n)}{|n|^{2}}  \int_{\mathbb{R}^{3}} \hat{g}_\epsilon \bdv,  \]
and
\[ \mathcal{B}_\epsilon(n){h} :=     \mathcal{L}h-   \mathrm{i}(v \cdot n) h -   \epsilon^2 \frac{\mathrm{i}(v \cdot n)}{|n|^{2}}  \int_{\mathbb{R}^{3}} h \bdv,  \]
then \eqref{vpb-linear-fourier} can be rewritten as
\begin{align}
\label{bepsilon}
\partial_t \hat{g}_\epsilon = \tfrac{1}{\epsilon^2} \mathcal{B}_\epsilon(\epsilon n) \hat{g}_\epsilon .
\end{align}
Furthermore,
The spectrum of $\mathcal{B}_1(n)$ was clearly investigated in \cite[Lemma 3.16]{spectrum-vpb-2016-bipolar-decay} and \cite[Theorem 2.11]{spectrum-arxiv-vpb}. The spectrum of $\mathcal{B}_\epsilon$ is stated in Sec. \ref{sec-spec}.  Formally, $\mathcal{B}_\epsilon$ is similar to $\mathcal{B}_1$. But we can not use their spectrum results directly. Some modification should be made to their proof in \cite[Lemma 3.16]{spectrum-vpb-2016-bipolar-decay} and \cite[Theorem 2.11]{spectrum-arxiv-vpb} to adapt to the new  operator  $\mathcal{B}_\epsilon$.   We collect the semi-group structure results below first.  From the previous analysis, according to \eqref{duhamel-g}, we can represent the solution $g_\epsilon$:
\begin{align}
\label{duhamel-g-copy}
\begin{aligned}
g_{\epsilon}(t,x,v) &=e^{t G_{\varepsilon}} g_\epsilon(0,x,v)+ \int_{0}^{t}  e^{(t-s) G_{\epsilon}} N_1(g_\epsilon) \bd s + \int_{0}^{t}  e^{(t-s) G_{\epsilon}} N_2(g_\epsilon) \bd s \\
 &=U^{\epsilon} g_\epsilon(0)+\Psi^{\epsilon}_1\left(g_{\epsilon}\right) + \Psi^{\epsilon}_2\left(g_{\epsilon}\right). \end{aligned}
\end{align}
Recalling that
\[ U^{\epsilon} g(0)  = \mathcal{F}^{-1}\left( \exp\left(\repst \mathcal{B}_\epsilon(\epsilon n)t\right)\hat{g}_0 \right), \]
similar to  \cite{convergence-rate-linear-1975,briant-2015-be-to-ns},
we can represent and decompose $ U^{\epsilon} g_\epsilon(0)$ with the help of \eqref{semi-decompose}
\begin{align*}
\exp\left(\repst \mathcal{B}_\epsilon(\epsilon n)t\right)\hat{g}_\epsilon(0) =  \bds_{1}(\tfrac{t}{\epsilon^2}, \epsilon n) \hat{g}_\epsilon(0)  + \bds_{2}(\tfrac{t}{\epsilon^2}, \epsilon n) \hat{g}_\epsilon(0)
\end{align*}
While $|\epsilon n| \le r_0$, from  Theorem \ref{theorem-decom}, the high frequencies enjoy exponential decay.  From \eqref{semi-decompose},
\begin{align}
\label{low-fre}
\bds_{1}(\tfrac{t}{\epsilon^2}, \epsilon n) \hat{g}_\epsilon(0) = \sum_{j=-1}^{3} e^{\tfrac{t \lambda_{j}(|\epsilon n |)}{\epsilon^2} } \bds_{j}(\epsilon n )  \hat{g} \mathbf{1}_{|\epsilon n| \le r_0}
\end{align}
Plugging the eigenvalues into \eqref{low-fre}, it follows that
\begin{align}
\label{low-ei}
\begin{split}
\bds_{1}(\tfrac{t}{\epsilon^2}, \epsilon n)\hat{g}_\epsilon(0) = &  \sum_{j=\pm 1} e^{\{\tfrac{\pm\mathrm{i}( 6|n| + 5 n^2  ) }{6\epsilon} + \tfrac{a_{11} n^2 }{2} + \frac{1}{\epsilon^2} O(|\epsilon n|^3)\}t } \bdp_{j}(\epsilon n )  \mathbf{1}_{|\epsilon n| \le r_0}\hat{g}_\epsilon(0)\\
& +   e^{\{ \tfrac{a_{44} n^2 }{2} + \frac{1}{\epsilon^2} O(|\epsilon n|^3)\}t } \bdp_0(\epsilon n )   \mathbf{1}_{|\epsilon n| \le r_0} \hat{g}_\epsilon(0) \\
& +   e^{\{ \tfrac{a_{22} n^2 }{2} + \frac{1}{\epsilon^2} O(|\epsilon n|^3)\}t } \bdp_2(\epsilon n )   \mathbf{1}_{|\epsilon n| \le r_0} \hat{g}_\epsilon(0)
\end{split}
\end{align}
with
\begin{equation}
\label{semi-p}
\begin{aligned} \bdp_{2}(\epsilon n) \hat{g}_\epsilon=&\sum_{j=2,3}\left(\hat{m}_\epsilon \cdot w^{j}\right)\left(w^{j} \cdot v\right)  +\sum_{j=2,3}|\epsilon n| \mathrm{T}_{1,j}(w) \hat{g}_\epsilon  + \sum_{j=2,3}|\epsilon n|^2 \mathrm{T}_{2,j}(w) \hat{g}_\epsilon \\
& :=\bdp_{0,2}(n)\hat{g}_\epsilon  + \epsilon |n| \bdp_{1,2}(w)\hat{g}_\epsilon + \epsilon^2 |n|^2 \bdp_{2,2}(w)\hat{g}_\epsilon,  \\
& := \bdp_{0,2}(n)\hat{g}_\epsilon + \epsilon |n| \bdp_{r,2}(w)\hat{g}_\epsilon,  \\
 \bdp_{0}(\epsilon n) \hat{g}_\epsilon=&\left(\hat{\theta}_{\epsilon}(0)-\sqrt{\tfrac{2}{3}} \hat{\rho}_{\epsilon}(0)\right) \chi_{4}+|\epsilon n| \mathrm{T}_{0,1}(w) \hat{g}_\epsilon + |\epsilon n|^2 \mathrm{T}_{0,2}(w) \hat{g}_\epsilon\\
  & :=\bdp_{0,0}(n)\hat{g}_\epsilon(0)+ \epsilon |n| \bdp_{1,0}(w)\hat{g}_\epsilon + \epsilon^2 |n|^2 \bdp_{2,0}(w)\hat{g}_\epsilon\\
 & :=\bdp_{0,0}(n)\hat{g}_\epsilon+ \epsilon |n| \bdp_{r,0}(w)\hat{g}_\epsilon \\
 \bdp_{\pm }(\epsilon n) \hat{g}_\epsilon=& \tfrac{1}{2}\left[\left(\hat{m}_{\epsilon} \cdot \omega\right) \mp \tfrac{1}{|n|} \hat{\rho}_{\epsilon} \mp   (\hat{\rho}_\epsilon + \hat{\theta}_\epsilon) |n|\right](v \cdot \omega) \\ & + \epsilon \left(\tfrac{1}{2} \hat{\rho}_{\epsilon}\left(1+\sqrt{\tfrac{2}{3}} \chi_{4}\right)    + |n| \mathrm{T}_{1,\pm 1}(\xi) \hat{g}_{\epsilon}\right)  + \epsilon^2|n|^2 \mathrm{T}_{2,\pm 1}(\xi) \hat{g}_{\epsilon} \\
 & := \bdp_{0,\pm}(n)\hat{g}_\epsilon+ \epsilon |n|\bdp_{1,\pm}(w)\hat{g}_\epsilon + \epsilon^2 |n|^2\bdp_{2,\pm}(w)\hat{g}_\epsilon\\
 & := \bdp_{0,\pm}(n)\hat{g}_\epsilon+ \epsilon |n|\bdp_{r,\pm}(w)\hat{g}_\epsilon
  \end{aligned}
\end{equation}
In the above equations,
\[w = \tfrac{n}{|n|},~\hat{\rho}_\epsilon(0) = \intv \hat{g}_\epsilon(0) \bdv, \hat{m}_\epsilon(0) = \intv \hat{g}_\epsilon(0) v \bdv, \hat{\theta}_\epsilon(0) = \intv  \tfrac{|v|^2-3}{3}\hat{g}_\epsilon(0) \bdv.   \]
Compared to  \cite{convergence-rate-linear-1975} and \cite{briant-2015-be-to-ns}, the lower frequencies part of the semi-group for the linear Boltzmann operator and linear Vlasov-Poisson- Boltzmann operator are similar. Indeed, their structures are
\begin{align}
\label{structure-semi}
 e^{\{\tfrac{a^{j}}{\epsilon} + { b^j n^2 }  +  \epsilon c^j |n|^3 \}t } \left(\bdp_{0,j}(n )  + \epsilon \bdp_{r,j}(w ) \right),~~j=\pm 1,0, 2,
\end{align}
where all the constants are real number with $a^0=a^2 =0$, defined in \eqref{low-ei}. Besides, $\bdp_{0,j}(n )$ is made up  of the macroscopic variable and the remainder $\bdp_{r,j}(\epsilon n )$ is a bounded operator. The only difference between them is that the constant $a^j$, $b^j$ and $c^j$ are with different values. But the values do not affect the proof. In what follows, we will prove the convergence rates of electric field parts in details as a example.

\subsection{The convergence rate of the electric field}
Similar to \eqref{structure-semi} and \cite[Sec. 8.1.1]{briant-2015-be-to-ns},  we split the low frequencies of semi-group like this:
\begin{align}
\begin{aligned}
\bds_{1}(\tfrac{t}{\epsilon^2}, \epsilon n)   & = \sum_{j=-1}^{2} e^{\tfrac{t \lambda_{j}(|\epsilon n |)}{\epsilon^2} } \bdp_{j}(\epsilon n )    \mathbf{1}_{|\epsilon n| \le r_0} \\
 & =  \sum_{j=-1}^{2} e^{\{\tfrac{a^{j}}{\epsilon} + { b^j n^2 }  +  \epsilon c^j |n|^3 \}t } \left(\bdp_{0,j}(n )  + \epsilon \bdp_{r,j}(w ) \right)\hat{g} \mathbf{1}_{|\epsilon n| \le r_0}\\
& =  \sum_{j=-1}^{2} e^{\{\tfrac{a^{j}}{\epsilon} + { b^j n^2 }   \}t }  \bdp_{0,j}(n )   \\
& ~~ + \sum_{j=-1}^{2} \mathbf{1}_{|\epsilon n| \le r_0}  e^{\{\tfrac{a^{j}}{\epsilon} + { b^j n^2 }   \}t } \left( e^{   \epsilon c^j |n|^3 t }  -1 \right) \bdp_{0,j}(n )\\
& ~~ + \sum_{j=-1}^{2}   e^{\{\tfrac{a^{j}}{\epsilon} + { b^j n^2 }   \}t } \left(\mathbf{1}_{|\epsilon n| \le r_0}  -1 \right) \bdp_{0,j}(n )\\
& ~~ + \sum_{j=-1}^{2} e^{\{\tfrac{a^{j}}{\epsilon} + { b^j n^2 }  +  \epsilon c^j |n|^3 \}t }   \epsilon \bdp_{r,j}(w )  \hat{g}_\epsilon \mathbf{1}_{|\epsilon n| \le r_0}
\end{aligned}
\end{align}
Correspondingly,  we can represent $\Psi^\epsilon_1(g_\epsilon)$ as follows:
\begin{align}
\label{estPsi1Split}
\begin{aligned}
\Psi^{\epsilon}_1\left(g_{\epsilon}\right) & = \int_{0}^{t}  e^{(t-s) G_{\epsilon}} N_1(g_\epsilon)(s)  \bd s \\
& = \mathcal{F}^{-1}\left\lbrace   \int_0^t \left[\bds_{1}(\tfrac{t-s}{\epsilon^2}, \epsilon n)    + \bds_{2}(\tfrac{t-s}{\epsilon^2}, \epsilon n)\right] \mathcal{F}({N_1(g_\epsilon)})(s) \bd s    \right\rbrace \\
& = \sum_{j=-1}^{2}\mathcal{F}^{-1}\left\lbrace   \int_0^t  e^{\{\tfrac{a^{j}}{\epsilon} + { b^j n^2 }   \}(t-s)  }  \bdp_{0,j}(n )  \mathcal{F}({N_1(g_\epsilon)})(s) \bd s    \right\rbrace\\
& + \sum_{j=-1}^{2}\mathcal{F}^{-1}\left\lbrace     \mathbf{1}_{|\epsilon n| \le r_0}  e^{\{\tfrac{a^{j}}{\epsilon} + { b^j n^2 }   \}t } \left( e^{   \epsilon c^j |n|^3 t }  -1 \right) \bdp_{0,j}(n ) \mathcal{F}({N_1(g_\epsilon)})(s) \bd s    \right\rbrace\\
& + \sum_{j=-1}^{2}\mathcal{F}^{-1}\left\lbrace   \int_0^t  e^{\{\tfrac{a^{j}}{\epsilon} + { b^j n^2 }   \}(t-s)  } \left(\mathbf{1}_{|\epsilon n| \le r_0}  -1 \right)  \bdp_{0,j}(n )  \mathcal{F}({N_1(g_\epsilon)})(s) \bd s    \right\rbrace\\
& + \sum_{j=-1}^{2}\mathcal{F}^{-1}\left\lbrace   \int_0^t e^{\{\tfrac{a^{j}}{\epsilon} + { b^j n^2 }  +  \epsilon c^j |n|^3 \}(t-s) }    \mathbf{1}_{|\epsilon n| \le r_0}\epsilon \bdp_{r,j}(w )     \mathcal{F}({N_1(g_\epsilon)})(s) \bd s    \right\rbrace\\
& + \mathcal{F}^{-1}\left\lbrace   \int_0^t \left[ \bds_{2}(\tfrac{t-s}{\epsilon^2}, \epsilon n)\right] \mathcal{F}({N_1(g_\epsilon)})(s) \bd s    \right\rbrace  \\
&:= \sum_{j=-1}^{2}\Psi_{1,1,j}^\epsilon(g_\epsilon) + \sum_{j=-1}^{2}\Psi_{2,1,j}^\epsilon(g_\epsilon)  + \sum_{j=-1}^{2}\Psi_{3,1,j}^\epsilon(g_\epsilon) + \sum_{j=-1}^{2}\Psi_{r,1,j}^\epsilon(g_\epsilon) + \Psi_{1,2}^\epsilon(g_\epsilon).
\end{aligned}
\end{align}
Denote
\[ \Psi_1(t) h = \mathcal{F}^{-1}\left[\int_0^t\left( e^{ \tfrac{a_{44} n^2 }{2} (t-s) } P_{0,0} + e^{ \tfrac{a_{22} n^2 }{2} (t-s) } P_{0,2}\right)N_1(h)(s)\bd s \right]. \]
\begin{lemma}[The Electric field part]
\label{lemmaConvergenceElec}
Under the assumption of Theorem \ref{main-results-convergence-rate},  for any $T>0$, we can prove that
\begin{align}
\label{lemmaElectric-decay-rates-2}
\begin{aligned}
\|\int_0^T \Psi_1^\epsilon(s) g_\epsilon  \bd s -  \int_0^T \Psi_1(s) g_\epsilon \bd s\|_{H^\ell_x}    \lesssim \epsilon \sqrt T,~~\ell \le s-1,
\end{aligned}
\end{align}
and
\begin{align}
\label{lemmaElectric-decay-rates-l2}
\begin{aligned}
& \int_0^T \|\Psi_1^\epsilon(s) g_\epsilon  -   \Psi_1(s) g_\epsilon - \sum\limits_{\pm 1}\Psi_{1,1,\pm 1}^\epsilon(g)\|_{H^\ell_x}^2 \bd s    \\
& \lesssim \epsilon^2 T + \sup\limits_{0 \le s \le T} \|g_\epsilon(s)\|_{H^s_x}^2 \int_0^T \|(\bdp g_\epsilon - g)(s)\|_{L^2}^2\bd s,~~\ell \le s-1.
\end{aligned}
\end{align}
\end{lemma}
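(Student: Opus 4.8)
The plan is to work on the Fourier side in $x$ and to read everything off the decomposition \eqref{estPsi1Split} of $\Psi^\epsilon_1(g_\epsilon)$ together with the semi-group structure \eqref{structure-semi}. The first point is that, since $a^0=a^2=0$, $b^0=\tfrac{a_{44}}2$, $b^2=\tfrac{a_{22}}2$, $\bdp_{0,0}=P_{0,0}$ and $\bdp_{0,2}=P_{0,2}$, the two parabolic pieces $\Psi_{1,1,0}^\epsilon(g_\epsilon)+\Psi_{1,1,2}^\epsilon(g_\epsilon)$ coincide exactly with $\Psi_1(g_\epsilon)$; hence
\[ \Psi^\epsilon_1(g_\epsilon)-\Psi_1(g_\epsilon)=\sum_{j=\pm1}\Psi_{1,1,j}^\epsilon(g_\epsilon)+\sum_{j=-1}^{2}\big(\Psi_{2,1,j}^\epsilon+\Psi_{3,1,j}^\epsilon+\Psi_{r,1,j}^\epsilon\big)(g_\epsilon)+\Psi_{1,2}^\epsilon(g_\epsilon), \]
and it suffices to bound these five families. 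Throughout I use the uniform bound $\|g_\epsilon(t)\|_{H^s_x}^2+\epsilon^2\|\nabla_v g_\epsilon(t)\|^2_{H^{s-1}}\le\bar c_{00}e^{-\bar c_0t}$ of Theorem \ref{theoremNonlinear-decay}, and the fact that under \eqref{assumpIni-Con} the limit $g=\rho+u\sdot v+\tfrac\theta2(|v|^2-3)$ lies in $\mathrm{Ker}\,\mathcal L$ and solves \eqref{nsfp}.

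For \eqref{lemmaElectric-decay-rates-2} the acoustic terms $\Psi_{1,1,\pm1}^\epsilon$ are killed by non-resonant cancellation. Interchanging the two time integrations in $\int_0^T\Psi_{1,1,\pm1}^\epsilon(g_\epsilon)(t)\,\mathrm dt$, the inner integral produces the factor $\big(e^{(\mathrm i\theta_n^\pm/\epsilon+b^{\pm1}n^2)(T-s)}-1\big)\big/\big(\mathrm i\theta_n^\pm/\epsilon+b^{\pm1}n^2\big)$ with $\theta_n^\pm\in\mathbb R$, $|\theta_n^\pm|\gtrsim|n|$ and $b^{\pm1}<0$; since $|n|\ge1$ on $\mathbb T^3$ its modulus is $\lesssim\epsilon/|n|$, so after summation this family is $\lesssim\epsilon\int_0^T\|N_1(g_\epsilon)(s)\|_{H^\ell_x}\,\mathrm ds\lesssim\epsilon\sqrt T$ — the $v$-derivative inside $N_1$ being removed by integrating by parts in $v$ against the polynomial, $\dm$-weighted test functions defining $\bdp_{0,\pm1}$, so that no factor $\epsilon^{-1}$ survives. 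The four remaining families are genuine $O(\epsilon)$ errors: in $\Psi_{2,1,j}^\epsilon$ one writes $e^{\epsilon c^j|n|^3(t-s)}-1=O\big(\epsilon|n|^3(t-s)\big)$ and absorbs the surplus powers of $|n|$ into $e^{b^jn^2(t-s)}$ (legitimate once $r_0$ is so small that $|b^j|n^2>\epsilon|c^j||n|^3$ on $|\epsilon n|\le r_0$), at the cost of one $x$-derivative — which is where $\ell\le s-1$ enters; in $\Psi_{3,1,j}^\epsilon$ the frequencies satisfy $|n|>r_0/\epsilon$, so the parabolic factor decays like $e^{-c(t-s)/\epsilon^2}$, the time integral contributes $\epsilon^2$, and $\mathbf 1_{|\epsilon n|>r_0}\le\epsilon|n|/r_0$ trades one more derivative for a power of $\epsilon$; the remainder operators $\Psi_{r,1,j}^\epsilon$ carry an explicit prefactor $\epsilon$ with an $L^2_v$-bounded multiplier; and $\Psi_{1,2}^\epsilon$ is the high-mode semi-group $\bds_2$, which decays exponentially by Theorem \ref{theorem-decom} and gains $\epsilon^2$ on time integration. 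Summing the five estimates gives \eqref{lemmaElectric-decay-rates-2}.

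For \eqref{lemmaElectric-decay-rates-l2} the phase no longer helps, since $\|\,\cdot\,\|_{L^2([0,T])}$ is unaffected by $e^{\mathrm i\theta_n^\pm(\cdot)/\epsilon}$; this is exactly why the acoustic terms are subtracted in the statement rather than discarded. I would replace $g_\epsilon$ by $g$ inside them and note that $\bdp_{0,\pm1}(n)$ only samples the macroscopic moments of its argument, while the moments of $N_1(h)=(vh-\nabla_v h)\sdot\nabla\phi[h]$ depend only on $\bdp h$ — the density moment vanishes after a $v$-integration by parts, the momentum moment equals $\rho[h]\nabla\phi[h]$, the energy moment equals $2u[h]\sdot\nabla\phi[h]$. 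Since $g=\bdp g$, the quantity $\bdp_{0,\pm1}(n)\mathcal F\big(N_1(g_\epsilon)-N_1(g)\big)$ depends only on $\bdp g_\epsilon-g$; using that $N_1$ is quadratic and the uniform bound $\|e^{b^{\pm1}n^2\tau}\|_{L^1_\tau}\lesssim|n|^{-2}\le1$ for the convolution-in-time kernel, this family is $\lesssim\sup_{[0,T]}\|g_\epsilon\|_{H^s_x}\big(\int_0^T\|(\bdp g_\epsilon-g)(s)\|_{L^2}^2\,\mathrm ds\big)^{1/2}$ in $L^2([0,T];H^\ell_x)$, i.e. the second term in \eqref{lemmaElectric-decay-rates-l2}. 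The four error families are bounded in $L^2([0,T];H^\ell_x)$ by $\epsilon\sqrt T$ just as above (now using only $\|N_1(g_\epsilon)\|_{L^\infty_tH^\ell_x}\lesssim1$), and squaring produces the $\epsilon^2T$ term.

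The step I expect to be the main obstacle is the velocity-derivative contribution $\nabla_v g_\epsilon\sdot\nabla\phi_\epsilon$ in $N_1$: unlike the collisional nonlinearity $N_2=\reps\Gamma(g_\epsilon,g_\epsilon)$, it has no uniform-in-$\epsilon$ bound — only $\epsilon\|\nabla_v g_\epsilon\|_{H^{s-1}}=O(1)$. The remedy is, in each term of the spectral decomposition, either to integrate by parts in $v$ so that $\nabla_v$ lands on the polynomial, $\dm$-weighted structure of the operators $\bdp_{0,j}$ (and, after the reduction above, of $\Psi_1$ itself), which removes the $\epsilon^{-1}$, or — for the correction operators $\bdp_{r,j}$ and the high-mode semi-group — to let their explicit $\epsilon$-prefactor compensate it. This mechanism, together with the $|n|^3$-versus-$n^2$ discrepancy noted above, is also what forces the derivative loss $\ell\le s-1$ here (and, combined with the analogous losses in the other constituent estimates, $\ell\le s-2$ in Theorem \ref{main-results-convergence-rate}).
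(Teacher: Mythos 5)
Your decomposition, your handling of the acoustic terms $\Psi_{1,1,\pm1}^\epsilon$ by integration by parts in $t$ and by passing to the macroscopic moments of $N_1$, and your observation that $\bdp_{0,\pm1}(n)$ only samples the fluid part (so that $\bdp_{0,\pm1}\mathcal F(N_1(g_\epsilon)-N_1(g))$ depends on $\bdp g_\epsilon-g$ alone) all match the paper's proof step for step. However, the way you dispose of the remainder pieces $\Psi_{r,1,j}^\epsilon$ and the high--frequency part $\Psi_{1,2}^\epsilon$ has a genuine gap.

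You write that for those two families the explicit $\epsilon$-prefactor ``compensates'' the absence of a uniform bound on $\nabla_v g_\epsilon$. Trace the powers: the kernel $T_j(w)$ (respectively $\bds_2$) is a bounded $L^2_v$ operator applied to $N_1(g_\epsilon)$, and $\|N_1(g_\epsilon)\|_{L^2}\lesssim \|g_\epsilon\|_{H^s_x}^2+\|g_\epsilon\|_{H^s_x}\|\nabla_v g_\epsilon\|_{L^2}$. The only pointwise-in-time information available is $\epsilon\|\nabla_v g_\epsilon(t)\|_{L^2}\lesssim 1$, i.e.\ $\|\nabla_v g_\epsilon(t)\|_{L^2}\lesssim 1/\epsilon$. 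Feeding that into $\|\Psi_{r,1,j}^\epsilon\|_{L^2}^2\lesssim \epsilon^2\int_0^t\|N_1(g_\epsilon)(s)\|_{L^2}^2\,\bd s$ (and the analogous bound for $\Psi_{1,2}^\epsilon$, where the $\epsilon^2$ comes from integrating $e^{-\sigma s/\epsilon^2}$) gives $\epsilon^2\cdot T/\epsilon^2 = T$, not $\epsilon^2 T$ — so the prefactor restores only an $O(1)$ bound, not the required $O(\epsilon^2)$. What closes the argument in the paper is the \emph{time-integrated} dissipation estimate $\int_0^\infty\|\nabla_v g_\epsilon(s)\|_{H^{s-1}_\Lambda}^2\,\bd s\lesssim 1$ uniformly in $\epsilon$ (derived, as in \eqref{est-v-uni-copy-1}--\eqref{est-v-int-t}, from the modified energy functional whose dissipation has \emph{no} $1/\epsilon^2$ in front of the $v$-derivative term). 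This gives $\int_0^t\|N_1(g_\epsilon)(s)\|_{L^2}^2\,\bd s\lesssim 1$ uniformly, and only then does the $\epsilon^2$ prefactor survive. This is the single idea missing from your proposal, and it is exactly the same ingredient you would need for $\Psi_{1,2}^\epsilon$ in the bilinear part as well.

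A smaller point: you attribute the derivative loss $\ell\le s-1$ partly to absorbing $|n|^3$ versus $n^2$ in $\Psi_{2,1,j}^\epsilon$. In fact the paper absorbs that surplus entirely into the parabolic kernel ($s\,|n|^3\,e^{(b^j/2)n^2 s}\lesssim s^{-1/2}e^{(b^j/4)n^2 s}$) with no derivative cost; the loss $\ell\le s-1$ comes solely from the explicit $\nabla_v$ sitting inside $N_1$, which forces one to measure $\nabla_v g_\epsilon$ rather than $g_\epsilon$ in the $H^\ell$ estimate.
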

\begin{remark}
\label{remark-rate-1}
Compared to the Boltzmann case, \eqref{lemmaElectric-decay-rates-l2} is new. Indeed, according to \cite{briant-2015-be-to-ns}, only by the structure of semi-group, one can not directly deduce that $$\int_0^T \|\Psi_1^\epsilon(s) g_\epsilon  -   \Psi_1(s) g_\epsilon  \|_{H^\ell_x}^2 \bd s \lesssim \epsilon T.$$
Our new observation is that we can estimate $\Psi_1^\epsilon(s) g_\epsilon  -   \Psi_1(s) g_\epsilon - \sum\limits_{\pm 1}\Psi_{1,1,\pm 1}^\epsilon(g)$ to obtain \eqref{lemmaElectric-decay-rates-l2}.  Since the initial data are small enough, the last term on the right hand of \eqref{lemmaElectric-decay-rates-l2} can be absorbed. Furthermore, we shall show
\[ \sum\limits_{\pm 1}\Psi_{1,1,\pm 1}^\epsilon(g) = O(\epsilon), \]
in Lemma \ref{lemma-dd}.
\end{remark}
\begin{proof}
Compared to the Boltzmann case, this step is new. there exists new   difficulty resulting from the electric field. Indeed,  recalling
\[N_1(g_\epsilon)= ({v}  g_\epsilon - \nabla_v g_\epsilon     ) \sdot \nabla\phi_\epsilon, ~~  \Psi^{\epsilon}_1\left(g_{\epsilon}\right) = \int_{0}^{t}  e^{(t-s) G_{\epsilon}} \left( ({v}  g_\epsilon - \nabla_v g_\epsilon     ) \sdot \nabla\phi_\epsilon  \right) \bd s   \]
from the uniform estimates in Lemma \ref{lemma-nonlinear-decay}, i.e.,
\[ \|(g_\epsilon(t)\|_{H^s_x}^2 + \epsilon^2 \|\nabla_v g_\epsilon(t)\|_{H^{s-1}}^2 \le \bar{c}_0 \exp(- \bar{c} t), \]
there is no uniform estimates of $\nabla_v g_\epsilon$. Furthermore, $N_1$  does not belong to the orthogonal space of $\mathrm{Ker} \mathcal{L}$. Thus, the method in step two can not be directly used here.

In \eqref{estPsi1Split}, only
$\Psi_{1,1,0}^\epsilon(g_\epsilon) + \Psi_{1,1,2}^\epsilon(g_\epsilon)$ is what we need. The rest terms will turn to zero in some norm.

From \eqref{structure-semi}, we first compute the kernel part of $N_1$.
\begin{align}
\begin{aligned}
\intv N_1(g_\epsilon) \bdv & = - \nabla \phi_\epsilon \sdot \intv \nabla_v ( g_\epsilon \dm)  \bd v =0,\\
\intv N_1(g_\epsilon) v \bdv & = - \nabla \phi_\epsilon \sdot \intv \nabla_v ( g_\epsilon \dm) v \bd v =3\rho_\epsilon \nabla \phi_\epsilon,\\
\intv N_1(g_\epsilon) \left(  \tfrac{|v|^2 - 3}{3} \right)  \bdv & = - \nabla \phi_\epsilon \sdot \intv \nabla_v ( g_\epsilon \dm) \left(  \tfrac{|v|^2 - 3}{3} \right) \bd v =\tfrac{2}{3} u_\epsilon \nabla \phi_\epsilon.
\end{aligned}
\end{align}
Thus, plugging the kernel parts into \eqref{semi-p}, it follows that
\begin{equation}
\label{semi-n1}
\begin{aligned} \bdp_{2}(\epsilon n) N_1(g_\epsilon)=&\sum_{j=2,3}\left(3\hat\rho_\epsilon\!*\!\nabla \hat\phi_\epsilon \cdot w^{j}\right)\left(w^{j} \cdot v\right)  +\sum_{j=2,3}|\epsilon n| \mathrm{T}_{j}(n) \mathcal{F} \left(N_1(g_\epsilon)\right)    \\
:= &  \bdp_{0,2}(n)N_1(g_\epsilon) + \epsilon \bdp_{r,2}(n)\mathcal{F} \left(N_1(g_\epsilon)\right) , \\
 \bdp_{0}(\epsilon n) N_1(g_\epsilon)=& \hat u_\epsilon\!*\!\nabla \hat\phi_\epsilon \chi_{4}+|\epsilon n| \mathrm{T}_{0}(w) \mathcal{F} \left(N_1(g_\epsilon)\right)\\
 := & \bdp_{0,0}(n)N_1(g_\epsilon) + \epsilon \bdp_{r,0}(n)\mathcal{F} \left(N_1(g_\epsilon)\right), \\
 \bdp_{\pm 1}(\xi) N_1(g_\epsilon)=& \tfrac{1}{2} \left(3\hat\rho_\epsilon\!*\!\nabla \hat\phi_\epsilon \cdot \omega \mp \tfrac{2|n|}{3}\hat{u}_\epsilon\!*\!\nabla \hat\phi_\epsilon\right)\sdot(v\sdot \omega)    +|\epsilon n | \mathrm{T}_{\pm 1}(n) \mathcal{F} \left(N_1(g_\epsilon)\right)\\
:= &  \bdp_{0,\pm 1}(n)N_1(g_\epsilon) + \epsilon \bdp_{r,\pm 1}(n)\mathcal{F} \left(N_1(g_\epsilon)\right). \end{aligned}
\end{equation}

   For $\Psi_{1,1,\pm 1}^\epsilon(g^\epsilon)$, integrating by part with respect to $t$, we can infer that
\begin{equation}
\label{estTemp}
\begin{aligned}
\int_{0}^{T} \Psi_{1,1,\pm 1}^\epsilon(g_\epsilon) \bd t &=\sum_{n \in \mathbb{Z}^{d}-[0]} e^{\mathrm{i} n \cdot x} \int_{0}^{T} \int_0^t\left( e^{\{\tfrac{a^{\pm 1}}{\epsilon} + { b^{\pm 1} n^2 }   \}(t-s)  }  \bdp_{0,\pm 1}(n )  \mathcal{F}({N_1(g_\epsilon)})(s) \bd s \right) \bd t \\
&=\sum_{n \in \mathbb{Z}^{d}-[0]} e^{ \mathrm{i} n \cdot x} \frac{\epsilon}{ \mathrm{i} a^ {\pm 1 } +\epsilon b^{\pm 1}|n|^2}\\
& \times \left[\int_{0}^{T}\left(e^{\{\tfrac{a^{\pm 1}}{\epsilon} + { b^{\pm 1} n^2 }   \}(T-s)  }-1\right)  P_{0,\pm 1}(n )  \mathcal{F}({N_1(g)})(s) \bd s\right].
\end{aligned}
\end{equation}
By Plancherel theorem, its $L^2$ norm can be estimated:
\begin{align*}
\|\int_{0}^{T} \Psi_{1,1,\pm 1}^\epsilon(g^\epsilon) \bd t\|_{L^2}^2& \le 2   \sum_{n \in \mathbb{Z}^{d}-[0]} \frac{\epsilon^2}{  (a^ {\pm 1 })^2 +(\epsilon b^{\pm 1})^2|n|^4} \| \int_{0}^{T}   \bdp_{0,\pm 1}(n )  \mathcal{F}({N_1(g)})(s) \bd s \|_{L^2_v}^2\\
& \lesssim    \epsilon^2 \sum_{n \in \mathbb{Z}^{d}-[0]}  \frac{1}{  (a^ {\pm 1 })^2  } \| \int_{0}^{T}   \bdp_{0,\pm 1}(n )  \mathcal{F}({N_1(g)})(s) \bd s \|_{L^2_v}^2
\end{align*}
where we have used \eqref{low-ei} and \eqref{structure-semi}, i.e.,
\[ a^{\pm 1} = \pm |n| + \tfrac{5}{6} n^2,~~b^{\pm 1} = a_{11}. \]
Again,  from \eqref{semi-n1}, recalling that
\[ \bdp_{0,\pm 1}(n )  \mathcal{F}({N_1(g)})(s) = \tfrac{1}{2} \left(3\hat\rho_\epsilon\!*\!\nabla \hat\phi_\epsilon \cdot \omega \mp \tfrac{2|n|}{3}\hat{u}_\epsilon\!*\!\nabla \hat\phi_\epsilon\right)\sdot(v\sdot \omega)  \]
Furthermore, we can conclude that
\begin{align*}
  \| \int_{0}^{T}   \bdp_{0,\pm 1}(n )  \mathcal{F}({N_1(g_\epsilon)})(s) \bd s \|_{L^2_v}^2& \lesssim    \|\int_{0}^{T}      \tfrac{1}{2} \left(3\hat\rho_\epsilon\!*\!\nabla \hat\phi_\epsilon \cdot \omega \mp \tfrac{2|n|}{3}\hat{u}_\epsilon\!*\!\nabla \hat\phi_\epsilon\right)\sdot(v\sdot \omega) \bd s\|_{L^2_v}^2  \\
& \lesssim     T \left(\int_0^T \vert \hat\rho_\epsilon\!*\!\nabla \hat\phi_\epsilon\vert^2  \bd s \right)  +  |n|^2\left(\int_0^T \vert \hat{u}_\epsilon\!*\!\nabla \hat\phi_\epsilon \vert^2 \bd s \right).
\end{align*}
Thus, we can obtain that
\begin{align*}
\|\int_{0}^{T} \Psi_{1,1,\pm 1}^\epsilon(g^\epsilon) \bd t\|_{L^2}^2& \lesssim  \epsilon^2  T    \sum_{n \in \mathbb{Z}^{d}-[0]}\left(\int_0^T \vert \hat\rho_\epsilon\!*\!\nabla \hat\phi_\epsilon\vert^2  \bd s    +   \int_0^T \vert \hat{u}_\epsilon\!*\!\nabla \hat\phi_\epsilon \vert^2 \bd s \right).
\end{align*}
By Plancherel theorem, since we have obtained the exponential decay of $g_\epsilon$ in Lemma \ref{lemma-nonlinear-decay}, we obtain that
\begin{align}
\label{estConEl-1}
\|\int_{0}^{T} \Psi_{1,1,\pm 1}^\epsilon(g_\epsilon) \bd t\|_{L^2} \lesssim \sqrt{T} \epsilon.
\end{align}
Denoting
\[ d_{\pm 1}(n,s) =   \bdp_{0,\pm 1}(n )  \mathcal{F}({N_1(g_\epsilon)} - N_1(g))(s),  \]
Furthermore, noticing that
\begin{aligno}
   \sum_{\pm 1} (\Psi_{1,1,\pm 1}^\epsilon(g_\epsilon)  - \Psi_{1,1,\pm 1}^\epsilon(g))   = \sum_{n \in \mathbb{Z}^{d}-[0]} e^{\mathrm{i} n \cdot x} \int_0^t\left( e^{\{\tfrac{a^{\pm 1}}{\epsilon} + { b^{\pm 1} n^2 }   \}(t-s)  }  d_{\pm 1}({n,s}) \bd s \right),
\end{aligno}
by the Plancherel theorem and integration by part with respect to $t$,
\begin{aligno}
\label{estl22}
& \int_0^T \|\sum_{\pm 1}(\Psi_{1,1,\pm 1}^\epsilon(g_\epsilon)  - \Psi_{1,1,\pm 1}^\epsilon(g))\|_{L^2}^2 \bd t \\
& \lesssim \sum_{ \pm 1 \atop n \in \mathbb{Z}^{d}-[0]}  \int_0^T \|\int_0^t\left( e^{\{\tfrac{a^{\pm 1}}{\epsilon} + { b^{\pm 1} n^2 }   \}(t-s)  }  d_{\pm 1}(n,s) \bd s \right)\|_{L^2_v}^2 \bd t\\
& \lesssim  \sum_{\pm 1 \atop n \in \mathbb{Z}^{d}-[0]} \int_0^T \|\int_0^t\left( e^{\{  { b^{\pm 1} n^2 }   \}(t-s)  }  |d_{\pm 1}(n,s)| \bd s \right)\|_{L^2_v}^2 \bd t\\
& \lesssim \sum_{\pm 1 \atop n \in \mathbb{Z}^{d}-[0]} \int_0^T  \int_0^t   e^{\{  { b^{\pm 1} n^2 }   \}(t-s)  }  \|d_{\pm 1}(n,s)\|_{L^2_v}^2 \bd s    \bd t \\
&  \lesssim \sum_{\pm 1 \atop n \in \mathbb{Z}^{d}-[0]} \frac{1}{|n|^2} \int_0^T ( e^{\{  { b^{\pm 1} n^2 }   \}(T-s)  }  - 1)\|d_{\pm 1}(n,s)\|_{L^2_v}^2 \bd s    \bd t \\
& \lesssim    \sum_{\pm 1 \atop n \in \mathbb{Z}^{d}-[0]}   \tfrac{1}{|n|^2} \int_0^T      \|d_{\pm 1}(n,s)\|_{L^2_v}^2 \bd s,
\end{aligno}
where we have used
\begin{align*}
\|\int_0^t\left( e^{\{\tfrac{a^{\pm 1}}{\epsilon} + { b^{\pm 1} n^2 }   \}(t-s)  }  d_{\pm 1}(n,s) \bd s \right)\|_{L^2_v}^2 & \le \int_0^t  e^{\{  { b^{\pm 1} n^2 }   \}(t-s)  }  \|d_{\pm 1}(n,s)\|_{L^2_v} \bd s   \\
& \le \int_0^t  e^{\{  { b^{\pm 1} n^2 }   \}(t-s)  }    \bd s \int_0^t  e^{\{  { b^{\pm 1} n^2 }   \}(t-s)  }  \|d_{\pm 1}(n,s)\|_{L^2_v}^2 \bd s\\
& \lesssim \int_0^t  e^{\{  { b^{\pm 1} n^2 }   \}(t-s)  }  \|d_{\pm 1}(n,s)\|_{L^2_v}^2 \bd s.
\end{align*}
From \eqref{semi-n1}, we obtain that
\begin{align*}
\bdp_{0,\pm 1}(n )  \mathcal{F}({N_1(g_\epsilon)}) =   \left(3\hat\rho_\epsilon\!*\!\nabla \hat\phi_\epsilon \cdot \omega \mp \tfrac{2|n|}{3}\hat{u}_\epsilon\!*\!\nabla \hat\phi_\epsilon\right)\sdot(v\sdot \omega).
\end{align*}
Thus, we can obtain that
\begin{align*}
  \sum_{\pm 1 \atop n \in \mathbb{Z}^{d}-[0]} \tfrac{1}{|n|^2}       \|d_{\pm 1}(n,s)\|_{L^2_v}^2 & \lesssim \|\rho_\epsilon \sdot \nabla \phi_\epsilon - \rho \sdot \nabla \phi\|_{L^2}^2 + \|u_\epsilon \sdot \nabla \phi_\epsilon - u \sdot \nabla \phi\|_{L^2}^2 \\
  & \lesssim  \|(g_\epsilon,g)\|_{H^s_x}^2\|\bdp g_\epsilon - g\|_{L^2}^2.
\end{align*}
All together, it follows that
\begin{align}
\label{estddd}
\int_0^T \|\sum_{\pm 1}(\Psi_{1,1,\pm 1}^\epsilon(g_\epsilon)  - \Psi_{1,1,\pm 1}^\epsilon(g))\|_{L^2}^2 \bd t \lesssim  \sup\limits_{0 \le s \le T} \|g_\epsilon(s)\|_{H^s_x}^2 \int_0^T \|(\bdp g_\epsilon - g)(s)\|_{L^2}^2\bd s.
\end{align}
By the similar method, we can prove that
\begin{align}
\int_0^T \|\sum_{\pm 1}(\Psi_{1,0,\pm 1}^\epsilon(g_\epsilon)  - \Psi_{1,0,\pm 1}^\epsilon(g))\|_{H^\ell_x}^2 \bd t \lesssim  \sup\limits_{0 \le s \le T} \|g_\epsilon(s)\|_{H^s_x}^2 \int_0^T \|(\bdp g_\epsilon - g)(s)\|_{H^\ell_x}^2\bd s.
\end{align}

For  $\Psi_{2,1,j}^\epsilon(g^\epsilon)$, noticing that
\[ \|\Psi_{2,1,j}^\epsilon(g_\epsilon)\|_{L^2}^2 = \sum_{n \in \mathbb{Z}^{d}-[0]} e^{\mathrm{i} n \cdot x}   \int_0^t\left( e^{\{\tfrac{a^{j}}{\epsilon} + { b^j n^2 }   \}s } \left( e^{   \epsilon c^j |n|^3 s }  -1 \right)  \bdp_{0,j}(n )  \mathcal{F}({N_1(g_\epsilon)})(t-s) \bd s \right),   \]
thus, we have
\begin{align}
\begin{aligned}
\|\Psi_{2,1,j}^\epsilon(g^\epsilon)\|_{L^2}^2
& \lesssim  \sum_{n \in \mathbb{Z}^{d}-[0]} \epsilon^2   \| \int_0^t s e^{  { \tfrac{b^j}{2} n^2 }    s } |n|^3   \vert \bdp_{0,j}(n )  \mathcal{F}({N_1(g_\epsilon)})(t-s) \vert  \bd s \|_{L^2_v}^2 \\
& \lesssim \sum_{n \in \mathbb{Z}^{d}-[0]} \epsilon^2  \| \int_0^t \tfrac{1}{\sqrt{s}} e^{  { \tfrac{b^j}{4} n^2 }    s }   \vert \bdp_{0,j}(n )  \mathcal{F}({N_1(g_\epsilon)})(t-s) \vert  \bd s \|_{L^2_v}^2\\
& \lesssim \sum_{n \in \mathbb{Z}^{d}-[0]} \epsilon^2    \left(\int_0^t \tfrac{1}{\sqrt{s}} e^{  { \tfrac{b^j}{4} n^2 }    s }   \bd s \right)^2 \sup\limits_{0 \le s \le t}  \| \bdp_{0,j}(n )  \mathcal{F}({N_1(g_\epsilon)})(s) \|_{L^2_v} \\
& \lesssim  \epsilon^2   \sum_{n \in \mathbb{Z}^{d}-[0]}     \sup\limits_{0 \le s \le t}  \| \bdp_{0, j}(n )  \mathcal{F}({N_1(g_\epsilon)})(s) \|_{L^2_v}^2\\
& \lesssim \epsilon^2 \|g_\epsilon(0)  \|_{H^s_x}^4.
\end{aligned}
\end{align}
From the above inequality, we can obtain that
\begin{align}
\label{estConEl-2}
\|\int_{0}^{T} \Psi_{2,1,j}^\epsilon(g_\epsilon) \bd t\|_{L^2} \lesssim \sqrt T \epsilon,~~ \|\int_{0}^{T} \Psi_{2,1,j}^\epsilon(g_\epsilon) \bd t\|_{L^2((0,T);L^2)} \lesssim \sqrt T \epsilon.
\end{align}

For $\Psi_{3,0,j}^\epsilon(g_\epsilon)$,  noticing that
$$\mathbf{1}_{|\epsilon n| \le r_0}  -1 \le \tfrac{\epsilon |n| }{r_0},$$  it follows that
\begin{align}
\begin{aligned}
\|\Psi_{3,1,j}^\epsilon(g_\epsilon)\|_{L^2}^2
& \lesssim  \sum_{n \in \mathbb{Z}^{d}-[0]} \epsilon^2
 \| \int_0^t  e^{  {  {b^j}  n^2 }    s } |n|^2   \vert \bdp_{0,j}(n )  \mathcal{F}({N_1(g_\epsilon)})(t-s) \vert  \bd s \|_{L^2_v}^2 \\
& \lesssim \sum_{n \in \mathbb{Z}^{d}-[0]} \epsilon^2  |n|^2  \left(\int_0^t   e^{  { \tfrac{b^j}{4} n^2 }    s }   \| \bdp_{0,j}(n )  \mathcal{F}({N_1(g_\epsilon)})(t-s)\|_{L^2_v}    \bd s \right)^2 \\
& \lesssim \sum_{n \in \mathbb{Z}^{d}-[0]} \epsilon^2 |n|^2 \left(\int_0^t  e^{  {  {b^j}  n^2 }    s }  \bd s \right)^2    \sup\limits_{0 \le s \le t}  \| \bdp_{0,j}(n )  \mathcal{F}({N_1(g)})(s) \|_{L^2_v}^2 \\
& \lesssim  \epsilon^2  |n|^2 \cdot \tfrac{1}{|n|^4} \sum_{n \in \mathbb{Z}^{d}-[0]}     \sup\limits_{0 \le s \le t}  \| \bdp_{0, j}(n )  \mathcal{F}({N_1(g)})(s) \|_{L^2_v}^2\\
& \lesssim \epsilon^2 \|g_\epsilon(0)  \|_{H^s_x}^4.
\end{aligned}
\end{align}
This means that
\begin{align}
\label{estConEl-3}
\|\int_{0}^{T} \Psi_{3,1,j}^\epsilon(g^\epsilon) \bd t\|_{L^2} \lesssim \sqrt T \epsilon,~~ \|\int_{0}^{T} \Psi_{3,1,j}^\epsilon(g^\epsilon) \bd t\|_{L^2((0,T);L^2)} \lesssim \sqrt T \epsilon.
\end{align}

For the left term in the low frequency, from \eqref{semi-p} and \eqref{semi-n1},
\[ \Psi_{r,1,j}^\epsilon(g^\epsilon)=\mathcal{F}^{-1}\left\lbrace   \int_0^t e^{\{\tfrac{a^{j}}{\epsilon} + { b^j n^2 }  +  \epsilon c^j |n|^3 \}(t-s) }    \mathbf{1}_{|\epsilon n| \le r_0}\epsilon \bdp_{r,j}(w )     \mathcal{F}({N_1(g_\epsilon)})(s) \bd s    \right\rbrace,  \]
and
\[ \epsilon \bdp_{r,j}(w )     \mathcal{F}({N_1(g_\epsilon)})(s)= \mathbf{1}_{|\epsilon n| \le r_0} |\epsilon n| \mathrm{T}_j(w) \mathcal{F}({N_1(g_\epsilon)})(s),    \]
noticing that $T_j(w)$ is a bounded operator, thus it follows that
\begin{align}
\label{estaa}
\begin{aligned}
\|\Psi_{r,1,j}^\epsilon(g^\epsilon)\|_{L^2}^2
& \lesssim  \sum_{n \in \mathbb{Z}^{d}-[0]} \epsilon^2
 \| \int_0^t  e^{  {  \tfrac{b^j}{2}  n^2 }    s } |n|^2   \vert \bdp_{r    ,j}(n )  \mathcal{F}({N_1(g_\epsilon)})(t-s) \vert  \bd s \|_{L^2_v}^2 \\
& \lesssim \sum_{n \in \mathbb{Z}^{d}-[0]} \epsilon^2  |n|^2  \left(\int_0^{t}   e^{  { \tfrac{b^j}{4} n^2 }    s }   \| \mathrm{T}_j(w  )  \mathcal{F}({N_1(g)})(t-s)\|_{L^2_v}    \bd s \right)^2 \\
& \lesssim \sum_{n \in \mathbb{Z}^{d}-[0]} \epsilon^2 |n|^2  \left(\int_0^t  e^{  {  \tfrac{b^j}{4}  n^2 }    s }  \bd s \right)  \left(  \int_0^t   e^{  {  \tfrac{b^j}{4}  n^2 }    s }   \| \mathrm{T}_j(w  ) \mathcal{F}({N_1(g)})(t-s)\|_{L^2_v}^2 \bd s  \right)     \\
& \lesssim  \sum_{n \in \mathbb{Z}^{d}-[0]} \epsilon^2  |n|^2 \cdot \tfrac{1}{|n|^2}      \int_0^t     \|   \mathcal{F}({N_1(g)})(s)\|_{L^2_v}^2 \bd s\\
& \lesssim \epsilon^2 \int_0^t \|N_1(g_\epsilon)\|_{L^2}^2 \bd s.
\end{aligned}
\end{align}
\[  \]
In the above computation, we have used the fact that
\[ e^{  {  {b^j}  n^2 }    s } \le 1. \]
Recalling that
\[ \|N_1(g_\epsilon)\|_{L^2}^2 \le \|v g_\epsilon \nabla \phi_\epsilon\|_{L^2}^2 + \|\nabla_v g_\epsilon \cdot \nabla \phi_\epsilon\|_{L^2}^2\lesssim \|g_\epsilon\|_{H^s_x}^4 + \|g_\epsilon\|_{H^s_x}^2 \|\nabla_v g_\epsilon\|_{L^2}^2. \]
From the main result in Lemma \ref{lemma-nonlinear-decay}, the is no exponential decay for $\|\nabla_v g_\epsilon\|_{L^2}$.  Thus, we need to get a better estimates for $\|\nabla_v g_\epsilon\|_{L^2}$.    It can be dealt with by the following methods. From \eqref{est-v-uni},
\begin{align}
\label{est-v-uni-copy}
\begin{split}
& \dt \bigg( c_e \cdot   \mathfrak{E}_{\epsilon,1}^s(t) +  c_f \cdot   \mathfrak{E}_{\epsilon,2,2}^s(t) +  \mathfrak{E}_{\epsilon,2,1}^s(t)\bigg) +  c_d \|g_\epsilon\|_{H^{s}_{\Lambda_x}}^2 +    c_d\|\nabla_v g_\epsilon\|_{H^{s-1}_\Lambda}^2 \\
& \le C_s \sqrt{\mathfrak{E}_\epsilon^s(t)}\|g_\epsilon\|_{H^{s}_x}^2 + \tfrac{C_s}{\epsilon^2} \sqrt{\mathfrak{E}_\epsilon^s(t)}\cdot  \epsilon^2 \|\nabla_v g_\epsilon\|_{H^{s-1}_\Lambda}^2.
\end{split}
\end{align}
As long as  \eqref{est-v-uni-2} is satisfied, for any $t \ge 0$, we can obtain that
\begin{align}
\label{est-v-uni-copy-1}
\begin{split}
& \dt \bigg( c_e \cdot   \mathfrak{E}_{\epsilon,1}^s(t) +  c_f \cdot   \mathfrak{E}_{\epsilon,2,2}^s(t) +  \mathfrak{E}_{\epsilon,2,1}^s(t)\bigg) +  \tfrac{c_d}{2} \|g_\epsilon\|_{H^{s}_\Lambda}^2     \le 0.
\end{split}
\end{align}
Then, we can obtain that for any $t \ge 0$
\begin{align}
\label{est-v-int-t}
\int_0^t \| g_\epsilon\|_{H^{s}_\Lambda}^2  \bd x \lesssim \|g_\epsilon(0)\|_{H^s_x}^2  + \|\nabla_v g_\epsilon(0)\|_{H^{s-1}_x}^2 \lesssim C_0.
\end{align}

Thus, we obtain that
\begin{align}
\label{estConEl-4}
\|\int_{0}^{T} \Psi_{r,1,j}^\epsilon(g^\epsilon) \bd t\|_{L^2} \lesssim \sqrt T \epsilon,~~ \|\int_{0}^{T} \Psi_{r,1,j}^\epsilon(g^\epsilon) \bd t\|_{L^2((0,T);L^2)} \lesssim \sqrt T \epsilon.
\end{align}
Since the high frequency part $\Psi_{1,2}^\epsilon(g_\epsilon)$ enjoys a exponential decay, i.e.
\begin{align*}
\|\bds_{2}(\tfrac{t-s}{\epsilon^2}, \epsilon n) \mathcal{F}({N_1(g_\epsilon)})(s)\|_{L^2_v } \lesssim \exp(-\sigma\tfrac{t-s}{\epsilon^2})\|\mathcal{F}({N_1(g_\epsilon)})(s)\|_{L^2_v},
\end{align*}
we can obtain that
\begin{align*}
\|\Psi_{1,2}^\epsilon(g_\epsilon)\|_{L^2}^2 & \lesssim  \sum_{n \in \mathbb{Z}^{d}-[0]}
 \left( \int_0^t  \|\bds_{2}(\tfrac{t-s}{\epsilon^2}, \epsilon n) \mathcal{F}({N_1(g_\epsilon)})(s)  \|_{L^2_v}\bd s \right)^2  \\
& \lesssim \sum_{n \in \mathbb{Z}^{d}-[0]}   \left( \|\int_0^{t}  e^{-\sigma \tfrac{s}{\epsilon^2}} \mathcal{F}({N_1(g_\epsilon)})(t-s)   \bd s \|_{L^2_v} \bd s \right)^2 \\
& \lesssim \sum_{n \in \mathbb{Z}^{d}-[0]}  \left(\int_0^t  e^{-\sigma \tfrac{s}{\epsilon^2}} \bd s \right)  \left(  \int_0^t  e^{-\sigma \tfrac{s}{\epsilon^2}}  \| \mathcal{F}({N_1(g_\epsilon)})(t-s)\|_{L^2_v}^2 \bd s  \right)     \\
& \lesssim  \sum_{n \in \mathbb{Z}^{d}-[0]} \epsilon^2    \int_0^t     \|   \mathcal{F}({N_1(g)})(s)\|_{L^2_v}^2 \bd s\\
& \lesssim \epsilon^2 \int_0^t \|N_1(g_\epsilon)\|_{L^2}^2 \bd s.
\end{align*}
Thus, we can obtain that
\begin{align}
\label{estConEl2}
\|\int_{0}^{T} \Psi_{1,2}^\epsilon(g^\epsilon) \bd t\|_{L^2} \lesssim \sqrt T \epsilon,~~ \int_0^T\|\Psi_{1,2}^\epsilon(g^\epsilon) \|^2_{L^2} \bd s  \lesssim   T \epsilon^2.
\end{align}
Combing \eqref{estConEl-1}, \eqref{estConEl-2}, \eqref{estConEl-3}, \eqref{estConEl-4} and \eqref{estConEl2}, we can obtain that
\begin{align}
\label{step-1-1}
\|\int_0^T \Psi_1^\epsilon(s) g_\epsilon  \bd s -  \int_0^T \Psi_1(s) g_\epsilon \bd s\|_{L^2} \lesssim \epsilon \sqrt T.~~
\end{align}
Thus, we can complete the proof for $L^2$ norm case. Similarly, we can prove for $H^\ell_x$ case. Furthermore, since the term induced by the electric field already contains derivative with $v$, the $\ell$ is required to be less than $s-1$.

\end{proof}

\subsection{ The convergence rate of the bilinear part.  }
Recalling that
\[  \Psi^{\epsilon}_2\left(g_{\epsilon}\right) = \int_{0}^{t}  e^{(t-s) G_{\epsilon}} N_2(g_\epsilon) \bd s  =  \reps \int_{0}^{t}  e^{(t-s) G_{\epsilon}} {\Gamma}(g_\epsilon,g_\epsilon)(s) \bd s. \]
noticing that ${\Gamma}(g_\epsilon,g_\epsilon)$ lies in $\left(\mathrm{Ker} \mathcal{L}\right)^\perp$, thus only the remainder parts in \eqref{structure-semi} are to be considered. Thus the bad coefficent $\reps$ will be cancelled by the $\epsilon|n|$ before $\bdp_{1,i}(i=0,\pm 1, 2)$. Similar to the electri field parts, correspondingly,  we can represent $\Psi^\epsilon_1(g_\epsilon)$ as follows:
\begin{align}
\label{estPsi2Split}
\begin{aligned}
\Psi^{\epsilon}_2\left(g_{\epsilon}\right) & = \int_{0}^{t}  e^{(t-s) G_{\epsilon}} N_2(g_\epsilon)(s)  \bd s \\
& = \mathcal{F}^{-1}\left\lbrace   \int_0^t \left[\bds_{1}(\tfrac{t-s}{\epsilon^2}, \epsilon n)    + \bds_{2}(\tfrac{t-s}{\epsilon^2}, \epsilon n)\right] \mathcal{F}({N_2(g_\epsilon)})(s) \bd s    \right\rbrace \\
& = \sum_{j=-1}^{2}\mathcal{F}^{-1}\left\lbrace   \int_0^t  e^{\{\tfrac{a^{j}}{\epsilon} + { b^j n^2 }   \}(t-s)  }  |n|\bdp_{1,j}(n )  \mathcal{F}({\Gamma}(g_\epsilon,g_\epsilon))(s) \bd s    \right\rbrace\\
& + \sum_{j=-1}^{2}\mathcal{F}^{-1}\left\lbrace     \mathbf{1}_{|\epsilon n| \le r_0}  e^{\{\tfrac{a^{j}}{\epsilon} + { b^j n^2 }   \}t } \left( e^{   \epsilon c^j |n|^3 t }  -1 \right) |n|\bdp_{1,j}(n ) \mathcal{F}({\Gamma}(g_\epsilon,g_\epsilon))(s) \bd s    \right\rbrace\\
& + \sum_{j=-1}^{2}\mathcal{F}^{-1}\left\lbrace   \int_0^t  e^{\{\tfrac{a^{j}}{\epsilon} + { b^j n^2 }   \}(t-s)  } \left(\mathbf{1}_{|\epsilon n| \le r_0}  -1 \right) |n| \bdp_{1,j}(n )  \mathcal{F}({\Gamma}(g_\epsilon,g_\epsilon))(s) \bd s    \right\rbrace\\
& + \sum_{j=-1}^{2}\mathcal{F}^{-1}\left\lbrace   \int_0^t e^{\{\tfrac{a^{j}}{\epsilon} + { b^j n^2 }  +  \epsilon c^j |n|^3 \}(t-s) }    \mathbf{1}_{|\epsilon n| \le r_0}\epsilon |n|^2 \bdp_{2,j}(w )     \mathcal{F}({\Gamma}(g_\epsilon,g_\epsilon))(s) \bd s    \right\rbrace\\
& + \mathcal{F}^{-1}\left\lbrace   \int_0^t \left[ \bds_{2}(\tfrac{t-s}{\epsilon^2}, \epsilon n)\right] \mathcal{F}({N_2(g_\epsilon)})(s) \bd s    \right\rbrace  \\
&:= \sum_{j=-1}^{2}\Psi_{1,2,j}^\epsilon(g_\epsilon) + \sum_{j=-1}^{2}\Psi_{2,2,j}^\epsilon(g_\epsilon)  + \sum_{j=-1}^{2}\Psi_{3,2,j}^\epsilon(g_\epsilon) + \sum_{j=-1}^{2}\Psi_{r,2,j}^\epsilon(g_\epsilon) + \Psi_{2,2}^\epsilon(g_\epsilon).
\end{aligned}
\end{align}

Denote
\[ \Psi_2(t) h = \mathcal{F}^{-1}\left[\int_0^t\left( e^{ \tfrac{a_{44} n^2 }{2} (t-s) } |n|\bdp_{1,0} + e^{ \tfrac{a_{22} n^2 }{2} (t-s) } |n|\bdp_{1,2}\right){\Gamma}(h,h)(s)\bd s \right]. \]
\begin{lemma}[The bilinear part]
\label{lemmaConvergenceBilinear}
Under the assumption of of Theorem \ref{main-results-convergence-rate},  for any $T>0$, we can prove that
\begin{align}
\label{lemmaBilinear-decay-rates-2}
\begin{aligned}
\|\int_0^T \Psi_2^\epsilon(s) g_\epsilon  \bd s -  \int_0^T \Psi_2(s) g_\epsilon \bd s\|_{H^\ell_x}   \lesssim \sqrt\epsilon \sqrt T, ~~\ell \le s,
\end{aligned}
\end{align}
and
\begin{align}
\label{lemmaBilinear-decay-rates-l2}
\begin{aligned}
& \int_0^T \|\Psi_2^\epsilon(s) g_\epsilon  -   \Psi_2(s) g_\epsilon - \sum\limits_{\pm 1}\Psi_{1,2,\pm 1}^\epsilon(g)\|_{H^\ell_x}^2 \bd s    \\
& \lesssim \epsilon T + \sup\limits_{0 \le s \le T} \|g_\epsilon(s)\|_{H^s_x}^2 \int_0^T \|(\bdp g_\epsilon - g)(s)\|_{L^2}^2\bd s,~~\ell \le s.
\end{aligned}
\end{align}
\end{lemma}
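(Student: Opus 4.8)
The plan is to follow the proof of Lemma~\ref{lemmaConvergenceElec} for the electric field, exploiting the decisive extra structure of $N_2$: since $\Gamma(g_\epsilon,g_\epsilon)\in(\mathrm{Ker}\,\mathcal L)^\perp$, in the decomposition \eqref{estPsi2Split} only the remainder operators $\bdp_{1,j}$ and $\bdp_{2,j}$ survive, and each of them carries a factor $\epsilon|n|$ that cancels the singular prefactor $\reps$ of $N_2=\reps\Gamma(g_\epsilon,g_\epsilon)$, leaving the manageable Fourier symbols $|n|\bdp_{1,j}$ and $\epsilon|n|^2\bdp_{2,j}$ already displayed in \eqref{estPsi2Split}. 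First I would note that, because $a^0=a^2=0$ with $b^0=\tfrac{a_{44}}{2}$ and $b^2=\tfrac{a_{22}}{2}$, the two summands $\Psi_{1,2,0}^\epsilon(g_\epsilon)$ and $\Psi_{1,2,2}^\epsilon(g_\epsilon)$ are \emph{exactly} $\Psi_2(t)g_\epsilon$, so that
\begin{align*}
\Psi_2^\epsilon(g_\epsilon)-\Psi_2(t)g_\epsilon
&=\sum_{j=\pm1}\Psi_{1,2,j}^\epsilon(g_\epsilon)+\Psi_{2,2}^\epsilon(g_\epsilon)\\
&\quad+\sum_{j=-1}^{2}\big(\Psi_{2,2,j}^\epsilon(g_\epsilon)+\Psi_{3,2,j}^\epsilon(g_\epsilon)+\Psi_{r,2,j}^\epsilon(g_\epsilon)\big),
\end{align*}
and it remains to estimate each group separately.

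For the oscillatory modes $\Psi_{1,2,\pm1}^\epsilon(g_\epsilon)$ (here $a^{\pm1}=\pm|n|+\tfrac{5}{6}n^2\ne0$) I would integrate by parts in $t$ as in \eqref{estTemp}, which produces the gain $\big|\tfrac{\epsilon}{\mathrm i a^{\pm1}+\epsilon b^{\pm1}|n|^2}\big|\lesssim\tfrac{\epsilon}{|n|}$; combined with the $|n|$ inside $|n|\bdp_{1,\pm1}$ and the boundedness of $\bdp_{1,\pm1}$, Plancherel gives $\|\int_0^T\Psi_{1,2,\pm1}^\epsilon(g_\epsilon)\,\bd t\|_{H^\ell_x}^2\lesssim\epsilon^2 T\int_0^T\|\Gamma(g_\epsilon,g_\epsilon)\|_{H^\ell_x}^2\,\bd s$, and \eqref{constant-cn} together with the uniform decay of Lemma~\ref{lemma-nonlinear-decay} and the time-integrated bound $\int_0^\infty\|g_\epsilon\|_{H^s_{\Lambda_x}}^2\,\bd s\lesssim C_0$ from \eqref{est-v-int-t} control $\int_0^T\|\Gamma(g_\epsilon,g_\epsilon)\|_{H^s_x}^2\,\bd s$ by a constant, so this term is $O(\epsilon\sqrt T)$. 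For $\Psi_{2,2,j}^\epsilon$ and $\Psi_{3,2,j}^\epsilon$ I would use $|e^{\epsilon c^j|n|^3 s}-1|\le\epsilon|c^j|\,|n|^3 s\,e^{\epsilon|c^j||n|^3 s}$ and $0\le1-\mathbf 1_{|\epsilon n|\le r_0}\le\epsilon|n|/r_0$, absorbing the extra powers of $|n|$ into the parabolic factor $e^{b^j n^2 s}$ (after shrinking $r_0$ so that $e^{b^j n^2 s}e^{\epsilon|c^j||n|^3 s}\le e^{\tfrac{b^j}{2}n^2 s}$, and using $\tau^k e^{-c\tau}\lesssim1$), which reproduces the $\epsilon^2$ bounds of \eqref{estConEl-2}--\eqref{estConEl-3}. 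For the remaining low-frequency remainder $\Psi_{r,2,j}^\epsilon$, the symbol $\epsilon|n|^2\bdp_{2,j}(w)$ has $\bdp_{2,j}$ bounded, and a Cauchy--Schwarz splitting against $e^{b^j n^2 s}$ absorbs $|n|^2$ and gives $\lesssim\epsilon^2\int_0^t\|\Gamma(g_\epsilon,g_\epsilon)\|_{L^2}^2\,\bd s$; the high-frequency term $\Psi_{2,2}^\epsilon$ is handled as in \eqref{estConEl2} using $\|\bds_2(\tfrac{t-s}{\epsilon^2},\epsilon n)h\|_{L^2_v}\lesssim e^{-\sigma(t-s)/\epsilon^2}\|h\|_{L^2_v}$ and Cauchy--Schwarz, producing a factor $\epsilon^2$. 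Summing everything yields \eqref{lemmaBilinear-decay-rates-2} (in fact with the better rate $\epsilon\sqrt T$, hence a fortiori $\sqrt\epsilon\sqrt T$); moreover the smoothing $\int_0^t e^{b^j n^2 s}|n|\,\bd s\lesssim|n|^{-1}$ pays back the derivative spent on $|n|\bdp_{1,j}$, so the bound holds for all $\ell\le s$ with no derivative loss.

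For the $L^2_t$-estimate \eqref{lemmaBilinear-decay-rates-l2} the role of subtracting $\sum_{\pm1}\Psi_{1,2,\pm1}^\epsilon(g)$ is to turn the oscillatory modes into $\sum_{\pm1}\big(\Psi_{1,2,\pm1}^\epsilon(g_\epsilon)-\Psi_{1,2,\pm1}^\epsilon(g)\big)$, which by bilinearity of $\Gamma$ involves $\Gamma(g_\epsilon,g_\epsilon)-\Gamma(g,g)=\Gamma(g_\epsilon-g,g_\epsilon)+\Gamma(g,g_\epsilon-g)$. Following the computation \eqref{estl22}--\eqref{estddd} (bound the oscillatory exponential by the parabolic one, Cauchy--Schwarz in $s$, Plancherel, boundedness of $\bdp_{1,\pm1}$) gives a bound by $\int_0^T\|\Gamma(g_\epsilon,g_\epsilon)-\Gamma(g,g)\|_{L^2}^2\,\bd s$; writing $g_\epsilon=\bdp g_\epsilon+g_\epsilon^\perp$ and $g=\bdp g$, \eqref{constant-cn} yields $\|\Gamma(g_\epsilon,g_\epsilon)-\Gamma(g,g)\|_{L^2}\lesssim\|(g_\epsilon,g)\|_{H^s_x}\big(\|g_\epsilon^\perp\|_{L^2}+\|\bdp g_\epsilon-g\|_{L^2}\big)$, and the dissipation bound $\int_0^\infty\tfrac{1}{\epsilon^2}\|g_\epsilon^\perp\|_{H^s_x}^2\,\bd s\lesssim C_0$ from the energy estimates of Lemma~\ref{lemma-nonlinear-decay} converts the $\|g_\epsilon^\perp\|$ part into an $O(\epsilon^2)$ contribution; the remaining pieces $\Psi_{2,2,j}^\epsilon,\Psi_{3,2,j}^\epsilon,\Psi_{r,2,j}^\epsilon,\Psi_{2,2}^\epsilon$ were already shown to be $O(\epsilon)$ in $L^2_tH^\ell_x$. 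Collecting terms gives \eqref{lemmaBilinear-decay-rates-l2}.

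I expect the principal obstacle to be structural rather than computational: one must make sure at the very first step that the singular prefactor $\reps$ is genuinely absorbed, i.e. that no surviving part of the semigroup ever acts on the $\mathrm{Ker}\,\mathcal L$-component of $N_2$ and that every remainder operator $\bdp_{r,j}$ indeed carries the compensating factor $\epsilon|n|$; this is exactly why $\Gamma(g_\epsilon,g_\epsilon)\in(\mathrm{Ker}\,\mathcal L)^\perp$ has to be invoked immediately. Secondarily one has to keep track that the surplus $|n|$ from $|n|\bdp_{1,j}$ is always paid for either by an integration by parts in $t$ (for the oscillatory branches $j=\pm1$) or by the smoothing of the parabolic semigroup (for $j=0,2$), so that the whole argument closes in $H^\ell_x$ for $\ell\le s$ without invoking any Hilbert-expansion input.
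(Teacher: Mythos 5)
Your overall strategy coincides with the paper's: decompose $\Psi_2^\epsilon-\Psi_2$ via \eqref{estPsi2Split}, treat each piece as in Lemma~\ref{lemmaConvergenceElec}, and for \eqref{lemmaBilinear-decay-rates-l2} subtract $\sum_{\pm1}\Psi_{1,2,\pm1}^\epsilon(g)$, split $g_\epsilon=\bdp g_\epsilon+g_\epsilon^\perp$, and use the time-integrated dissipation bound \eqref{estes} together with $\Gamma(\bdp g_\epsilon,\bdp g_\epsilon)=\mathcal L((\bdp g_\epsilon)^2)$ to land on the right-hand side of the estimate.

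The one step that is not justified as written is the high-frequency term $\Psi_{2,2}^\epsilon$. You correctly identify that $\Gamma(g_\epsilon,g_\epsilon)\in(\mathrm{Ker}\,\mathcal L)^\perp$ removes the $\reps$ singularity, but that cancellation lives entirely in the \emph{low-frequency} branch: it is $\bdp_{0,j}\hat\Gamma=0$ that replaces $\bdp_j$ by $\epsilon|n|\bdp_{r,j}$ and thereby eats $\reps$. The high-frequency semigroup $\bds_2$ carries no such projection structure; the available estimate is simply $\|\bds_2(\tfrac{t-s}{\epsilon^2},\epsilon n)h\|_{L^2_v}\lesssim e^{-\sigma(t-s)/\epsilon^2}\|h\|_{L^2_v}$. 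If you repeat \eqref{estConEl2} verbatim you obtain $\|\Psi_{2,2}^\epsilon(t)\|_{L^2}^2\lesssim\epsilon^2\int_0^t\|N_2\|_{L^2}^2\,\bd s=\int_0^t\|\Gamma(g_\epsilon,g_\epsilon)\|_{L^2}^2\,\bd s$, and the $\epsilon^2$ is completely destroyed by $\|N_2\|^2=\repst\|\Gamma\|^2$ --- this is exactly the point the paper alludes to when it says the high-frequency part only yields $\sqrt\epsilon$ rather than $\epsilon$. To recover an $\epsilon$ factor one must \emph{keep} the exponential weight in the second Cauchy--Schwarz factor, giving $\|\Psi_{2,2}^\epsilon(t)\|_{L^2}^2\lesssim\int_0^t e^{-\sigma(t-s)/\epsilon^2}\|\Gamma(s)\|_{L^2}^2\,\bd s$, and then either bound $\|\Gamma(s)\|$ by its supremum and integrate the kernel (giving $\lesssim\epsilon^2\sup_s\|\Gamma(s)\|_{L^2}^2$, hence $\|\Psi_{2,2}^\epsilon\|_{L^2}\lesssim\epsilon$), or Fubini in $(s,t)$ together with the finiteness of $\int_0^\infty\|\Gamma(s)\|_{L^2}^2\,\bd s$ (giving $\int_0^T\|\Psi_{2,2}^\epsilon\|_{L^2}^2\,\bd t\lesssim\epsilon^2$ and so $\|\int_0^T\Psi_{2,2}^\epsilon\,\bd t\|_{L^2}\lesssim\sqrt T\,\epsilon$). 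With this addition your claimed sharper rate $\epsilon\sqrt T$ does hold; as written, the sentence ``producing a factor $\epsilon^2$'' asserts the conclusion without accounting for the uncancelled $\reps$ sitting in front of $\Gamma$, and without the fix this term contributes no $\epsilon$-smallness at all.

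One small imprecision besides: the bound $\|\Gamma(g_\epsilon,g_\epsilon)-\Gamma(g,g)\|_{L^2}\lesssim\|(g_\epsilon,g)\|_{H^s_x}\bigl(\|g_\epsilon^\perp\|_{L^2}+\|\bdp g_\epsilon-g\|_{L^2}\bigr)$ is not literally what \eqref{constant-cn} gives (that is a dualized bound involving derivatives). What you want here is the product estimate $\|\Gamma(g,h)\|_{L^2}\lesssim\|g\|_{H^2_x}\|h\|_{L^2}$ (used implicitly in the paper, with the factor of $\mathcal L((\bdp g_\epsilon)^2-g^2)$ reduced to an $L^\infty_x\cdot L^2_x$ product of fluid moments); the conclusion is the same, but the reference should be corrected.
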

\begin{proof}
For the \eqref{lemmaBilinear-decay-rates-2},  according to the decomposition \eqref{estPsi2Split}, the left proof are similar to the eletric part. For the high frequency parts, since there exists coefficient $\reps$, we only get $\sqrt{\epsilon}$ other than $\epsilon$ before \eqref{lemmaBilinear-decay-rates-2}.

For the \eqref{lemmaBilinear-decay-rates-l2}, denoting
\[ d_{2, \pm 1}(n,s) =   \bdp_{1,\pm 1}(n )  \mathcal{F}({{\Gamma}(g_\epsilon,g_\epsilon)} - {\Gamma}(g,g))(s),  \]
Furthermore, noticing that
\begin{aligno}
   \sum_{\pm 1} (\Psi_{1,2,\pm 1}^\epsilon(g_\epsilon)  - \Psi_{1,2,\pm 1}^\epsilon(g))   = \sum_{n \in \mathbb{Z}^{d}-[0]} e^{\mathrm{i} n \cdot x} \int_0^t\left( e^{\{\tfrac{a^{\pm 1}}{\epsilon} + { b^{\pm 1} n^2 }   \}(t-s)  }  d_{2,\pm 1}({n,s}) \bd s \right),
\end{aligno}
by the similar method of deducing \eqref{estl22}, we can obtain that
\begin{aligno}
\int_0^T \|\sum_{\pm 1}(\Psi_{1,2,\pm 1}^\epsilon(g_\epsilon)  - \Psi_{1,2,\pm 1}^\epsilon(g))\|_{L^2}^2 \bd t
& \lesssim    \sum_{\pm 1 \atop n \in \mathbb{Z}^{d}-[0]}     \int_0^T      \|d_{2,\pm 1}(n,s)\|_{L^2_v}^2 \bd s\\
& \lesssim \int_0^T \|{{\Gamma}(g_\epsilon,g_\epsilon)}  - {{\Gamma}(g,g)}\|_{L^2}^2 \bd s.
\end{aligno}
Decomposing
\[ g_\epsilon = \bdp g_\epsilon + g_\epsilon^\perp,  \]
we can decompose ${\Gamma}(g_\epsilon,g_\epsilon)(s)$ as follows
\[ {\Gamma}(g_\epsilon,g_\epsilon)(s) = {\Gamma}(\bdp g_\epsilon, \bdp g_\epsilon)(s) + {\Gamma}(\bdp g_\epsilon,  g_\epsilon^\perp)(s) + {\Gamma}( g_\epsilon^\perp, \bdp g_\epsilon)(s) + {\Gamma}(g_\epsilon^\perp, g_\epsilon^\perp)(s),  \]
From \eqref{est-v-uni}, we can obtain that
\begin{align}
\label{estes}
\int_0^\infty \|g_\epsilon^\perp(z)\|_{H^s_{x}}^2 \bd s \lesssim \epsilon^2.
\end{align}
Based on the above useful estimates, we can obtain that
\begin{aligno}
\int_0^T \|{{\Gamma}(g_\epsilon,g_\epsilon)}  - {{\Gamma}(g,g)}\|_{L^2}^2 \bd s     & \lesssim \epsilon^2 + \int_0^T \|{{\Gamma}(\bdp g_\epsilon,\bdp g_\epsilon)}  - {{\Gamma}(g,g)}\|_{L^2}^2 \bd s \\
&   \lesssim \epsilon^2 + \int_0^T \|{\mathcal{L}((\bdp g_\epsilon)^2  - g^2)}\|_{L^2}^2 \bd s\\
& \lesssim  \epsilon^2 + \sup\limits_{0 \le s \le T} \|g_\epsilon(s)\|_{H^s_x}^2 \int_0^T \|(\bdp g_\epsilon - g)(s)\|_{L^2}^2\bd s
\end{aligno}
where we have used   the propertis of $\Gamma$,
\[ \Gamma(\bdp g_\epsilon, \bdp g_\epsilon) = \mathcal{L}(\left(\bdp g_\epsilon\right)^2).  \]
\end{proof}

\subsection{The convergence rates of the linear part}Denoting
\[ U(t)h(0)= \mathcal{F}^{-1}\left[( e^{ \tfrac{a_{44} n^2 }{2} t } P_{0,0} + e^{ \tfrac{a_{22} n^2 }{2} t } P_{0,2})\hat{h}(0),\right] \]
the following lemma is to show
\begin{align*}
U^\epsilon(t) g_\epsilon(0) \to U(t) g_\epsilon(0).
\end{align*}
\begin{lemma}[The linear part]
\label{lemmaConvergenceLinear}
Under the assumption of of Theorem \ref{main-results-convergence-rate},  for any $T>0$, we can prove that
\begin{align}
\label{linear-decay-rates}
\begin{aligned}
\|\int_0^T U^\epsilon(s) g_\epsilon(0) \bd s  - \int_0^T U(s) g_\epsilon(0) \bd s\|_{H^\ell_x} \lesssim \epsilon,~~\ell \le s.
\end{aligned}
\end{align}
If the initial data satisfy \eqref{assumpIni-Con},
\begin{align}
\label{linear-decay-rates-L2}
\begin{aligned}
\int_0^T \|U^\epsilon(s) g_\epsilon(0) -  U(s) g_\epsilon(0)\|_{H^\ell_x}^2 \bd s \lesssim \epsilon^2,~~\ell \le s.
\end{aligned}
\end{align}
\end{lemma}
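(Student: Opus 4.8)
The plan is to work entirely on the Fourier side and exploit the semigroup decomposition already established in Section~\ref{sec-spec} and recalled in \eqref{low-ei}–\eqref{structure-semi}. Writing $\widehat g_\epsilon(0,n,v)$ for the Fourier coefficients, we have
\[
U^\epsilon(t)g_\epsilon(0)=\mathcal F^{-1}\Big(\exp\big(\tfrac{t}{\epsilon^2}\mathcal B_\epsilon(\epsilon n)\big)\widehat g_\epsilon(0)\Big)
=\mathcal F^{-1}\big(\mathrm S_1(\tfrac t{\epsilon^2},\epsilon n)\widehat g_\epsilon(0)\big)+\mathcal F^{-1}\big(\mathrm S_2(\tfrac t{\epsilon^2},\epsilon n)\widehat g_\epsilon(0)\big),
\]
and I would split the difference $U^\epsilon(t)g_\epsilon(0)-U(t)g_\epsilon(0)$ into five pieces exactly as in the decomposition of $\bds_1$ used for $\Psi_1^\epsilon$ in Section~\ref{sec-con}: (i) the genuine limiting semigroup built from $e^{b^jn^2 t}\bdp_{0,j}(n)$ for the two diffusive modes $j=0,2$; (ii) the two fast modes $j=\pm1$, which carry the oscillatory factor $e^{\pm i|n|t/\epsilon+\cdots}$; (iii) the Taylor-remainder error $\big(e^{\epsilon c^j|n|^3 t}-1\big)$; (iv) the cutoff error $(\mathbf 1_{|\epsilon n|\le r_0}-1)$; (v) the bounded remainder operators $\epsilon\bdp_{r,j}$; plus the high-frequency part $\mathrm S_2$. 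The high-frequency part and pieces (iii), (iv), (v) are handled by the same Plancherel estimates already carried out for $\Psi_{1,2}^\epsilon$, $\Psi_{2,1,j}^\epsilon$, $\Psi_{3,1,j}^\epsilon$, $\Psi_{r,1,j}^\epsilon$ in the proof of Lemma~\ref{lemmaConvergenceElec}: each contributes $O(\epsilon)$ to the time-integrated $H^\ell_x$ norm, using the bounds $\mathbf 1_{|\epsilon n|\le r_0}-1\le \epsilon|n|/r_0$, $|e^{\epsilon c^j|n|^3 t}-1|\lesssim \epsilon|n|^3 t\, e^{\cdots}$, the uniform boundedness of $\mathrm T_j(w)$, the exponential decay $\|\mathrm S_2\|\lesssim e^{-\sigma t/\epsilon^2}$, and the uniform energy bound $\|g_\epsilon(0)\|_{H^s_x}^2\le c_{00}$ from Theorem~\ref{theoremNonlinear-decay}.

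For \eqref{linear-decay-rates}, the only genuinely new point compared to $\Psi_1^\epsilon$ is piece (ii): the two fast modes $j=\pm1$ do not appear in $U(t)$ and must be shown to integrate to $O(\epsilon)$ in time. Here I would integrate in $t$ and use the non-stationary phase / integration-by-parts identity
\[
\int_0^T e^{\{\frac{a^{\pm1}}{\epsilon}+b^{\pm1}n^2\}t}\,\bdp_{0,\pm1}(n)\widehat g_\epsilon(0)\,\bd t
=\frac{\epsilon}{ia^{\pm1}+\epsilon b^{\pm1}|n|^2}\Big(e^{\{\frac{a^{\pm1}}{\epsilon}+b^{\pm1}n^2\}T}-1\Big)\bdp_{0,\pm1}(n)\widehat g_\epsilon(0),
\]
with $a^{\pm1}=\pm|n|+\tfrac56 n^2$, exactly as in \eqref{estTemp}. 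The prefactor is $O(\epsilon/|n|)$, so after summing over $n$ and using Plancherel together with $\|g_\epsilon(0)\|_{H^\ell_x}\lesssim 1$ this term is $O(\epsilon)$; note it does \emph{not} pick up a $\sqrt T$ because, unlike the Duhamel terms in Lemma~\ref{lemmaConvergenceElec}, there is no time convolution here — the bound is uniform in $T$, which is why \eqref{linear-decay-rates} has $\epsilon$ and not $\epsilon\sqrt T$ on the right. This forces the derivative restriction $\ell\le s$ only (no loss), since the linear flow contains no $v$-derivative term.

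For the $L^2_tH^\ell_x$ estimate \eqref{linear-decay-rates-L2} under the well-prepared assumption \eqref{assumpIni-Con}, the extra ingredient is that now $g_\epsilon(0)=g_0(0)+\epsilon g_{\epsilon,1}(0)$ with $g_0(0)\in\mathrm{Ker}\,\mathcal L$ satisfying the Boussinesq relation, so $\bdp_{0,\pm1}(n)\widehat g_0(0)=0$ for the fast modes — the oscillatory contribution of $g_0(0)$ vanishes identically, and what remains is $\epsilon$ times the fast-mode action on $g_{\epsilon,1}(0)$, which is $O(\epsilon)$ in $L^\infty_t L^2_x$, hence $O(\epsilon)$ in $L^2_t L^2_x$ after the $\int_0^t e^{b^{\pm1}n^2(t-s)}$ smoothing (as in the computation leading to \eqref{estl22}); squaring gives $\epsilon^2 T$, and since $b^{\pm1}=a_{11}<0$ provides integrable-in-time decay one in fact gets $\epsilon^2$ uniformly in $T$. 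For the remaining pieces (iii)–(v) and $\mathrm S_2$ one reuses verbatim the $L^2_t$ bounds $O(\epsilon^2)$ from the proof of Lemma~\ref{lemmaConvergenceElec} (applied to the free data instead of $N_1(g_\epsilon)$), again with no $T$ growth thanks to the exponential/diffusive decay factors. \textbf{The main obstacle} is the fast-mode term: one must verify carefully that the well-preparedness \eqref{assumpIni-Con} — precisely, $g_0(0)\in\mathrm{Ker}\,\mathcal L$ together with $\divg u_0=0$ and the Boussinesq/Poisson relation $\Delta(\rho_0+\theta_0)=\rho_0$ — is exactly what annihilates $\bdp_{0,\pm1}(n)$ and $\bdp_{0,0}(n)$ acting on $g_0(0)$ up to the acoustic projection, so that only the $O(\epsilon)$ correction $\epsilon g_{\epsilon,1}(0)$ survives in the oscillatory modes; without this cancellation one only gets the weaker $\epsilon\sqrt T$-type bound, which is why \eqref{linear-decay-rates-L2} is stated under \eqref{assumpIni-Con} whereas \eqref{linear-decay-rates} is not.
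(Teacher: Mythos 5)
Your proposal is correct and follows essentially the same route as the paper: decompose $U^\epsilon$ via the semigroup structure \eqref{semi-decompose}, dispose of the Taylor-remainder, cutoff, bounded-remainder, and high-frequency pieces exactly as in Lemma~\ref{lemmaConvergenceElec}, integrate by parts in time (as in \eqref{estTemp}) to extract the $O(\epsilon)$ prefactor from the fast acoustic modes $j=\pm1$ for \eqref{linear-decay-rates}, and invoke the well-preparedness \eqref{assumpIni-Con} together with Remark~\ref{remark-a2} to get $\bdp_{\pm 1}\hat g_\epsilon(0)=O(\epsilon)$ for the $L^2_t$ estimate \eqref{linear-decay-rates-L2}. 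Two small slips worth flagging, neither of which breaks the argument: (i) for the free flow $U^\epsilon(t)g_\epsilon(0)$ there is no Duhamel convolution, so the reference to an ``$\int_0^t e^{b^{\pm1}n^2(t-s)}$ smoothing'' is misplaced — the $T$-uniform $L^2_t$ bound comes directly from $\int_0^T e^{2a_{11}|n|^2 t}\,\bd t\lesssim |n|^{-2}$ with $a_{11}<0$, which you do correctly cite in the next clause; (ii) in your ``main obstacle'' discussion, well-preparedness annihilates only the acoustic projections $\bdp_{0,\pm1}$ (up to $O(\epsilon)$), not $\bdp_{0,0}$ — the $j=0$ and $j=2$ modes are precisely the diffusive modes that make up the limiting flow $U(t)$ and must survive.
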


\begin{proof}
Compared to the nonlinear part, the linear part is easier. Indeed, noticing that the initial data is independent of time, the integration of $U^\epsilon(t) g_\epsilon(0)$ with respect to $t$ only acts on the semigroup. Thus, we can can better convergence rate than nonlinear case (there is no $T$ before $\epsilon$ on the right hand of \eqref{linear-decay-rates}).
For \eqref{linear-decay-rates-L2},  by the proof of Lemma \ref{lemmaConvergenceElec}, except for $\Psi_{1,0,\pm 1}(g^\epsilon)$, we first obtain the convergence rate in $L^2((0,T);L^2)$ norm then the $L^2$ norm of the average in time. From Remark \ref{remark-a2}, while the initial data are well-parepared, $\bdp_\pm \hat g_\epsilon(0) =O(\epsilon)$, then we complete the proof. Furthermore, if the initial data do not satisfy \eqref{assumpIni-Con}, we can not obtain \eqref{linear-decay-rates-L2} (see the remark on \cite[Sec. C.1.1]{briant-2015-be-to-ns}).

\end{proof}

From the previous analysis, owing to  the same structure of the semi-group, the proof of this lemma is the same to the Botlzmann case like \cite[Sec.8.1.2]{briant-2015-be-to-ns}.

Before we complete the proof of Theorem \ref{main-results-convergence-rate}, we prove that
\begin{lemma}
\label{lemma-dd} Under the assumption of of Theorem \ref{main-results-convergence-rate},
\begin{align}
\|\Psi_{1,1,\pm 1}^\epsilon(g)\|_{H^\ell_x}^2 \lesssim  \epsilon^2,~~ \|\Psi_{1,2,\pm 1}^\epsilon(g)\|_{H^\ell_x}^2 \lesssim  \epsilon^2,~~~~\ell \le s-2.
\end{align}
\end{lemma}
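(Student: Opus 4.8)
The plan is to estimate the two quantities $\Psi_{1,1,\pm 1}^\epsilon(g)$ and $\Psi_{1,2,\pm 1}^\epsilon(g)$ separately, but by essentially the same device: integrate in time to gain an $\epsilon$ from the oscillatory phase. Recall from \eqref{estPsi1Split} that
\[
\Psi_{1,1,\pm 1}^\epsilon(g) = \mathcal{F}^{-1}\Big\{ \int_0^t e^{\{\tfrac{a^{\pm 1}}{\epsilon} + b^{\pm 1}n^2\}(t-s)} \bdp_{0,\pm 1}(n) \mathcal{F}(N_1(g))(s)\,\bd s \Big\},
\]
and $a^{\pm 1} = \pm|n| + \tfrac56 n^2 \neq 0$ for $n \neq 0$. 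The first step is to integrate by parts in $s$ exactly as in \eqref{estTemp}: writing the kernel as $\tfrac{\epsilon}{\mathrm{i}a^{\pm1}+\epsilon b^{\pm1}|n|^2}\,\partial_s\big(e^{\{\tfrac{a^{\pm1}}{\epsilon}+b^{\pm1}n^2\}(t-s)}\big)$, the prefactor $\tfrac{\epsilon}{\mathrm{i}a^{\pm1}+\epsilon b^{\pm1}|n|^2}$ carries the gain; its modulus is $\lesssim \epsilon/|a^{\pm1}| \lesssim \epsilon/|n|$. Hence, after Plancherel,
\[
\|\Psi_{1,1,\pm 1}^\epsilon(g)\|_{H^\ell_x}^2 \lesssim \epsilon^2 \sum_{n\neq 0} \frac{|n|^{2\ell}}{|n|^2}\Big( \sup_{0\le s\le t}\|\bdp_{0,\pm 1}(n)\mathcal{F}(N_1(g))(s)\|_{L^2_v} + \int_0^t \|\bdp_{0,\pm1}(n)\partial_s\mathcal{F}(N_1(g))(s)\|_{L^2_v}\,\bd s\Big)^2,
\]
where I have used that the boundary term at $s=0$ vanishes (since $g(0)\in\mathrm{Ker}\,\mathcal L$ and $N_1$ involves $\nabla\phi$, so $N_1(g)(0)$ is built from fluid quantities that are controlled) and that $|e^{\{\tfrac{a^{\pm1}}{\epsilon}+b^{\pm1}n^2\}(t-s)}|\le e^{b^{\pm1}n^2(t-s)}\le 1$ because $b^{\pm1}=a_{11}<0$.

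Next I must control the two factors on the right. From \eqref{semi-n1}, $\bdp_{0,\pm1}(n)\mathcal F(N_1(g))$ is $\tfrac12(3\widehat\rho\ast\nabla\widehat\phi\cdot\omega \mp \tfrac{2|n|}{3}\widehat u\ast\nabla\widehat\phi)\cdot(v\cdot\omega)$, so its $L^2_v$ norm is $\lesssim (1+|n|)|\widehat{\rho\nabla\phi}(n)| + |n|\,|\widehat{u\nabla\phi}(n)|$; summing $|n|^{2\ell}/|n|^2$ against the square of this produces, for $\ell\le s-2$ (the extra two derivatives absorbing the $(1+|n|)$), a bound $\lesssim \|\rho\nabla\phi\|_{H^s}^2 + \|u\nabla\phi\|_{H^s}^2 \lesssim \|g\|_{H^s_x}^4$, which is bounded (and exponentially decaying) by Theorem~\ref{main-results-convergence-rate}'s limit $g$ solving \eqref{nsfp}, whose $H^s_x$ regularity is part of the hypotheses. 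For the $\partial_s$ factor I differentiate $N_1(g)=({v}g-\nabla_v g)\cdot\nabla\phi$ in $t$ and use the fluid equations \eqref{nsfp} to express $\partial_t\rho,\partial_t u,\partial_t\theta$ (hence $\partial_t\phi$ via $\Delta(\rho+\theta)=\rho$ and $\partial_t g$ via $g=\rho+u\cdot v+\tfrac\theta2(|v|^2-3)$) in terms of spatial derivatives of $(\rho,u,\theta)$; this costs finitely many extra $x$-derivatives, which is precisely why the loss to $\ell\le s-2$ appears. The integral $\int_0^t\|\cdots\|\,\bd s$ is then finite by the exponential decay of $g$.

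For $\Psi_{1,2,\pm1}^\epsilon(g)$ the argument is identical in structure — integrate by parts in $s$, pick up the $\epsilon/|a^{\pm1}|$ prefactor — except that, as in \eqref{estPsi2Split}, the amplitude is $|n|\bdp_{1,\pm1}(n)\mathcal F(\Gamma(g,g))$ rather than $\bdp_{0,\pm1}(n)\mathcal F(N_1(g))$. Since $\bdp_{1,\pm1}$ is a bounded operator and $\Gamma(g,g)=\mathcal L((\bdp g)^2)$ for $g\in\mathrm{Ker}\,\mathcal L$ (the identity used at the end of Lemma~\ref{lemmaConvergenceBilinear}), we get $\|\,|n|\bdp_{1,\pm1}(n)\mathcal F(\Gamma(g,g))\|_{L^2_v}\lesssim |n|\,|\widehat{g^2}(n)|$; the extra $|n|$ cancels against the $1/|n|$ from integration by parts, leaving $\|\Psi_{1,2,\pm1}^\epsilon(g)\|_{H^\ell_x}^2\lesssim \epsilon^2\sum_n |n|^{2\ell}|\widehat{g^2}(n)|^2\lesssim \epsilon^2\|g\|_{H^{\ell+1}_x}^4$, finite for $\ell\le s-1$, and a fortiori for $\ell\le s-2$; the time-derivative term is handled as before using \eqref{nsfp}.

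The main obstacle is bookkeeping the derivative count: one must verify that after (i) the $|n|$ weight from $\bdp_{1,\pm1}$ or the $(1+|n|)$ from the $\bdp_{0,\pm1}(n)\mathcal F(N_1(g))$ formula, (ii) the extra $x$-derivatives incurred when time-differentiating $N_1(g)$ or $\Gamma(g,g)$ via the fluid system \eqref{nsfp}, and (iii) the $1/|n|$ gained from integrating the oscillatory phase by parts, the net requirement is $\ell\le s-2$ and no more. Everything else — the $\epsilon^2$ gain, the summability in $n$, the finiteness in $t$ — follows routinely from the boundedness of $\bdp_{1,j}$, the Plancherel theorem, $|e^{b^{\pm1}n^2(t-s)}|\le1$, and the exponential-in-time decay of the limit $g$.
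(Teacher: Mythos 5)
Your argument matches the paper's: both integrate by parts in $s$ as in \eqref{estTemp} to extract the $\epsilon/|a^{\pm1}|$ factor, bound the resulting boundary terms via Plancherel and $b^{\pm1}<0$, and handle the residual $\partial_s$ term by invoking the parabolic structure of \eqref{nsfp} (one time derivative costs two spatial derivatives), which is exactly where the restriction $\ell\le s-2$ enters (cf.\ Remark \ref{remark-rate-2}). One small imprecision worth fixing: the $s=0$ boundary term does not vanish — it is $-\sum_n e^{\mathrm{i}n\cdot x}\tfrac{\epsilon}{\mathrm{i}a^{\pm1}+\epsilon b^{\pm1}|n|^2}e^{\{\tfrac{a^{\pm1}}{\epsilon}+b^{\pm1}n^2\}t}\bdp_{0,\pm1}(n)\mathcal F(N_1(g))(0)$, the term $E_2$ in \eqref{est111} — but since $|e^{\{\tfrac{a^{\pm1}}{\epsilon}+b^{\pm1}n^2\}t}|\le 1$ it is bounded and the $\epsilon$ prefactor makes it $O(\epsilon^2)$ in $H^\ell_x$, so the conclusion is unaffected.
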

\begin{remark}
\label{remark-rate-2}
The goal of this lemma has been explained in Remark \ref{remark-rate-1}. The idea of this lemma is based on  \eqref{est111}. Indeed, after integration by part,
\[ \Psi_{1,1,\pm 1}^\epsilon(g) \approx O(\epsilon) + O(\epsilon)\Psi_{1,1,\pm 1}^\epsilon(g_t), \]
noticing that $g$ is governed by a parabolic type equation where one derivative with respect to $t$ is equal two times derivative with respect to $x$, this is why $ \ell \le s -2$.

\end{remark}
\begin{proof}
By the integration by parts with respect to $s$, we can obtain that
\begin{equation}
\label{est111}
\begin{aligned}
 \Psi_{1,1,\pm 1}^\epsilon(g_\epsilon)  &=\sum_{n \in \mathbb{Z}^{d}-[0]} e^{\mathrm{i} n \cdot x}  \int_0^t\left( e^{\{\tfrac{a^{\pm 1} }{\epsilon}i + { b^{\pm 1} n^2 }   \}(t-s)  }  \bdp_{0,\pm 1}(n )  \mathcal{F}({N_1(g)})(s) \bd s \right)  \\
&=\sum_{n \in \mathbb{Z}^{d}-[0]} e^{ \mathrm{i} n \cdot x} \frac{\epsilon}{ \mathrm{i} a^ {\pm 1 } +\epsilon b^{\pm 1}|n|^2}  \bdp_{0,\pm 1}(n)\mathcal{F}({N_1(g)})(t)   \\
& - \sum_{n \in \mathbb{Z}^{d}-[0]} e^{ \mathrm{i} n \cdot x} \frac{\epsilon}{ \mathrm{i} a^ {\pm 1 } +\epsilon b^{\pm 1}|n|^2}e^{\{\tfrac{a^{\pm 1}}{\epsilon} + { b^{\pm 1} n^2 }   \}(t)  }  \bdp_{0,\pm 1}(n )  \mathcal{F}({N_1(g)})(0) \\
& - \sum_{n \in \mathbb{Z}^{d}-[0]} e^{ \mathrm{i} n \cdot x} \frac{\epsilon}{ \mathrm{i} a^ {\pm 1 } +\epsilon b^{\pm 1}|n|^2} \int_{0}^{t}e^{\{\tfrac{a^{\pm 1}}{\epsilon} + { b^{\pm 1} n^2 }   \}(t-s)  }  P_{0,\pm 1}(n )  \mathcal{F}({N_1(g_t)})(s) \bd s\\
&:= E_1 + E_2 + E_3.
\end{aligned}
\end{equation}
Recalling that
\[
\bdp_{0,\pm 1}(n )  \mathcal{F}({N_1(g_\epsilon)}) =   \left(3\hat\rho_\epsilon\!*\!\nabla \hat\phi_\epsilon \cdot \omega \mp \tfrac{2|n|}{3}\hat{u}_\epsilon\!*\!\nabla \hat\phi_\epsilon\right)\sdot(v\sdot \omega),
  \]
by the Plancherel theorem and noticing that $b^{\pm 1} <0$, we can infer that
\begin{align}
\|E_1\|_{H^\ell_x}^2 + \|E_2\|_{H^\ell_x}^2 \lesssim \epsilon^2.
\end{align}
For the time derivative of $g$ in $E_3$, recalling that $g$ belongs to the kernel space of $\mathcal{L}$ with coefficients $\rho, u, \theta$ satisfying the Navier-Stokes-Poission system. In the parabolic type equation, one derivative with respect to $t$ is equal two times derivative with respect to $x$.  By the similar method of infering \eqref{estddd},  we can obtain that
\begin{align}
\|E_3\|_{H^\ell_x}^2   \lesssim \epsilon^2,~~\ell \le s -2.
\end{align}
For $\Psi_{1,2,\pm 1}^\epsilon(g)$,
\begin{align*}
 \Psi_{1,2,\pm 1}^\epsilon(g)  &=\sum_{n \in \mathbb{Z}^{d}-[0]} e^{\mathrm{i} n \cdot x}  \int_0^t  e^{\{\tfrac{a^{\pm 1}}{\epsilon} + { b^{\pm 1} n^2 }   \}(t-s)  }  |n|\bdp_{1,\pm 1}(n )  \mathcal{F}(\Gamma(g,g))(s) \bd s\\
  &=\sum_{n \in \mathbb{Z}^{d}-[0]} e^{\mathrm{i} n \cdot x}  \int_0^t  e^{\{\tfrac{a^{\pm 1}}{\epsilon} + { b^{\pm 1} n^2 }   \}(t-s)  }  |n|\bdp_{1,\pm 1}(n )  \mathcal{F}(\mathcal{L}(g^2)(s) \bd s
\end{align*}
By the similar method trick of deducing \eqref{estl22} and \eqref{est111}, we complete the proof.
\end{proof}
\subsection{The final proof of Theorem \ref{main-results-convergence-rate}}
From Lemma \ref{lemmaConvergenceElec}, Lemma \ref{lemmaConvergenceBilinear} and Lemma \ref{lemmaConvergenceLinear}, we can obtain that
\begin{align}
g(t) = U(t)g_0 + \Psi_1(t)(g) + \Psi_2(t)(g).
\end{align}
Recalling that
\begin{align}
g_\epsilon(t) = U^\epsilon(t)g_\epsilon(0)+ \Psi_1^\epsilon(t)(g_\epsilon) + \Psi_2^\epsilon(t)(g_\epsilon),
\end{align}
we can decompose $g_\epsilon - g$ as follows
\begin{aligno}
\label{estd}
g_\epsilon(t) - g(t) & = \bigg( U^\epsilon(t)g_\epsilon(0)+ \Psi_1^\epsilon(t)(g_\epsilon) + \Psi_2^\epsilon(t)(g_\epsilon) \\
& - U(t)g_\epsilon(0)- \Psi_1(t)(g_\epsilon) - \Psi_2(t)(g_\epsilon)\bigg)\\
& + \bigg( U(t)g_\epsilon(0) - U(t)g_0 \bigg)+ \bigg( \Psi_1(t)(g_\epsilon)  - \Psi_1(t)(g)\bigg) \\
&  + \bigg(\Psi_2(t)(g_\epsilon) - \Psi_2(t)(g) \bigg)\\
& := \mathbf{D}_\epsilon^1(t) + \mathbf{D}_\epsilon^2(t) + \mathbf{D}_\epsilon^3(t) + \mathbf{D}_\epsilon^4(t).
\end{aligno}
From Lemma \ref{lemmaConvergenceElec}, Lemma \ref{lemmaConvergenceBilinear}, Lemma \ref{lemmaConvergenceLinear} and Lemma \ref{lemma-dd}, we can deduce that
\begin{align}
\label{estd1}
\int_0^T \|\mathbf{D}_\epsilon^1(s)\|_{H^\ell_x}^2 \bd s  \lesssim   \epsilon T + \sup\limits_{0 \le s \le T} \|g_\epsilon(s)\|_{H^s_x}^2 \int_0^T \|(\bdp g_\epsilon - g)(s)\|_{H^\ell_x}^2\bd s, ~~ \ell \le s-2.
\end{align}
According to the settings on the initial data,
\begin{align}
\label{estd2}
\int_0^T \|\mathbf{D}_\epsilon^2(s)\|_{H^\ell_x}^2 \bd s \lesssim \epsilon^2, ~~\ell \le s.
\end{align}
But for $\mathbf{D}_\epsilon^3(t)$ and $\mathbf{D}_\epsilon^4(t)$, it is more complicated.
Recalling that
\[ \Psi_2(t) g_\epsilon = \mathcal{F}^{-1}\left[\int_0^t\left( e^{ \tfrac{a_{44} n^2 }{2} (t-s) } |n|\bdp_{1,0} + e^{ \tfrac{a_{22} n^2 }{2} (t-s) } |n|\bdp_{1,2}\right){\Gamma}(g_\epsilon,g_\epsilon)(s)\bd s, \right]\]
decomposing
\[ g_\epsilon = \bdp g_\epsilon + g_\epsilon^\perp,  \]
we can decompose ${\Gamma}(g_\epsilon,g_\epsilon)(s)$ as follows
\[ {\Gamma}(g_\epsilon,g_\epsilon)(s) = {\Gamma}(\bdp g_\epsilon, \bdp g_\epsilon)(s) + {\Gamma}(\bdp g_\epsilon,  g_\epsilon^\perp)(s) + {\Gamma}( g_\epsilon^\perp, \bdp g_\epsilon)(s) + {\Gamma}(g_\epsilon^\perp, g_\epsilon^\perp)(s),  \]
From \eqref{est-v-uni}, we can obtain that
\begin{align}
\label{estes-2}
\int_0^\infty \|g_\epsilon^\perp(z)\|_{H^s_{x}}^2 \bd s \lesssim \epsilon^2.
\end{align}
Based on the above useful estimates, we can obtain that
\begin{align}
 \int_0^t \|\left( \Psi_2(t) g_\epsilon - \Psi_2(t) \bdp g_\epsilon\right)  \|_{H^\ell_x}^2  \bd s \lesssim \epsilon^2.
\end{align}
By the propertis of $\Gamma$,
\[ \Gamma(\bdp g_\epsilon, \bdp g_\epsilon) = \mathcal{L}(\left(\bdp g_\epsilon\right)^2),  \]
then by the same trick of deducing \eqref{estaa}, we can obtain that
\begin{aligno}
\label{estbb}
& \int_0^T \| \Psi_2(s) (\bdp g_\epsilon -g)  \|_{H^\ell_x}^2 \bd s \\
& \lesssim  \int_0^T \| \mathcal{L} ((\bdp g_\epsilon)^2 -g^2)  \|_{H^\ell_x}^2 \bd s \\
& \lesssim  \sup\limits_{ 0 \le s \le T}   \|  g_\epsilon(s)\|_{H^s_x}^2     \int_0^T \|\left( \bdp  g_\epsilon - g \right)\|_{H^\ell_x}^2\bd s.
\end{aligno}
In summary,
\begin{align}
\label{estd4}
\|\int_0^t \mathbf{D}_4^\epsilon(s) \bd s \lesssim  \epsilon^2 +  \sup\limits_{ 0 \le s \le T}   \|  g_\epsilon(s)\|_{H^s_x}^2     \int_0^T \|\left( \bdp  g_\epsilon - g \right)\|_{H^\ell_x}^2\bd s.
\end{align}
For $\mathbf{D}_\epsilon^3(t)$, recalling that
\[ \Psi_1(t) h = \mathcal{F}^{-1}\left[\int_0^t\left( e^{ \tfrac{a_{44} n^2 }{2} (t-s) } P_{0,0} + e^{ \tfrac{a_{22} n^2 }{2} (t-s) } P_{0,2}\right)N_1(h)(s)\bd s \right], \]
from \eqref{semi-n1}, $\bdp_{0,0} N_1(g_\epsilon)$ and $\bdp_{0,2} N_1(g_\epsilon)$ are nonlinear and only related to the fluid parts, by the similar trick to \eqref{estbb}, we can
\begin{align}
\label{estd3}
\|\int_0^t \mathbf{D}_3^\epsilon(s) \bd s \|_{H^\ell_x} \lesssim  \epsilon^2 +  \sup\limits_{ 0 \le s \le T}   \|  g_\epsilon(s)\|_{H^s_x}^2     \int_0^T \|\left( \bdp  g_\epsilon - g \right)\|_{H^\ell_x}^2\bd s.
\end{align}
From \eqref{estd}, \eqref{estd1}, \eqref{estd2}, \eqref{estd3} and \eqref{estd4}, we finally obtain that for $\ell \le s-2$
\begin{aligno}
 \int_0^T \|( g_\epsilon  - g)(s)\|_{H^\ell_x}^2\bd s & \lesssim  \epsilon T   +  \sup\limits_{ 0 \le s \le T}   \|  g_\epsilon(s)\|_{H^s_x}^2     \int_0^T \|\left( \bdp  g_\epsilon - g \right)\|_{H^\ell_x}^2\bd s\\
 &     \lesssim  \epsilon T   +  \sup\limits_{ 0 \le s \le T}   \|  g_\epsilon(s)\|_{H^s_x}^2     \int_0^T \|\left(    g_\epsilon - g \right)\|_{H^\ell_x}^2\bd s
\end{aligno}
Since the initial data are small enough, we can deduce that
\begin{align}
\int_0^T \|( g_\epsilon  - g)(s)\|_{H^\ell_x}^2 \bd s\lesssim \epsilon T  .
\end{align}
Since $g_\epsilon$ converge weakly to $g$ in $L^\infty((0,T),H^s_x))$ space, it follows that
\begin{align}
\|g_\epsilon(t)- g(t)\|_{H^\ell_x}^2  \le 2\bar{c}_0 \exp(- \bar{c} t).
\end{align}
For any $T>0$,
\begin{align*}
 \int_T^\infty \|g_\epsilon(s) - g(s)\|_{H^\ell_x}^2 \bd s  \le  2\tfrac{ {\bar{c}_0}}{\bar{c}^2} \exp\left({-  {\hat{c}}  T}\right).
\end{align*}
Then by simple computation, as long as
\begin{align*}
 T \ge - \tfrac{1}{\bar{c}} \ln\left( -\tfrac{\bar{c}^2}{2{\bar{c}_0}} \epsilon \right),
\end{align*}
we can obtain that
\begin{align*}
\int_T^\infty \|g_\epsilon(s) - g(s)\|_{H^\ell_x}^2 \bd s \le  \epsilon.
\end{align*}
All together, we complete the proof.

\begin{remark}
\label{remark-hilbert}
We explain why and how the Hilbert expansion is used for the Boltzmann case in \cite{briant-2015-be-to-ns}. Indeed,  without estimates like \eqref{lemmaBilinear-decay-rates-l2}, to get the convergence rate of
\[ \Psi_2(t)(g_\epsilon) - \Psi_2(t)(g), \]
the Hilbert expansion is used. Indeed, by the Hilbert expansion (see \cite{guo2006NSlimit}),
\begin{align}
\label{esthilbert}
g_\epsilon = g  + \epsilon g_1 + \epsilon^2 g_2 +\cdots + \epsilon^n g_{n,\epsilon},
\end{align}
we have
\begin{align*}
\Psi_2(t) g_\epsilon - \Psi_2(t) g & = \mathcal{F}^{-1}\left[\int_0^t\left( e^{ \tfrac{a_{44} n^2 }{2} (t-s) } |n|\bdp_{1,0} + e^{ \tfrac{a_{22} n^2 }{2} (t-s) } |n|\bdp_{1,2}\right)[{\Gamma}(g_\epsilon,g_\epsilon)(s)- {\Gamma}(g,g)(s)] \bd s, \right]\\
&  = \mathcal{F}^{-1}\left[\int_0^t\left( e^{ \tfrac{a_{44} n^2 }{2} (t-s) } |n|\bdp_{1,0} + e^{ \tfrac{a_{22} n^2 }{2} (t-s) } |n|\bdp_{1,2}\right)[{\Gamma}(g_\epsilon - g ,g_\epsilon)(s) ] \bd s \right] \\
& +  \mathcal{F}^{-1}\left[\int_0^t\left( e^{ \tfrac{a_{44} n^2 }{2} (t-s) } |n|\bdp_{1,0} + e^{ \tfrac{a_{22} n^2 }{2} (t-s) } |n|\bdp_{1,2}\right)[{\Gamma}(g ,g_\epsilon  - g)(s) ] \bd s. \right]
\end{align*}
Since $a_{22} <0$  and $a_{44}<0$, by \eqref{esthilbert}, we can prove
\[  \|\Psi_2(t) g_\epsilon - \Psi_2(t) g\|_{L^2} \lesssim \epsilon. \]

\end{remark}
\section{Settings with random inputs.}
\label{sec-z}
This section consists of showing that the main results still hold while the random inputs are involved.  The random may come from both the initial data and collision kernel.  The random settings on the collision kernel are the same to those in \cite[Sec. 5]{liujin-2018-kinetic} where their initial data do not include fluid parts. Thus, the diffusive limit was not considered in their work.  In this work, we verify the fluid limit under random settings.
\subsection{Settings and functional space}
In the following, we introduce the similar assumptions on the kernel. To introduce the similar assumptions on kernel to that in Sec. \ref{sec-assump-determin}, we first introduce  the functional space
\begin{align*}
\|h\|_{\hszt}^2 = \int_{\mathbb{I}_z} \intps f^2 \bdv \bd x \bd  z,~~ \bd z = \bdp(z) d z.
\end{align*}

In the same way, we can define $H^s_\Lambda {L^2_z}$, $H^s_x {L^2_z}$, $H^s_\Lambda {L^{2\cap \infty}_z}$.

\begin{align*}
 \|f\|_{L^2_v}^2 =  \intv f^2 \bdv , ~~\|f\|_{L^2}^2 =  \intps f^2 \bdv \bd x,~~\|f\|_{L^2_\Lambda}^2 =  \intps f^2 \hat{v}\bdv \bd x,~~~~\hat{v}= 1 + |v|,\\
 \|f\|_{H^s_{x,z}}^2 =  \sum\limits_{k=0}^s\|\nabla^k_x f\|_{L^2L^{2\cap \infty}_z}^2 + \sum\limits_{k=0}^{s-1}\|\nabla^k_x  \partial_z f\|_{L^2L^2_z}^2,\\
 ~~\|f\|_{H^s_{\Lambda,z}}^2 =  \sum\limits_{k=0}^s \sum\limits_{i + j =k}\|\nabla^i_x \nabla^j_v f\|_{L^2_\Lambda L^{2\cap \infty}_z}^2  + \sum\limits_{k=0}^{s-1} \sum\limits_{i + j =k}\|\nabla^i_x \nabla^j_v  \partial_z f\|_{L^2_\Lambda L^2_z}^2,\\
 ~~\|f\|_{H^s_{z}}^2 =  \sum\limits_{k=0}^s \sum\limits_{i + j =k}\|\nabla^i_x \nabla^j_v f\|_{L^2 L^{2\cap\infty}_z}^2  + \sum\limits_{k=0}^{s-1} \sum\limits_{i + j =k}\|\nabla^i_x \nabla^j_v  \partial_z f\|_{L^2 L^2_z}^2.
\end{align*}

\subsection{A prior estimate}
In this subsection, we will establish similar results as Lemma \ref{lemma-nonlinear-decay}. The derivative of $g_\epsilon(0)$  with respect with $z$  is also assumed to bounded. As we will show, it is convenient for us to verify the fluid limits.  To achieve this, we assume that
\begin{align}
\label{initial-mean-nonlinear-ran}
\begin{split}
  { \intps \partial_z g_\epsilon(0) \m \bd v \bd x = 0,     \intps v \partial_z g_\epsilon(0) \m \bd v \bd x =0,}\\
  \partial_z\big( 3\intps (\tfrac{|v|^2}{3} - 1) g_\epsilon(0)\m \bd v \bd x +  \epsilon \|\nabla_x \phi_\epsilon(0)\|_{L^2}^2 \big)= 0.
  \end{split}
\end{align}
This assumption   \eqref{initial-mean-nonlinear-ran} plays a key role in deducing the estimates related to $\partial_z g_\epsilon$, specially for the similar process like \eqref{est-nonlinear-h1-all-0} \eqref{est-mean-difference} and \eqref{est-differ}. Indeed,
by global conservation law, we can deduce   the mean value of fluid parts  for $\partial_z g_\epsilon$ on tours.

Recalling \eqref{initial-mean-nonlinear},
\begin{align}
\label{initial-mean-nonlinear-recall}
\begin{split}
  { \intps g_\epsilon(0) \m \bd v \bd x = 0,     \intps v g_\epsilon(0) \m \bd v \bd x =0,}\\
  \big( 3\intps (\tfrac{|v|^2}{3} - 1) g_\epsilon(0)\m \bd v \bd x +  \epsilon \|\nabla_x \phi_\epsilon(0)\|_{L^2}^2 \big)= 0.
  \end{split}
\end{align}
The following two lemma is on the $L^2$ and $L^\infty$ estimates. It can be directly derived from Lemma \ref{lemma-nonlinear-decay}.

\begin{lemma}[$\hszti$]
\label{lemma-nonlinear-decay-ran-infty}
Under the assumption of \eqref{initial-mean-nonlinear-ran}, there exists some small enough constant $e_0$   such that as long as
\[  \|g_\epsilon(0,z)\|_{H^s_x \hszti}^2 + \epsilon^2 \|\nabla_v   g_\epsilon(0,z)\|_{H^{s-1} \hszti}^2   \le  e_0, \quad \forall z \in \mathbb{I}_z, \]
then  \eqref{vpb-ns-scaling-ns-corrector} admit a solution  $(g_\epsilon, \nabla \phi_\epsilon)$ satisfying for some $\bar{e}_0>0$ and $\bar{e}>0$ (all independent of $\epsilon$ while $\epsilon <1$) such that
\begin{align}
\label{est-lemma-nonlinear-exp-decay-ran-infty}
\|(g_\epsilon,\nabla_x \phi_\epsilon)(t)\|_{H^s_x \hszti}^2 + \epsilon^2 \|\nabla_v g_\epsilon(t)\|_{H^{s-1} \hszti}^2 \le \bar{e}_0 \exp(- \bar{e} t).
\end{align}
\end{lemma}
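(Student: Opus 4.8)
The plan is to derive Lemma~\ref{lemma-nonlinear-decay-ran-infty} essentially as a parametrized version of Lemma~\ref{lemma-nonlinear-decay}, treating the random variable $z$ as a harmless parameter for the $L^2L^\infty_z$ component and then promoting the pointwise-in-$z$ estimate to the $L^2_z$ component. First I would fix $z\in\mathbb{I}_z$ and observe that, by the assumption at the end of Sec.~\ref{sec-assump-ran}, the operator $\mathcal{L}=\mathcal{L}(\cdot,z)$ satisfies for each such $z$ all the coercivity, mixing, relaxation, and bilinear bounds of Sec.~\ref{sec-assump-determin}, with constants uniform in $z$ (this uniformity is exactly what the bounds $|b(\eta,z)|\le\tilde C_b$, $|\partial_\eta b(\eta,z)|\le\tilde C_b$ give). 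Consequently, for each fixed $z$ the function $g_\epsilon(\cdot,\cdot,\cdot,z)$ solves system~\eqref{vpb-ns-scaling-ns-corrector} with that fixed collision operator, the initial data satisfy~\eqref{initial-mean-nonlinear} by~\eqref{initial-mean-nonlinear-recall-1}, and the smallness hypothesis on $\|g_\epsilon(0,z)\|_{H^s_x}^2+\epsilon^2\|\nabla_v g_\epsilon(0,z)\|_{H^{s-1}}^2$ holds because it is dominated by $e_0$. Thus Lemma~\ref{lemma-nonlinear-decay} applies verbatim for each $z$, yielding a unique solution with
\[
\|g_\epsilon(t,z)\|_{H^s_x}^2+\epsilon^2\|\nabla_v g_\epsilon(t,z)\|_{H^{s-1}}^2\le \bar c_{00}\exp(-\bar c_0 t),
\]
with $\bar c_{00},\bar c_0$ independent of $z$ since all the structural constants are. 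Taking $\sup_{z\in\mathbb{I}_z}$ gives the $L^\infty_z$ half of the norm, and integrating $\int_{\mathbb{I}_z}\cdot\,\pi(z)\,\mathrm dz$ (using that $\mathbb{I}_z$ has finite measure) gives the $L^2_z$ half, which together produce the $L^{2\cap\infty}_z$ control of $(g_\epsilon,\nabla_x\phi_\epsilon)$ and $\nabla_v g_\epsilon$.

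Next I would handle the $\partial_z$-component of the $H^s_{x}\hszti$ norm, which is genuinely new: differentiating~\eqref{vpb-ns-scaling-ns-corrector} in $z$ produces
\[
\partial_t(\partial_z g_\epsilon)+\reps\vdot(\partial_z g_\epsilon)+\repst\mathcal{L}(\partial_z g_\epsilon)-\tfrac{v}{\epsilon}\sdot\nabla\partial_z\phi_\epsilon
= \text{(linear-in-}\partial_z g_\epsilon\text{ lower-order terms)}+\reps\big(\Gamma_z(g_\epsilon,g_\epsilon)+2\Gamma(\partial_z g_\epsilon,g_\epsilon)\big)-\repst\mathcal{L}_z(g_\epsilon)+\partial_z N_1,
\]
where the term $-\repst\mathcal{L}_z(g_\epsilon)$ and $\reps\Gamma_z(g_\epsilon,g_\epsilon)$ carry the $z$-dependence of the kernel. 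The key point is that, by the Sec.~\ref{sec-assump-ran} assumptions, $\mathcal{L}_z(g_\epsilon)\in\mathrm{Ker}(\mathcal{L})^\perp$ with $\int\mathcal{L}_z(h)\cdot g\lesssim\|h\|_{L^2_\Lambda}\|g^\perp\|_{L^2_\Lambda}$, so the dangerous $\repst$ factor is absorbed exactly as the $\Gamma$-term was in~\eqref{est-nonlinear-es1} by pairing against $\tfrac{1}{\epsilon}(\partial_z g_\epsilon)^\perp$; and $\Gamma_z$ obeys the same bound as $\Gamma$ by~\eqref{constant-gamma-z}. One then runs the entire energy machinery of Lemma~\ref{lemma-nonlinear-decay} — the functionals $\mathfrak{E}_{\epsilon,1}^s$, $\mathfrak{E}_{\epsilon,2,1}^s$, $\mathfrak{E}_{\epsilon,2,2}^s$ — on $\partial_z g_\epsilon$, with all the previously-controlled quantities (now already known to decay, from the first step) appearing as coefficients in front of $\partial_z$-quantities, so the new inequalities are \emph{linear} in the $\partial_z$-energy with a small, exponentially decaying, forcing coming from the $\mathcal{L}_z$ and $\Gamma_z$ terms. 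The assumption~\eqref{initial-mean-nonlinear-ran} guarantees, via global conservation laws differentiated in $z$, that the analogue of the constant $C_{in}$ in~\eqref{est-differ} for $\partial_z g_\epsilon$ vanishes, so Poincar\'e's inequality remains available for the mean part of $\bdp(\partial_z g_\epsilon)$ exactly as in the deterministic nonlinear proof.

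Finally I would assemble: the $z$-uniform decay from the first step plus the linear Gr\"onwall estimate for the $\partial_z$-energy from the second step close together by the same continuous-bootstrap argument as~\eqref{est-v-uni-2}–\eqref{est-nonlinear-decay-rate}, giving~\eqref{est-lemma-nonlinear-exp-decay-ran-infty} with $\bar e_0,\bar e$ independent of $\epsilon$. The main obstacle I anticipate is the $\partial_z$ step: one must verify that every bilinear and electric-field term that was handled in Lemma~\ref{lemma-nonlinear-decay} still closes after one $z$-derivative — in particular the Leibniz-expanded terms $\Gamma(\partial_z g_\epsilon,g_\epsilon)$ and $\partial_z N_1=( v\,\partial_z g_\epsilon-\nabla_v\partial_z g_\epsilon)\sdot\nabla\phi_\epsilon+(vg_\epsilon-\nabla_v g_\epsilon)\sdot\nabla\partial_z\phi_\epsilon$ — using that the un-differentiated factors are already small in $H^s$ and decaying; the borderline case is the highest $v$-derivative of $\partial_z N_1$, which, as in Remark~\ref{remark-v-derivative}, requires an integration by parts to avoid $\nabla_v^{s+1}\partial_z g_\epsilon$ and is the reason the $L^{2\cap\infty}_z$ bound on $\nabla_v g_\epsilon$ only comes with the $\epsilon^2$ weight. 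Everything else is a routine transcription of Sections~\ref{sec-linear-vpb}–\ref{sec-app} with an extra parameter.
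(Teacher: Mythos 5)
Your first paragraph is precisely the paper's proof, and it is already complete: fix $z$, note that the Sec.~\ref{sec-assump-ran} assumptions give the coercivity, mixing, relaxation and bilinear bounds of Sec.~\ref{sec-assump-determin} with constants uniform in $z$, run the deterministic energy machinery of Lemma~\ref{lemma-nonlinear-decay} pointwise in $z$ to get the exponential decay of $\mathcal{E}_\epsilon^s(t,z)$ with $z$-independent constants (the paper's \eqref{est-z-basic}--\eqref{est-z-to-e}), then take $\sup_{z}$ for the $L^\infty_z$ component and integrate in $z$ over the bounded set $\mathbb{I}_z$ for the $L^2_z$ component.

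Your second and third paragraphs, however, rest on a misreading of the norm. The quantity controlled in \eqref{est-lemma-nonlinear-exp-decay-ran-infty} is measured in $H^s_x\,L^{2\cap\infty}_z$ and $H^{s-1}L^{2\cap\infty}_z$, and the notation $L^{2\cap\infty}_z$ (the macro \verb|\hszti|) denotes only $L^2_z+L^\infty_z$ in the $z$-variable applied to the $L^2_{x,v}$-norm; it carries no $\partial_z$-derivative. You appear to have conflated it with $H^s_{x,z}$, whose definition does add the term $\sum_{k\le s-1}\|\nabla_x^k\partial_z f\|_{L^2L^2_z}^2$. Consequently, differentiating the system in $z$, treating $\mathcal{L}_z(g_\epsilon)$ and $\Gamma_z$ and $\Gamma(\partial_z g_\epsilon,g_\epsilon)$, and invoking \eqref{initial-mean-nonlinear-ran} for the $\partial_z$-mean values are all unnecessary for this lemma. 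That is precisely the content of the \emph{next} statement, Lemma~\ref{lemma-nonlinear-decay-ran}, which addresses $H^s_{x,z}$ and $H^{s-1}_z$; the paper proves the two lemmas separately. Your over-scoping does not invalidate the argument, since the extra estimates are themselves correct, but you should be aware that the present lemma is closed already after your first paragraph, and the hypotheses \eqref{initial-mean-nonlinear-ran} (rather than just \eqref{initial-mean-nonlinear-recall-1} for each fixed $z$) only become genuinely necessary at the $\partial_z$-energy stage of the following lemma.
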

\begin{proof}
For each fixed $z \in \mathbb{I}_z$,  the nonlinear system
the non-linear system
\begin{align}
\label{vpb-ns-nonlinear-c-copy}
\partial_t   g_\epsilon(z) + \reps \vdot   g_\epsilon(z) +  \repst \mathcal{L}(  g_\epsilon(z))   - \tfrac{v}{\epsilon} \sdot \nabla   \phi_\epsilon(z) = N(g_\epsilon(z)),
\end{align}
where
\begin{align*}
N(g_\epsilon)&:=  N_1(g_\epsilon) + \partial_z N_2(g_\epsilon),\\
N_1(g_\epsilon)&:= ({v}  g_\epsilon - \nabla_v g_\epsilon     ) \sdot \nabla\phi_\epsilon,\\
N_2(g_\epsilon)&:= \reps {\Gamma}(g_\epsilon,g_\epsilon),
\end{align*}
different with \eqref{vpb-ns-nonlinear}, the solution $g_\epsilon$ to \eqref{vpb-ns-nonlinear-c-copy} is dependent of $z$. With the same method of obtaining \eqref{est-re-z}, we can deduce that
\begin{align}
\label{est-z-basic}
\begin{split}
& \dt \bigg( c_e \cdot   \mathfrak{E}_{\epsilon,1,z}^s(t,z) +  c_f \cdot   \mathfrak{E}_{\epsilon,2,2,z}^s(t,z) +  \mathfrak{E}_{\epsilon,2,1,z}^s(t,z)\bigg) \\
& +  c_d \|g_\epsilon(t,z)\|_{H^{s}_x L^2_z}^2 +   \tfrac{c_d}{\epsilon^2} \cdot \epsilon^2 \|\nabla_v g_\epsilon(t,z)\|_{H^{s-1}_\Lambda L^2_z}^2 \\
& \lesssim   \tfrac{\epsilon^2 \lambda_3 {(a_4 + C_\delta)} }{9C_\Omega}   \|\nabla_x \phi_\epsilon(t,z)\|_{L^2}^4      + c_e(\lambda_1 + \lambda_2) \sum\limits_{k=0}^{s} \vert \intps \nabla^k_x N(g_\epsilon) \sdot \nabla_x^k g_\epsilon\bdv \bd x \vert \\
& + (c_f +1 + c_e \lambda_3) \epsilon^2  \sum\limits_{i \ge 1\atop i + j =s} \vert\intps \big(\nabla^j_x \nabla^i_v N(g_\epsilon)\big) \sdot \big(\nabla^j_x \nabla^i_v g_\epsilon\big)\bdv \bd x   \vert\\
& + \lambda_4 \cdot  c_e \epsilon \sum\limits_{k=0}^{s-1} \vert \intps \nabla_x \nabla^k_x N(g_\epsilon) \sdot \nabla_v \nabla^k_x g_\epsilon\bdv \bd x  \vert.
\end{split}
\end{align}
with
\begin{align*}
\mathfrak{E}_{\epsilon,1,z}^s(t,z)=\|g_\epsilon(t,z)\|_{H^{s}  }^2+ \sum\limits_{i = 1, i + j =s}\epsilon^2\|\nabla^i_v \nabla_x^j g_\epsilon(z)\|_{L^2  }^2,\\
\mathfrak{E}_{\epsilon,2,1,z }^s(t,z)=\sum\limits_{i = 2, i + j =s}\epsilon^2\|\nabla^i_v \nabla_x^j  g_\epsilon(t,z)\|_{L^2 }^2,~~\mathfrak{E}_{\epsilon,2,2,z }^s(t,z) = \sum\limits_{i \ge 3, i + j =s}\epsilon^2\|\nabla^i_v \nabla_x^j g_\epsilon(t,z)\|_{L^2  }^2.
\end{align*}
With
\[ \mathcal{E}_\epsilon^s(t,z):= \|(g_\epsilon(t,z),\nabla_x \phi_\epsilon(t,z))\|_{H^s_x}^2 + \epsilon^2 \|\nabla_v g_\epsilon(t,z)\|_{H^{s-1}}^2, \]
in the same way of deducing \eqref{est-re-z},  we can conclude that
\begin{align}
\label{est-re-z-c-0}
\begin{split}
& \dt \bigg( c_e \cdot   \mathfrak{E}_{\epsilon,1}^s(t,z) +  c_f \cdot   \mathfrak{E}_{\epsilon,2,2}^s(t,z) +  \mathfrak{E}_{\epsilon,2,1}^s(t,z)\bigg) \\
& + \tfrac{c_e}{2\epsilon^2} \|(g_\epsilon)^\perp\|_{H^s_\Lambda}^2 +  c_d \|g_\epsilon(t,z)\|_{H^{s}_x}^2 +   \tfrac{c_d}{\epsilon^2} \cdot \epsilon^2 \|\nabla_v g_\epsilon(t,z)\|_{H^{s-1}_\Lambda}^2 \\
&\lesssim \big(\|g_\epsilon\|_{H^s}^2 + \|g_\epsilon\|_{H^s} + \|\rho_\epsilon\|_{H^s}\big)\|g_\epsilon\|_{H^s}^2      \\
&\lesssim (\|g_\epsilon\|_{H^s_x} +   \epsilon \|\nabla_v g_\epsilon\|_{H^{s-1}})( \|g_\epsilon\|_{H^{s}_x}^2 +   \tfrac{1}{\epsilon^2} \cdot \epsilon^2 \|\nabla_v g_\epsilon\|_{H^{s-1}}^2 )\\
& \lesssim   \sqrt{\mathfrak{E}_\epsilon^s(t,z)} \|g_\epsilon(t,z)\|_{H^{s}_x}^2  + \tfrac{1}{\epsilon^2} \sqrt{\mathfrak{E}_\epsilon^s(t,z)}\cdot  \epsilon^2 \|\nabla_v g_\epsilon(t,z)\|_{H^{s-1}}^2.
\end{split}
\end{align}
Thus, there exists some $C_{sz}$ such that
\begin{align}
\label{est-re-z-c}
\begin{split}
& \dt \bigg( c_e \cdot   \mathfrak{E}_{\epsilon,1}^s(t,z) +  c_f \cdot   \mathfrak{E}_{\epsilon,2,2}^s(t,z) +  \mathfrak{E}_{\epsilon,2,1}^s(t,z)\bigg) \\
& \le C_{sr}  \sqrt{\mathfrak{E}_\epsilon^s(t,z)} \|g_\epsilon(t,z)\|_{H^{s}_x}^2  +  C_{sr} \tfrac{1}{\epsilon^2} \sqrt{\mathfrak{E}_\epsilon^s(t,z)}\cdot  \epsilon^2 \|\nabla_v g_\epsilon(t,z)\|_{H^{s-1}}^2.
\end{split}
\end{align}

Since \eqref{est-re-z} holds for each $z \in \mathbb{I}_z$, based on the fact that the equivalent norm relation \eqref{constant-cl-cu}, similar to \eqref{est-to-e},  there exists some constant $e_0$ such that as long as
\begin{align}
\label{estsinitial}
\mathcal{E}_\epsilon^s(0,z) \le e_0= \frac{c_l c_d^2}{4 c_u C_{sr}^2 }, \forall z \in \mathbb{I}_z,
\end{align}
we can obtain that
\begin{align}
\label{estsup}
\mathfrak{E}_\epsilon^s(t,z) \le \frac{c_d^2}{4  C_{sr}^2 },~~\forall t >0.
\end{align}
Furthermore,
\begin{align}
\label{est-z-to-e-or}
\begin{split}
& \dt \bigg( c_e \cdot   \mathfrak{E}_{\epsilon,1}^s(t,z) +  c_f \cdot   \mathfrak{E}_{\epsilon,2,2}^s(t,z) +  \mathfrak{E}_{\epsilon,2,1}^s(t,z)\bigg)  \\
&  +  \tfrac{c_d}{2} \|g_\epsilon(t,z)\|_{H^{s}_x}^2 +   \tfrac{c_d}{2}  \|\nabla_v g_\epsilon(t,z)\|_{H^{s-1}_\Lambda}^2  + \tfrac{c_e}{2\epsilon^2} \|(g_\epsilon)^\perp(t,z)\|_{H^s_{\Lambda_x}}^2\le 0,
\end{split}
\end{align}
and
\begin{align}
\label{est-z-to-e}
\begin{split}
& \dt \bigg( c_e \cdot   \mathfrak{E}_{\epsilon,1}^s(t,z) +  c_f \cdot   \mathfrak{E}_{\epsilon,2,2}^s(t,z) +  \mathfrak{E}_{\epsilon,2,1}^s(t,z)\bigg)  \\
& +  \frac{c_d}{2 c_u}\bigg( c_e \cdot   \mathfrak{E}_{\epsilon,1,z}^s(t,z) +  c_f \cdot   \mathfrak{E}_{\epsilon,2,2}^s(t,z) +  \mathfrak{E}_{\epsilon,2,1}^s(t,z)\bigg) \le 0,
\end{split}
\end{align}
Since the inequality \eqref{est-z-to-e-or} and \eqref{est-z-to-e} work for each $z \in \mathbb{I}_z$, we can integrate them with respect $z$ over $\mathbb{I}_z$ and complete the proof.
\end{proof}
The following lemma provides the estimates of $\partial_z g_\epsilon$.
\begin{lemma}
\label{lemma-nonlinear-decay-ran}
Under the assumption of \eqref{initial-mean-nonlinear-ran}, there exists some small enough constant $d_0$ such that as long as
\[  \|(g_\epsilon,\nabla_x \phi_\epsilon)(0)\|_{H^s_{x,z}}^2 + \epsilon^2 \|\nabla_v g_\epsilon(0)\|_{H^{s-1}_z}^2  \le d_0, \]
then  \eqref{vpb-ns-scaling-ns-corrector} admit a solution  $(g_\epsilon, \nabla \phi_\epsilon)$ satisfying for some $\bar{d}_0>0$ and $\bar{d}>0$ (all independent of $\epsilon$ while $\epsilon <1$) such that
\begin{align}
\label{est-lemma-nonlinear-exp-decay-ran-z}
\|(g_\epsilon,\nabla_x \phi_\epsilon)(t)\|_{H^s_{x,z}}^2 + \epsilon^2 \|\nabla_v g_\epsilon(t)\|_{H^{s-1}_z}^2 \le \bar{d}_0 \exp(- \bar{d} t).
\end{align}
\end{lemma}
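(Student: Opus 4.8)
The plan is to differentiate the system \eqref{vpb-ns-scaling-ns-corrector} with respect to $z$ and to re‑run the hypocoercivity argument of Lemma~\ref{lemma-nonlinear-decay}, this time for $h_\epsilon:=\partial_z g_\epsilon$ and $\psi_\epsilon:=\partial_z\phi_\epsilon$, treating the already‑controlled quantity $g_\epsilon$ — which by Lemma~\ref{lemma-nonlinear-decay-ran-infty} is uniformly small and exponentially decaying in $H^s_x\hszti$ — as a prescribed small coefficient. Since only $\mathcal{L}$ and $\Gamma$ depend on $z$, $h_\epsilon$ solves
\[
\partial_t h_\epsilon+\reps\vdot h_\epsilon+\repst\mathcal{L}(h_\epsilon)-\tfrac{v}{\epsilon}\sdot\nabla_x\psi_\epsilon=(v\,h_\epsilon-\nabla_v h_\epsilon)\sdot\nabla_x\phi_\epsilon+\mathcal{R}_\epsilon,\qquad\Delta_x\psi_\epsilon=\intv h_\epsilon\,\bdv,
\]
with
\[
\mathcal{R}_\epsilon:=-\repst\mathcal{L}_z(g_\epsilon)+(v\,g_\epsilon-\nabla_v g_\epsilon)\sdot\nabla_x\psi_\epsilon+\reps\big(\Gamma_z(g_\epsilon,g_\epsilon)+\Gamma(h_\epsilon,g_\epsilon)+\Gamma(g_\epsilon,h_\epsilon)\big).
\]
Apart from $\mathcal{R}_\epsilon$ this is precisely equation \eqref{vpb-ns-nonlinear} with $g_\epsilon$ replaced by $h_\epsilon$, the term $(v h_\epsilon-\nabla_v h_\epsilon)\sdot\nabla_x\phi_\epsilon$ playing the role of $N_1$ but with the \emph{small} coefficient $\nabla_x\phi_\epsilon$ supplied by $g_\epsilon$; hence the natural Lyapunov functional is $\mathcal{E}:=K\,\mathfrak{E}_\epsilon^s(g_\epsilon)+\mathfrak{E}_\epsilon^s(h_\epsilon)$, where $\mathfrak{E}_\epsilon^s(g_\epsilon)$ denotes the combined functional of Lemma~\ref{lemma-nonlinear-decay-ran-infty} and $\mathfrak{E}_\epsilon^s(h_\epsilon)$ is split into $\mathfrak{E}_{\epsilon,1}^s,\mathfrak{E}_{\epsilon,2,1}^s,\mathfrak{E}_{\epsilon,2,2}^s$ exactly as in \eqref{est-norm-es-g-2} with the same weights $\lambda_i$, and $K$ is a large constant fixed below.

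Before estimating, one must restore the Poincar\'e inequality for $\bdp h_\epsilon$. Differentiating the global conservation laws \eqref{conservation-law-nonlinear} in $z$ and inserting the constraint \eqref{initial-mean-nonlinear-ran}, the means over $\mathbb{T}^3$ of the density and momentum of $h_\epsilon$ vanish for all $t$, while the kinetic‑energy mean is of size $\epsilon\,\|\nabla_x\phi_\epsilon\|_{L^2}\|\nabla_x\psi_\epsilon\|_{L^2}$. Hence, repeating the splitting that produced \eqref{est-mean-difference}--\eqref{est-differ} (the analogue of $C_{in}$ vanishing by \eqref{initial-mean-nonlinear-ran}),
\[
\|\bdp h_\epsilon\|_{L^2}^2\le a_5\,\|\nabla_x\bdp h_\epsilon\|_{L^2}^2+C\,\epsilon^2\,\|\nabla_x\phi_\epsilon\|_{L^2}^2\,\|\nabla_x\psi_\epsilon\|_{L^2}^2,
\]
and similarly after applying $\nabla_x^k$; the extra term is a genuine higher‑order remainder that the smallness of $\|\nabla_x\phi_\epsilon\|_{L^2}$ lets us absorb.

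The terms of $\mathcal{R}_\epsilon$ bilinear in $(g_\epsilon,h_\epsilon)$ — namely $(v g_\epsilon-\nabla_v g_\epsilon)\sdot\nabla_x\psi_\epsilon$ (with $\nabla_x\psi_\epsilon$ controlled through $\partial_z\rho_\epsilon=\intv h_\epsilon\,\bdv$, hence by $\|h_\epsilon\|$), $\Gamma(h_\epsilon,g_\epsilon)$ and $\Gamma(g_\epsilon,h_\epsilon)$ — are handled exactly as the nonlinearities in the proof of Lemma~\ref{lemma-nonlinear-decay}: integration by parts in $v$ (Remark~\ref{remark-v-derivative}) removes the potential $\nabla_v^{s+1}$, and each resulting bound carries a factor $\|g_\epsilon\|_{H^s_x}$ or $\|(g_\epsilon,h_\epsilon)\|_{H^s_x}$, kept below any prescribed $\delta$ by the bootstrap, so they are absorbed by the dissipation. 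The new terms are $\repst\mathcal{L}_z(g_\epsilon)$ and $\reps\Gamma_z(g_\epsilon,g_\epsilon)$; both lie in $\mathrm{Ker}(\mathcal{L})^\perp$, so they meet only $h_\epsilon^\perp$ when tested against $h_\epsilon$, and since $\mathcal{L}_z$ annihilates collision invariants one has $\mathcal{L}_z(g_\epsilon)=\mathcal{L}_z(g_\epsilon^\perp)$. The kernel bounds of Sec.~\ref{sec-assump-ran}, \eqref{constant-gamma-z} and Young's inequality then give
\[
\repst\Big|\intps\mathcal{L}_z(g_\epsilon)\sdot h_\epsilon\,\bdv\,\bd x\Big|\lesssim\tfrac{1}{\epsilon^2}\|g_\epsilon^\perp\|_{L^2_\Lambda}^2+\tfrac{\delta}{\epsilon^2}\|h_\epsilon^\perp\|_{L^2_\Lambda}^2,\qquad\reps\Big|\intps\Gamma_z(g_\epsilon,g_\epsilon)\sdot h_\epsilon\,\bdv\,\bd x\Big|\lesssim\|g_\epsilon\|_{H^s_x}^2\|g_\epsilon\|_{H^s_{\Lambda_x}}^2+\tfrac{\delta}{\epsilon^2}\|h_\epsilon^\perp\|_{L^2_\Lambda}^2,
\]
together with their higher $x$-derivative analogues (for $v$-derivatives one invokes the $\mathcal{K}$--$\Lambda$ decomposition as in the proof of Lemma~\ref{lemma-linear-decay}). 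The $\|h_\epsilon^\perp\|^2$ contributions sit inside the dissipation of the $h_\epsilon$-inequality, while $\tfrac{1}{\epsilon^2}\|g_\epsilon^\perp\|_{H^s}^2$ and $\|g_\epsilon\|_{H^s_x}^2\|g_\epsilon\|_{H^s_{\Lambda_x}}^2$ are, up to a constant, among the dissipative quantities on the left of the $g_\epsilon$-inequality of Lemma~\ref{lemma-nonlinear-decay-ran-infty} (cf.\ \eqref{est-re-z-c-0}, \eqref{est-z-to-e-or}); a large enough $K$ absorbs them into $K\,\mathfrak{E}_\epsilon^s(g_\epsilon)$.

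Adding the $h_\epsilon$-inequality to $K$ times the $g_\epsilon$-one, the right‑hand side becomes $\lesssim\sqrt{\mathcal{E}}\cdot(\text{dissipation})$ exactly as in \eqref{est-re-z}--\eqref{est-v-uni}; the continuous bootstrap method then applies whenever $\|(g_\epsilon,\nabla_x\phi_\epsilon)(0)\|_{H^s_{x,z}}^2+\epsilon^2\|\nabla_v g_\epsilon(0)\|_{H^{s-1}_z}^2\le d_0$ (with $d_0\le e_0$ so that Lemma~\ref{lemma-nonlinear-decay-ran-infty} is available), giving smallness of $\mathcal{E}$ for all $t$ and a closed inequality $\tdt\mathcal{E}+c\,\mathcal{E}\le0$; integrating over $z\in\mathbb{I}_z$ and invoking the norm equivalence \eqref{constant-cl-cu} yields \eqref{est-lemma-nonlinear-exp-decay-ran-z}, once one notes that $\|(g_\epsilon,\nabla_x\phi_\epsilon)\|_{H^s_{x,z}}^2$ is recovered by combining the $\hszti$-bound of Lemma~\ref{lemma-nonlinear-decay-ran-infty} with the $\partial_z$-bound just obtained. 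I expect the principal obstacle to be the $v$-derivative bookkeeping: since $\nabla_v g_\epsilon$ and $\nabla_v h_\epsilon$ carry only $\epsilon^2$-weighted bounds, the mixed source $\nabla_v g_\epsilon\sdot\nabla_x\psi_\epsilon$ must be reorganized by integration by parts in $v$ (creating $v\,g_\epsilon\sdot\nabla_x\psi_\epsilon$ and terms paired with $\nabla_v h_\epsilon$) and balanced against the order‑one dissipation $\|\nabla_v h_\epsilon\|_{H^{s-1}_\Lambda}^2$ available in \eqref{est-v-uni-copy}; dually, the $\repst\mathcal{L}_z(g_\epsilon)$ term is admissible only because its $\epsilon^{-2}$ prefactor meets $h_\epsilon^\perp$, whose $\epsilon^{-2}$-weighted $L^2_\Lambda$-norm is dissipated.
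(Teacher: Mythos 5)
Your proof follows the same route as the paper's: differentiate \eqref{vpb-ns-scaling-ns-corrector} in $z$, use \eqref{initial-mean-nonlinear-ran} together with the $z$-differentiated conservation laws to restore the Poincar\'e control of $\bdp h_\epsilon$ up to an $O(\epsilon^2\|\nabla_x\phi_\epsilon\|^2\|\nabla_x\psi_\epsilon\|^2)$ remainder, exploit that $\mathcal{L}_z$ and $\Gamma_z$ take values in $\mathrm{Ker}(\mathcal{L})^\perp$ so that the dangerous $\epsilon^{-2}$ prefactors are met by $\|h_\epsilon^\perp\|_{L^2_\Lambda}$ (and likewise $\mathcal{L}_z(g_\epsilon)=\mathcal{L}_z(g_\epsilon^\perp)$ trades the $g_\epsilon$ factor for the dissipated $g_\epsilon^\perp$), integrate by parts in $v$ to defuse the $\nabla_v g_\epsilon\sdot\nabla_x\psi_\epsilon$ source as in Remark~\ref{remark-v-derivative}, and close by bootstrap. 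The only cosmetic difference is bookkeeping: you aggregate the $g_\epsilon$ and $h_\epsilon=\partial_z g_\epsilon$ estimates into a single Lyapunov functional $\mathcal{E}=K\,\mathfrak{E}_\epsilon^s(g_\epsilon)+\mathfrak{E}_\epsilon^s(h_\epsilon)$ and let the large $K$ absorb $\tfrac{1}{\epsilon^2}\|g_\epsilon^\perp\|^2$ and $\|g_\epsilon\|_{H^s_\Lambda}^2$, whereas the paper keeps the two inequalities separate (see \eqref{estzH1-c}) and absorbs those terms a posteriori by invoking the $g_\epsilon$-dissipation inequality \eqref{est-z-to-e-or}; these are equivalent. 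One small point worth flagging: you correctly read the source term as $-\repst\mathcal{L}_z(g_\epsilon)$, which is what the chain rule gives and what the paper actually estimates, despite the displayed formula for $N_z$ in the paper containing a typographical $\mathcal{L}_z(\partial_z g_\epsilon)$.
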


{\bf The estimate of $\partial_z g_\epsilon$.}

\begin{proof}
For the derivative of $g_\epsilon$ with respect to $z$, the $\partial_z g_\epsilon$ satifsies the following system non-linear system
\begin{align}
\label{vpb-ns-nonlinear-z}
\partial_t \partial_z g_\epsilon + \reps \vdot \partial_z g_\epsilon +  \repst \mathcal{L}(\partial_z g_\epsilon)   - \tfrac{v}{\epsilon} \sdot \nabla \partial_z \phi_\epsilon = N_z(g_\epsilon),
\end{align}
where
\begin{align*}
N_z(g_\epsilon)&:= \partial_z N_1(g_\epsilon) + \partial_z N_2(g_\epsilon) - \repst \mathcal{L}_z(\partial_z g_\epsilon)   ,\\
N_1(g_\epsilon)&:= ({v}  g_\epsilon - \nabla_v g_\epsilon     ) \sdot \nabla\phi_\epsilon,\\
N_2(g_\epsilon)&:= \reps {\Gamma}(g_\epsilon,g_\epsilon).
\end{align*}
The proof follows the idea of Lemma \ref{lemma-nonlinear-decay}.  From \eqref{conservation-law-nonlinear}, we can derive that
\begin{align}
\label{conservation-law-nonlinear-z}
\begin{split}
\tdt { \intps \partial_z g_\epsilon(t) \m \bd v \bd x = 0,   \tdt \intps v \partial_z g_\epsilon(t) \m \bd v \bd x =0,}\\
\tdt \big(  \intps (\tfrac{|v|^2}{2} - \tfrac{3}{2}) \partial_z g_\epsilon(t)\m \bd v \bd x +  \epsilon \partial_z \|\nabla_x \phi_\epsilon(t)\|_{L^2}^2 \big)= 0.
\end{split}
\end{align}
Furthermore, from \eqref{initial-mean-nonlinear}, we can deduce that
\begin{align}
\label{conservation-law-nonlinear-zz}
\begin{split}
  { \intps \partial_z g_\epsilon(0) \m \bd v \bd x = 0,   \tdt \intps v \partial_z g_\epsilon(0) \m \bd v \bd x =0,}\\
 \big(  \intps (\tfrac{|v|^2}{2} - \tfrac{3}{2}) \partial_z g_\epsilon(0)\m \bd v \bd x +  \epsilon \partial_z \|\nabla_x \phi_\epsilon(0)\|_{L^2}^2 \big)= 0.
\end{split}
\end{align}
Noticing that the left hands of \eqref{vpb-ns-nonlinear} and \eqref{vpb-ns-nonlinear-z} share the same structure, thus by the similar tricks in Lemma \ref{lemma-nonlinear-decay}, we can obtain the counterpart as  \eqref{est-nonlinear-all-hs},
\begin{align}
\label{est-z-h1}
\begin{split}
& \dt \bigg( c_e \cdot   \mathfrak{E}_{\epsilon,1,z,1}^s(t) +  c_f \cdot   \mathfrak{E}_{\epsilon,2,2,z,1}^s(t) +  \mathfrak{E}_{\epsilon,2,1,z,1}^s(t)\bigg) \\
& \qquad +   \tfrac{c_e}{\epsilon^2} \|(\partial_z g_\epsilon)^\perp\|_{H^{s-1}_x L^2_z}^2  + c_e\|\partial_z \rho_\epsilon\|_{H^{s-1} L^2_z}^2  + c_d\|\partial_z g_\epsilon\|_{H^{s-1}_\Lambda L^2_z}^2  \\
&\le  \tfrac{\epsilon^2 \lambda_3 {(a_4 + C_\delta)} }{9C_\Omega} \intz \|\nabla_x \phi_\epsilon\|_{L^2  }^2\|\nabla_x \partial_z \phi_\epsilon\|_{L^2  }^2 \bd z  \\
& + c_e(\lambda_1 + \lambda_2) \sum\limits_{k=0}^{s-1} \vert \intpsz \nabla^k_x N_z(g_\epsilon) \sdot \nabla_x^k \partial_z g_\epsilon\bdv \bd x  \bd z\vert \\
& + (c_f +1 + c_e \lambda_3) \epsilon^2  \sum\limits_{i \ge 1\atop i + j =s-1} \vert\intpsz \big(\nabla^j_x \nabla^i_v N_z(g_\epsilon)\big) \sdot \big(\nabla^j_x \nabla^i_v \partial_z g_\epsilon\big)\bdv \bd x \bd z \vert\\
& + \lambda_4 \cdot  c_e \epsilon \sum\limits_{k=0}^{s-2} \vert \intpsz \nabla_x \nabla^k_x N_z(g_\epsilon) \sdot \nabla_v \nabla^k_x \partial_z g_\epsilon\bdv \bd x \bd z\vert,
\end{split}
\end{align}
where
\begin{align*}
\mathfrak{E}_{r,\epsilon,1,z,1}^s(t)=\|\partial_z g_\epsilon\|_{H^{s-1}}^2+ \sum\limits_{i = 1, i + j =s-1}\epsilon^2\|\nabla^i_v \nabla_x^j \partial_z g_\epsilon\|_{L^2}^2,\\
\mathfrak{E}_{r,\epsilon,2,1,z,1}^s(t)=\sum\limits_{i = 2, i + j =s-1}\epsilon^2\|\nabla^i_v \nabla_x^j \partial_z g_\epsilon\|_{L^2}^2,~~\mathfrak{E}_{r,\epsilon,2,2,z,1}^s(t) = \sum\limits_{i \ge 3, i + j =s-1}\epsilon^2\|\nabla^i_v \nabla_x^j \partial_z g_\epsilon\|_{L^2}^2.
\end{align*}
Recalling that
\begin{align*}
N_z(g_\epsilon)&:= \partial_z N_1(g_\epsilon) + \partial_z N_2(g_\epsilon) - \repst \mathcal{L}_z(\partial_z g_\epsilon)   ,\\
N_1(g_\epsilon)&:= ({v}  g_\epsilon - \nabla_v g_\epsilon     ) \sdot \nabla\phi_\epsilon,\\
N_2(g_\epsilon)&:= \reps {\Gamma}(g_\epsilon,g_\epsilon).
\end{align*}
We first estimate  the term induced by eletric field, i.e., $N_1(g_\epsilon)$,
\begin{align*}
& \sum\limits_{ i+j =s-1}\vert \intpsz \nabla^j_x \nabla^i_v \partial_z\big( ({v}  g_\epsilon - \nabla_v g_\epsilon     ) \sdot \nabla\phi_\epsilon \big) \sdot \nabla^j_x \nabla^i_v \partial_z g_\epsilon\bdv \bd x \bd z \vert \\
& \le   \sum\limits_{ i+j =s-1} \vert \intpsz \nabla^j_x \nabla^i_v \partial_z\big(  {v}  g_\epsilon    \sdot \nabla\phi_\epsilon \big) \sdot \nabla^j_x \nabla^i_v \partial_z g_\epsilon\bdv \bd x \bd z\vert \\
& + \sum\limits_{ i+j =s-1} \vert \intpsz \nabla^j_x \nabla^i_v \partial_z\big( \nabla_v g_\epsilon      \sdot \nabla\phi_\epsilon \big) \sdot \nabla^j_x \nabla^i_v \partial_z g_\epsilon\bdv \bd x  \bd z\vert.
\end{align*}
We only show how to control the second term in the above inequality. The first term can be dealt with in the same way. Noticing that
\begin{align*}
 & \sum\limits_{ i+j =s-1} \vert \intpsz \nabla^j_x \nabla^i_v \partial_z\big( \nabla_v g_\epsilon      \sdot \nabla\phi_\epsilon \big) \sdot \nabla^j_x \nabla^i_v \partial_z g_\epsilon\bdv \bd x  \bd z\vert \\
 & = \sum\limits_{ i+j =s-1} \vert \intpsz \nabla^j_x \nabla^i_v \big(  \nabla_v  \partial_z g_\epsilon    \sdot \nabla\phi_\epsilon \big) \sdot \nabla^j_x \nabla^i_v \partial_z g_\epsilon\bdv \bd x \bd z\vert \\
&  + \sum\limits_{ i+j =s-1} \vert \intpsz \nabla^j_x \nabla^i_v \big(  \nabla_v   g_\epsilon    \sdot \nabla \partial_z \phi_\epsilon \big) \sdot \nabla^j_x \nabla^i_v \partial_z g_\epsilon\bdv \bd x \bd z\vert\\
& := T_1 + T_2
\end{align*}
We can split $T_1$ into three groups. Indeed,
\begin{align}
\label{splitterms-z-t1}
\begin{aligned}
T_1 &= \sum\limits_{ i+j =s-1} \vert \intpsz \nabla^j_x \nabla^i_v \big( \nabla_v \partial_z g_\epsilon      \sdot \nabla\phi_\epsilon \big) \sdot \nabla^j_x \nabla^i_v g_\epsilon\bdv \bd x  \bd z\vert\\
& \le \sum\limits_{ i+j =s-1} \vert \intpsz \nabla^j_x  \big( \nabla_v \nabla^i_v \partial_z g_\epsilon      \sdot \nabla\phi_\epsilon \big) \sdot \nabla^j_x \nabla^i_v g_\epsilon\bdv \bd x  \bd z\vert \\
& \le  \sum\limits_{ i+j =s-1} \vert \intpsz   \big( \nabla_v \nabla^j_x \nabla^i_v \partial_z g_\epsilon      \sdot \nabla\phi_\epsilon \big) \sdot \nabla^j_x \nabla^i_v g_\epsilon\bdv \bd x \bd z\vert\\
& + \sum\limits_{  i+j =s-1, i \ge 1 \atop k + l = j, l \ge 1} \vert \intpsz   \big(   \nabla^k_x \nabla^{ i +1}_v \partial_z g_\epsilon      \sdot \nabla_x^{l+1}\phi_\epsilon \big) \sdot \nabla^j_x \nabla^i_v g_\epsilon\bdv \bd x \bd z\vert\\
& +  \vert \intpsz   \big(    \nabla_v \partial_z g_\epsilon      \sdot \nabla_x^{s}\phi_\epsilon \big) \sdot \nabla^j_x \nabla^i_v g_\epsilon\bdv \bd x \bd z \vert\\
& = T_{11} + T_{12} +T_{13}.
\end{aligned}
\end{align}
For $T_{11}$, by integration by part and noticing that $\|\rho_\epsilon(z)\|_{H^s} = \|\phi_\epsilon(z)\|_{H^{s+2}}$, thus we can infer after integration by parts
\begin{align*}
T_{11} &\le  \sum\limits_{i+j =s-1} \vert \intpsz   \big( \nabla_v \nabla^j_x \nabla^i_v \partial_z g_\epsilon      \sdot \nabla\phi_\epsilon \big) \sdot \nabla^j_x \nabla^i_v g_\epsilon\bdv \bd x \bd z \vert\\
& \quad =  \sum\limits_{i+j =s-1} \vert \intpsz   \big(  \nabla^j_x \nabla^i_v \partial_z g_\epsilon      \sdot \nabla\phi_\epsilon \big) \sdot \nabla^j_x \nabla^{i+1}_v g_\epsilon\bdv \bd x \bd z \vert\\
&  \quad +  \sum\limits_{i+j =s-1} \vert \intpsz  v \big(  \nabla^j_x \nabla^i_v \partial_z g_\epsilon      \sdot \nabla\phi_\epsilon \big) \sdot \nabla^j_x \nabla^{i}_v g_\epsilon\bdv \bd x \bd z \vert\\
& \quad \lesssim \|\nabla \phi_\epsilon\|_{L^\infty_x L^\infty_z}\|g_\epsilon\|_{H^s_{\Lambda,z}}^2\\
& \quad \lesssim \|g_\epsilon\|_{H^s_{x}L^\infty_z}\|g_\epsilon\|_{H^s_{\Lambda,z}}^2.
\end{align*}
For $T_{12}$, noticing that $l+1 \le s-1$, it follows that
\begin{align}
\label{phi-z-infty}
\|\nabla^l \phi_\epsilon\|_{L^\infty_x L^\infty_z} \lesssim \|g_\epsilon\|_{H^{l}L^\infty_z} \lesssim\|g_\epsilon\|_{H^{l+1}_{x,z}}.
\end{align}
Then we can directly obtain
\begin{align*}
T_{12} &\le  \sum\limits_{  i+j =s-1, i \ge 1 \atop k + l = j, l \ge 1} \vert \intpsz   \big(   \nabla^k_x \nabla^{ i +1}_v \partial_z g_\epsilon      \sdot \nabla_x^{l+1}\phi_\epsilon \big) \sdot \nabla^j_x \nabla^i_v g_\epsilon\bdv \bd x \bd z\vert\\
& \quad \lesssim \|\nabla^{l +1} \phi_\epsilon\|_{L^\infty_x L^\infty_z}\|g_\epsilon\|_{H^s_{\Lambda,z}}^2\\
& \quad \lesssim \|g_\epsilon\|_{H^s_{x}L^\infty_z}\|g_\epsilon\|_{H^s_{\Lambda,z}}^2.
\end{align*}
With \eqref{phi-z-infty}, for $T_{13}$, we can also obtain that
\begin{align*}
T_{13} &\le  \vert \intpsz   \big(    \nabla_v \partial_z g_\epsilon      \sdot \nabla_x^{s}\phi_\epsilon \big) \sdot \nabla^j_x \nabla^i_v g_\epsilon\bdv \bd x \bd z \vert\\
& \quad \lesssim \|\nabla^{s} \phi_\epsilon\|_{L^\infty_x L^\infty_z}\|g_\epsilon\|_{H^s_{\Lambda,z}}^2\\
& \quad \lesssim \|g_\epsilon\|_{H^s_xL^\infty_z}\|g_\epsilon\|_{H^s_{\Lambda,z}}^2.
\end{align*}
In summary, we can obtain
\begin{align}
\label{est-t1}
T_1  \lesssim \|g_\epsilon\|_{H^s_xL^\infty_z}\|g_\epsilon\|_{H^s_{\Lambda,z}}^2.
\end{align}
Similary, $T_2$ also can be split into two groups
\begin{align}
\label{splitterms-z-t2}
\begin{aligned}
T_2 &= \sum\limits_{ i+j =s-1} \vert \intpsz \nabla^j_x \nabla^i_v \big( \nabla_v  g_\epsilon      \sdot \nabla_x \partial_z \phi_\epsilon \big) \sdot \nabla^j_x \nabla^i_v g_\epsilon\bdv \bd x  \bd z\vert\\
& \le  \sum\limits_{  i+k + l  =s-1 \atop  i+j = s-1, l \le s-2 } \vert \intpsz   \big(   \nabla^k_x \nabla^{ i +1}_v   g_\epsilon      \sdot \nabla_x^{l+1} \partial_z \phi_\epsilon \big) \sdot \nabla^j_x \nabla^i_v g_\epsilon\bdv \bd x \bd z\vert\\
& +  \vert \intpsz   \big(    \nabla_v  g_\epsilon      \sdot \nabla_x^{s} \partial_z\phi_\epsilon \big) \sdot \nabla^j_x \nabla^i_v g_\epsilon\bdv \bd x \bd z \vert\\
& = T_{21} + T_{22}.
\end{aligned}
\end{align}
For $T_{21}$, since $ l + 1 \le s -1 $, it follows that
\begin{align*}
\|\nabla^l \partial_z\phi_\epsilon\|_{L^\infty_x L^\infty_z} \lesssim \|g_\epsilon\|_{H^{l}L^\infty_z} \lesssim\|g_\epsilon\|_{H^{l+1}_{x,z}}.
\end{align*}
With the help of the above inequality, we can deduce that
\begin{align*}
& \vert \intpsz   \big(   \nabla^k_x \nabla^{ i +1}_v   g_\epsilon      \sdot \nabla_x^{l+1} \partial_z \phi_\epsilon \big) \sdot \nabla^j_x \nabla^i_v g_\epsilon\bdv \bd x \bd z\vert  \\
 & \lesssim \vert \int_{\mathbb{I}_z} \int_{\mathbb{T}^3}  |\nabla^{l+1}_x\partial_z\phi_\epsilon|  \|\nabla^k_x \nabla^{ i +1}_v   g_\epsilon(t,x,z)\|_{L^2_v} \|\nabla^j_x \nabla^i_vg_\epsilon\|_{L^2_v} \bd x \bd z\vert\\
&  \lesssim \vert \int_{\mathbb{I}_z} |\nabla^{l+1}_x\partial_z\phi_\epsilon(z)|_{L^\infty_x}  \int_{\mathbb{T}^3}   \|\nabla^k_x \nabla^{ i +1}_v   g_\epsilon(t,x,z)\|_{L^2_v} \|\nabla^j_x \nabla^i_vg_\epsilon\|_{L^2_v} \bd x \bd z\vert  \\
& \lesssim \vert \int_{\mathbb{I}_z} |\nabla^{l+1}_x\partial_z\phi_\epsilon(z)|_{L^\infty_x}  \|g_\epsilon(z)\|_{H^s} \|g_\epsilon(z)\|_{H^{s-1}} \bd z\vert \\
& \lesssim \|g_\epsilon\|_{H^{s}_z} \vert \int_{\mathbb{I}_z} |\nabla^{l+1}_x\partial_z\phi_\epsilon(z)|_{L^\infty_x}  \|g_\epsilon(z)\|_{H^s}  \bd z\vert\\
& \lesssim \|g_\epsilon\|_{H^{s}_z} \vert \int_{\mathbb{I}_z} \| \partial_z\phi_\epsilon(z)\|_{H^{s-1}_x}  \|g_\epsilon(z)\|_{H^s}  \bd z\vert\\
& \lesssim  \|\partial_z g_\epsilon\|_{H^{s-1}_x}\|g_\epsilon\|_{H^{s}_z}^2.
\end{align*}
Noticing, by integration by parts with respect to $x$, we can infer
\begin{align*}
& \vert \intpsz   \big(    \nabla_v  g_\epsilon      \sdot \nabla_x^{s} \partial_z\phi_\epsilon \big) \sdot \nabla^j_x \nabla^i_v g_\epsilon\bdv \bd x \bd z \vert \\
& \le \vert \intpsz   \big(   \nabla_x \nabla_v  g_\epsilon      \sdot \nabla_x^{s-1} \partial_z\phi_\epsilon \big) \sdot \nabla^j_x \nabla^i_v g_\epsilon\bdv \bd x \bd z \vert\\
& + \vert \intpsz   \big(    \nabla_v  g_\epsilon      \sdot \nabla_x^{s-1} \partial_z\phi_\epsilon \big) \sdot \nabla^{j+1}_x \nabla^i_v g_\epsilon\bdv \bd x \bd z \vert.
\end{align*}
Then by employing the same way of dealing  with $T_{21}$,

Combining \eqref{est-z-basic} and \eqref{est-z-h1}, we can infer that
\begin{align*}
\vert \intpsz   \big(    \nabla_v  g_\epsilon      \sdot \nabla_x^{s} \partial_z\phi_\epsilon \big) \sdot \nabla^j_x \nabla^i_v g_\epsilon\bdv \bd x \bd z \vert \lesssim \|\partial_z g_\epsilon\|_{H^{s-1}_x}\|g_\epsilon\|_{H^{s}_z}^2.
\end{align*}
All together, we can infer that
In summary, we can obtain
\begin{align}
\label{est-t2}
T_2  \lesssim \|\partial_z g_\epsilon\|_{H^{s-1}_{x,z}}\|g_\epsilon\|_{H^s_{\Lambda,z}}^2.
\end{align}
According to \eqref{est-t1} and \eqref{est-t2},  we can conclude that
\begin{align}
\label{est-z-n1-1}
\sum\limits_{ i+j =s-1}\vert \intpsz \nabla^j_x \nabla^i_v \partial_z\big( ({v}  g_\epsilon - \nabla_v g_\epsilon     ) \sdot \nabla\phi_\epsilon \big) \sdot \nabla^j_x \nabla^i_v \partial_z g_\epsilon\bdv \bd x \bd z \vert \lesssim \| g_\epsilon\|_{H^{s}_{x,z}}\|g_\epsilon\|_{H^s_{\Lambda,z}}^2.
\end{align}
In the similar way, we can infer that
\begin{align}
&\label{est-z-n1-2} \sum\limits_{k=0}^{s-1} \vert \intpsz \nabla^k_x \partial_z N_1(g_\epsilon) \sdot \nabla_x^k \partial_z g_\epsilon\bdv \bd x  \bd z\vert \lesssim \| g_\epsilon\|_{H^{s}_{x,z}}\|g_\epsilon\|_{H^s_{\Lambda,z}}^2,\\
& \label{est-z-n1-3} \sum\limits_{k=0}^{s-2} \vert \intpsz \nabla_x \nabla^k_x \partial_z N_1(g_\epsilon) \sdot \nabla_v \nabla^k_x \partial_z g_\epsilon\bdv \bd x \bd z\vert \lesssim \| g_\epsilon\|_{H^{s}_{x,z}}\|g_\epsilon\|_{H^s_{\Lambda,z}}^2.
\end{align}
From \eqref{est-z-n1-1}, \eqref{est-z-n1-2} and \eqref{est-z-n1-3}, we can obtain for $N_1(g_\epsilon)$,  we can conclude that
 \begin{aligno}
 \label{estzN1}
& (\lambda_1 + \lambda_2)\sum\limits_{k=0}^{s-1} \vert \intpsz \nabla^k_x \partial_z N_1(g_\epsilon) \sdot \nabla_x^k \partial_z g_\epsilon\bdv \bd x \bd z \vert \\
&  + \epsilon^2 (c_f +1 + c_e \lambda_3) \sum\limits_{i \ge 1\atop i + j =s-1} \vert\intpsz \big(\nabla^j_x \nabla^i_v \partial_z\big(N_1(g_\epsilon)\big)\big) \sdot \big(\nabla^j_x \nabla^i_v \partial_z g_\epsilon\big)\bdv \bd x \bd z \vert \\
& + \lambda_4 c_e \epsilon \sum\limits_{k=0}^{s-2} \vert \intpsz \nabla_x \nabla^k_x \partial_z\big( N_1(g_\epsilon) \big) \sdot \nabla_v \nabla^k_x \partial_z g_\epsilon\bdv \bd x \bd z\vert  \\
& \lesssim    \|  g_\epsilon\|_{H^{s}_{x,z}} \|  g_\epsilon\|_{H^{s}_{\Lambda,z}}^2  .
\end{aligno}

Recalling that the source term $N_z(g_\epsilon)$ in \eqref{est-z-h1} is made up of three parts, i.e.,
$$N_z(g_\epsilon) = \partial_z N_1(g_\epsilon) + \partial_z N_2(g_\epsilon) - \repst \mathcal{L}_z(  g_\epsilon),$$
since \eqref{estzN1}  are only  about $N_1(g_\epsilon)$,
then the left things are to estimate the collision term in \eqref{est-z-h1}. First for $\repst \mathcal{L}_z(  g_\epsilon)$, by the assumption, $\mathcal{L}_z$ is also a linear Boltzmann operator, thus
\begin{align}
\label{est-z-lz-1}
\begin{aligned}
& (c_f +1 + c_e \lambda_3) \sum\limits_{ i+j =s-1}\epsilon^2 \vert \intpsz \nabla^j_x \nabla^i_v  \repst \mathcal{L}_z(g_\epsilon)   \sdot \nabla^j_x \nabla^i_v \partial_z g_\epsilon\bdv \bd x \bd z \vert \\
& \le  (c_f +1 + c_e \lambda_3)\sum\limits_{ i+j =s-1} \vert \intpsz \nabla^j_x \nabla^i_v  \mathcal{L}_z(g_\epsilon)   \sdot \nabla^j_x \nabla^i_v \partial_z g_\epsilon\bdv \bd x \bd z \vert \\
& \le (c_f +1 + c_e \lambda_3) \sum\limits_{ i+j =s-1}  \left( \tfrac{1}{c_d} \|\nabla^j_x \nabla^i_v  \mathcal{L}_z(g_\epsilon)\|_{L^2L^2_z}^2 +    \tfrac{c_d}{4}\|\nabla^j_x \nabla^i_v \partial_z g_\epsilon\|_{L^2L^2_z}^2 \right) \\
&  \le  2(c_f +1 + c_e \lambda_3)^2\tfrac{C_z^2}{c_d} \|   g_\epsilon \|_{H^{s}_{\Lambda,z}}^2 +    \tfrac{c_d}{8}\|  \partial_z g_\epsilon\|_{H^{s-1}_z}^2.
\end{aligned}
\end{align}
While there is no derivative with respect to $v$, it follows that
\begin{align}
\begin{aligned}
&\label{est-z-lz-2} (\lambda_1 + \lambda_2)\sum\limits_{k=0}^{s-1} \repst \vert \intpsz \nabla^k_x \mathcal{L}_z(g_\epsilon) \sdot \nabla_x^k \partial_z g_\epsilon\bdv \bd x  \bd z\vert \\
& = (\lambda_1 + \lambda_2) \sum\limits_{k=0}^{s-1} \repst \vert \intpsz \mathcal{L}_z( (\nabla^k_x  g_\epsilon)^\perp) \sdot \left(\nabla_x^k \partial_z g_\epsilon\right)^\perp \bdv \bd x  \bd z\vert \\
& \le (\lambda_1 + \lambda_2)\sum\limits_{k=0}^{s-1} \left( \tfrac{1}{ c_e\epsilon^2} \|   \mathcal{L}_z( (\nabla^k_x  g_\epsilon)^\perp)\|_{L^2L^2_z}^2 +    \tfrac{c_e}{4 \epsilon^2 }\|\left(\nabla_x^k \partial_z g_\epsilon\right)^\perp\|_{L^2L^2_z}^2 \right) \\
&  \le 4(\lambda_1 + \lambda_2)^2 \tfrac{C_z^2}{ c_e\epsilon^2} \|      ( g_\epsilon)^\perp \|_{H^{s-1}_{\Lambda,x} L^2_z}^2 +    \tfrac{c_e}{16 \epsilon^2 }\|\left(\partial_z g_\epsilon\right)^\perp\|_{H^{s-1}_{x} L^2_z}^2.
\end{aligned}
\end{align}
Similarly, we can obtain
\begin{aligno}
&\label{est-z-lz-3} \lambda_4 c_e\sum\limits_{k=0}^{s-2} \reps \vert \intpsz \nabla^k_x \mathcal{L}_z(g_\epsilon) \sdot \nabla_x^k \partial_z g_\epsilon\bdv \bd x  \bd z\vert \\
& = \lambda_4 c_e\sum\limits_{k=0}^{s-1} \reps \vert \intpsz \mathcal{L}_z( (\nabla^k_x  g_\epsilon)^\perp) \sdot \left(\nabla_x^k \partial_z g_\epsilon\right)^\perp \bdv \bd x  \bd z\vert \\
& \le \sum\limits_{k=0}^{s-1} \left( \tfrac{\lambda_4^2 c_e}{  \epsilon^2} \|   \mathcal{L}_z( (\nabla^k_x  g_\epsilon)^\perp)\|_{L^2L^2_z}^2 +    \tfrac{c_e}{4 \epsilon^2 }\|\left(\nabla_x^k \partial_z g_\epsilon\right)^\perp\|_{L^2L^2_z}^2 \right) \\
&  \le 4\tfrac{C_z^2 \lambda_4^2 c_e}{  \epsilon^2} \|      ( g_\epsilon)^\perp \|_{H^{s-1}_{\Lambda,x} L^2_z}^2 +    \tfrac{c_e}{16 \epsilon^2 }\|\left(\partial_z g_\epsilon\right)^\perp\|_{H^{s-1}_{x} L^2_z}^2.
\end{aligno}
Combining the relevent estimates with $\mathcal{L}_z$, we can obtain that
\begin{aligno}
\label{estzLz}
& (c_f +1 + c_e \lambda_3) \sum\limits_{ i+j =s-1}\epsilon^2 \vert \intpsz \nabla^j_x \nabla^i_v  \repst \mathcal{L}_z(g_\epsilon)   \sdot \nabla^j_x \nabla^i_v \partial_z g_\epsilon\bdv \bd x \bd z \vert \\
& +  (\lambda_1 + \lambda_2)\sum\limits_{k=0}^{s-1} \repst \vert \intpsz \nabla^k_x \mathcal{L}_z(g_\epsilon) \sdot \nabla_x^k \partial_z g_\epsilon\bdv \bd x  \bd z\vert \\
& +  \lambda_4 c_e\sum\limits_{k=0}^{s-2} \reps \vert \intpsz \nabla^k_x \mathcal{L}_z(g_\epsilon) \sdot \nabla_x^k \partial_z g_\epsilon\bdv \bd x  \bd z\vert \\
&  -  \tfrac{c_e}{8 \epsilon^2 }\|\left(\partial_z g_\epsilon\right)^\perp\|_{H^{s-1}_{x} L^2_z}^2 - \tfrac{c_d}{8}\|  \partial_z g_\epsilon\|_{H^{s-1}_z}^2 \\
&  \lesssim \|   g_\epsilon \|_{H^{s}_{\Lambda,z}}^2 + \repst \|      ( g_\epsilon)^\perp \|_{H^{s-1}_{\Lambda,x} L^2_z}^2
\end{aligno}

For $\partial_z N_2(g_\epsilon)$, recalling that
\[ N_2(g_\epsilon)=\reps {\Gamma}(g_\epsilon,g_\epsilon),  \]
thus,
\[ \partial_z \big(\Gamma(g_\epsilon, g_\epsilon)\big)=  {\Gamma}_z(g_\epsilon,g_\epsilon) +   {\Gamma}(\partial_z g_\epsilon,g_\epsilon) +   {\Gamma}_z(g_\epsilon, \partial_z g_\epsilon), \]
noticing that $  {\Gamma}(g_\epsilon,g_\epsilon)$ and $\Gamma_z(g_\epsilon, g_\epsilon)$ belongs to the orthogonal space of $\mathcal{L}$,  thus, we have
\begin{align*}
&  (\lambda_1 + \lambda_2)\sum\limits_{k=0}^{s-1} \vert \intpsz \nabla^k_x \partial_z N_2(g_\epsilon) \sdot \nabla_x^k \partial_z g_\epsilon\bdv \bd x \bd z \vert \\
& =  \sum\limits_{k=0}^{s-1} (\lambda_1 + \lambda_2) \vert \intpsz \nabla^k_x \partial_z \big( \Gamma(g_\epsilon, g_\epsilon) \big)\sdot \tfrac{1}{\epsilon}\big(\nabla_x^k  \partial_z g_\epsilon\big)^\perp\bdv \bd x \bd z\vert \\
& \lesssim \reps \|g_\epsilon\|_{H^s_{x,z}}  \|g_\epsilon\|_{H^s_{\Lambda_x,z}}\|\partial_z g_\epsilon^\perp\|_{H^{s-1}_{\Lambda_x} L^2_z}.
\end{align*}
From the above inequalities, by Young's inequality,  we can obtain that
\begin{align*}
&  (\lambda_1 + \lambda_2)\sum\limits_{k=0}^{s-1} \vert \intpsz \nabla^k_x \partial_z N_2(g_\epsilon) \sdot \nabla_x^k \partial_z g_\epsilon\bdv \bd x \bd z \vert  - \tfrac{c_e}{8\epsilon^2}\|\partial_z g_\epsilon^\perp\|_{H^{s-1}_{\Lambda_x} L^2_z}^2  \\
& \lesssim  \|g_\epsilon\|_{H^s_{x,z}}^2  \|g_\epsilon\|_{H^s_{\Lambda_x,z}}^2.
\end{align*}
For the last two terms in the right hand of \eqref{est-z-basic}, recalling that
\begin{align*}
\|\partial_z \big(\Gamma(g_\epsilon, g_\epsilon)\big)\|_{H^{s-1}L^2_z} \lesssim \|g_\epsilon\|_{H^{s}_{\Lambda,z}}^2,~~\|\partial_z \big(\Gamma(g_\epsilon, g_\epsilon)\big)\|_{H^{s-2}_xL^2_z} \lesssim \|g_\epsilon\|_{H^{s-1}_{\Lambda,x}}^2,
\end{align*}
thus
\begin{align*}
& \epsilon^2 (c_f +1 + c_e \lambda_3)  \sum\limits_{i \ge 1\atop i + j =s-1} \vert\intpsz \big(\nabla^j_x \nabla^i_v \partial_z\big(N_2(g_\epsilon)\big)\big) \sdot \big(\nabla^j_x \nabla^i_v \partial_z g_\epsilon\big)\bdv \bd x \bd z \vert \\
& + \epsilon \lambda_4 \cdot  c_e \sum\limits_{k=0}^{s-2} \vert \intpsz \nabla_x \nabla^k_x \partial_z\big( N_2(g_\epsilon) \big) \sdot \nabla_v \nabla^k_x \partial_z g_\epsilon\bdv \bd x \bd z\vert \\
& \lesssim  \epsilon \sum\limits_{i \ge 1\atop i + j =s-1} \vert\intpsz \big(\nabla^j_x \nabla^i_v \partial_z \big(\Gamma(g_\epsilon, g_\epsilon)\big)\big) \sdot \big(\nabla^j_x \nabla^i_v \partial_z g_\epsilon\big)\bdv \bd x \bd z\vert\\
& \qquad +  \sum\limits_{k=0}^{s-2} \vert \intpsz \nabla_x \nabla^k_x \partial_z \big(\Gamma(g_\epsilon, g_\epsilon)\big) \sdot \nabla_v \nabla^k_x \partial_z g_\epsilon\bdv \bd x \bd z\vert\\
& \lesssim \epsilon \|  g_\epsilon\|_{H^{s}_z} \|  g_\epsilon\|_{H^{s}_{\Lambda,z}}^2 + \|  g_\epsilon\|_{H^{s}_{x,z}} \|  g_\epsilon\|_{H^{s}_{\Lambda,z}}^2.
\end{align*}
In summary, we can conclude that
 \begin{aligno}
 \label{estzN2}
& (\lambda_1 + \lambda_2)\sum\limits_{k=0}^{s-1} \vert \intpsz \nabla^k_x \partial_z N_2(g_\epsilon) \sdot \nabla_x^k \partial_z g_\epsilon\bdv \bd x \bd z \vert \\
&  + \epsilon^2  \sum\limits_{i \ge 1\atop i + j =s-1} \vert\intpsz \big(\nabla^j_x \nabla^i_v \partial_z\big(N_2(g_\epsilon)\big)\big) \sdot \big(\nabla^j_x \nabla^i_v \partial_z g_\epsilon\big)\bdv \bd x \bd z \vert \\
& + \epsilon \sum\limits_{k=0}^{s-2} \vert \intpsz \nabla_x \nabla^k_x \partial_z\big( N_2(g_\epsilon) \big) \sdot \nabla_v \nabla^k_x \partial_z g_\epsilon\bdv \bd x \bd z\vert - \tfrac{c_e}{8\epsilon^2}\|\partial_z g_\epsilon^\perp\|_{H^{s-1}_{\Lambda_x} L^2_z}^2 \\
& \lesssim \epsilon \|  g_\epsilon\|_{H^{s}_z} \|  g_\epsilon\|_{H^{s}_{\Lambda,z}}^2 + \|  g_\epsilon\|_{H^{s}_{x,z}} \|  g_\epsilon\|_{H^{s}_{\Lambda,z}}^2 + \|g_\epsilon\|_{H^s_{x,z}}^2  \|g_\epsilon\|_{H^s_{\Lambda_x,z}}^2.
\end{aligno}
With
\begin{align*}
\mathcal{E}_{\epsilon,z}^s(t):& = \|(g_\epsilon(t),\nabla_x \phi_\epsilon(t))\|_{H^s_{x,z}}^2 + \epsilon^2 \|\nabla_v g_\epsilon(t)\|_{H^{s-1}_z}^2\\
 & \quad +  \|(g_\epsilon(t),\nabla_x \phi_\epsilon(t))\|_{H^s_{x}\lzi}^2 + \epsilon^2 \|\nabla_v g_\epsilon(t,z)\|_{H^{s-1}\lzi}^2,
\end{align*}
finally we are ready to  close the inequalities.  From  \eqref{estzN1}, \eqref{estzLz} and \eqref{estzN2}, \eqref{est-z-h1} turns to
\begin{align}
\label{estzH1-c}
\begin{split}
& \dt \bigg( c_e \cdot   \mathfrak{E}_{\epsilon,1,z,1}^s(t) +  c_f \cdot   \mathfrak{E}_{\epsilon,2,2,z,1}^s(t) +  \mathfrak{E}_{\epsilon,2,1,z,1}^s(t)\bigg) \\
& \qquad +   \tfrac{3c_e}{4\epsilon^2} \|(\partial_z g_\epsilon)^\perp\|_{H^{s-1}_x L^2_z}^2  + c_e\|\partial_z \rho_\epsilon\|_{H^{s-1} L^2_z}^2  + \tfrac{3c_d}{4}\|\partial_z g_\epsilon\|_{H^{s-1}_\Lambda L^2_z}^2  \\
&\lesssim \epsilon \|  g_\epsilon\|_{H^{s}_z} \|  g_\epsilon\|_{H^{s}_{\Lambda,z}}^2 + \|  g_\epsilon\|_{H^{s}_{x,z}} \|  g_\epsilon\|_{H^{s}_{\Lambda,z}}^2 + \|g_\epsilon\|_{H^s_{x,z}}^2  \|g_\epsilon\|_{H^s_{\Lambda_x,z}}^2 + \|   g_\epsilon \|_{H^{s}_{\Lambda,z}}^2 + \repst \|      ( g_\epsilon)^\perp \|_{H^{s-1}_{\Lambda,x} L^2_z}^2\\
& \lesssim \sqrt{\mathfrak{E}_{\epsilon, z}^s(t)} \|g_\epsilon(t,z)\|_{H^{s}_x L^2_z }^2  + \tfrac{1}{\epsilon^2} \sqrt{\mathfrak{E}_{\epsilon, z}^s(t)}\cdot  \epsilon^2 \|\nabla_v g_\epsilon(t,z)\|_{H^{s-1}_{\Lambda,z}}^2 + \|   g_\epsilon \|_{H^{s}_{\Lambda,z}}^2 + \repst \|      ( g_\epsilon)^\perp \|_{H^{s-1}_{\Lambda,x} L^2_z}^2,
\end{split}
\end{align}
where we have used that
\begin{align*}
\intz \|\nabla_x \phi_\epsilon\|_{L^2  }^2\|\nabla_x \partial_z \phi_\epsilon\|_{L^2  }^2 \bd z  \lesssim \|g_\epsilon\|_{H^s_{x,z}}^4.
\end{align*}
With the help of  \eqref{est-z-to-e-or}, the last two terms in the right hand of \eqref{estzH1-c} can be absorbed. Since the skills are the same to \eqref{estsinitial}, we omit the details and complete the proof.

\end{proof}

\subsection{Construction of approximate solutions}
In this section,  we are going to show the existence of solutions based on the prior estimates established in Sec. \ref{sec-z}. Different with  \cite{liujin-2018-kinetic} where the approximate solutions  were obtained by a Galerkin type method,  the iteration methods are employed to obtaining the approximate solution sequence in this work. For each fixed $z \in \mathbb{I}_z$,  the nonlinear system
the non-linear system
\begin{align}
\partial_t   g_\epsilon(z) + \reps \vdot   g_\epsilon(z) +  \repst \mathcal{L}(  g_\epsilon(z))   - \tfrac{v}{\epsilon} \sdot \nabla   \phi_\epsilon(z) = N(g_\epsilon(z)),
\end{align}
where
\begin{align*}
N(g_\epsilon)&:=  N_1(g_\epsilon) + \partial_z N_2(g_\epsilon),\\
N_1(g_\epsilon)&:= ({v}  g_\epsilon - \nabla_v g_\epsilon     ) \sdot \nabla\phi_\epsilon,\\
N_2(g_\epsilon)&:= \reps {\Gamma}(g_\epsilon,g_\epsilon).
\end{align*}
From Sec. \ref{sec-app}, the approximate solutions can be constructed and then the wellposedness of solutions can be verified in $H^s$ space. Furthermore,  from Lemma \ref{lemma-nonlinear-decay-ran-infty} and Lemma \ref{lemma-nonlinear-decay-ran}, under our settings, the solutions are Lipschitz in $z$ and  we have proved the existence of solutions.

\subsection{Stability}
In this part, we consider the stability problem. Suppose that $g_{\epsilon,1}(t)$  and $g_{\epsilon,2}(t)$ are the solutions (in $H^s_z$ space) to
\begin{align}
\partial_t   g_\epsilon  + \reps \vdot   g_\epsilon  +  \repst \mathcal{L}(  g_\epsilon )   - \tfrac{v}{\epsilon} \sdot \nabla   \phi_\epsilon  = N(g_\epsilon ),
\end{align}
with initial data $g_{\epsilon,1}(0)$ and $g_{\epsilon,2}(0)$ respectively.  Letting
\[ h_\epsilon = g_{\epsilon, 1} - g_{\epsilon,2}, ~~~~\Delta_x \phi^h_\epsilon =\rho^h_\epsilon := \intv h_\epsilon \bdv  \]
then
\begin{aligno}
\label{vpb-stable}
\partial_t   h_\epsilon  + \reps \vdot   h_\epsilon  +  \repst \mathcal{L}(  h_\epsilon )   - \tfrac{v}{\epsilon} \sdot \nabla   \phi_\epsilon^h  =:N^d(h_\epsilon) = N^d_1(h_{\epsilon}) + N^d_2(h_{\epsilon}),
\end{aligno}
with
\begin{align*}
N^d_1(h_{\epsilon}) = N_1(g_{\epsilon,1}) - N_1(g_{\epsilon,2}),\\
N^d_2(h_{\epsilon}) = N_2(g_{\epsilon,1}) - N_2(g_{\epsilon,2}).
\end{align*}
\begin{lemma}
\label{lemma-stable}
Suppose that $g_{\epsilon,1}$ and $g_{\epsilon,2}$ are two solutions to VPB system with initial data $g_{\epsilon,1}(0)$ and $g_{\epsilon,2}(0)$ respectively in $H^s_z$ space for each $\epsilon>0$. For some small enough consant $s_0$,  as long as the initial data satisfy
\begin{align*}
 \|(g_{\epsilon,1}, \nabla_x \phi_{\epsilon,1})\|_{H^s_{x,z}}^2 + \epsilon^2 \|\nabla_v g_{\epsilon,1}\|_{H^{s-1}_z}^2 \le  s_0,~~~~\|(g_{\epsilon,2}, \nabla_x \phi_{\epsilon,2})\|_{H^s_{x,z}}^2 + \epsilon^2 \|\nabla_v g_{\epsilon,2}\|_{H^{s-1}_z}^2 \le  s_0,
\end{align*}
there exist some constant $\bar{s}_0$ and $\bar{s}$  such that
\begin{aligno}
&\|(g_{\epsilon,1} - g_{\epsilon,2},\nabla_x (\phi_{\epsilon,1} -\phi_{\epsilon,1}) )(t)\|_{H^\ell_{x}L^{2\cap \infty}_z }^2 + \epsilon^2 \|\nabla_v g_\epsilon(t)\|_{H^{\ell-1}L^{2\cap \infty}_z}^2 \\
& \le \bar{s}_0 \exp(-\bar{s} t) \left( \|(g_{\epsilon,1} - g_{\epsilon,2},\nabla_x (\phi_{\epsilon,1} -\phi_{\epsilon,1}) )(0)\|_{H^\ell_{x}L^{2\cap \infty}_z }^2 + \epsilon^2 \|\nabla_v g_\epsilon(0)\|_{H^{\ell-1}L^{2\cap \infty}_z}^2 \right), ~~ \ell \le s -1.
\end{aligno}

\end{lemma}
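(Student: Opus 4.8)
\emph{Proof proposal.} The equation \eqref{vpb-stable} for the difference $h_\epsilon=g_{\epsilon,1}-g_{\epsilon,2}$ carries on its left-hand side exactly the linearized VPB operator of \eqref{vpb-ns-nonlinear}, so the plan is to run, for every fixed $z\in\mathbb{I}_z$, the same hypocoercive energy scheme that produced Lemma \ref{lemma-nonlinear-decay} and Lemma \ref{lemma-nonlinear-decay-ran-infty}, now with $N^d(h_\epsilon)=N^d_1(h_\epsilon)+N^d_2(h_\epsilon)$ in the role of the source term. Concretely, I would build the functional $\mathfrak{E}^\ell_\epsilon(h_\epsilon)$ exactly as in \eqref{est-norm-es-g-2} (the $H^\ell_x$ part, the mixed $x$–$v$ cross term, and the $\epsilon^2$-weighted $v$-derivatives split into the $i=1$ and $i\ge2$ pieces), reproduce the chain leading to the analogue of \eqref{est-nonlinear-all-hs}, and then I am left only to bound the four ``source'' integrals obtained by pairing $\nabla_x^k N^d$, $\nabla_x\nabla_x^k N^d$, $\epsilon^2\nabla_v\nabla_x^k N^d$ and $\epsilon\,\nabla_x\nabla_x^k N^d$ against the matching derivatives of $h_\epsilon$. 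Since $\|(g_{\epsilon,j},\nabla_x\phi_{\epsilon,j})\|_{H^s_{x,z}}^2+\epsilon^2\|\nabla_v g_{\epsilon,j}\|_{H^{s-1}_z}^2\le s_0$ for $j=1,2$ and $N^d$ is linear in $h_\epsilon$ with every coefficient carrying a factor of $g_{\epsilon,1}$ or $g_{\epsilon,2}$, each such integral will be of the form $s_0^{1/2}\times(\text{dissipation of }\mathfrak{E}^\ell_\epsilon)$, hence absorbable once $s_0$ is small; this yields $\tdt\mathfrak{E}^\ell_\epsilon(h_\epsilon)+\bar{s}\,\mathfrak{E}^\ell_\epsilon(h_\epsilon)\le0$ and Grönwall closes the argument. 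Taking $\sup_z$ and integrating over $z$ turns this per-$z$ bound into the stated $H^\ell_x L^{2\cap\infty}_z$ estimate.

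Two structural points precede the absorption. First, the mean values: since the two solutions are those of Theorem \ref{main-result-stable}(I) (Lemma \ref{lemma-nonlinear-decay-ran}), their initial data satisfy \eqref{initial-mean-nonlinear-recall-1}, so the conserved quantities \eqref{conservation-law-nonlinear} applied to $h_\epsilon$ give $\intps h_\epsilon(t)\m\bd v\bd x=\intps v h_\epsilon(t)\m\bd v\bd x=0$ and $\intps(\tfrac{|v|^2}{2}-\tfrac32)h_\epsilon(t)\m\bd v\bd x=-\epsilon\intt\nabla_x\phi^h_\epsilon\sdot\nabla_x(\phi_{\epsilon,1}+\phi_{\epsilon,2})\,\bd x$; hence $\intt\bdp h_\epsilon\,\bd x$ is $O(\epsilon s_0^{1/2}\|h_\epsilon\|_{L^2})$ and the Poincaré inequality can be applied to $\bdp h_\epsilon$ up to an error that, being small and quadratic, is swallowed by the dissipation — the exact analogue of \eqref{est-mean-difference}--\eqref{est-differ}. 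Second, the collision part $N^d_2(h_\epsilon)=\reps\big(\Gamma(h_\epsilon,g_{\epsilon,1})+\Gamma(g_{\epsilon,2},h_\epsilon)\big)$ lies in $\mathrm{Ker}(\mathcal{L})^\perp$, so the bad prefactor $\reps$ is matched by the coercivity \eqref{constant-a2} and the bilinear bounds \eqref{constant-cn} exactly as for $N_2$ in Lemma \ref{lemma-nonlinear-decay}, leaving terms $\lesssim s_0^{1/2}\|h_\epsilon\|_{H^\ell_{\Lambda_x}}^2/\epsilon^2$ controlled by the $\|h^\perp_\epsilon\|/\epsilon$ dissipation.

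The genuinely new and delicate term is the electric part $N^d_1(h_\epsilon)=(v h_\epsilon-\nabla_v h_\epsilon)\sdot\nabla_x\phi_{\epsilon,1}+(v g_{\epsilon,2}-\nabla_v g_{\epsilon,2})\sdot\nabla_x\phi^h_\epsilon$. The first group is ``$h_\epsilon$ tested against the small fixed field $\nabla_x\phi_{\epsilon,1}$'': using $N_1(g)\m=-\mathrm{div}_v(g\m\nabla_x\phi)$ and integrating by parts in $v$ (to avoid producing $\nabla_v^{\ell+1}h_\epsilon$) disposes of it just as the $N_1$ term was handled in Lemmas \ref{lemma-nonlinear-decay} and \ref{lemma-nonlinear-decay-ran}, with smallness from $\|\nabla_x\phi_{\epsilon,1}\|_{H^s_x}\lesssim s_0^{1/2}$. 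The second group is where the derivative loss originates: after applying $\nabla_x^{\ell}$, the worst contribution places all $\ell$ spatial derivatives on the potential, $(v g_{\epsilon,2}-\nabla_v g_{\epsilon,2})\sdot\nabla_x^{\ell+1}\phi^h_\epsilon$ tested against $\nabla_x^{\ell}h_\epsilon$, and since $\nabla_x^{\ell+1}\phi^h_\epsilon$ is controlled only through $\nabla_x^{\ell-1}\rho^h_\epsilon$, hence by $\|h_\epsilon\|_{H^{\ell-1}_x}$, one needs \emph{one more} spatial derivative of $h_\epsilon$ than is available at level $\ell$ — this is precisely why the statement is restricted to $\ell\le s-1$, and it is the content of \eqref{estStableN1}--\eqref{estDN11}. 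A further difficulty, as in Remark \ref{remark-v-derivative}, is that $\nabla_v g_{\epsilon,2}$ obeys only the $\epsilon$-weighted bound $\epsilon\|\nabla_v g_{\epsilon,2}\|_{H^{s-1}_z}\le s_0^{1/2}$, so the powers of $\epsilon$ must be tracked: the $\epsilon^{-1}$ lost on $\nabla_v g_{\epsilon,2}$ has to be recovered from the $\epsilon$'s sitting in the $\epsilon^2$-weighted slots of $\mathfrak{E}^\ell_\epsilon$ and from the $\epsilon$ attached to the electric coupling — which is exactly what those weights in $\mathfrak{E}^\ell_\epsilon$ were designed for. I expect this bookkeeping — closing the $(v g_{\epsilon,2}-\nabla_v g_{\epsilon,2})\sdot\nabla_x\phi^h_\epsilon$ estimate inside $\mathfrak{E}^{s-1}_\epsilon$ while simultaneously honouring the mean-value correction above — to be the main obstacle; everything else is a faithful repetition of the estimates already carried out for the existence result, done uniformly in $z$ and then integrated/sup-ed in $z$.
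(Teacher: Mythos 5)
Your plan follows the same architecture as the paper's proof: set up the hypocoercive functional $\mathfrak{E}^\ell_\epsilon$ for $h_\epsilon=g_{\epsilon,1}-g_{\epsilon,2}$, derive the modified conservation law for the mean of $\bdp h_\epsilon$ (the paper's \eqref{est-mean-difference-stable}), use that $N^d_2$ lives in $\mathrm{Ker}(\mathcal{L})^\perp$ so the $1/\epsilon$ prefactor is absorbed by the $\|h_\epsilon^\perp\|/\epsilon$ dissipation, and identify $(v g_{\epsilon,2}-\nabla_v g_{\epsilon,2})\sdot\nabla_x\phi^h_\epsilon$ as the source of the $\ell\le s-1$ loss — all of this is exactly what the paper does in \eqref{est-stable-hs}--\eqref{estszN2}.

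However, there is a genuine gap in the closing step. You claim that every source integral reduces to $s_0^{1/2}\times(\text{dissipation of }\mathfrak{E}^\ell_\epsilon)$, hence directly absorbable once $s_0$ is small, giving $\tdt\mathfrak{E}^\ell_\epsilon+\bar s\,\mathfrak{E}^\ell_\epsilon\le0$ and a clean Grönwall. That is false for the contribution of the second group in $N^d_1$: estimating $(v g_{\epsilon,2}-\nabla_v g_{\epsilon,2})\sdot\nabla_x\phi^h_\epsilon$ (the paper's \eqref{estDN13}) produces a term $\|g_{\epsilon,2}(t)\|_{H^s_\Lambda}^2\,\|h_\epsilon(t)\|_{H^\ell_x}^2$. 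The coefficient carries the $\Lambda$-\emph{weighted} norm of $g_{\epsilon,2}$, not the $H^s_x$ norm, and because $\hat v=1+|v|\ge 1$ the former dominates the latter. The uniform-in-time bound from Theorem \ref{main-result-stable}(I) controls only $\|g_{\epsilon,2}(t)\|_{H^s_x}\lesssim s_0^{1/2}$; the $\Lambda$-weighted norm appears only in the \emph{dissipation}, so $\|g_{\epsilon,2}(t)\|_{H^s_\Lambda}^2$ is merely integrable in $t$, not pointwise small. Consequently the differential inequality one actually obtains is of the form
$$\tdt\,\mathfrak{E}^\ell_\epsilon(h_\epsilon) + \bar c\,\mathfrak{E}^\ell_\epsilon(h_\epsilon) \lesssim \|g_{\epsilon,2}(t)\|_{H^s_\Lambda}^2\,\mathfrak{E}^\ell_\epsilon(h_\epsilon),$$
and the proof only closes by invoking $\int_0^\infty\|g_{\epsilon,2}(t)\|_{H^s_\Lambda}^2\,\bd t<\infty$ (a consequence of the dissipation estimate, as in \eqref{est-v-int-t}/\eqref{est-z-to-e-or}) together with a Grönwall inequality with a time-dependent integrable coefficient, producing $\bar s_0=\exp(C\int_0^\infty\|g_{\epsilon,2}\|_{H^s_\Lambda}^2\,\bd t)$ rather than a constant coming from smallness alone. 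The paper flags this explicitly as the ``new difficulty'' of the stability argument right after \eqref{estshs}; your proposal would not close without that observation, since the direct absorption you describe requires a pointwise smallness of $\|g_{\epsilon,2}\|_{H^s_\Lambda}$ that is simply not available.
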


\begin{proof}
Noticing that the left part of  \eqref{vpb-stable} enjoys the same structure to that  of \eqref{vpb-ns-nonlinear}. To get the same version to \eqref{est-nonlinear-all-hs}, we shall analyze the mean value of the fluid part of $h_\epsilon(t)$ to obtain the similar result as \eqref{est-mean-difference}. By simple calculation, the global conservation laws  of \eqref{lemma-stable}  are
\begin{align}
\label{conservation-law-nonlinear-stable}
\begin{split}
\tdt { \intps h_\epsilon(t,z) \m \bd v \bd x = 0,   \tdt \intps v h_\epsilon(t,z) \m \bd v \bd x =0,}\\
\tdt \bigg(  \intps (\tfrac{|v|^2}{2} - \tfrac{3}{2}) h_\epsilon(t,z)\m \bd v \bd x +  \epsilon \big( \|\nabla_x \phi_{\epsilon,1}(t,z)\|_{L^2}^2 - \|\nabla_x \phi_{\epsilon,2}(t,z)\|_{L^2}^2 \big) \bigg)= 0.
\end{split}
\end{align}
Thus, we can deduce
\begin{align}
\label{est-mean-difference-stable}
\begin{split}
\intt \bdp h_\epsilon(t,z) \bd x  =- C_\Omega^{-1} \sdot \tfrac{\epsilon}{3} \left(  \|\nabla_x \phi_{\epsilon,1}(t)\|_{L^2}^2  - \|\nabla_x \phi_{\epsilon,2}(t)\|_{L^2}^2 \right)  \sdot (\tfrac{|v|^2}{3} - 1).
\end{split}
\end{align}
With the help of \eqref{est-mean-difference-stable} and the similar trick of obtaining \eqref{est-nonlinear-all-hs}, we can obtain that
\begin{align}
\label{est-stable-hs}
\begin{split}
& \dt \bigg( c_e \cdot   \mathfrak{E}_{\epsilon,1}^\ell(t,z) +  c_f \cdot   \mathfrak{E}_{\epsilon,2,2}^\ell(t,z) +  \mathfrak{E}_{\epsilon,2,1}^\ell(t,z)\bigg) \\
& \qquad +   \tfrac{c_e}{\epsilon^2} \|(h_\epsilon(t,z))^\perp\|_{H^\ell_\Lambda}^2  + c_e\|\rho_\epsilon^h(t,z)\|_{H^\ell}^2  + c_d\|h_\epsilon(t,z)\|_{H^\ell_{\Lambda}}^2  \\
&\le  \tfrac{\epsilon^2 \lambda_3 {(a_4 + C_\delta)} }{9C_\Omega} \left( \|\nabla_x \phi_{\epsilon,1}\|_{L^2}^2 - \|\nabla_x \phi_{\epsilon,2}\|_{L^2}^2 \right)^2  \\
& + c_e(\lambda_1 + \lambda_2) \sum\limits_{k=0}^{\ell} \vert \intps \nabla^k_x N^d_\epsilon \sdot \nabla_x^k h_\epsilon\bdv \bd x \vert \\
& + (c_f +1 + c_e \lambda_3) \epsilon^2  \sum\limits_{i \ge 1\atop i + j =\ell} \vert\intps \big(\nabla^j_x \nabla^i_v N^d_\epsilon\big) \sdot \big(\nabla^j_x \nabla^i_v h_\epsilon\big)\bdv \bd x \vert\\
& + \lambda_4 \cdot  c_e \epsilon \sum\limits_{k=0}^{\ell-1} \vert \intps \nabla_x \nabla^k_x N^d_\epsilon \sdot \nabla_v \nabla^k_x h_\epsilon\bdv \bd x\vert, \quad\quad \ell \le s-1.
\end{split}
\end{align}
The idea is to employ the left hand of \eqref{est-stable-hs} to absorbing its right hand with the help of Lemma \ref{lemma-nonlinear-decay-ran-infty}.

For the source term $N(g_{\epsilon,1})$, first for $N_1$, we can obtain that
\begin{aligno}
\label{estStableN1}
N_1^d(h_\epsilon) & =  N_1(g_{\epsilon, 1}) - N_1(g_{\epsilon, 2})  \\
& = ({v}  g_{\epsilon, 1} - \nabla_v g_{\epsilon, 1}     ) \sdot \nabla\phi_{\epsilon, 1} -  ({v}  g_{\epsilon, 2} - \nabla_v g_{\epsilon, 2}     ) \sdot \nabla\phi_{\epsilon, 2} \\
& = ({v}  h_{\epsilon} - \nabla_v h_{\epsilon}     ) \sdot \nabla\phi_{\epsilon, 1} +  ({v}  g_{\epsilon, 2} - \nabla_v g_{\epsilon, 2}     ) \sdot \nabla\phi_{\epsilon}^h.
\end{aligno}
Noticing that there exists $({v}  g_{\epsilon, 2} - \nabla_v g_{\epsilon, 2}     ) \sdot \nabla\phi_{\epsilon}^h$ in \eqref{estStableN1}, this is why we need $\ell \le s -1$. Indeed, if $\ell =s$,
$  \vert \intps \nabla^s_v ({v}  g_{\epsilon, 2} - \nabla_v g_{\epsilon, 2}     ) \sdot \nabla\phi_{\epsilon}^h  \sdot \nabla_v^s h_\epsilon\bdv \bd x \vert$ can not be bounded.  Furthermore, we can obtain for the differences between  $N_1(g_{\epsilon,1})$ and  $N_1(g_{\epsilon,2})$
\begin{aligno}
\label{estDN11}
& \sum\limits_{ i+j =\ell}\vert \intps \nabla^j_x \nabla^i_v N_1^d(h_\epsilon) \sdot \nabla^j_x \nabla^i_v   h_\epsilon\bdv \bd x   \vert \\
& \le   \sum\limits_{ i+j =\ell} \vert \intps \nabla^j_x \nabla^i_v  \big( ({v}  h_{\epsilon} - \nabla_v h_{\epsilon}     ) \sdot \nabla\phi_{\epsilon, 1} \big) \sdot \nabla^j_x \nabla^i_v   h_\epsilon\bdv \bd x  \vert \\
& +  \sum\limits_{ i+j =\ell} \vert \intps \nabla^j_x \nabla^i_v  \big( ({v}  g_{\epsilon, 2} - \nabla_v g_{\epsilon, 2}     ) \sdot \nabla\phi_{\epsilon}^h \big) \sdot \nabla^j_x \nabla^i_v   h_\epsilon\bdv \bd x  \vert.
\end{aligno}
Since $\ell \le s -1$ and $j +1 \le s \le s+2$, we can infer that
\[ \|\nabla^n \phi_{\epsilon,1}\|_{L^\infty} \lesssim \|g_{\epsilon,1}\|_{H^s}, n \le \ell. \]
With the help of the above inequality, we can infer \begin{aligno}\label{estDN12}
& \sum\limits_{ i+j =\ell} \vert \intps \nabla^j_x \nabla^i_v  \big( ({v}  h_{\epsilon} - \nabla_v h_{\epsilon}     ) \sdot \nabla\phi_{\epsilon, 1} \big) \sdot \nabla^j_x \nabla^i_v   h_\epsilon\bdv \bd x  \vert \\
& \le \sum\limits_{ i+j =\ell} \vert \intps \nabla^j_x \bigg( \nabla^i_v   ({v}  h_{\epsilon} - \nabla_v h_{\epsilon}     ) \sdot \nabla\phi_{\epsilon, 1} \bigg) \sdot \nabla^j_x \nabla^i_v   h_\epsilon\bdv \bd x  \vert \\
& \lesssim \|g_{\epsilon,1}\|_{H^s}\|h_\epsilon\|_{H^\ell_\Lambda}^2,
\end{aligno}
and
\begin{aligno}\label{estDN13}
& \sum\limits_{ i+j =\ell} \vert \intps \nabla^j_x \nabla^i_v  \big( ({v}  g_{\epsilon, 2} - \nabla_v g_{\epsilon, 2}     ) \sdot \nabla\phi_{\epsilon}^h \big) \sdot \nabla^j_x \nabla^i_v   h_\epsilon\bdv \bd x  \vert\\
& \le \|g_{\epsilon,2}\|_{H^s_\Lambda}\|h_\epsilon\|_{H^s}\|h_\epsilon\|_{H^\ell_\Lambda}.
\end{aligno}
Combing \eqref{estDN11}, \eqref{estDN12} and \eqref{estDN13},  we can obtain that for $\epsilon \le 1$
\begin{aligno}
\label{estDN1}
&  c_e(\lambda_1 + \lambda_2) \sum\limits_{k=0}^{\ell} \vert \intps \nabla^k_x N^d_1(h_\epsilon) \sdot \nabla_x^k h_\epsilon\bdv \bd x \vert - \tfrac{c_d}{8} \|h_\epsilon\|_{H^\ell_\Lambda}^2 \\
& + (c_f +1 + c_e \lambda_3) \epsilon^2  \sum\limits_{i \ge 1\atop i + j =\ell} \vert\intps \big(\nabla^j_x \nabla^i_v N^d_1(h_\epsilon)\big) \sdot \big(\nabla^j_x \nabla^i_v h_\epsilon\big)\bdv \bd x \vert\\
& + \lambda_4 \cdot  c_e \epsilon \sum\limits_{k=0}^{\ell-1} \vert \intps \nabla_x \nabla^k_x N^d_1(h_\epsilon) \sdot \nabla_v \nabla^k_x h_\epsilon\bdv \bd x\vert    \\
& \lesssim \|g_{\epsilon,1}\|_{H^s_x}\|h_\epsilon\|_{H^\ell_\Lambda}^2 + \|g_{\epsilon,2}\|_{H^s_\Lambda}^2\|h_\epsilon\|_{H^\ell}^2.
\end{aligno}
The $N_2^d(h_\epsilon)$ is more complicated. First, we can decompose  $N^d_2(h_{\epsilon})$ as follows
\begin{align*}
N^d_2(h_{\epsilon}) & = \reps \Gamma(g_{\epsilon,1}, g_{\epsilon,1})  - \reps \Gamma(g_{\epsilon,2}, g_{\epsilon,2})\\
& = \reps \Gamma(h_{\epsilon}, g_{\epsilon,1})  + \reps \Gamma(g_{\epsilon,2}, h_{\epsilon}).
\end{align*}
Again, noticing that $  \Gamma(h_{\epsilon}, g_{\epsilon,1})$ and $\Gamma(g_{\epsilon,2}, h_{\epsilon})$ belongs to the orthogonal space of $\mathcal{L}$,  thus it follows that
\begin{align*}
&  (\lambda_1 + \lambda_2)\sum\limits_{k=0}^{\ell} \vert \intps \nabla^k_x   N_2^d(h_\epsilon) \sdot \nabla_x^k   h_\epsilon\bdv \bd x   \vert \\
& =  \sum\limits_{k=0}^{\ell} (\lambda_1 + \lambda_2) \vert \intps \nabla^k_x N_2^d(h_\epsilon)\sdot \tfrac{1}{\epsilon}\big(\nabla_x^k    h_\epsilon\big)^\perp\bdv \bd x  \vert \\
& \le \sum\limits_{k=0}^{\ell} (\lambda_1 + \lambda_2) \vert \intps \nabla^k_x \Gamma(h_{\epsilon}, g_{\epsilon,1})\sdot \tfrac{1}{\epsilon}\big(\nabla_x^k    h_\epsilon\big)^\perp\bdv \bd x  \vert \\
& + \sum\limits_{k=0}^{\ell} (\lambda_1 + \lambda_2) \vert \intps \nabla^k_x \Gamma(g_{\epsilon,2}, h_{\epsilon})\sdot \tfrac{1}{\epsilon}\big(\nabla_x^k    h_\epsilon\big)^\perp\bdv \bd x  \vert \\
& \lesssim \reps \|g_{\epsilon,1}\|_{H^\ell_{x}}  \|h_\epsilon\|_{H^\ell_{\Lambda_x}}\|  h_\epsilon^\perp\|_{H^{\ell}_{\Lambda_x}  }.
\end{align*}
From the above inequalities, by Young's inequality,  we can obtain that
\begin{align*}
&  (\lambda_1 + \lambda_2)\sum\limits_{k=0}^{\ell} \vert \intps \nabla^k_x   N_2^d(h_\epsilon) \sdot \nabla_x^k   h_\epsilon\bdv \bd x   \vert  - \tfrac{c_e}{8\epsilon^2}\|  h_\epsilon^\perp\|_{H^{\ell}_{\Lambda_x}  }^2  \\
& \lesssim  \left(  \|g_{\epsilon,1}\|_{H^\ell_{x}}^2  + \|g_{\epsilon,2}\|_{H^\ell_{x}}^2 \right) \|h_\epsilon\|_{H^\ell_{\Lambda_x}}^2.
\end{align*}
In the similar way, we can infer that
\begin{align*}
& \epsilon^2 (c_f +1 + c_e \lambda_3)  \sum\limits_{i \ge 1\atop i + j =\ell} \vert\intps \big(\nabla^j_x \nabla^i_v  \big(N_2^d(h_\epsilon)\big)\big) \sdot \big(\nabla^j_x \nabla^i_v h_\epsilon\big)\bdv \bd x   \vert \\
& + \epsilon \lambda_4 \cdot  c_e \sum\limits_{k=0}^{\ell - 1} \vert \intps \nabla_x \nabla^k_x  \big( N_2^d(h_\epsilon) \big) \sdot \nabla_v \nabla^k_x   h_\epsilon\bdv \bd x  \vert \\
& \lesssim  \epsilon \sum\limits_{i \ge 1\atop i + j =\ell} \vert\intps \big(\nabla^j_x \nabla^i_v   \big(\Gamma(h_\epsilon, g_{\epsilon,1})\big)\big) \sdot \big(\nabla^j_x \nabla^i_v   h_\epsilon\big)\bdv \bd x  \vert\\
& + \epsilon \sum\limits_{i \ge 1\atop i + j =\ell} \vert\intps \big(\nabla^j_x \nabla^i_v   \big(\Gamma(g_{\epsilon,2}, h_\epsilon)\big)\big) \sdot \big(\nabla^j_x \nabla^i_v   h_\epsilon\big)\bdv \bd x  \vert\\
&  +  \sum\limits_{k=0}^{\ell -1} \vert \intps \nabla_x \nabla^k_x  \big(\Gamma(h_\epsilon, g_{\epsilon,1})\big) \sdot \nabla_v \nabla^k_x   h_\epsilon\bdv \bd x  \vert\\
& + \sum\limits_{k=0}^{\ell -1} \vert \intps \nabla_x \nabla^k_x \partial_z \big(\Gamma(g_{\epsilon,2}, h_\epsilon)\big) \sdot \nabla_v \nabla^k_x   h_\epsilon\bdv \bd x  \vert\\
& \lesssim \epsilon \|  g_{\epsilon,1}\|_{H^{\ell}} \|  h_\epsilon\|_{H^{s}_{\Lambda,z}}^2 + \epsilon \|  g_{\epsilon,2}\|_{H^{\ell}} \|  h_\epsilon\|_{H^{\ell}_{\Lambda}}^2  \\
& \qquad + \|  g_{\epsilon,1}\|_{H^{\ell}_{x}} \|  h_\epsilon\|_{H^{s}_{\Lambda}}^2 + \|  g_{\epsilon,2}\|_{H^{\ell}_{x}} \|  h_\epsilon\|_{H^{s}_{\Lambda}}^2.
\end{align*}
In summary, we can conclude that
 \begin{aligno}
 \label{estszN2}
& (\lambda_1 + \lambda_2)\sum\limits_{k=0}^{\ell} \vert \intps \nabla^k_x  N_2^d(h_\epsilon) \sdot \nabla_x^k   g_\epsilon\bdv \bd x \bd z \vert \\
&  + \epsilon^2 (c_f +1 + c_e \lambda_3) \sum\limits_{i \ge 1\atop i + j =\ell-1} \vert\intps \big(\nabla^j_x \nabla^i_v N_2^d(h_\epsilon)\big) \sdot \big(\nabla^j_x \nabla^i_v  h_\epsilon\big)\bdv \bd x \  \vert \\
& + \epsilon \sum\limits_{k=0}^{\ell-2} \vert \intpsz \nabla_x \nabla^k_x  \big( N_2^d(h_\epsilon) \big) \sdot \nabla_v \nabla^k_x h_\epsilon\bdv \bd x \bd z\vert - \tfrac{c_e}{8\epsilon^2}\|h_\epsilon^\perp\|_{H^{\ell}_{\Lambda_x}  }^2 \\
& \lesssim \epsilon \|  (g_{\epsilon,1}, g_{\epsilon,2})\|_{H^{\ell}} \|  h_\epsilon\|_{H^{s}_{\Lambda,z}}^2 + \|  (g_{\epsilon,1}, g_{\epsilon,2})\|_{H^{\ell}_{x}} \|  h_\epsilon\|_{H^{s}_{\Lambda}}^2 + \|(g_{\epsilon,1}, g_{\epsilon,2})\|_{H^s_{x}}^2  \|h_\epsilon\|_{H^s_{\Lambda_x}}^2.
\end{aligno}
Combing \eqref{est-stable-hs}, \eqref{estDN1} and \eqref{estszN2}, we can infer that
\begin{align}
\label{estshs}
\begin{split}
& \dt \bigg( c_e \cdot   \mathfrak{E}_{\epsilon,1}^\ell(t,z) +  c_f \cdot   \mathfrak{E}_{\epsilon,2,2}^\ell(t,z) +  \mathfrak{E}_{\epsilon,2,1}^\ell(t,z)\bigg) \\
& \qquad +   \tfrac{7c_e}{8\epsilon^2} \|(h_\epsilon(t,z))^\perp\|_{H^\ell_{\Lambda_x}}^2  + c_e\|\rho_\epsilon^h(t,z)\|_{H^\ell}^2  + c_d\|h_\epsilon(t,z)\|_{H^\ell_{\Lambda}}^2  \\
& \lesssim  \|g_{\epsilon,2}(t,z)\|_{H^s_\Lambda}^2\|h_\epsilon(t,z)\|_{H^\ell_x}^2 + \epsilon \|  (g_{\epsilon,1}, g_{\epsilon,2})(t,z)\|_{H^{s}} \|  h_\epsilon(t,z)\|_{H^{\ell}_{\Lambda}}^2 \\
& + \|  (g_{\epsilon,1}, g_{\epsilon,2})(t,z)\|_{H^{s}_{x}} \|  h_\epsilon(t,z)\|_{H^{\ell}_{\Lambda}}^2 + \|(g_{\epsilon,1}, g_{\epsilon,2})(t,z)\|_{H^s_{x}}^2  \|h_\epsilon(t,z)\|_{H^\ell_{\Lambda}}^2 \\
& \lesssim  \|g_{\epsilon,2}(t,z)\|_{H^s_\Lambda}^2\|h_\epsilon(t,z)\|_{H^\ell_x}^2 +  \sqrt{\mathfrak{E}_{\epsilon,1+2}^s(t,z)} \|h_\epsilon(t,z)\|_{H^\ell_{\Lambda}}^2, \ell \le s-1,
\end{split}
\end{align}
with
\begin{align*}
\mathfrak{E}_{\epsilon,1+2}^s(t,z) = \mathfrak{E}_{\epsilon,1}^s(t,z) + \mathfrak{E}_{\epsilon,2}^s(t,z).
\end{align*}
Here $ \mathfrak{E}_{\epsilon,1}^s(t,z)$ and $ \mathfrak{E}_{\epsilon,2}^s(t,z)$ are the $H^s$ norms of $g_{\epsilon,1}$ and $g_{\epsilon,2}$ respectively.

Based on \eqref{estshs} which shares the similar structure as \eqref{est-v-uni},   the term $\|g_{\epsilon,2}(t,z)\|_{H^s_\Lambda}^2\|h_\epsilon(t,z)\|_{H^\ell}^2$ brings new difficulty while following the  trick of obtaining estimates like \eqref{est-to-e}. Indeed,  with the notations,  we can only obtain
\[  \|g_{\epsilon,2}(t,z)\|_{H^s}\lesssim \|g_{\epsilon,2}(t,z)\|_{H^s_\Lambda}.  \]
The idea is to employ the bound of type \eqref{est-v-int-t} to absorb $\|g_{\epsilon,2}(t,z)\|_{H^s_\Lambda}^2\|h_\epsilon(t,z)\|_{H^\ell}^2$.

From \eqref{estshs}, there exists some constant $C_\ell$ such that
\begin{align}
\label{estshs-simple}
\begin{split}
& \dt \bigg( c_e \cdot   \mathfrak{E}_{\epsilon,1}^\ell(t,z) +  c_f \cdot   \mathfrak{E}_{\epsilon,2,2}^\ell(t,z) +  \mathfrak{E}_{\epsilon,2,1}^\ell(t,z)\bigg)  + c_d\|h_\epsilon(t,z)\|_{H^\ell_{\Lambda}}^2  \\
& \le   C_\ell  \|g_{\epsilon,2}(t,z)\|_{H^s_\Lambda}^2\|h_\epsilon(t,z)\|_{H^\ell_x}^2 +    C_\ell \sqrt{\mathfrak{E}_{\epsilon,1+2}^s(t,z)} \|h_\epsilon(t,z)\|_{H^\ell_{\Lambda}}^2.
\end{split}
\end{align}
Reselecting the constant $d_0$ in \eqref{estsinitial} as
\begin{align}
\label{estsinitial-ss}
  e_0\le \frac{c_l c_d^2}{16 c_u \left( C_{sr} + C_\ell + 1\right)^2 },
\end{align}
it follows that
\begin{aligno}
\label{est12up}
\mathfrak{E}_{\epsilon,1+2}^s(t,z) \le \frac{  c_d^2}{4   \left( C_{sr} + C_\ell + 1\right)^2 }.
\end{aligno}
Based on \eqref{estshs-simple} and  \eqref{est12up},
\begin{align}
\label{eststable0}
\begin{split}
 & \frac{\bd }{\bd t} \bigg( c_e \cdot   \mathfrak{E}_{\epsilon,1}^\ell(t,z) +  c_f \cdot   \mathfrak{E}_{\epsilon,2,2}^\ell(t,z) +  \mathfrak{E}_{\epsilon,2,1}^\ell(t,z)\bigg) \\
 &  +  \tfrac{c_d}{c_u} \bigg( c_e \cdot   \mathfrak{E}_{\epsilon,1}^\ell(t,z) +  c_f \cdot   \mathfrak{E}_{\epsilon,2,2}^\ell(t,z) +  \mathfrak{E}_{\epsilon,2,1}^\ell(t,z)\bigg)   \\
& \lesssim     \|g_{\epsilon,2}(t,z)\|_{H^s_\Lambda}^2 \bigg( c_e \cdot   \mathfrak{E}_{\epsilon,1}^\ell(t,z) +  c_f \cdot   \mathfrak{E}_{\epsilon,2,2}^\ell(t,z) +  \mathfrak{E}_{\epsilon,2,1}^\ell(t,z)\bigg).
\end{split}
\end{align}
To complete the proof, we only need to show $\int_0^\infty  \|g_{\epsilon,2}(t,z)\|_{H^s_\Lambda}^2 \bd t $ is finite. From  \eqref{est-z-to-e-or}, by simple calculation, we can obtain that there exists some $C$ such that
\begin{align*}
\int_0^\infty  \|g_{\epsilon,2}(t,z)\|_{H^s_\Lambda}^2 \bd t \le C.
\end{align*}
Then by Gr\"onwall's inequality, we complete the proof.

\end{proof}

\section{Remarks on the fluids limits}
\label{sec-limits}

This section is devoted to verifying the Fluid limits. The formal analysis was clearly performed in \cite{diogosrm-2019-vmb-fluid}.  Based on the uniform estiamtes fluctuations (i.e. $g_\epsilon$), the diffusive limits of the Boltzmann equation was verified in \cite{ns-limit-2018}. Since the process of verifying fluid limits are similar (except $z$),  we sketch the proof. As the fluctuation $g_\epsilon$ is dependent of $z$, we shall explain how to improve the regularity of the limit of $g_\epsilon$ on $z$.

 First, we collect the estimates to be used later,  from \eqref{est-z-to-e-or} and \eqref{est-lemma-nonlinear-exp-decay-ran-z}, we can obtain that
\begin{align}
\label{estLimits}
\int_0^\infty \|g_\epsilon^\perp(z)\|_{H^s_{x.z}}^2 \bd s \lesssim \epsilon^2,~~~\|g_\epsilon\|_{H^s_{x,z}}^2  \le C_0.
\end{align}
From \eqref{estLimits}, we can obtain that
\[  g^\perp_\epsilon \to 0, ~~~~\text{in},~~ L^2\left((0,+\infty); H^s_{x,z}\right),  \]
and there exists some $g \in H^s_{x,z}$ such that
\begin{align*}
g_\epsilon(z) \to g(z),~~~~\text{in},~~~~ H^{s-1}_xL^2_z.
\end{align*}
In summary, we can obtain that
\[  g \in \mathrm{Ker} \mathcal{L}. \]
Now, we try to deduce the Navier-Stokes-Poisson-Fourier system. Letting
\[ \rho_\epsilon(t,x,z) = \intv g_\epsilon(t,z)\bdv,~~ u_\epsilon(t,x,z) = \intv g_\epsilon v\bdv,~~ \theta_\epsilon(t,x,z) = \intv g_\epsilon \left( \tfrac{|v|^2}{3} -1\right) \bdv,    \]
from the estimates \eqref{estLimits}, we can obtain that
\begin{align}
\label{estLimit3}
\rho_\epsilon(t,x,z),~~u_\epsilon(t,x,z),~~\theta_\epsilon(t,x,z) \in L^\infty([0,+\infty); H^s_{x,z}).
\end{align}
Furthermore, for any fixed $t>0$,  by the Sobolev embedding inequaity, $\rho_\epsilon,~~u_\epsilon$ and $\theta_\epsilon$ are H\"older continuous on $\mathbb{T}^3 \times \mathbb{I}_z$. In what follows, we first verify the fluid limit in the distributional sense, then by Arzelà–Ascoli  thoerem to improve the regularity.

We copy \eqref{vpb-ns-scaling-ns-corrector} below,
\begin{align}
\label{vpb-limit}
\partial_t g_\epsilon + \reps \vdot g_\epsilon +  \repst \mathcal{L}(g_\epsilon) +  (\nabla_v g_\epsilon   -  {v}  g_\epsilon) \sdot \nabla\phi_\epsilon - \tfrac{v}{\epsilon} \sdot \nabla \phi_\epsilon = \reps {\Gamma}(g_\epsilon,g_\epsilon).
\end{align}
Multiplying the above equation by $\dm,~v\dm~ \text{and}~\left(\tfrac{|v|^2}{3} -1\right)\dm$ respectively, then the evolution equation of $\rho_\epsilon,~u_\epsilon,~\theta_\epsilon$ can be obtained:
\begin{aligno}
\label{nsp-app-0}
\partial_t \rho_\epsilon + \tfrac{1}{\epsilon} \divg u_\epsilon &=0,\\
\partial_t u_\epsilon + \reps \divg \intv \hat{A} \mathcal{L}g_\epsilon \bdv + \reps \nabla_x (\rho_\epsilon + \theta_\epsilon  - \phi_\epsilon) & = \rho_\epsilon \nabla_x \phi_\epsilon,\\
\partial_t \theta_\epsilon + \tfrac{2}{3\epsilon} \divg \intv \hat{B} \mathcal{L}g_\epsilon \bdv + \tfrac{2}{3\epsilon} \divg u_\epsilon & = \tfrac{2}{3} u_\epsilon \cdot \nabla \phi_\epsilon.
\end{aligno}
where
\begin{align*}
A(v) = v \otimes v - \tfrac{|v|^2}{3}\mathbb{I},~~B(v) = v ( \tfrac{|v|^2}{2} - \tfrac{5}{2}),\\
\mathcal{L}\hat{A}(v) = A(v),~~\mathcal{L}\hat{B}(v) = B(v).
\end{align*}

Based on the estimates \eqref{estLimits} and the first equation of \eqref{nsp-app-0},
\begin{align*}
\divg u_\epsilon \weakc \divg u,~\text{in}~H^{s-1}_{x,z},~ \text{and},~ \divg u =0.
\end{align*}

From the first and third equation of \eqref{nsp-app-0}, we can deduce that
\begin{aligno}
\label{nsp-app}
\partial_t u_\epsilon + \reps \divg \intv \hat{A} \mathcal{L}g_\epsilon \bdv + \reps \nabla_x (\rho_\epsilon + \theta_\epsilon  - \phi_\epsilon) & = \rho_\epsilon \nabla_x \phi_\epsilon,\\
\partial_t \left(\tfrac{3}{5}\theta_\epsilon - \tfrac{2}{5}\rho_\epsilon \right) + \tfrac{2}{5\epsilon} \divg \intv \hat{B} \mathcal{L}g_\epsilon \bdv   & = \tfrac{2}{5} u_\epsilon \sdot \nabla \phi_\epsilon.
\end{aligno}
Based on \eqref{estLimits}, it follows that  $\reps \divg \intv \hat{A} \mathcal{L}g_\epsilon \bdv$ has uniform upper bound with resepct to $\epsilon$  in $H^{s-1}_x$ space. Thus, we can obtain that in the distribution sense that
\[ \nabla_x (\rho_\epsilon + \theta_\epsilon  - \phi_\epsilon) \to 0,  \]
and
\[ \nabla_x(\rho + \theta) = \nabla \phi. \]
Furthermore, letting $\mathbf{P}$ be the Leray projection operator on torus and applying $\mathbf{P}$ to the first equation of \eqref{nsp-app}, it follows that
\begin{aligno}
\label{nsp-app-4}
\partial_t \mathbf{P} u_\epsilon + \reps \mathbf{P}\left( \divg \intv \hat{A} \mathcal{L}g_\epsilon \bdv \right)   & = \mathbf{P} \left(  \rho_\epsilon \nabla_x \phi_\epsilon \right),\\
\partial_t \left(\tfrac{3}{5}\theta_\epsilon - \tfrac{2}{5}\rho_\epsilon \right) + \tfrac{2}{5\epsilon} \divg \intv \hat{B} \mathcal{L}g_\epsilon \bdv   & = \tfrac{2}{5} u_\epsilon \sdot \nabla \phi_\epsilon.
\end{aligno}
While verifying diffusive limit of the Boltzmann equation (see \cite{ns-limit-2018} for instance, specially Sec. 4.2), the similar system to \eqref{nsp-app-4} was established (without the right hand of \eqref{nsp-app-4}). From \eqref{vpb-limit},
\begin{align*}
     \reps \mathcal{L}(g_\epsilon)     & =  {\Gamma}(g_\epsilon,g_\epsilon) + \vdot(\phi_\epsilon - g_\epsilon) - \epsilon \left( (\nabla_v g_\epsilon   -  {v}  g_\epsilon) \sdot \nabla\phi_\epsilon + \partial_t g_\epsilon \right) \\
     & =   {\Gamma}(g_\epsilon,g_\epsilon) + \vdot(\phi_\epsilon - g_\epsilon) + O(\epsilon).
\end{align*}
Since there is a coefficient $\epsilon$ in $O(\epsilon)$,  in the distributional sense,
\[ O(\epsilon) \to 0. \]
By simple calculation (see \cite{diogosrm-2019-vmb-fluid,bgl1993convergence,bgl1991formal}),
\begin{align}
\label{aaa}
  \intv \hat{A} \sdot \reps \mathcal{L}g_\epsilon \bdv = u_\epsilon \otimes u_\epsilon - \tfrac{|u_\epsilon|^2}{3} \mathbf{I} - \mu \left(\nabla_x u_\epsilon + \nabla^T_\epsilon u_\epsilon -\tfrac{2}{3} \divg u_\epsilon \mathbf{I}\right)- R_1(\epsilon)
\end{align}
with
\[R_1(\epsilon) :=  \intv A \sdot \left( O(\epsilon) -  \vdot g^\perp_\epsilon +  {\Gamma}(g_\epsilon^\perp,g_\epsilon) + {\Gamma}(g_\epsilon,g_\epsilon^\perp) \right) \bdv.  \]
and
\[  \mu = \tfrac{1}{15} \sum\limits_{ 1 \le i \le 3 \atop 1 \le j \le 3}\intv A_{ij}\hat{A}_{ij}\bdv. \]

Based on the estimate \eqref{estLimits} on $g^\perp_\epsilon$, one can obtain that in the distributional sense
\[  R_1(\epsilon) \to 0.  \]

 Thus, in the light of  estimates \eqref{estLimits}, \eqref{estLimit3} and \eqref{aaa}, one can finally obtain in distributional sense:
\begin{aligno}
& \mathbf{P} u_\epsilon \to u,~~\reps \mathbf{P}\left( \divg \intv \hat{A} \mathcal{L}g_\epsilon \bdv \right)  \to u \sdot \nabla u - \mu \Delta u.
\end{aligno}
For the temperatrue equation, it is slightly different with that of the Boltzmann case where there is no term $\tfrac{2}{5} u_\epsilon \sdot \nabla \phi_\epsilon$. By the similar way of deducing \eqref{aaa}, we have
\begin{align}
\label{bbb}
  \tfrac{2}{5}   \intv \hat{B} \sdot \reps \mathcal{L}g_\epsilon \bdv = u_\epsilon\sdot \theta_\epsilon - \kappa \nabla \theta_\epsilon -  R_b(\epsilon)
\end{align}
with
\[R_b(\epsilon) :=  \tfrac{5}{2}\intv B \sdot \left( O(\epsilon) -  \vdot g^\perp_\epsilon +  {\Gamma}(g_\epsilon^\perp,g_\epsilon) + {\Gamma}(g_\epsilon,g_\epsilon^\perp) \right) \bdv.  \]
Plugging \eqref{bbb} into the second equation of \eqref{nsp-app-4}, we obtain that
\begin{align*}
\partial_t \left(\tfrac{3}{5}\theta_\epsilon - \tfrac{2}{5}\rho_\epsilon \right) + \mathbf{P}u_\epsilon\sdot \nabla_x \left(\tfrac{3}{5}\theta_\epsilon - \tfrac{2}{5}\rho_\epsilon \right)- \kappa \Delta \theta_\epsilon   & = R_2(\epsilon),
\end{align*}
with
\[ R_2(\epsilon) = \tfrac{2}{5} \divg R_b(\epsilon) + \tfrac{2}{5}\mathbf{P}_\perp u_\epsilon \sdot \nabla \phi_\epsilon - \divg (\mathbf{P}_\perp u_\epsilon \sdot \theta_\epsilon) - \tfrac{2}{5} \mathbf{P} u_\epsilon \sdot \nabla(\rho_\epsilon + \theta_\epsilon  - \phi_\epsilon),\]
and
\[ \mathbf{P}_\perp u_\epsilon = u_\epsilon - \mathbf{P} u_\epsilon,~~ \kappa = \tfrac{2}{15} \sum\limits_{ 1 \le i \le 3  }\intv B_{i}\hat{B}_{i}\bdv. \]
Noticing that $$u_\epsilon \in H^s_{x,z}, $$ by the Sobolev embedding theorem, without loss generality, we can obtain that
\[ u_\epsilon \to u,~~\text{in},~~H^{s-1}_xL^2_z.\]
Thus, we can deduce that
\[ \mathbf{P}_\perp u_\epsilon \to 0,~~\text{in},~~L^2.  \]
Based on the estimates \eqref{estLimits} and the above fact, one can verify that in the distributional sense
\begin{align*}
R_2(\epsilon) \to 0,
\end{align*}
and
\begin{align}
\partial_t(\tfrac{3}{2}\theta - \rho) + u \sdot\nabla (\tfrac{3}{2}\theta - \rho) - \tfrac{5\kappa}{2}\Delta \theta =0.
\end{align}
Thus, we have verify that in distributional sense
\begin{align}
\label{limit-g}
g_\epsilon \to g = \rho + u \sdot v + \tfrac{\theta}{2}(|v|^2 - 3),
\end{align}
with $\rho,~u,~\theta \in  L^\infty([0,+\infty);H^s_{x,z})$  and satisfying
\begin{align}
\label{nsfp-c}
\begin{cases}
\partial_t u + u\sdot \nabla u - \nu \Delta u + \nabla P = \rho \nabla \theta,\\
\partial_t(\tfrac{3}{2}\theta - \rho) + u \sdot\nabla (\tfrac{3}{2}\theta - \rho) - \tfrac{5\kappa}{2}\Delta \theta =0,\\
\divg u = 0,~~ \Delta(\rho + \theta ) = \rho,~~E = \nabla(\rho + \theta).
\end{cases}
\end{align}

{\bf Improving the regularity of $z$ and $t$.} The above process of verifying \eqref{limit-g} is established in distributional sense. In fact, the convergence of $g_\epsilon$ can be improved to strong convergence (at least in H\"older space).

From \eqref{estLimit3} ($s\ge 5$) and \eqref{estLimits}, with the help of \eqref{nsp-app-4}, we can obtain that
\begin{align}
\partial_t \mathbf{P} \mathbf{u}_\epsilon,~~\partial_t \left(\tfrac{3}{5}\theta_\epsilon - \tfrac{2}{5}\rho_\epsilon \right) \in H^{s-1}_{x,z}.
\end{align}
Then, by the Aubin-Lions-Simon theorem (see \cite{tools-ns}),
\[ \mathbf{P} \mathbf{u}_\epsilon,  \left(\tfrac{3}{5}\theta_\epsilon - \tfrac{2}{5}\rho_\epsilon \right) \in C((0,+\infty;H^{s-1}_{x,z}), \]
and
\[ \mathbf{P} \mathbf{u},  \left(\tfrac{3}{5}\theta  - \tfrac{2}{5}\rho  \right) \in C((0,+\infty;H^{s-1}_{x,z}). \]

Furthermore, by the Sobolev embedding theorem, for any $(t,x,z) \in (0,+\infty) \times \mathbb{T}^3\times \mathbb{I}_z$, there exists some constant $C_0$ ( only dependent of the initial data) such that
\begin{align*}
| \mathbf{P} \mathbf{u}_\epsilon| +  |\left(\tfrac{3}{5}\theta_\epsilon - \tfrac{2}{5}\rho_\epsilon \right)| \le C_0.
\end{align*}
Furthermore, for any $\delta>0$,  $\mathbf{P} u_\epsilon$ and $\left(\tfrac{3}{5}\theta_\epsilon - \tfrac{2}{5}\rho_\epsilon \right)$ are Lispchtiz continuous on $ [\delta,+\infty) \times \mathbb{T}^3\times \mathbb{I}_z$. This means that
\[ \mathbf{P} u_\epsilon, \left(\tfrac{3}{5}\theta_\epsilon - \tfrac{2}{5}\rho_\epsilon \right) \text{ are uniformly bounded and equicontinuous on } [\delta,+\infty) \times \mathbb{T}^3\times \mathbb{I}_z. \]
Then by the Arzel\`a–Ascoli theorem, up to a subsequence,
\[ \mathbf{P} u_\epsilon \xrightarrow{~~~~\text{uniformly converge}~~~~ } u,~~\left(\tfrac{3}{5}\theta_\epsilon - \tfrac{2}{5}\rho_\epsilon \right) \xrightarrow{~~~~\text{uniformly converge} ~~~~} \left(\tfrac{3}{5}\theta - \tfrac{2}{5}\rho  \right).  \]
As a consequence,
\[ \rho,~~u,~~\theta \in C((0,+\infty) \times \mathbb{T}^3\times \mathbb{I}_z). \]
If the initial data are well prepared,
\[ \rho,~~u,~~\theta \in C([0,+\infty) \times \mathbb{T}^3\times \mathbb{I}_z). \]

\appendix
\section{Analysis of spectrum}
\label{sec-spec}
This section consists of calculating the eigenvalue of $\mathcal{B}_\epsilon$ and its eigenvector. The following theorem is a counterpart of \cite[Theorem 2.11]{spectrum-arxiv-vpb}.

\begin{theorem}
\label{theorem-decom}
For some constant $r_0>0$, the spectrum set of $\mathcal{B}_1(\xi)$ is made up of five eigenvalues:$$\{\lambda_j(|\xi|),\ j=-1,0,1,2,3\},~~|\xi| \le r_0, ~~\mathrm{Re}\lambda_i >-\hat a_2 /2.$$ Furthermore, the spectrum $\lambda_j(|\xi|)$ and its associate eigenfunction $\psi_j(s,\omega)$ ($s=|\xi|,~\omega = \tfrac{\xi}{|\xi|}$) are $C^\infty$ functions of $s$ while $|s|\leq r_0$. In additions, if $|s|\leq r_0$, then $\lambda_i$ enjoys  asymptotic expansion:
\begin{equation}
\left\{\begin{aligned} \lambda_{\pm 1}(s) &=\pm \epsilon\mathrm{i}+\left(a_{11} \pm \mathrm{i} \tfrac{5}{3\epsilon}\right) s^{2}+O\left(s^{3}\right), \quad \overline{\lambda_{1}}=\lambda_{-1} \\ \lambda_{0}(s) &=a_{44} s^{2}+O\left(s^{3}\right), \\ \lambda_{2}(s) &=\lambda_{3}(s)=a_{22} s^{2}+O\left(s^{3}\right). \end{aligned}\right.
\end{equation}
Furthermore, the semigroup  $e^{\mathcal{B}_\epsilon(\xi)t}$ has the following low and high frequencies decomposition$\colon$
\begin{equation}
\label{semi-decompose}
\begin{aligned}
e^{\mathcal{B}_\epsilon(\xi)t}\hat{g}_\epsilon & = \bds_{1}(t, \xi) \hat{g}_\epsilon  + \bds_{2}(t, \xi) \hat{g}_\epsilon , \\
& = \sum_{j=-1}^{3} e^{t \lambda_{j}(|\xi|)} \bdp_{j}(\xi) \hat{g}_\epsilon(0)\mathbf{1}_{|\xi| \le r_0} +
\bds_{2}(t, \xi) \hat{g}_\epsilon(0),
\end{aligned}
\end{equation}
with
\begin{equation}
\label{projectionP}
\begin{aligned} \bdp_{j}(\xi) \hat{g}_{\epsilon} =&\left(\hat{m}_\epsilon  \cdot w^{j}\right)\left(w^{j} \cdot v\right)  +|\xi| \mathrm{T}_{1,j}(\xi) \hat{g}_{\epsilon}  + |\xi|^2 \mathrm{T}_{2,j}(\xi) \hat{g}_{\epsilon}, \quad j=2,3 \\ \bdp_{0}(\xi) \hat{g}_{\epsilon} =&\left( \sqrt{\tfrac{3}{2}} \hat{\theta}_{\epsilon} -\sqrt{\tfrac{2}{3}} \hat{\rho}\right) \chi_{4}+|\xi| \mathrm{T}_{1,0}(\xi) \hat{g}_{\epsilon}   + |\xi|^2 \mathrm{T}_{2,0}(\xi) \hat{g}_{\epsilon} \\ \bdp_{\pm 1}(\xi) \hat{g}_{\epsilon}=& \tfrac{1}{2}\left[\left(\hat{m}_{\epsilon}  \cdot \omega\right) \mp \tfrac{\epsilon}{|\xi|} \hat{\rho} \mp \reps (\hat{\rho}_\epsilon+ \hat{\theta}_\epsilon) |\xi|\right](v \cdot \omega) \\ & +\tfrac{\epsilon}{2} \hat{\rho}_{\epsilon} \left(1+\sqrt{\tfrac{2}{3}} \chi_{4}\right)    +|\xi| \mathrm{T}_{1,\pm 1}(\xi) \hat{g}_{\epsilon}  + |\xi|^2 \mathrm{T}_{2,\pm 1}(\xi) \hat{g}_{\epsilon}, \end{aligned}
\end{equation}
and
\begin{equation}
\chi_{0}=1, \quad \chi_{j}=v_{i}  (i=1,2,3), \quad \chi_{4}=\tfrac{\left(|v|^{2}-3\right) }{\sqrt{6}}.
\end{equation}
Here, in \eqref{projectionP},  $\bdp_{j}(\xi) \hat{g}_{\epsilon}(0)$ is the projection of $\hat{g}_{\epsilon}(0)$ onto the space spanned by the   eigenfunctions related to $\lambda_j$ and the right hand are their   Taylor expansion.

Furthermore, the high frequency part enjoys a exponential decay, i.e., there exists some $C_{r_0}$ and $\sigma>0$, for any $\xi$,
\[ \|\bds_2(t,\xi)\hat{g}\|_{L^2_v} \le C_{r_0} e^{-\sigma t}\|g\|_{L^2_v}.  \]
\end{theorem}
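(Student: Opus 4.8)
The plan is to adapt the Ellis--Pinsky--Ukai spectral program, as carried out for the unscaled Vlasov--Poisson--Boltzmann operator in \cite[Theorem~2.11]{spectrum-arxiv-vpb} and \cite[Lemma~3.16]{spectrum-vpb-2016-bipolar-decay}, keeping careful track of the parameter $\epsilon$. Write $s=|\xi|$, $\omega=\xi/|\xi|$, and decompose
\[
\mathcal{B}_\epsilon(\xi)h=\mathcal{L}h-\mathrm{i}(v\cdot\xi)h-\epsilon^2\tfrac{\mathrm{i}(v\cdot\xi)}{|\xi|^2}\intv h\,\m\,\bd v,
\]
regarding the transport term and the Poisson coupling as a perturbation of $\mathcal{L}$. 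By the coercivity and kernel hypotheses of Section~\ref{sec-assump-determin} (namely \eqref{assump-h1-h}, \eqref{constant-a2} and the splitting $\mathcal{L}=\mathcal K-\Lambda$), $\mathcal{L}$ is self-adjoint on $L^2_v$, $0$ is an isolated eigenvalue of multiplicity five with eigenspace $\mathrm{span}\{1,v_1,v_2,v_3,|v|^2\}$, and $\mathcal{L}\le -a_2$ on the orthogonal complement, so there is a spectral gap. Restricting to $|\xi|\le r_0$ with $r_0>0$ small, I would first isolate the $1/s$ singularity carried by the Poisson term --- which acts only on the density component $\intv h\,\m\,\bd v$ --- by the standard rescaling of the macroscopic unknowns; after this change of variables the family $\xi\mapsto\mathcal{B}_\epsilon(\xi)$ becomes analytic in $s$ near $s=0$ for each fixed direction $\omega$. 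Kato's analytic perturbation theory then produces the rank-five Riesz projection $\bdp(\xi)=\sum_{j=-1}^{3}\bdp_j(\xi)$ onto the part of $\mathrm{spec}(\mathcal{B}_\epsilon(\xi))$ near the origin, together with the five eigenvalues $\lambda_j(s)$ and associated eigenfunctions $\psi_j(s,\omega)$, all $C^\infty$ in $s$ on $|s|\le r_0$.

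Next I would read off the asymptotic expansions from the reduced $5\times5$ matrix $M_\epsilon(s)=\bdp(\xi)\mathcal{B}_\epsilon(\xi)\bdp(\xi)$ written in the orthonormal macroscopic basis $\chi_0,\dots,\chi_4$. At order $s^0$ the only nonzero contribution is the plasma-oscillation coupling between $\chi_0$ and $v\cdot\omega$ generated by the $\epsilon^2/s$ Poisson term together with the $O(s)$ transport term; diagonalising the resulting $2\times2$ block yields the two branches $\lambda_{\pm1}(s)=\pm\epsilon\mathrm{i}+O(s)$, while the shear and temperature directions remain $O(s^2)$. Odd-order terms vanish by the velocity parity of $\mathcal{L}$ and of $v\cdot\omega$, so the $s^1$ coefficients are zero; the $s^2$ coefficients are exactly the Navier--Stokes transport/viscosity/heat constants, giving $a_{11}$, $a_{22}$ (with multiplicity two) and $a_{44}$, together with the imaginary corrections $\pm\mathrm{i}\tfrac{5}{3\epsilon}s^2$ coming from the interaction of the Poisson term with the acoustic mode; $\mathrm{Re}\,\lambda_j>-\hat a_2/2$ then follows by taking $r_0$ small. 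The same Taylor expansion of the Riesz projections produces \eqref{projectionP}, the leading pieces being the macroscopic projections onto $\hat m_\epsilon\cdot\omega$, $\chi_4$, etc., and the remainders $|\xi|\,\mathrm{T}_{1,j}+|\xi|^2\,\mathrm{T}_{2,j}$ with $\mathrm{T}_{1,j},\mathrm{T}_{2,j}$ bounded on $L^2_v$; one checks that all constants and $r_0$ can be chosen uniformly in $\epsilon\le1$.

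For the complementary part I would use the dissipative structure already exploited in Lemma~\ref{lemma-linear-decay}: the transport term $-\mathrm{i}(v\cdot\xi)$ is skew-adjoint on $L^2_v$, the Poisson term is controlled exactly as in \eqref{est-linear-vpb-ns-basic-l2} (paired against $h$, and combined with the potential energy $|\xi|^{-2}|\widehat{\rho}|^2$ via the continuity relation, it reproduces a perfect time derivative and so does not spoil the energy), and $\mathcal{L}$ supplies the coercive dissipation $-a_2\|h^\perp\|_{\llv}^2$ on the microscopic part; the Fourier-variable analogue of the hypocoercive functional $\mathfrak{E}_\epsilon^1$ then yields a resolvent bound showing that, outside small discs around $\{\lambda_j(|\xi|)\}$ when $|\xi|\le r_0$ and for all $\xi$ with $|\xi|>r_0$, one has $\mathrm{spec}(\mathcal{B}_\epsilon(\xi))\subset\{\mathrm{Re}\,\lambda\le-\sigma\}$ for some $\sigma>0$ independent of $\xi$ and of $\epsilon$. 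Representing $e^{\mathcal{B}_\epsilon(\xi)t}$ by the Dunford integral $\tfrac{1}{2\pi\mathrm{i}}\oint e^{t\lambda}(\lambda-\mathcal{B}_\epsilon(\xi))^{-1}\,\bd\lambda$ and splitting the contour into small loops around the $\lambda_j$ (contributing $\bds_1$) and a curve in $\{\mathrm{Re}\le-\sigma\}$ (contributing $\bds_2$) gives \eqref{semi-decompose}, and a Gearhart--Pr\"uss type resolvent estimate on the latter contour yields $\|\bds_2(t,\xi)\hat g\|_{L^2_v}\le C_{r_0}e^{-\sigma t}\|g\|_{L^2_v}$. The main obstacle is the $\epsilon$-uniform treatment of the \emph{singular} Poisson perturbation near $\xi=0$: one must verify that the rescaling removing the $1/|\xi|$ factor, the radius $r_0$, the gap $\sigma$, and the error operators $\mathrm{T}_{i,j}$ can all be chosen independently of $\epsilon\in(0,1]$, so that the $1/\epsilon$ factors appearing inside the eigenvalues --- harmless once absorbed into $e^{t\lambda_j}$ under the $t/\epsilon^2$ time rescaling in \eqref{bepsilon} --- do not infiltrate the remainder estimates; apart from this bookkeeping, the argument is a modification of \cite[Theorem~2.11]{spectrum-arxiv-vpb}.
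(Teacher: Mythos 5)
Your strategy is recognisably in the Ellis--Pinsky--Ukai family and lands in the right place, but the central technical step is asserted rather than carried out, and it is precisely the step the paper handles differently.

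The issue is the claim that after ``the standard rescaling of the macroscopic unknowns'' the family $\xi\mapsto\mathcal{B}_\epsilon(\xi)$ becomes analytic in $s=|\xi|$ near $s=0$, so that Kato's analytic perturbation theory and the rank-five Riesz projection can be invoked off the shelf. The Poisson term $\epsilon^2\,\mathrm{i}(v\cdot\xi)|\xi|^{-2}\int h\,\m\,\bd v$ is genuinely singular as $s\to 0$; the operator family does not even have a limit at $\xi=0$, and no rescaling is exhibited that restores boundedness, let alone analyticity, uniformly in $\epsilon\in(0,1]$. The paper does not try to make the \emph{operator} analytic. Instead it introduces the $\epsilon$- and $\xi$-dependent inner product $(f,g)_\xi=(f,g)+\tfrac{\epsilon^2}{|\xi|^2}(\mathrm{P}_d f,\mathrm{P}_d g)$ (which absorbs the singular potential energy and restores self-adjointness with respect to that metric), then decomposes the eigenvalue equation as in \eqref{eigen-decom} into the macroscopic part $h_0$ and the kinetic part $h_1$, inverts the microscopic block by the resolvent $\mathbf{R}(\lambda,\xi)$, and reduces to the finite-dimensional system \eqref{ei-matrix}. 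The reduced $3\times 3$ longitudinal matrix still has the $1/s$ entry, but the determinant $D_\epsilon(\lambda,s)$ in \eqref{ei-det-s} is an honest analytic function of $s$ near $0$ (the singular factor cancels against $s$ in the off-diagonal product), with $D_\epsilon(\lambda,0)=\lambda(\lambda^2+\epsilon^2)$. The implicit function theorem applied to this \emph{scalar} equation delivers the $C^\infty$ eigenvalues and their Taylor coefficients, and the eigenfunctions are built afterwards by expanding the solution of \eqref{eigen-decom}. This finite-dimensional reduction is what makes the singularity harmless; your Kato--Riesz route requires an operator-level regularisation that you have not supplied, and as written the argument does not go through.

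Two smaller points: the plasma-oscillation coupling at leading order is part of a $3\times 3$ longitudinal block (density, longitudinal momentum, temperature), not a $2\times 2$ block --- the temperature direction is coupled at order $s$ and contributes the $a_{14},a_{41}$ terms in $D_\epsilon$, so the diagonalisation cannot be done on a $2\times 2$ subsystem alone; and the vanishing of $\lambda_j'(0)$ is obtained in the paper by checking $\partial_s D_\epsilon(\cdot,0)=0$ directly from \eqref{ei-det-s}, not from a velocity-parity argument on the operator. Your treatment of $\bds_2$ by a Dunford contour plus resolvent bound is in line with the paper's appeal to \cite[Lemma 2.4]{spectrum-arxiv-vpb}, and your closing remark about $\epsilon$-uniformity correctly names the bookkeeping that has to be done, but again the mechanism the paper uses to secure it is the explicit reduced determinant, not an operator-analyticity statement.
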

\begin{remark}
\label{remark-a2}
Compared to the results of \cite{spectrum-arxiv-vpb}, the main difference happens on the projection operator $\bdp_{\pm}$.  Here we make some comments on $\reps (\hat{\rho}_\epsilon(0) + \hat{\theta}_\epsilon(0)) |\xi|$. While the $\epsilon=1$, owing to  the coefficent $|\xi|$, this one is contained in $|\xi| \mathrm{T}_{\pm 1}(\xi) \hat{g}_{\epsilon}(0)$ in \cite{spectrum-arxiv-vpb}. Here, since there exists coefficent $\epsilon^{-1}$,  we deal it in a different way. From \eqref{bepsilon},    the $\xi$ will be replaced by $\epsilon n$ in Sec. \ref{sec-con} and the coefficient $\epsilon^{-1}$ will not bring bad effect. Furthermore, the coefficent before $v\sdot \omega$, i.e.,
\[ \left[\left(\hat{m}_{\epsilon}(0) \cdot \omega\right) \mp \tfrac{\epsilon}{|\xi|} \hat{n}_{0} \mp \reps (\hat{\rho}_\epsilon(0) + \hat{\theta}_\epsilon(0)) |\xi|\right]\]
hints the well-prepared initial data which means
\[ \divg u_\epsilon(0) =0, \Delta\left(\rho_\epsilon(0) + \theta_\epsilon(0)\right) = \rho_\epsilon(0).  \]
\end{remark}
\begin{proof}
This theorem serves to calculate the eigenvalues $\lambda$ and eigenfunctions $\psi$ of $\mathcal{B}_\epsilon$ for each $\epsilon>0$, i.e.,
\begin{align}
\label{eigen}
\lambda \psi= \mathcal{B}_\epsilon \psi.
\end{align}
While $\epsilon =1$, this theorem is the same to the Theorem 2.1 in \cite{spectrum-arxiv-vpb} where they considered the spectrum properties of $\mathcal{B}_1$. Since the only difference between $\mathcal{B}_\epsilon$ and $\mathcal{B}_1$ is the coefficient $\epsilon^2$ before the electric field, their proof can be directly adapted to our case with some modification. Owing to  the coefficient $\epsilon^2$, there exists the same $\epsilon^2$ in the definition of the inner product $(f,g)_\xi$, i.e.,
\[ (f,g)_\xi = (f,g) + \tfrac{\epsilon^2}{|\xi|^2} (\mathrm{P}_{\mathrm{d}} f, \mathrm{P}_{\mathrm{d}} g),  ~~(f,g) = \intv f \bar{g} \bdv,~~ \mathrm{P}_{\mathrm{d}} f =  \intv f \bdv. \]
The existence of eigenvalues is guaranteed by the strict semigroup theory and was clearly verified in \cite{spectrum-vpb-2016-bipolar-decay,spectrum-arxiv-vpb,convergence-rate-linear-1975}.  In what follows, we sketch the idea of calculating eigenvalues and eigenfunctions for low frequency. The proof of the  high frequency ($\bds_2$) are the same to \cite[Lemma 2.4]{spectrum-arxiv-vpb}.

For any $h$ satisfying \eqref{eigen}, it can be orthogonally split into fluid part $h$ and microscopic part:
\[ h =:\bdp h    + \bdpv h = h_0 + h_1,  \]
where
\[h_0 = \intv h_0 \bdv   + v_i \intv v_i h \bdv + \tfrac{\left(|v|^{2}-3\right) }{\sqrt{6}} \intv \tfrac{\left(|v|^{2}-3\right) }{\sqrt{6}} h \bdv.   \]
Plugging this ansta into \eqref{eigen}, we can find that
\begin{equation}
\label{eigen-decom}
\begin{aligned}
&\lambda h_{0}=-\bdp\left[\mathbf{i}(v \cdot \xi)\left(h_{0}+h_{1}\right)\right]-\epsilon^2 \tfrac{\mathrm{i}(v \cdot \xi) }{|\xi|^{2}} \int_{\mathbb{R}^{3}} h  \bdv,\\
&\lambda h_{1}=\mathcal{L}h_{1}-\bdpv\left[\mathbf{i}(v \cdot \xi)\left(h_{0}+h_{1}\right)\right].
\end{aligned}
\end{equation}
From the second equation of \eqref{eigen-decom} on $h_1$, we can find that
\[ \left(\lambda \bdpv- \mathcal{L} + \mathrm{i}\bdpv (v \sdot \xi) \bdpv\right) h_{1}=-\mathrm{i} \bdpv(v \cdot \xi) h_{0}. \]
From \cite{spectrum-vpb-2016-bipolar-decay,spectrum-arxiv-vpb}, while $\mathrm{Re}\lambda> -a_2$, the operator before $h_1$ is reversible. Denoting
\[ \mathbf{R}(\lambda, \xi) = -\left(\lambda \bdpv- \mathcal{L} + \mathrm{i}\bdpv (v \sdot \xi) \bdpv\right)^{-1},  \]
thus
\[ h_1 = \mathrm{i} \mathbf{R}(\lambda,\xi)\bdpv(v \cdot \xi) h_{0}. \]
Substituting $h_1$ into the first equation of \eqref{eigen-decom}, it follows that
\[ \lambda h_0 = -\mathrm{i}\bdp(v\sdot \xi)h_0 + \bdp(v\sdot \xi)\mathbf{R}(\lambda,\xi)\bdpv(v\sdot \xi) h_0  -\epsilon^2 \tfrac{\mathrm{i}(v \cdot \xi) }{|\xi|^{2}} \int_{\mathbb{R}^{3}} h_0  \bdv.  \]
This  equation are only related to the fluid parts. Applying the above equation to $\hat{g}_\epsilon$, recalling that $\hat{g}_\epsilon$ admits the following type decomposition:
\[\bdp \hat{g}_\epsilon = (\hat\rho_\epsilon,\hat u_{\epsilon,1},\hat u_{\epsilon,2},\hat u_{\epsilon,3},\hat \Theta_\epsilon)\sdot(\chi_0,\chi_1,\chi_2,\chi_3,\chi_4)^T, ~~\hat u_\epsilon=(\hat u_{\epsilon,1},\hat u_{\epsilon,2},\hat u_{\epsilon,3}).   \]
thus, it follows that
\begin{equation}
\label{ei-matrix}
\begin{aligned}
\lambda \hat\rho_\epsilon=&-\mathrm{i}(\hat u_\epsilon \sdot \xi), \\
\lambda \hat u_{\epsilon,i}=&-\mathrm{i} \hat\rho_\epsilon\left(\xi_{i}+\epsilon^2 \tfrac{\xi_{i}}{|\xi|^{2}}\right)-\mathrm{i} \sqrt{\tfrac{2}{3}} \hat\Theta_\epsilon \xi_{i}+\sum_{j=1}^{3} \hat u_{\epsilon,j}b_{i,j}
+\hat\Theta_\epsilon b_{i,4}, \\
\lambda \hat\Theta_\epsilon=&-\mathrm{i} \sqrt{\tfrac{2}{3}}(\hat u_{\epsilon,i} \cdot \xi)+\sum_{j=1}^{3} \hat u_{\epsilon,j}b_{4,j}
+ \hat\theta_\epsilon b_{4,4}
\end{aligned}
\end{equation}
where $a_{i,j}$ is defined as follows:
\[ b_{i,j} = \intv \mathbf{R}(\lambda, \xi) \bdpv\left((v \cdot \xi) \chi_{j}\right)\sdot (v \sdot \xi) \chi_{i}\bdv.   \]
Denoting the unit vector of $\xi$ by $\omega$, i.e,
\[ \xi = s \omega, \]
and $U_\epsilon=(\hat\rho_\epsilon, \hat u_\epsilon\sdot \omega, \hat\theta_\epsilon)$,
according to \cite[Lemma 2.7]{spectrum-arxiv-vpb} and \cite[Lemma 2.8]{spectrum-arxiv-vpb}, there are five eigenvalues. Two of them are
\begin{align*}
\lambda_2(s)=\lambda_3(s)= a_{22}(\lambda_2)s^2 + o(s^2).
\end{align*}
The rest   can be obtained by solving the following eigenvalue problem:
\begin{equation}
\left(\begin{array}{ccc}
{\lambda} & {\text{i} s } & {0} \\
{\mathrm{i}\left(s+\frac{\epsilon^2}{s}\right)} & {\lambda-s^{2} a_{11}} & {\text { is } \sqrt{\frac{2}{3}}-s^{2} a_{41}} \\
{0} & {\text{i}s \sqrt{\frac{2}{3}}-s^{2} a_{14}} & {\lambda-s^{2} a_{44}}
\end{array}\right)U_\epsilon^t =0,
\end{equation}
with
\[ a_{i,j} = \intv \mathbf{R}(\lambda, s e_1) \bdpv\left(v_1 \chi_{i}\right)\sdot v_1 \chi_{j}\bdv.   \]
With
\[ D_\epsilon(\lambda,s)= \left\vert \begin{array}{ccc}
{\lambda} & {\text{i} s } & {0} \\
{\mathrm{i}\left(s+\frac{\epsilon^2}{s}\right)} & {\lambda-s^{2} a_{11}} & {\text { is } \sqrt{\frac{2}{3}}-s^{2} a_{41}} \\
{0} & {\text{i}s \sqrt{\frac{2}{3}}-s^{2} a_{14}} & {\lambda-s^{2} a_{44}}
\end{array} \right\vert  \]
by simple computation, we can deduce
\begin{equation}
\label{ei-det-s}
\begin{aligned}
D_\epsilon(\lambda, s)=\lambda^{3}-\lambda^{2} s^{2}\left( a_{11} + a_{44}\right) -\left(s^{2}\epsilon^2+s^{4}\right)a_{44} \\
+\lambda\left(\epsilon^2+\tfrac{5}{3} s^{2}+\mathrm{i} \sqrt{\tfrac{2}{3}} s^{3} ( a_{41} + a_{14})
+s^{4} (a_{44}a_{11} - a_{41}a_{14})\right).
\end{aligned}
\end{equation}
For $D_\epsilon(\lambda, 0)=\lambda(\lambda^2 + \epsilon^2)$, there exist three eigenvalues $\pm\epsilon \mathrm{i},0$. Furthermore,
\[\partial_s D_\epsilon(  \epsilon \mathrm{i}\sdot k, 0) =0,~~ \partial_\lambda D_\epsilon( \epsilon \mathrm{i}\sdot k, 0) = \epsilon^2 -3k^2\epsilon^2 = (1-3k^2)\cdot\epsilon^2,~~~~k=0,\pm 1. \]
By the implicit function theorem, for some $r_0>0$ each eigenvalue is a $C^\infty$ function of $s$ such that
\[
D_\epsilon(\lambda_k(s), s) =0,~~\lambda_k(0)= \epsilon\mathrm{i}\cdot k \quad \text { and } \quad \lambda_k^{\prime}(0)=0, \quad k=0, \pm 1,~~ -r_0 \le s \le r_0.
\]
Thus, the eigenvalue enjoys the following Talyor expansion:
\[ \lambda_k(s) = \epsilon \mathrm{i} \sdot k + \tfrac{\lambda^{\prime\prime}_k(0)}{2}s^2 + o(s^2).  \]
Furthermore, from \eqref{ei-det-s}, noticing that
\begin{align*}
\partial_s^2 D_\epsilon(\lambda_k(0), 0)=  2\epsilon^2 \sdot k^2(a_{11}+ a_{44})  -2 \epsilon^2 a_{44} + \tfrac{10\epsilon \mathrm{i}}{3}\sdot k,~~~~k =0,\pm,
\end{align*}
then the second derivative of $\lambda_k(s)$ at $s=0$ are
\begin{align}
\lambda^{\prime\prime}_k(0) = -\tfrac{\partial_s^2 D_\epsilon(\epsilon \mathrm{i} \sdot k, 0)}{\partial_\lambda D_\epsilon( \epsilon \mathrm{i} \sdot k, 0)} = -\tfrac{ 2\epsilon^2 \sdot k^2(a_{11}+ a_{44}) -2 a_{44}\epsilon^2 + \tfrac{10\epsilon \mathrm{i}}{3}\sdot k}{(1-3k^2)\cdot\epsilon^2}.
\end{align}
Here the $a_{11}$ and $a_{44}$ are dependent of eigenvalues.
From the above equation, the three eigenvalues admit the following expansion
\begin{equation}
\begin{split}
\lambda_0(s) & =  {a_{44}}(\lambda_0)s^2 + o(s^2),\\
\lambda_{\pm 1}(s) & = \pm\epsilon \mathrm{i} + \tfrac{1}{2} \left( {a_{11}}(\lambda_{\pm 1})  \pm \tfrac{5\mathrm{i}}{3 \epsilon}   \right)s^2 + o(s^2).
\end{split}
\end{equation}

Since the eigenvalues have been figured out, for fixed $\lambda_j$, the eigenfunctions satisfy the following relation:
\[ \lambda_j \psi_j=\mathcal{L}\psi_j-   \mathrm{i}(v \cdot n) \psi_j -   \epsilon^2 \frac{\mathrm{i}(v \cdot n)}{|n|^{2}}  \int_{\mathbb{R}^{3}} \psi_j \bdv.\]
The method  of constructing the eigenfunction is similar to that of calculating the eigenvalues. Indeed, the eigenfunction can be decomposed into fluids parts and microscopic parts as \eqref{eigen-decom}. Similar matrix like \eqref{ei-matrix} can be deduced too. By using the Taylor expansion,
\[
\psi_{j}(s, \omega)=\psi_{j, 0}(\omega)+\psi_{j, 1}(\omega) s+\psi_{j, 2}(\omega) s^{2}+o\left(s^{2}\right), \quad|s| \leq r_{0}
  \]
the eigenfunction can be calculated. Here we omit the details. For more details, we refer to \cite[Theorem 2.8]{spectrum-arxiv-vpb}. The eigenfunctions are
\begin{equation}
\left\{\begin{array}{l}
{\psi_{0,0}=\chi_{4}, \quad \psi_{0,1}=\mathrm{i} \mathcal{L}^{-1} \bdpv(v \cdot \omega) \chi_{4}, \quad\left(\psi_{0,2}, 1 \right)=-\repst \sqrt{\frac{2}{3}}} \\
{\psi_{\pm 1,0}=\frac{\sqrt{2} }{2}(v \cdot \omega)  , \quad\left(\psi_{\pm 1,2}, 1\right)=0} \\
{\psi_{\pm 1,1}=\mp \reps \left(\frac{\sqrt{2}}{2}   + \frac{\sqrt{3}}{3} \chi_{4} \right)+\frac{\sqrt{2}}{2} \mathrm{i}\left(\mathcal{L} \mp \mathrm{i} \bdpv\right)^{-1} \bdpv(v \cdot \omega)^{2}  } \\
{\psi_{j, 0}=\left(v \cdot c^{j}\right)  , \quad\left(\psi_{j, n}, 1\right)=\left(\psi_{j, n}, \chi_{4}\right)=0 \quad(n \geq 0)} \\
{\psi_{j, 1}=\mathrm{i} \mathcal{L}^{-1} \bdpv\left[(v \cdot \omega)\left(v \cdot c^{j}\right)\right], \quad j=2,3}
\end{array}\right.
\end{equation}
With the eigenvalue and eigenfunction at our disposal, the low and high frequency decomposition  is  defined as follows:
\begin{align*}
\begin{aligned}
e^{\mathcal{B}_\epsilon(\xi)t}\hat{g}_\epsilon & = \bds_{1}(t, \xi) \hat{g}_\epsilon  + \bds_{2}(t, \xi) \hat{g}_\epsilon.
\end{aligned}
\end{align*}
Here, $\bds_1$ denotes the low frequency part, specially,
\begin{align*}
 \bds_1(t,\xi) \hat{g}_\epsilon =  \sum_{j=-1}^{3} e^{t \lambda_{j}(|\xi|)} \bdp_{j}(\xi) \hat{g}_\epsilon\mathbf{1}_{|\xi| \le r_0}.
\end{align*}
Here $\bdp_j$ denotes the projection onto the $j$-$\mathrm{th}$ eigenfunction under the inner product $(,)_\xi$
\begin{align*}
\bdp_{j}(\xi)\hat{g}_\epsilon \mathbf{1}_{|\xi| \le r_0}  = (\hat{g}_\epsilon,\psi_j)_\xi\sdot \psi_j \mathbf{1}_{|\xi| \le r_0}:= \bdp_{j,0}(\xi) + |\xi|\mathrm{T}_j(\xi)\hat{g}_\epsilon.
\end{align*}
Here, the $\bdp_{j,0}(\xi) + |\xi|\mathrm{T}_j(\xi)\hat{g}_\epsilon$ is the Taylor expansion of $\bdp_{j}(\xi)\hat{g} \mathbf{1}_{|\xi| \le r_0}$. Nocticing that usually
\[  \theta_\epsilon:= \intv \tfrac{|v|^2 -1 }{3} g_\epsilon \bdv, \]
there
\begin{align*}
\hat{\Theta}_\epsilon = \intv \chi_4 \hat{g}_\epsilon\bdv = \sqrt{\frac{3}{2}} \hat{\theta}_\epsilon.
\end{align*}
Then we complete the proof.

\end{proof}

\end{document}